%-----------------------------------------------------------------

\documentclass[letterpaper,12pt,oneside,reqno]{amsart}

%-----------------------------------------------------------------

\usepackage{setspace}

\usepackage[DIV12]{typearea}
\usepackage{adjustbox}
\usepackage[width=.9\textwidth]{caption}
\allowdisplaybreaks%
\numberwithin{equation}{section}%

%-----------------------------------------------------------------

% amssymb contains amsfonts, which in turn contains eufrak 
\usepackage{amssymb}   

\usepackage{amsmath}

\usepackage{mathrsfs}

\usepackage{stmaryrd}

% use \begin{enumerate}[resume] to resume a list
\usepackage{enumitem}

% do not use \usepackage{bbold}: it provides blackbaord bold
% numbers (e.g. $\mathbb{1}$) but causes type 3 fonts to be used
% for any $\mathbb$ command, which is bad; warning: $\mathbb{1}$
% will appear to work without \usepackage{bbold} but will produce
% junk output

\usepackage[all]{xy}

% for \textquotesingle
\usepackage{textcomp}

% for \boldsymbol
\usepackage{amsbsy}

%-----------------------------------------------------------------

\usepackage{datetime}
\renewcommand{\dateseparator}{-}
\renewcommand{\today}{\the\year \dateseparator \twodigit\month
\dateseparator \twodigit\day}

\usepackage
[pdfauthor={Francesco Cavazzani},
 pdftitle={Complete Homogeneous Varieties via Representation Theory},
 bookmarks=false]
{hyperref}

\title{Complete Homogeneous Varieties via Representation Theory} 
\author{Francesco Cavazzani}

%-----------------------------------------------------------------

%\addtolength{\oddsidemargin}{-1 in}
%\addtolength{\evensidemargin}{-1 in}
%\addtolength{\textwidth}{2 in}

% default text width in amsart is 30pc = 360pt = 4.98in
% (see amsart.cls)
% so margin in letter paper will be about 1.76in
% to see value of a variable, use eg \the\oddsidemargin
% see Lamport p182 for what it means

%\addtolength{\oddsidemargin}{-0.25 in} % make it >=1.5
%\addtolength{\evensidemargin}{-0.75 in} % make it >=1
%\addtolength{\textwidth}{1 in}

%\addtolength{\topmargin}{-0.45in} % make it about 1
%\addtolength{\textheight}{0.3in}

%-----------------------------------------------------------------

%\usepackage{fancyhdr}
%\fancyhf{}
%\fancyhead[C]{\thepage}
%\pagestyle{fancy}

% redefine the plain pagestyle
%\fancypagestyle{plain}{%
%\fancyhf{} % clear all header and footer fields
%\fancyhead[C]{\thepage} % except the center
%}

\pagestyle{plain}

%\setlength{\footnotesep}{1.67\baselineskip} % set spacing between footnotes

%\addtolength{\footskip}{0.5cm}

%------------------------

% number theorems within subsection
% e.g. 2.1.1, 2.1.2 etc in subsection 2.1
% \newtheorem{thm}{Theorem}[subsection]

% number theorems within section
%\newtheorem{thm}{Theorem}[section]

% lemmas, propositions, corollaries, conjectures share the
% numbering sequence of theorems
%\newtheorem{thm-defn}[thm]{Theorem/Definition}
%\newtheorem{lemma}[thm]{Lemma}
%\newtheorem{prop}[thm]{Proposition}
%\newtheorem{cor}[thm]{Corollary}
%\newtheorem{conj}[thm]{Conjecture}
%\newtheorem{prob}[thm]{Problem}

%THEOREMS                                                                           
\newtheorem{theorem}{Theorem}[section]
\newtheorem{lemma}[theorem]{Lemma}
\newtheorem{quest}[theorem]{Question}
\newtheorem{problem}[theorem]{Problem}
\newtheorem{proposition}[theorem]{Proposition}
\newtheorem{corollary}[theorem]{Corollary}
\newtheorem{conjecture}[theorem]{Conjecture}
\theoremstyle{definition}
\newtheorem{definition}[theorem]{Definition}
\newtheorem{example}[theorem]{Example}
\theoremstyle{remark}
\newtheorem{remark}[theorem]{Remark}

\numberwithin{equation}{section}

%\setlist[1]{leftmargin=0in,labelsep=*,itemindent=0.3in}

%-----------------------------------------------------------------

%\newcommand{\GG}{{\mathbb G}}
%\newcommand{\PP}{{\mathbb P}}

%SPECIAL CHAPTERS
\usepackage[titletoc]{appendix} %for bibliography
\usepackage{cite} %to cite for bibliography

%MATH
\usepackage{amsmath}
\usepackage{amssymb}
\usepackage{amsthm} %per teoremi e co
\usepackage{units} %per \nicefrac
\usepackage{mathrsfs} %per mathscr
\usepackage[vcentermath]{youngtab}
\usepackage{mathtools} %for psmallmatrix
\usepackage{graphics}
\usepackage{setspace}

%CODE
\usepackage{listings} %to insert python code
\usepackage{hyperref} %to insert websites in bibtex

%WRITING
\usepackage[english]{babel} %per la sillabazione inglese
\usepackage[utf8]{inputenc}
\usepackage{verbatim} %per multiline comment
\usepackage{soul} %per \st{} cose overlinerate e sottolineate
\usepackage{color}
\usepackage{enumitem} %to change indent on lists
%\hyphenation{sub-man-i-fold}

%DRAWING
\usepackage{tikz}
\usetikzlibrary{matrix,arrows} %diagrammi commutativi

%NEW COMMANDS

\newcommand{\A}{\mathbb{A}}
\newcommand{\C}{\mathbb{C}}
\newcommand{\G}{\mathbb{G}(1,3)}
\newcommand{\GG}{\mathbb{G}}
\newcommand{\R}{\mathbb{R}}
\newcommand{\Q}{\mathbb{Q}}

\newcommand{\PP}{\mathbb{P}}
\newcommand{\Z}{\mathbb{Z}}
\newcommand{\N}{\mathbb{N}}

\newcommand{\wt}{\widetilde}

\newcommand{\OO}{\mathcal{O}}
\newcommand{\zzz}[3] {z_1^{#1}z_2^{#2}z_3^{#3}}
\newcommand{\dimas}{dim^{\!\!\!\!\!^{as}}(V_\mu)}
\newcommand{\dimasi}{dim^{\!\!\!\!\!^{as}}(V_\mu^H)}
\newcommand{\dimm}{dim^{\!\!\!\!\!^{as}}}
\newcommand{\dimasx}{dim^{\!\!\!\!\!^{as}}(V_x^H)}

%-----------------------------------------------------------------

%-----------------------------------------------------------------
\setcounter{tocdepth}{4}

\begin{document}

\pagenumbering{roman}

\doublespacing

% ----------------------------------------------------------------

\thispagestyle{empty}

\vspace*{\fill}

\begin{center}
Complete Homogeneous Varieties via Representation Theory\\
\vspace{0.2in}
A dissertation presented\\
\vspace{0.2in}
by\\
\vspace{0.2in}
Francesco Cavazzani\\
\vspace{0.2in}
to\\
\vspace{0.2in}
The Department of Mathematics\\
\vspace{0.2in}
in partial fulfillment of the requirements\\
for the degree of\\
Doctor of Philosophy\\
in the subject of\\
Mathematics\\
\vspace{0.2in}
Harvard University\\
Cambridge, Massachusetts\\
\vspace{0.2in}
April 2016\\
\end{center}

\vspace*{\fill}

\pagebreak

%\maketitle

% ----------------------------------------------------------------

%%% Local Variables: 
%%% mode: latex
%%% TeX-master: "main"
%%% End: 

% ----------------------------------------------------------------

% ----------------------------------------------------------------

\thispagestyle{empty} 

\vspace*{\fill}

\begin{center}
\copyright \, 2016 -- Francesco Cavazzani \\
All rights reserved.
\end{center}

\vspace*{\fill}

\pagebreak

% ----------------------------------------------------------------

%%% Local Variables: 
%%% mode: latex
%%% TeX-master: "main"
%%% End: 

% ----------------------------------------------------------------

\doublespacing

% ----------------------------------------------------------------

\noindent Dissertation Advisor: Professor Harris \hfill
Francesco Cavazzani

\vspace{0.5in}

\centerline{Complete Homogeneous Varieties via Representation Theory}

\vspace{0.8in}

\centerline{Abstract}

\vspace{0.3in}

Given an algebraic variety $X\subset\PP^N$ with stabilizer $H$, the quotient $PGL_{N+1}/H$ can be interpreted a parameter space for all $PGL_{N+1}$-translates of $X$. We define $X$ to be a \textit{homogeneous variety} if $H$ acts on it transitively, and satisfies a few other properties, such as $H$ being semisimple. Some examples of homogeneous varieties are quadric hypersurfaces, rational normal curves, and Veronese and Segre embeddings. In this case, we construct new compactifications of the parameter spaces $PGL_{N+1}/H$, obtained compactifying $PGL_{N+1}$ to the classically known space of \textit{complete collineations}, and taking the G.I.T. quotient by $H$, and we will call the result space of \textit{complete homogeneous varieties}; this extends the same construction for quadric hypersurfaces in \cite{kannan}. We establish a few properties of these spaces: in particular, we find a formula for the volume of divisors that depends only on the dimension of $H$-invariants in irreducible representations of $SL_{N+1}$. We then develop some tools in invariant theory, combinatorics and spline approximation to calculate such invariants, and carry out the entire calculations for the case of $SL_2$-invariants in irreducible representations of $SL_4$, that gives us explicit values for the volume function in the case of $X$ being a twisted cubic. Afterwards, we focus our attention on the case of twisted cubics, giving a more explicit description of these compactifications, including the relation with the previously known moduli spaces. In the end, we make some conjectures about how the volume function might be used in solving some enumerative problems.

 \pagebreak

% ----------------------------------------------------------------

%%% Local Variables: 
%%% mode: latex
%%% TeX-master: "main"
%%% End: 

% ----------------------------------------------------------------

% ----------------------------------------------------------------

\tableofcontents

\pagebreak

% ----------------------------------------------------------------

%%% Local Variables: 
%%% mode: latex
%%% TeX-master: "main"
%%% End: 

% ----------------------------------------------------------------

% ----------------------------------------------------------------

\section*{Acknowledgements}

There is plenty of people I wish to thank, at the end of this journey.

The first, and more important thanks goes to my advisor Joe Harris, for his unlimited support, both in math and in life; he is probably the kindest person I know, and the best anyone could hope as a supervisor and guide.

Then, I owe many parts of this thesis to the many conversations that I had about it all around the world. It is very likely that all of this would not have even started, if I had not met Michel Brion in Kloster Heiligkreuztal in 2013. Determinant pieces of the puzzle were also added by Alessio Corti, Noam Elkies, Maksym Fedorchuk and Rahul Pandharipande. Without these people, this thesis would be worth much less. I am also very grateful to Corrado De Concini, a real mentor.

I also want thank the many people that listened to what I had to say, a returned me a lot of useful pieces of advice; in particular, Valeri Alexeev, Carolina Araujo, Alessandro Chiodo, Igor Dolgachev, Bill Fulton, Giovanni Gaiffi, Dick Gross, Joseph Landsberg, Barry Mazur and Ragni Piene, Frank Schreyer, Jerzy Weyman, and the many math buddies Andrea Appel, Nasko Atanasov, Anand Deopurkar, Gabriele Di Cerbo, Simone Di Marino, Andrea Fanelli, Jacopo Gandini, Gijs Heuts, Luca Moci, Anand Patel, Salvatore Stella, Roberto Svaldi and Giulio Tiozzo.

A special thanks goes to Susan Gilbert (happy retirement!), Darryl Zeigler, and the entire Math Department staff. Their everyday work makes the life of a graduate student much easier - especially a first year grad student from Italy with an embarrassingly low English proficiency.

I wish of course to thank also my family and my closest friends; during my entire life, I've never really felt alone, and that's because of them.

In the end, the biggest thanks goes to \textit{you}, standing by my side every single day in the past seven years. I look forward to marry you, and to wake up next to you every single day of my life. \textit{Buon proseguimento}

\pagebreak

% ----------------------------------------------------------------

%%% Local Variables: 
%%% mode: latex
%%% TeX-master: "main"
%%% End: 

% ----------------------------------------------------------------

\pagenumbering{arabic}

\section{Introduction}
\subsection{History and motivation}
Hermann Schubert's book \textit{Kalk\"ul der abz\"ahlenden Geometrie}, in 1879 (cf. \cite{Kalkul}), was a breakthrough in enumerative geometry. There, he indicated the answers to thousands of questions such as

\begin{quest}\label{Q1}
How many conics in $\PP^2$ are tangent to 5 general conics?
\end{quest}
\begin{quest}\label{Q2}
How many twisted cubics in $\PP^3$ are tangent to 12 general quadric surfaces?
\end{quest}

Question~\ref{Q1} has an interesting storyline - an incorrect answer (7776) was given at first by Steiner in 1848; he was later corrected by Chasles, that gave the correct answer (3264) in 1864. Steiner's miscalculation can be easily understood; in the space $\PP^5$ of conics, the locus of conics that are tangent to a given conic is a hypersurface of degree 6, and intersecting five of them we should get, by Bezout's theorem, $6^5$ that is exactly 7776. However, any five such hypersurfaces fail to satisfy Bezout's theorem hypothesis, that is to intersect transversely. The right answer given by Chasles relies on considering a different space parametrizing conics, the space of \textit{complete conics}, where we endow a conic the information of its dual conic as well. While for conics everything is quite simple, for twisted cubics the situation gets much more complicated; most likely, Schubert did not have in mind a precise moduli space for twisted cubic when finding all the enumerative answers (also called \textit{characteristic numbers}); his calculations work in a quite mysterious way - but he was correct, in every single one of them. It is not by chance that Hilbert dedicated his 15th problem to putting a rigorous foundation to Schubert's work. From a modern prospective, Schubert was taking product of divisors in a parameter space for twisted cubics with 11 boundary divisors, that he called \textit{aspects}; as in the case of complete conics, such divisors can be obtained as degenerations of a ``complete'' twisted cubic, when we endow it with the information of the curve of tangent lines and the dual twisted cubic of osculating planes. Rigorously defined moduli spaces for twisted cubics were eventually found in the 20th century, such as the Hilbert scheme $Hilb_{3m+1}(\PP^3)$ (cf. \cite{pieneschl}, \cite{vainsblowup}, and a variant in \cite{netquadrics} and \cite{netquadrics2}) and the Kontsevich space of stable maps $\overline{\mathcal{M}}_{0,0}(\PP^3,3)$ (cf. \cite{pandha}); using the latter, in particular, with the use of quantum cohomology, people were able to prove sistematically many of Schubert's numbers (cf. \cite{pandha}). In fact, some of these numbers have been proved earlier by ad-hoc arguments in \cite{sketchnumber} and \cite{vainsenchersegre}, including the answer to Question~\ref{Q2}, that is 5819539783680. None of these spaces, though, has the richness of the space Schubert had in mind, or the right amount of symmetry that the space of complete conic has. Tentatives of creating a space more similar to Schubert's one have been done in \cite{alguneid} (where we see the complete twisted cubic as a triple of Chow cycles, later extended in \cite{alguneid2} and \cite{alguneid3}) and in \cite{piene11} and \cite{pienerandom} (where we see the complete twisted cubic as a point in a triple Hilbert scheme). None of these approaches, though, gave any indication about how to recover Schubert calculations rules on intersection products, or to prove them. In addition, none of the current enumerative techniques is easily generalizable to the higher dimensional case, to solve questions like the following.

\begin{quest}\label{Q3}
How many Segre threefolds in $\PP^5$ meet 24 lines?
\end{quest}
\begin{quest}\label{Q4}
How many Veronese surfaces in $\PP^5$ contain 9 general points?
\end{quest}
\begin{quest}\label{Q5}
How many Veronese surfaces in $\PP^5$ are tangent to 27 general hyperplanes?
\end{quest}

Here, Question~\ref{Q3} has been solved using an ad-hoc argument in \cite{vainsenchersegre}, and the answer is 7265560058820, while Question~\ref{Q4} has answer 4, and it follows from the fact that there is a unique elliptic curve of degree 6 through 9 general points in $\PP^5$ (cf. \cite{patel}). We are not aware of anybody ever attempting to answer Question~\ref{Q5}.

This works has the aim to be a step in the direction of filling this void. We will construct moduli spaces of \textit{complete twisted cubics}, and more in general of \textit{complete homogeneous varieties}, in a way that resembles the construction of complete conics and has more ties with Schubert's work. We will then develop an algorithmic way of doing intersection theory on such spaces.

In order to see how we did that, we will now take a step back, and look again at Question~\ref{Q2}. The key to solve it was to use a different compactification of the space of smooth conics, that we can identify with the quotient $PGL_3/PGL_2$. From a different prospective, we can state this problem as the one of finding ($G$-equivariant) compactifications of a quotient $G/H$. In the seminal work \cite{CSV} in 1983, it is proved that in some fortunate cases (that would later be called the cases where $H$ is \textit{spherical}) such a quotient has a very nice and symmetrical compactification, called \textit{wonderful} because of the boundary that is a simple normal crossing divisor. One such example is, in fact, the space of complete conics $\Omega_c$ as compactification of $PGL_3/PGL_2$. One other example is a compactification $\Omega_2$ of $PGL_3$ itself (seen as the quotient of $PGL_3\times PGL_3$ by the diagonal), called classically \textit{complete collineations} (cf. \cite{LaksovCC2}). Many papers have been written about the geometry of these wonderful compactifications, including ways to find intersection theory on them (cf. \cite{CSV}, \cite{brion_enum}). In the relatively under the radar paper \cite{kannan}, in 1999, Kannan showed an even different way of obtaining the space of complete conics; instead of directly compactifying the quotient $PGL_3/PGL_2$, one can compactify $PGL_3$ first to the space $\Omega_2$ of complete collineations, and then take the quotient by $PGL_2$, in the form of a G.I.T. quotient. In other words, in the diagram below, we can get to the space of complete conics $\Omega_c$ going either way.
\begin{center}
\begin{tikzpicture}[description/.style={fill=white,inner sep=2pt}]
\matrix (m) [matrix of math nodes, row sep=3em,
column sep=2.5em, text height=1.5ex, text depth=0.25ex]
{ PGL_3 &  & \Omega_2 \\
PGL_3/PGL_2 &  & \Omega_c\cong\Omega_2/\!\!/PGL_2\ \ \ \  \\ };
\path[->,font=\scriptsize]
(m-1-1) edge node[auto] {$  $} (m-1-3)
(m-1-3) edge node[auto] {$  $} (m-2-3)
(m-1-1) edge node[auto] {$  $} (m-2-1)
(m-2-1) edge node[auto] {$  $} (m-2-3);
\end{tikzpicture}
\end{center}

The space of twisted cubics that we want to compactify, on the other hand, is isomorphic to $PGL_4/PGL_2$, and it is not spherical, so \cite{CSV} does not give us any compactification. Our idea was to use Kannan's method in \cite{kannan} to do that. We first compactified $PGL_4$ to the space of complete collineations $\Omega_3$, took the G.I.T. quotient by $PGL_2$, ando so obtained a compactification of $PGL_4/PGL_2$. Notice that this can be done much more in general, to any quotient $G/H$; in this thesis we restricted ourselves to the cases of $G=PGL_{N+1}$, and $H$ the stabilizer of a nice enough variety. Then, we adapted the intersection theory methods from the theory of wonderful varieties to this case. In this way, intersection theory calculations translate to questions in invariant theory of $H$-invariants in irreducible representations of $G$, that can be dealt with algorithmically. For twisted cubics, this turned out to be a quite different compactification from the more common $Hilb_{3m+1}(\PP^3)$ and $\overline{\mathcal{M}}_{0,0}(\PP^3,3)$, having much more ties with the 11 degeneration Schubert had in mind. Extending the calculation to the case of Veronese varieties, for the quotient $PGL_6/PGL_3$, this work has the potential to give the answer to Question~\ref{Q5}.

\subsection{Summary}
In Section~\ref{sectCC}, we will describe the space of complete collineations $\Omega_N$, obtained taking the closure of $PGL_{N+1}$ in a suitable product of projective spaces. We will then show a few properties of this variety, that has lots of ties with representation theory. In particular, we show that $Pic(\Omega_N)$ can be identified with the lattice $\Lambda_{SL_{N+1}}$ of dominant weights for $SL_{N+1}$.

In Section~\ref{GIT}, we first define in Definition~\ref{homogdef} the notion of homogeneous variety $X\subset\PP^N$, as a projective nondegenerate variety with a transitive action of its stabilizer $H$, and satisfying a few more technical properties; we then prove that quadric hypersurfaces (for $N$ even), rational curves, Veronese surfaces and Segre embeddings (without repeated factors) are homogeneous in the sense of Definition~\ref{homogdef}, and we prove a few properties of the quotient $PGL_{N+1}/H$. Then, in Definition~\ref{maindef}, we define the spaces of complete homogeneous varieties 
$$M_L=\Omega_N/\!\!/_{_{\!\! L}}H=Proj\left(\bigoplus_{k\in\N} H^0(\Omega_N,L^{\otimes k})^H\right)$$
where $L\in Pic(\Omega_N)$ is the linearization for the G.I.T. quotient (different $L$ will lead to different models). In Proposition~\ref{4points}, then, we derive a few geometric properties in for these spaces $M_L$ from the same for $\Omega_N$, such as $Pic(M_L)$ embedding into $\Lambda_{SL_{N+1}}$ as before, and we move to intersection theory. Here, we focus our attention to the \textit{volume} of a divisor $D$ on a space $M_L$, defined as
$$\text{vol}(D)=s!\cdot\limsup_{k\to\infty}\frac{h^0(M_L,\OO(kD))}{k^{s}}.$$
Splitting the spaces of sections in $SL_{N+1}$-isotypical components, we get the first main theorem of this work, Theorem~\ref{main}.

\noindent\textbf{Theorem~\ref{main}.} 
\textit{Let $D$ be a Cartier divisor on $M_L$, such that $\OO(D)$ corresponds to the weight $\lambda$ of $SL_{N+1}$. We have then
$$\text{vol}(D)=\frac{s!}{N+1}\cdot\int_{\mathscr{P}_\lambda}\dimas\dimasi d\mu.$$}

Here, the functions $\dimas$ and $\dimasi$ are asymptotic values of the dimension of (respectively) the vector spaces $V_\mu$ and $V_{\mu}^H$, where $V_{\mu}$ is the irreducible representation of $SL_{N+1}$ of highest weight $\mu$, and $\mathscr{P}_\lambda$ is a polytope in the Weyl chamber of $SL_{N+1}$ depending on $\lambda$. Of these three ingredients for this formula, $\dimas$ and $\mathscr{P}_\lambda$ are very easy to compute, while $\dimasi$ is much harder.

In Section~\ref{invariant_theory}, we develop some invariant theory tools to find a closed formula for the generating function
$$\Xi_H^{SL_{N+1}}(z)=\sum_{\lambda\in\Lambda^+}dim(V_\lambda^H) z^\lambda.$$
We find such a closed formula in the second main theorem of this work, Theorem~\ref{mess}, in the twisted cubic case, for $SL_2$ invariants in irreducible representations of $SL_4$. The proof of this fact is an application of \cite{brion_residue}.

In Section~\ref{asymptotics}, we use the theory of splines to approximate and find the asymptotic value for the coefficients of $\Xi_H^{SL_{N+1}}$, that ultimately give us explicit values for the function $\dimasi$ in the twisted cubic cases, in Corollary~\ref{finalformula}.

In Section~\ref{TC}, we focus our attention to the spaces $M_L$ of complete twisted cubics; we describe explicitly the boundary components and their counterparts in Schubert's 11 degenerations, the different models $M_L$ as $L$ varies, and we calculate the volume explicitly in a few cases. Then, we state some conjectures and speculations about some possible relations between the volume and some enumerative questions, and some other phenomena that we have observed. 

\section{Complete collineations}\label{sectCC}

\subsection{Definition and first properties}

Complete collineations will be the main object to start from, to construct our spaces of complete homogeneous varieties. For a rather complete report on the wide history of the subject, we suggest \cite{LaksovCC2}. We will now describe its definition as wonderful compactification, following mainly \cite{CSV}. 

Everything will happen over the field $\mathbb{C}$ of complex numbers; as main reference, we refer to \cite{alggeo} for algebraic geometry and to \cite{reprthy} for representation theory. From now on, we will indicate by $G$ the group $SL_{N+1}$; we will also use the notation $G_a$ for its adjoint form $PGL_{N+1}$. We will denote by $B$ a Borel subgroup of $G$, $U$ its unipotent radical, $B^{-}$ its opposite, $T$ its Cartan subgroup, by $\Lambda$ its weight lattice, and by $\Lambda_{\R}$ the vector space it lies in (that we can think of as the Lie algebra of $T$). Subsequent to the choice of $B$, we will also denote by $\mathcal{W}\subset\Lambda_{\R}$ the Weyl chamber, by $\Lambda^+\subset\Lambda$ the monoid of dominant weight, by $\omega_1,\ldots,\omega_n\in\Lambda^+$ the fundamental weights and by $\Phi=\{\alpha_1,\ldots,\alpha_n\}\subset\Lambda$ the set of simple roots (ordered in such a way $\alpha_i$ is orthogonal to $\omega_j$ whenever $i\neq j$), and by $\Lambda_a\subset \Lambda$ the sublattice generated by the simple roots, that is also the weight lattice for $G_a$. Given a dominant weight $\lambda\in\Lambda^+$, we will denote by $V_\lambda$ the irreducible representation of $G$ with highest weight $\lambda$, and by $\mathcal{P}_\lambda$ and $P_\lambda$ the sets
$$\mathscr{P_\lambda}=\{\lambda-\sum c_i\alpha_i\mid c_i\in\R^{\geq0}\}\cap\mathcal{W}$$
$$P_\lambda=\{\lambda-\sum c_i\alpha_i\mid c_i\in\N\}\cap\mathcal{W}.$$
\begin{definition}
Consider the $G\times G$ variety
$$G\times G\curvearrowright \prod_{i=1}^N\PP(End(V_{\omega_i})))$$
where the action is given by
$$(g,g')\cdot (\phi_1,\ldots,\phi_N)=(g^{-1}\circ\phi_1\circ g',\ldots,g^{-1}\circ\phi_N\circ g').$$
We call the space of \textit{complete collineations} the closure $\Omega_N$ of the $G\times G$ orbit of the element
$$\mathbb{I}=(Id_{V_{\omega_1}},\ldots,Id_{V_{\omega_N}})\in \prod_{i=1}^N\PP(End(V_{\omega_i})))$$
\end{definition}

\noindent We will list now some properties.

\begin{itemize}[leftmargin=*]
\item[i)] The stabilizer of $\mathbb{I}$ is the subgroup generated by the centers $Z(G)$ of the two copies of $G$ and the diagonal, and hence the orbit of $\mathbb{I}$ will be isomorphic to $G_a$. The space $\Omega_N$ contains then an open dense subset $U$ isomorphic to $G_a$.
\item[ii)] $\Omega_{N}$ is smooth of dimension $(N+1)^2-1$, and inherits an action of $G\times G$, for which it has finitely many orbits. Moreover, the complement $\Omega_{N}\setminus U$ is the union of $N$ irreducible smooth divisors $\Delta_1,\ldots,\Delta_N$ intersecting transversely; this is why it is called ``wonderful compactification''. The $G\times G$ orbits besides $U$ are just intersections of boundary divisors
$$\Delta_I=\bigcap_{i\in I}\Delta_i\quad \forall I\subseteq\{1,2,\ldots,N\}.$$
\item[iii)] (cf.~\cite{CSV}, Theorem~7.6) The Picard group $Pic(\Omega_{N})$ is freely generated by the pullbacks $\eta_1,\ldots,\eta_N$ of the hyperplane classes of the factors $\PP(End(V_{\omega_i})))$ for $i=1,\ldots,N$, and hence it can be naturally identified with the weight lattice $\Lambda$, by associating $\eta_i$ to $\omega_i$. Under this identification, the classes $\delta_i$ of the boundary divisors $\Delta_i$ go to the simple roots $\alpha_i$. The divisor classes $\eta_1,\ldots,\eta_N$ generate also the nef cone (cf. \cite{brion_enum}).
\item[iv)] (cf.~\cite{CSV}, Theorem~8.3) Given a line bundle $L_\lambda$ corresponding to a weight $\lambda\in\Lambda$, the group $G\times G$ acts on $H^0(\Omega_N,L_\lambda)$, and we have
\begin{equation}\label{sectionsomega}
H^0(\Omega_N,L_\lambda)\cong\bigoplus_{\mu\in P_\lambda}V_{\mu}\otimes V_{\mu}^*
\end{equation}
as representation of $G\times G$ (that acts on the summands coordinatewise). 
\end{itemize}

\begin{remark}\label{vanishingboundary}
It is possible to give the different summands in (\ref{sectionsomega}) a geometric interpretation. If $\lambda-\mu=\sum c_i\alpha_i$, then sections in $V_{\mu}\otimes V_{\mu}^*$ are going to vanish with order exactly $c_i$ along the boundary divisor $\Delta_i$. This follows from the algebraic Peter-Weyl theorem, for which
$$\C[G]\cong\bigoplus_{\mu\in\Lambda^+}V_{\mu}\otimes V_{\mu}^*$$
$$\C[G_a]\cong\bigoplus_{\mu\in\Lambda_a\cap\Lambda^+}V_{\mu}\otimes V_{\mu}^*$$
and extending these functions to $\Omega_N$.
\end{remark}

\subsection{Further geometric properties}

We will now describe some further geometric properties of $\Omega_N$. Let us consider the first factor $\PP(End(V_{\omega_1}))$, and let us consider again the orbit of the identity by the action of $G\times G$. This will be the open dense subset of endomorphisms of full rank $N+1$, and its closure will be the entire $\PP(End(V_{\omega_1}))$; let us consider the stratification by rank
$$\PP(End(V_{\omega_1}))\supset Z_N\supset \ldots \supset Z_1$$
where $Z_i$ consists of endomorphisms of rank $i$ or less. We have then the following theorem.
\begin{theorem}[cf. \cite{VainsCC}, Theorem 1]
The space of complete collineations can be obtained after a sequence of blow ups of $\PP(End(V_{\omega_1}))$ along (proper transforms of) $Z_1,Z_2,\ldots,Z_N$.
\end{theorem}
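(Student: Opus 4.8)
The plan is to realize $\Omega_N$ explicitly as an iterated blow-up of $\PP(End(V_{\omega_1}))$ along the proper transforms of the rank-stratification $Z_1 \subset Z_2 \subset \cdots \subset Z_N$, and to check directly that this blow-up has the universal property characterizing the wonderful compactification. Write $Y_0 = \PP(End(V_{\omega_1}))$, and inductively let $\pi_i\colon Y_i \to Y_{i-1}$ be the blow-up along the proper transform $\widetilde{Z_i}$ of $Z_i$, so that $Y_N$ is the candidate for $\Omega_N$. The first thing I would establish is that each center is smooth enough for the construction to make sense: $Z_i$ is the determinantal variety of matrices of rank $\le i$, which is singular exactly along $Z_{i-1}$, so one must verify that after blowing up $\widetilde{Z_1}, \ldots, \widetilde{Z_{i-1}}$ the proper transform $\widetilde{Z_i}$ has become smooth. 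This is a local computation on the determinantal variety: in suitable local coordinates near a rank-$(i-1)$ point one factors the defining minors through the exceptional divisors already introduced, and checks the residual ideal is that of a smooth subvariety (this is the classical resolution of determinantal varieties by successive blow-ups).

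Next I would compare $Y_N$ with $\Omega_N$ by comparing the maps to the ambient product $\prod_{j=1}^N \PP(End(V_{\omega_j}))$. The identity component $PGL_{N+1} = G_a$ sits as an open dense orbit in both $Y_0$ (the full-rank locus) and in $\Omega_N$, and the rational map $Y_0 \dashrightarrow \prod_{j\ge 2}\PP(End(V_{\omega_j}))$ sending $\phi \mapsto (\Lambda^2\phi, \ldots, \Lambda^{N+1-1}\phi)$ — the exterior powers of the endomorphism — is defined away from the loci where $\Lambda^{j}\phi = 0$, i.e.\ where $\phi$ drops rank below $j$. I would show that blowing up along $\widetilde{Z_1}, \ldots, \widetilde{Z_N}$ is exactly what is needed to resolve all of these maps simultaneously: the $j$-th map acquires indeterminacy precisely along $Z_{j-1} \supset \cdots \supset Z_1$, and each blow-up $\pi_i$ resolves the contribution of $Z_i$ to every $\Lambda^j\phi$ with $j > i$. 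Hence the morphism $Y_N \to \prod_j \PP(End(V_{\omega_j}))$ is everywhere defined and its image is the closure of the $G_a$-orbit, namely $\Omega_N$; the two varieties therefore coincide once one checks the morphism is an isomorphism onto its image.

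That last point — that $Y_N \to \Omega_N$ is an isomorphism and not merely birational — is where I expect the main difficulty to lie, and I would handle it using properties (ii) and (iii) from the excerpt. Both spaces are smooth and proper, and both contain $G_a$ as an open orbit with complement a union of $N$ smooth divisors meeting transversally: on $Y_N$ these are the exceptional divisors $E_1, \ldots, E_N$ (the boundary divisors of $\Omega_N$ being $\Delta_1, \ldots, \Delta_N$). A proper birational morphism of smooth varieties that is an isomorphism in codimension one away from a locus one can control, together with the fact that the Picard rank of $\Omega_N$ is exactly $N$ (property (iii)) and matches that of $Y_N$ (which is $\mathrm{rk}\,Pic(Y_0) + N = 1 + N$ — wait, $Y_0 = \PP(End)$ has Picard rank $1$, so $Y_N$ has rank $N+1$, forcing a careful bookkeeping: in fact the hyperplane class of $Y_0$ pulls back to $\eta_1$ and is \emph{not} independent of the others, since on $\Omega_N$ one has a relation expressing $\eta_1$ in terms of the $\eta_i$ and $\delta_i$; I would use precisely this relation, coming from the birational map $\Omega_N \to \PP(End(V_{\omega_1}))$ contracting the $\Delta_i$, to match the cones). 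Concretely, I would verify that $\pi\colon Y_N \to \Omega_N$ is a birational morphism of smooth projective varieties with $\pi_*\OO_{Y_N} = \OO_{\Omega_N}$ whose exceptional locus, if nonempty, would have to be contracted — but every divisor on $Y_N$ is either $\pi$-ample-ish on fibers or already a pullback, and a dimension/discrepancy count (the $E_i$ have the ``right'' discrepancies, matching those predicted by the wonderful structure) shows there is nothing to contract, so $\pi$ is finite and birational, hence an isomorphism. I would cite \cite{VainsCC} for the determinantal blow-up computation itself and \cite{CSV} for the structural facts about $\Omega_N$ that pin down the identification.
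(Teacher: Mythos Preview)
The paper does not give its own proof of this theorem: it is quoted as a known result with a citation to \cite{VainsCC}, Theorem~1, and no argument is supplied. So there is nothing in the paper to compare your proposal against directly; what I can do is comment on your sketch on its own terms.

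Your overall strategy --- resolve the rational maps $\phi\mapsto(\Lambda^2\phi,\ldots,\Lambda^N\phi)$ by blowing up the rank loci in increasing order, then identify the result with $\Omega_N$ --- is exactly the classical approach and is essentially what Vainsencher does. The main genuine confusion in your write-up is the Picard rank mismatch you flag in the last paragraph. The resolution is simple: $Z_N$, the locus of matrices of rank $\le N$ in $\PP(End(V_{\omega_1}))$, is already a Cartier divisor (the determinant hypersurface), so blowing it up is the identity map. Only the blow-ups along $Z_1,\ldots,Z_{N-1}$ are nontrivial, contributing $N-1$ exceptional classes; together with the hyperplane class of $\PP(End(V_{\omega_1}))$ this gives Picard rank exactly $N$, matching $\Omega_N$. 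The boundary divisor $\Delta_N$ on the blow-up side is not an exceptional divisor at all but the proper transform of the determinant hypersurface $Z_N$; the paper's phrasing (listing $Z_N$ among the centers and calling its transform an ``exceptional divisor'') is a harmless abuse for notational uniformity. Once you see this, there is no relation to hunt for among the $\eta_i$ and $\delta_i$, and your attempted bookkeeping in the parenthetical remark can be dropped.

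With the Picard ranks matched, your final isomorphism argument can be made clean: you have a proper birational morphism $Y_{N-1}\to\Omega_N$ between smooth projective varieties of the same Picard rank, hence with no exceptional divisor, hence an isomorphism by Zariski's Main Theorem. The discrepancy/ampleness discussion you sketch is unnecessary.
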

The exceptional divisors obtained after the blow up of the locus of rank $k$ matrices is exactly the boundary divisors $\Delta_k$ mentioned above. We can then consider elements of $\Omega_N$ as endomorphisms of $V_{\omega_1}$ ``enriched'' whenever the rank is not $N+1$ (or $N$). Considering $G$ as acting on $\PP^N$, we can look at the projection of $\Omega_N$ onto the factors $\PP(End(V_{\omega_i}))$ as the actions of $G$ on the Grassmannian $\mathbb{G}(i,N)$ of $i$-planes in $\PP^N$ (embedded into $\PP(V_{\omega_i})$ through the Pl\"ucker embedding) and its degenerations.

Another very useful property is the following, which we can use to prove that $\Omega_N$ is smooth and the boundary divisors intersect trasversely.

\begin{proposition}\label{affinepatch}[cf. \cite{CSV}, Propositions~2.3 and 2.8]
Let $H_1,\ldots,H_N$ be zeroes of the only $B\times B^-$-invariant sections in the classes $\eta_1,\ldots,\eta_N$. Then the complement
$$A=\Omega_N\setminus (H_1\cap\ldots\cap H_N)$$
is isomorphic to the affine space $\mathbb{A}^{N^2+2N}$, and there is a set of affine coordinates for which the intersections of the boundary divisors $\Delta_i\cap A$ are coordinate hyperplanes.
\end{proposition}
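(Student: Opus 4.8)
The plan is to exhibit the open subset $A$ explicitly as an orbit-type chart for the wonderful compactification, in the spirit of the local structure theorem for wonderful varieties. First I would recall that the unique $B\times B^-$-invariant section $s_i$ of $L_{\omega_i}$ corresponds, under the identification $H^0(\Omega_N,L_{\omega_i})\cong\bigoplus_{\mu\in P_{\omega_i}}V_\mu\otimes V_\mu^*$ from (\ref{sectionsomega}), to the highest-weight line $v_{\omega_i}\otimes v_{-\omega_i}^*$ in the summand $\mu=\omega_i$ (the other summands $\mu<\omega_i$, namely $\mu=0$ when $\omega_i\in\Lambda_a$, contribute no $B\times B^-$-eigenvector of the right weight). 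Concretely, on the open $G_a$-orbit $U$, in the matrix model of Vainsencher, $s_i$ is (a power of) the principal $i\times i$ minor built from the first $i$ rows and columns — that is, the Plücker coordinate of the reference flag on the $i$-th factor. Hence $A=\Omega_N\setminus\bigcap H_i$ is the locus where not all these distinguished Plücker coordinates vanish simultaneously, and one checks this is exactly the big cell: it contains $\mathbb{I}$ and is stable under $B\times B^-$.

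Next I would identify $A$ with an affine space by a direct coordinate computation on the open orbit and then argue it extends. On $U\cong G_a=PGL_{N+1}$, the big cell of the Bruhat decomposition is $U^-\cdot T\cdot U$ where $U^\pm$ are the unipotent radicals of $B^\pm$; this is an open subset isomorphic to $\mathbb{A}^{N(N+1)/2}\times(\mathbb{C}^*)^N\times\mathbb{A}^{N(N+1)/2}$. The wonderful compactification replaces the torus part $T\cong(\mathbb{C}^*)^N$ by its toric completion along the negative chamber, namely $\mathbb{A}^N$ with coordinates $t_1,\dots,t_N$ where $t_i$ measures vanishing along $\Delta_i$ (this uses property (iii), that $\delta_i\mapsto\alpha_i$, together with Remark~\ref{vanishingboundary}). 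So $A\cong\mathbb{A}^{N(N+1)/2}\times\mathbb{A}^N\times\mathbb{A}^{N(N+1)/2}=\mathbb{A}^{N^2+2N}$, and in these coordinates $\Delta_i\cap A=\{t_i=0\}$ is a coordinate hyperplane, and the $\Delta_I\cap A$ are the evident coordinate subspaces — giving transversality and smoothness at once. The cleanest way to make the toric replacement rigorous is to write down the $B^-\times B$-equivariant map from $U^-\times\mathbb{A}^N\times U$ into $\prod\PP(\mathrm{End}(V_{\omega_i}))$ sending $(u^-,(t_i),u)$ to the tuple whose $i$-th entry is $u^-\cdot\mathrm{diag}(\text{monomials in the }t_j)\cdot u$ acting on $V_{\omega_i}$, check it is injective with image contained in $\Omega_N$ (the image of $U$ is dense, and $\Omega_N$ is the closure), and check it is surjective onto $A$ by a dimension count plus the fact that both sides are smooth of the same dimension with $A\setminus U$ mapping onto $\bigcup\Delta_i\cap A$.

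The main obstacle I anticipate is pinning down precisely which monomials in the $t_j$ appear on the diagonal in each factor $V_{\omega_i}$ — i.e., getting the toric geometry of the completed torus exactly right so that the divisor $\Delta_i$ (not some positive combination of the $\Delta_j$'s) is cut out by $t_i=0$. This is a bookkeeping problem about weights of $V_{\omega_i}$ relative to the simple roots: one must verify that the $B^-\times B$-semiinvariant section $s_i$, restricted to the torus slice, is the character $\prod_j t_j^{\langle\omega_i,\alpha_j^\vee\rangle}$-type monomial whose vanishing locus in $\mathbb{A}^N$ is the single hyperplane $t_i=0$, which follows from the fact that $\omega_i$ pairs nontrivially only with $\alpha_i$ among fundamental-coweight directions — but one has to be careful that it is the dual/fundamental basis that diagonalizes things, matching the ordering convention ``$\alpha_i\perp\omega_j$ for $i\ne j$'' fixed in the setup. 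Once that normalization is fixed, everything else is the routine verification that a Bruhat-type big cell times an affine toric chart is affine space, and I would cite \cite{CSV}, Propositions~2.3 and 2.8 for the general local structure statement and simply spell out the $SL_{N+1}$ case.
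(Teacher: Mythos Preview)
Your approach is correct and is exactly the De Concini--Procesi argument; the paper gives no proof of its own here, simply citing \cite{CSV}, Propositions~2.3 and 2.8, so there is nothing to compare.

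One genuine slip, though: you write that $A=\Omega_N\setminus\bigcap_i H_i$ is ``the locus where not all these distinguished Pl\"ucker coordinates vanish simultaneously'' and then assert this is the big cell. Those are different sets --- the big cell is where \emph{all} the $s_i$ are nonvanishing, i.e.\ the complement of the \emph{union} $\bigcup_i H_i$, not of the intersection. The complement of the intersection is not even affine for $N\ge 2$. This is almost certainly a typo in the stated proposition ($\cap$ for $\cup$), but you have copied it and then silently switched to the correct geometry in the next sentence; your Bruhat-cell argument only produces the complement of the union, so you should flag the discrepancy rather than paper over it.
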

\subsection{The full flag variety}

The intersection of all boundary divisors of $\Omega_N$ (and its only closed $G\times G$-orbit) is isomorphic to a product of two copies of the full flag variety $G/B\times G/B$. This is in some way the ``core'' of $\Omega_N$, and most of the properties of $\Omega_N$ have an equivalent (mostly, simpler) for $G/B$. We will then list a few properties of $G/B$, that will be useful in the future.

\begin{itemize}
\item[i)] $G/B$ is smooth of dimension $N(N+1)/2$, and it lives naturally inside the product of Grassmannians
$$G/B\hookrightarrow\PP^N\times\GG(1,N)\times\GG(2,N)\times\ldots\times\GG(N-1,N)$$
\item[ii)] The Picard group $Pic(G/B)$, is freely generated by the pullbacks $\eta_1,\ldots,\eta_N$ of the hyperplane classes of the factors $\GG(i-1,N)$ for $i=1,\ldots,N$, and hence it can be naturally identified with the weight lattice $\Lambda$, by associating $\eta_i$ to $\omega_i$. The divisor classes $\eta_1,\ldots,\eta_N$ generate also the nef cone.
\item[iii)] Given a line bundle $L_\lambda$ corresponding to a weight $\lambda\in\Lambda$, the group $\wt{G}$ acts on $H^0(G/B,L_\lambda)$, and we have
$$H^0(G/B,L_\lambda)\cong V_{\lambda}$$
\end{itemize}

We will also need the following simple lemma. Given a closed subgroup $H\subset G$, and a representation $V$ of $G$, we will denote by $V^H$ the subspace of $H$-invariant vectors in the restriction of $V$ to $H$.

\begin{lemma}\label{simplelemma}
Let $H$ be a closed subgroup of $G$, and let $\lambda_1,\lambda_2$ be two dominant weights of $G$. Then, if $V_{\lambda_i}^H\neq 0$ for $i=1,2$, we have 
$$dim(V_{\lambda_1+\lambda_2}^H)\geq dim(V_{\lambda_1}^H)+dim(V_{\lambda_2}^H)-1.$$  
 \end{lemma}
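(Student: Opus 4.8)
The plan is to reduce the inequality to a purely linear‑algebraic statement about bilinear maps with no zero divisors, the bridge being multiplication of sections on the flag variety $G/B$.

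First I would realize both sides geometrically. By property iii) of Section~\ref{sectCC} we may identify $V_{\lambda_i}=H^0(G/B,L_{\lambda_i})$ and $V_{\lambda_1+\lambda_2}=H^0(G/B,L_{\lambda_1+\lambda_2})$ as $G$-modules, and multiplication of sections gives a $G$-equivariant bilinear map $\mu\colon V_{\lambda_1}\times V_{\lambda_2}\to V_{\lambda_1+\lambda_2}$. Two features of $\mu$ matter. It is $G$-equivariant, hence $H$-equivariant, so it restricts to a bilinear map $V_{\lambda_1}^H\times V_{\lambda_2}^H\to V_{\lambda_1+\lambda_2}^H$. And it has \emph{no zero divisors}: $\mu(s_1,s_2)=0$ forces $s_1=0$ or $s_2=0$. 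This is because $G/B$ is integral, so if $s_1,s_2$ are nonzero sections their vanishing loci $V(s_1),V(s_2)$ are proper closed subsets, whence $V(s_1s_2)=V(s_1)\cup V(s_2)$ is proper and $s_1s_2\neq 0$. (As a byproduct the image of $\mu$ is a nonzero $G$-submodule of the irreducible $V_{\lambda_1+\lambda_2}$, so $\mu$ is surjective, but we will not use this.)

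Second, I would invoke the elementary lemma: if $A,B,C$ are finite-dimensional $\C$-vector spaces and $m\colon A\times B\to C$ is bilinear with $m(a,b)\neq 0$ whenever $a\neq 0$ and $b\neq 0$, then $\dim C\geq \dim A+\dim B-1$; applying it with $A=V_{\lambda_1}^H$, $B=V_{\lambda_2}^H$ (both nonzero, by hypothesis), $C=V_{\lambda_1+\lambda_2}^H$ and the restriction of $\mu$ yields the claim. To prove the lemma, let $M\colon A\otimes B\to C$ be the induced linear map. The no-zero-divisor hypothesis says exactly that $\PP(\ker M)$ is disjoint from the Segre variety $\Sigma=\PP(A)\times\PP(B)\subset\PP(A\otimes B)$, which has dimension $\dim A+\dim B-2$ inside $\PP(A\otimes B)$ of dimension $\dim A\cdot\dim B-1$. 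If $\ker M=0$ the bound is trivial; otherwise, by the projective dimension theorem two nonempty closed subvarieties of $\PP(A\otimes B)$ whose dimensions sum to at least $\dim A\cdot\dim B-1$ must intersect, so disjointness of $\Sigma$ and $\PP(\ker M)$ forces $(\dim A+\dim B-2)+(\dim\ker M-1)<\dim A\cdot\dim B-1$, i.e. $\dim\ker M\leq \dim A\cdot\dim B-\dim A-\dim B+1$, hence $\dim C\geq\operatorname{rk}M=\dim A\cdot\dim B-\dim\ker M\geq\dim A+\dim B-1$.

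The two content-bearing points are the absence of zero divisors for section multiplication and the Segre/dimension-theorem lemma; neither is deep, but that is where the argument really lives. The only thing I would take care to verify is the standard fact that, under the identification $V_\lambda\cong H^0(G/B,L_\lambda)$, the section-multiplication map is indeed the $G$-equivariant Cartan projection $V_{\lambda_1}\otimes V_{\lambda_2}\to V_{\lambda_1+\lambda_2}$, together with the integrality of $G/B$ already recorded in Section~\ref{sectCC}.
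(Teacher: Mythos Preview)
Your proof is correct and follows essentially the same approach as the paper: realize the multiplication of sections on $G/B$ as an $H$-equivariant bilinear map with no zero divisors, then apply the classical bound on the rank of such a map. The only cosmetic difference is in the proof of the linear-algebra lemma: the paper phrases it via the cohomology ring of $\PP(V_{\lambda_1}^H)\times\PP(V_{\lambda_2}^H)$ (the Hopf argument, noting that $(\zeta_1+\zeta_2)^{\dim V_{\lambda_1+\lambda_2}^H}$ must vanish), whereas you use the projective dimension theorem against the Segre variety; these are two standard and interchangeable routes to the same inequality.
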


\begin{proof}
Let us consider the map
$$\alpha:V_{\lambda_1}^H\times V_{\lambda_2}^H\cong H^0(G/B,L_{\lambda_1})^H\otimes H^0(G/B,L_{\lambda_2})^H\to H^0(G/B,L_{\lambda_1+\lambda_2})^H\cong V_{\lambda_1+\lambda_2}^H$$
Looking at vectors as sections of line bundles on $G/B$, and being $G/B$ irreducible, this map is never 0 on any pure tensor $v\otimes w$ for $v$ and $w$ nonzero.
We can then consider the associated map
$$\beta:\PP(V_{\lambda_1}^H)\otimes\PP(V_{\lambda_2}^H)\to\PP(V_{\lambda_1+\lambda_2}^H)$$ 
obtained mapping $([v],[w])$ to $[\alpha(v\otimes w)]$. In the intersection rings, the pullback by $\beta$ of the hyperplane section $\zeta_{12}$ of $\PP(V_{\lambda_1+\lambda_2}^H)$ pulls back to the sum $\zeta_1+\zeta_2$ of the two hyperplane sections of $\PP(V_{\lambda_1}^H)$ and $\PP(V_{\lambda_1}^H)$ in $A^1(\PP(V_{\lambda_1}^H)\times \PP(V_{\lambda_1}^H))$, because it comes from the bilinear map $\alpha$. For this to be a ring homorphisms, we need to have
$$(\zeta_1+\zeta_2)^{dim(V_{\lambda_1+\lambda_2}^H)}=0 \in A^*(\PP(V_{\lambda_1}^H)\times \PP(V_{\lambda_1}^H))=\Z[\zeta_1,\zeta_2]/(\zeta_1^{dim(V_{\lambda_1}^H)},\zeta_1^{dim(V_{\lambda_2}^H)})$$
but that happens if and only if $dim(V_{\lambda_1+\lambda_2}^H)\geq dim(V_{\lambda_1}^H)+dim(V_{\lambda_2}^H)-1$, as we needed to show.
\end{proof}

The above argument for the tensor product of two vector spaces goes all the way back to H. Hopf, that in \cite{hopf} proved this more in general using cohomology rings.

\section{Complete homogeneous varieties}\label{GIT}

\subsection{Homogeneous varieties}

Let $X$ be a subvariety of $\PP^N$, and let $Stab(X)$ be its stabilizer in $G$. The homogeneous space $G/Stab(X)$ is a parameter space for all possible $G$-translates of the variety $X$. We are interested in the case of $X$ satisfying the four properties below; we will call such an $X$ \textit{homogeneous variety}, even though this more usually means just the second part of property (ii). The wording \textit{homogeneous space} will be used when referring to the variety arising as quotient of any linear group by any closed subgroup.

\begin{definition}\label{homogdef}
We will call $X$ a \textbf{quasihomogeneous variety} if:
\begin{itemize}
\item[(i)] $X$ is a projective reduced subvariety of $\PP^N$, and it is nondegenerate (i.e. it is not contained in a hyperplane of $\PP^N$);
\item[(ii)] the stabilizer $Stab(X)\subset G$ of $X$ acts transitively on $X$;
\item[(iii)] the identity connected component $H$ of $Stab(X)$ is a semisimple group, and $Stab(X)$ is generated by $H$ and $Z(G)$.
\end{itemize}
We will call $X$ \textbf{homogeneous} if it satisfies also
\begin{itemize}
\item[(iv)] the bilateral action of $B\times H$ on $G$ has a point with finite stabilizer in $B\times H$.
\end{itemize}
\end{definition}

This definition might seem overly redundant, but we are going to need all these conditions to apply what comes next in the following sections. There are probably though some conditions that might be relaxed, causing some doable changes in what comes next; for instance, requiring $H$ to be just reductive (instead of semisimple) would need a treatment including also characters of $H$; we don't know though of any new example arising in doing that. Let us show some examples.

\begin{example}\label{quadric}
Let $X$ be a quadric hypersurface in $\PP^N$ for $N\geq2$. When $N$ is even, the stabilizer $Stab(X)$ is generated by the othogonal group $SO_{N+1}$ and of the center $Z(G)$, and hence $X$ will be quasihomogeneous, with $H$ equal to $SO_{N+1}$. When $N$ is odd, on the other hand, the product $SO_{N+1}\cdot Z(G)$ has index 2 in $Stab(X)$, and so $X$ will not be quasihomogeneous because it does not satisfy (iii). Let us show now that if $N$ is even, $X$ is also homogeneous. Consider the multiplication map $B\times H\to G$ (sending $(b,h)$ to $b^{-1}h$). The differential at the origin is injective, because the Lie algebra of $H$ (that is mapped into the antisymmetric $N+1\times N+1$ matrices) and the Lie algebra of $B$ (into the upper triangular traceless matrices) have disjoint images; this proves that the identity in $G$ has orbit of dimension equal to the dimension of $B\times H$, and hence that $X$ is homogeneous. This differential is also surjective, so $B\times H$ will also have a dense orbit. This is in some sense the best we can hope for, and plenty has been said about this case in the past (cf. \cite{CSV}), also in a similar fashion to what will follow in the next Sections (cf. \cite{brion_enum}). This entire work can be intended as an extension of all of this.
\end{example}

\begin{example}\label{rnc}
Let $X$ be a rational (projectively) normal curve of degree $N$ in $\PP^N$, where $N\geq 3$ (the case $N=2$ is covered by the previous example). In this case, the stabilizer $Stab(X)$ will be generated by the center $Z(G)$ and a group $H$, that will be $SL_2$ if $N$ is odd and $PGL_2$ if $N$ is even. The embedding $H\to G$ is given looking at the standard representation of $G$ as the $N$-th symmetric power of the standard representation of $SL_2$. Properties (i), (ii) and (iii) of being quasihomogeneous are again obvious, so let's prove (iv). For a general $g\in G$, its stabilizer in $B\times H$ is the same as the intersection $K=gBg^{-1}\cap H$, the set of all elements of $G$ stabilizing both $X$ and a general full flag of subvarieties $\Gamma_0,\ldots,\Gamma_{N-1}$ in $\PP^N$, where $\Gamma_k$ has dimension $k$. The group $K$ will stabilize also the intersection $\Gamma_{N-1}\cap X$, that is composed by $N$ general points on $X$; so, the restriction map $r:K\to Aut(X)$ to the automorphism group of $X$ can have at most a finite image (because on $X\cong \PP^1$ it has to stabilize a set of $N\geq 3$ points). But $r$ is also injective, because $X$ is nondegenerate, so the group $K$ has to be finite.
\end{example}

\begin{example}\label{Veronese}
We can extend the previous example to every Veronese embedding $X\subset \PP^N$ obtained as the embedding of $\PP^n$ through the entire linear series of degree $d$ (and hence $N=\binom{n+d}{n}-1$), let's suppose $n,d\geq 2$; notice that $X$ is a nondegenerate subvariety of $\PP^N$ of degree $d^n$. As before, it is easy to verify that $X$ is homogeneous and that $H$ will be isomorphic to $SL_{n+1}$ (or a quotient by a subgroup of the center of it). Let's now verify (iv) in a similar way as the previous example; the group $K=gBg^{-1}\cap H $ will stabilize $\Gamma_{N-n}\cap X$, that consists of $d^n$ general points on $X\cong\PP^n$. Notice that $d^n\geq n+2$ for all couples $n,d\geq 2$, so the restriction $r:K\to Aut(X)$ has a finite image again, and as before $K$ must be finite.
\end{example}

\begin{example}\label{multiSegreVeronese}
Even more in general, we can consider the multi-Segre-Veronese embeddings $X$ obtained as the image of
$$\PP^{n_1}\times\ldots\times\PP^{n_k}\xrightarrow{\phi_{\OO(d_1,\ldots,d_k)}}\PP^N \quad\quad N=\binom{n_1+d_1}{d_1}\cdots\binom{n_k+d_k}{d_k}-1.$$
Whenever $(n_i,d_i)\neq (n_j,d_j)\  \forall i\neq j$, the quotient $Stab(X)/Z(G)$ is connected, and $H$ will be isomorphic to (a quotient by a subgroup of the center of) $\bigtimes_1^k SL_{n_i+1}$ and (i), (ii), (iii) are easily verified. Notice that $X$ has degree equal to $$\frac{(\sum n_i)!}{\prod (n_i!)}\prod_1^k d_i^{n_i},$$ so the intersection with the general linear subspace $\Gamma_{N-\sum n_i}$ will be composed by this many points. As before, let's consider the restriction $r$ from $K=gBg^{-1}\cap H$ to $Aut(X)$; for it to have finite image, it is sufficient to have all the restrictions $r_i:K\to Stab(\PP^{n_i})$ having finite image. The numerical condition we need is then just
$$\frac{(\sum n_i)!}{\prod (n_i!)}\prod_1^k d_i^{n_i}\geq max\{n_i\}+2.$$
We can suppose $k\geq2$. Then, after some calculations, the equality above is true whenever $\prod d_i\geq2$, or $k\geq 3$, or $min\{n_i\}\geq 2$, and in all these cases we can conclude these are homogeneous varieties. The only cases for which this doesn't happen are the Segre varieties $\PP^1\times\PP^n$ embedded by the complete $(1,1)$ series (we can suppose $n\geq2$ because the case $n=1$ is covered by Example~\ref{quadric}); in this case, to prove that the restriction $r_2:K\to Aut(\PP^n)$ has a finite image, we need an extra step. In this case, the intersection $\Gamma_{N-n-1}\cap X$ is composed by $n+1=N-n$ points; from easy fact about the geometry of the Segre embedding, the intersection $\Gamma_{N-n}\cap X=C$ will be a rational normal curve of degree $n+1$ in $\Gamma_{N-n}=\Gamma_{n+1}$, and its projection to $\PP^n$ will be a rational normal curve as well (of degree $n$). The action of $K$ on $C$ has to fix the $n+1$ points $\Gamma_{N-n-1}\cap X$, and $n\geq 2$, so the image $K\to Aut(C)$ will have finite image; but $C$ spans the entire $\PP^n$, so $K$ will have finite image in $Aut(\PP^n)$, and hence in $Aut(X)$, that completes the proof.
\end{example}

Other quasihomogeneous varieties are, for example, Grassmannians in their Pl\"ucker embedding, and products of any of the above. We are not sure whether these are homogeneous too (the Grassmannian $\GG(1,3)$ is because of Example~\ref{quadric}). Also, we don't know of any quasihomogeneous varieties that are not homogeneous; we have in fact the following conjecture.

\begin{conjecture}
All quasihomogeneous varieties are homogeneous.
\end{conjecture}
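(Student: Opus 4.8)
Since the statement is a conjecture, what follows is a plan rather than a proof, together with the point where it stalls. Write $V=\C^{N+1}$, $\mathfrak{g}=\mathfrak{sl}(V)$, $\mathfrak{h}=\mathrm{Lie}(H)$, and assume $X\subsetneq\PP^N$ is a proper subvariety (the literal statement fails for $X=\PP^N$, where $H=G$ and (iv) is false, so ``subvariety'' must be read as ``proper subvariety''). The strategy is to turn (iv) into a dimension count and then feed it the representation theory of the pair $(H,V)$. For $g\in G$ the stabiliser of $g$ under the bilateral $B\times H$-action is isomorphic to $H\cap g^{-1}Bg$, so (iv) says precisely that some Borel subalgebra of $\mathfrak{g}$ meets $\mathfrak{h}$ only in $0$; since $\dim(\mathfrak{h}\cap\mathfrak{b})$ is upper semicontinuous on the flag variety $\mathcal{B}$ of Borel subalgebras, this then holds for generic $\mathfrak{b}$, and the task reduces to excluding the alternative that $\mathfrak{h}\cap\mathfrak{b}\neq 0$ for \emph{every} $\mathfrak{b}\in\mathcal{B}$. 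I would first extract from (i)--(iii) the structural fact that $V$ is an \emph{irreducible} $H$-module $V_\mu$ and $X$ is the closed $H$-orbit in $\PP(V_\mu)$, i.e.\ a generalised flag variety $H/P$ in its Borel--Weil embedding: nondegeneracy gives an injection $V^{*}\hookrightarrow H^{0}(X,\mathcal{O}_X(1))$ of $H$-modules and the latter is irreducible by Bott's theorem; moreover each component of $\mu$ is nonzero, so $\mathfrak{p}=\mathrm{Lie}(P)$ contains no nonzero ideal of $\mathfrak{h}$.

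Assume for contradiction $\mathfrak{h}\cap\mathfrak{b}\neq 0$ for all $\mathfrak{b}$, and consider the incidence variety
\[
T=\bigl\{(\mathfrak{b},[\xi])\in\mathcal{B}\times\PP(\mathfrak{h})\ :\ 0\neq\xi\in\mathfrak{b}\bigr\}.
\]
Its first projection is surjective with nonempty fibres, so $\dim T\ge\dim\mathcal{B}=\binom{N+1}{2}$; the fibre of the second projection over $[\xi]$ is the variety $\mathcal{B}_\xi$ of Borel subalgebras of $\mathfrak{g}$ containing $\xi$, of dimension $\tfrac12\bigl(\dim\mathfrak{z}_{\mathfrak{g}}(\xi)-N\bigr)$. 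Hence
\[
\binom{N+1}{2}\ \le\ (\dim\mathfrak{h}-1)+\tfrac12\Bigl(\max_{0\neq\xi\in\mathfrak{h}}\dim\mathfrak{z}_{\mathfrak{g}}(\xi)-N\Bigr),
\]
which rearranges to $2\dim\mathfrak{h}+c\ge\dim\mathfrak{g}+2$ with $c:=\max_{0\neq\xi\in\mathfrak{h}}\dim\mathfrak{z}_{\mathfrak{g}}(\xi)$. It therefore suffices to prove the inequality $2\dim\mathfrak{h}+c<\dim\mathfrak{g}+2$.

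The heart of the matter is the bound on $c$, i.e.\ the claim that no nonzero element of $\mathfrak{h}$ has an exceptionally large centraliser in $\mathfrak{g}=\mathfrak{sl}(V_\mu)$; the extreme candidates are a highest root vector $e_\theta$ of $\mathfrak{h}$ and the semisimple elements on the walls of the Weyl chamber of $\mathfrak{h}$, and in both cases the eigenvalue pattern, respectively the Jordan type, of the element on $V_\mu$ is computable from the weight structure of $\mu$ (for $e_\theta$, from the decomposition of $V_\mu$ under the attached principal $\mathfrak{sl}_2$). Since $\dim\mathfrak{h}$ is fixed while $\dim V_\mu$ grows with $\mu$, the inequality only becomes easier as $\mu$ increases, so one may take $\mu$ minimal for the given $P$ (a fundamental weight when $P$ is maximal), reducing to finitely many families: quadrics, Grassmannians, spinor/Lagrangian/orthogonal Grassmannians, the two exceptional minuscule varieties, Veronese and multi-Segre embeddings as in Examples~\ref{quadric}--\ref{multiSegreVeronese}, and products --- each to be checked by an explicit count of $\dim\mathfrak{h}$, $c$, $\dim\mathfrak{g}$. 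A parallel, more elementary route copies the examples verbatim: a generic flag $F_\bullet$ has $K=\mathrm{Stab}_H(F_\bullet)$ fixing the $\deg X$ general points $\PP(F_{N+1-\dim X})\cap X$ of $H/P$, so $\mathrm{Lie}(K)$ sits inside an intersection of $\deg X$ general conjugates of $\mathfrak{p}$, which vanishes once that number exceeds an integer $m_0(H,P)$ (two general parabolics meet in a Levi and each further one cuts a Levi of that Levi, so $m_0$ is of the order of the semisimple rank of $H$); as $K\to\mathrm{Aut}(X)$ has finite kernel by nondegeneracy, this proves (iv) whenever $\deg X\ge m_0(H,P)$.

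The main obstacle is the quantitative core shared by the two routes: a \emph{uniform} control, valid for every semisimple $H$, of how degenerate the minimal nonzero orbit of $\mathfrak{h}$ can become inside $\mathfrak{sl}(V_\mu)$ --- equivalently a good lower bound for $\deg(H/P)$ in its minimal embedding against $m_0(H,P)$. The inequality is genuinely tight on small minuscule embeddings (it is exactly what excludes $X=\PP^N$, and it is near equality for the plane conic), so the hypotheses ``nondegenerate'' and ``proper'' are used essentially; a case-free proof seems to require a cleaner form of the underlying $\mathfrak{sl}_2$-branching estimate than I currently have, while the family-by-family verification above should in principle go through.
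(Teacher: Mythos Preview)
The paper does not prove this statement; it is explicitly left as an open conjecture, with the sole remark that a proof would ``most likely \ldots\ require a complete classification of quasihomogeneous varieties.'' There is therefore no paper proof to compare yours against.

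Your proposal is honestly labelled a plan rather than a proof, and the steps you do carry out are correct. The reformulation of (iv) as the existence of a Borel subalgebra of $\mathfrak{g}$ with $\mathfrak{b}\cap\mathfrak{h}=0$ is right, and the incidence-variety count leading to $2\dim\mathfrak{h}+c\ge\dim\mathfrak{g}+2$ under the contrary hypothesis is valid (the Springer-fibre dimension formula $\dim\mathcal{B}_\xi=\tfrac12(\dim\mathfrak{z}_{\mathfrak{g}}(\xi)-N)$ does hold for arbitrary $\xi$, via Jordan decomposition). Your extraction, from nondegeneracy and Borel--Weil, that $V$ must be an \emph{irreducible} $H$-module is a genuine structural input that the paper does not isolate. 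Your observation that $X=\PP^N$ is a literal counterexample to the conjecture as phrased is also correct; the paper tacitly excludes this degenerate case.

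Where you say the argument stalls --- a uniform bound on $c=\max_{0\ne\xi\in\mathfrak{h}}\dim\mathfrak{z}_{\mathfrak{g}}(\xi)$, or dually a lower bound on $\deg(H/P)$ against the number of general conjugates of $\mathfrak{p}$ needed to intersect down to zero --- is precisely the point at which one is forced back into a case-by-case classification of the pairs $(H,V_\mu)$, which is exactly what the paper predicted. So your approach is more developed than the paper's one-line comment, but ultimately lands in the same place. One small caution on the second route: the heuristic that each further general conjugate of $P$ cuts the running intersection down to a Levi of the previous step is correct for Borels but not literally true for arbitrary parabolics, so your constant $m_0(H,P)$ would need a more careful definition if you pursue that line.
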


Most likely, proving such conjecture would require a complete classification of quasihomogeneous varieties.

From now on, we will denote by $H$ the identity component of the stabilizer $Stab(X)$ of a homogeneous variety $X$.

\begin{remark}
Our aim is to study the quotient $G/Stab(X)$ as a family of the translates of $X$. In the cases we have left off because $Stab(X)/Z(G)$ was not connected (such as quadric hypersurfaces when $N$ is odd and Segre embeddings with identical components), we can instead consider the quotient $G/[Z(G)\cdot(Stab(X)_0)]$, and everything that follows will apply to that quotient too. Many properties of $G/Stab(X)$ can be then deduced from properties of $G/[Z(G)\cdot(Stab(X)_0)]$, because it is just a finit cover of it. We are not anyways going to focus our attention to any of these cases.
\end{remark}

\subsection{The quotient $G/H$}

Let us focus our attention on $G/Stab(X)$. Notice that it is isomoprhic to $G_a/H_a$ (where $H_a$ is the adjoint form of $H$) because  $$Stab(X)/Z(G)=Z(G)\cdot H/Z(G)\cong H/(Z(G)\cap H),$$
and the center of $H$ is contained into the center of $G$. The quotient $G/Stab(X)$ still has a ``left'' action of $G$. We can then describe its coordinate ring in the following way, that follows from the algebraic Peter-Weyl theorem.

$$\C[G/Stab(X)]=\C[G]^{Stab(X)}=\bigoplus_{\lambda\in\Lambda^+_a}V_\lambda\otimes (V_{\lambda}^*)^{Stab(X)}$$
where the action of $G$ is on the left coordinates; only weights in $\Lambda_a$ will appear, because $Stab(X)$ contains the center $Z(G)$, and those are the only weighs for which we have vectors invariant for $Z(G)$. Because of this, we will instead consider the ring of semiinvariant functions 
$$\C[G]^{(Stab(X))}=\{f\in\C[G] : \exists \chi:Stab(X)\to\C^*\ |\ h\cdot f=\chi(h)f\ \forall h\in Stab(X)\}.$$
This ring can also be interpreted as the Cox ring of $G/Stab(X)$, and this is the main reason we are interested in this ring rather than the previous one. Because of the fact that $H$ is semisimple (and hence it has no nontrivial character) and $Stab(X)/H$ is a finite group, we have
$$\C[G]^{(Stab(X))}=\C[G]^H\cong \C[G/H].$$
We will then focus our attention more on the quotient $G/H$ (that will be a finite cover of $G/Stab(X)$).
We will now follow very closely Section~5 of \cite{timashev_book}.

\begin{definition}
We will denote by $\Lambda^+(G/H)$ the set of dominant weights $\lambda\in\Lambda$ of $G$ for which $(V_{\lambda}^*)^H\neq 0$, and by $\Lambda(G/H)$ its $\Z$-span in $\Lambda$. We will call the \textbf{rank} of $G/H$ the rank $r(G/H)$ of $\Lambda(G/H)$. We will call the \textbf{complexity} of $G/H$ the smallest codimension $c(G/H)$ of a $B$-orbit in $G/H$.
\end{definition}

Notice that $\Lambda^+(G/H)$ is a semigroup by Lemma~\ref{simplelemma}, and it is finitely generated because of Proposition~5.15 on \cite{timashev_book}. The rank and the complexity are very important invariants for the geometry of the homogeneous space $G/H$; in simple words, the rank tells about how many representations of $G$ have $H$-invariants, and the complexity about how large this invariant spaces are. We have in fact the following, that basically follows from Theorem~5.16 of \cite{timashev_book}.

\begin{proposition}\label{growth}
For every $\lambda\in\Lambda^+$, then $dim(V_{k\lambda}^H)=O(k^{c(G/H)})$. If $\lambda$ is in the interior of the cone generated by $\Lambda^+(G/H)$, then the complexity $c(G/H)$ is the smallest real number $c$ such that $dim(V_{k\lambda}^H)=O(k^c)$.
\end{proposition}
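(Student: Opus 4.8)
The plan is to translate the statement into a fact about the Hilbert function of the algebra of $U$-invariants on $G/H$, which is essentially the content of Theorem~5.16 of \cite{timashev_book}. First I would set $A=\C[G/H]^U$. Since $H$ is reductive, $G/H$ is affine, and the algebraic Peter--Weyl decomposition reads $\C[G/H]=\bigoplus_{\lambda\in\Lambda^+(G/H)}V_\lambda\otimes(V_\lambda^*)^H$ as a $G$-module, with $G$ acting on the $V_\lambda$ factor; passing to $U$-invariants picks out the highest-weight line of each $V_\lambda$, so $A=\bigoplus_{\lambda\in\Lambda^+(G/H)}A_\lambda$ is a $\Lambda^+(G/H)$-graded algebra with $\dim A_\lambda=\dim(V_\lambda^*)^H=\dim V_\lambda^H$, the last equality holding because over the reductive group $H$ invariants and coinvariants agree. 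By Proposition~5.15 of \cite{timashev_book}, $A$ is a finitely generated integral domain, so the Proposition becomes a statement about the growth of $\dim A_{k\lambda}$ along a ray.

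Next I would compute the Krull dimension of $A$. Since $U$ is unipotent, $\operatorname{Quot}(A)=\C(G/H)^U$, so $\dim A=\operatorname{trdeg}_\C\C(G/H)^U$. The residual torus $T=B/U$ acts on $A$ compatibly with the grading, and the weights that occur are exactly the elements of $\Lambda^+(G/H)$, which span the rank-$r(G/H)$ lattice $\Lambda(G/H)$; hence the generic $T$-orbit on $\operatorname{Spec}A$ has dimension $r(G/H)$. Combining this with $\C(G/H)^B=(\C(G/H)^U)^T$ gives $\operatorname{trdeg}_\C\C(G/H)^B=\dim A-r(G/H)$, and since by definition $c(G/H)$ is the codimension of a generic $B$-orbit in $G/H$, that is $\operatorname{trdeg}_\C\C(G/H)^B$, we obtain $\dim A=c(G/H)+r(G/H)$.

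To conclude I would fix $\lambda\in\Lambda^+$ and split into cases. If $\lambda$ does not lie in the cone $\R^{\geq0}\Lambda^+(G/H)$, then $k\lambda\notin\Lambda^+(G/H)$ for every $k\geq1$, so $V_{k\lambda}^H=0$ and the estimate is trivial. If $\lambda$ lies in the cone, consider the $\N$-graded subalgebra $A^{(\lambda)}=\bigoplus_{k\geq0}A_{k\lambda}$: it is a finitely generated graded domain, and $\operatorname{Proj}A^{(\lambda)}$ is the G.I.T.\ quotient $\operatorname{Spec}A/\!\!/_\lambda T$ with respect to the character $\lambda$. For $\lambda$ in the interior of the cone the $\lambda$-stable locus is dense and this quotient has dimension $\dim A-r(G/H)=c(G/H)$; for $\lambda$ on the boundary its dimension is at most $c(G/H)$. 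Since $\dim A_{k\lambda}=h^0\big(\operatorname{Proj}A^{(\lambda)},\OO(k)\big)$ for $k$ large (after replacing $A^{(\lambda)}$ by a Veronese subalgebra to obtain an ample polarization), asymptotic Riemann--Roch gives $\dim V_{k\lambda}^H=\dim A_{k\lambda}=O\!\big(k^{c(G/H)}\big)$, with the growth exponent equal to $c(G/H)$ precisely when $\lambda$ is interior. This establishes both assertions.

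The main obstacle is the bookkeeping around $\operatorname{Proj}A^{(\lambda)}$: the algebra $A^{(\lambda)}$ need not be generated in degree one nor be normal, so one should either pass to a Veronese subalgebra to get an honest very ample polarization, or argue more elementarily that the Hilbert function of a finitely generated $\N$-graded domain of Krull dimension $d$ agrees for large argument with a polynomial of degree $d-1$; one must also verify that for $\lambda$ interior the G.I.T.\ quotient really attains dimension $c(G/H)$, i.e.\ that the stable locus for an interior character of a torus action is nonempty. Granting the identifications $\dim A_\lambda=\dim V_\lambda^H$ and $\dim A=c(G/H)+r(G/H)$, the remainder is exactly Timashev's Theorem~5.16, which could alternatively be invoked verbatim.
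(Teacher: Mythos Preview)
Your argument is correct, but it takes a genuinely different route from the paper's for the sharpness assertion. The paper simply invokes Timashev's Theorem~5.16 as a black box for the upper bound \emph{and} for the existence of a single weight $\lambda_0$ at which the growth exponent equals $c(G/H)$; the new content in the paper is only the transfer from $\lambda_0$ to an arbitrary interior $\lambda$, and this is done via the elementary superadditivity estimate of Lemma~\ref{simplelemma}: one finds $h$ with $h\lambda-\lambda_0\in\Lambda^+(G/H)$ (possible exactly because $\lambda$ is interior and the semigroup is finitely generated) and concludes $\dim V_{kh\lambda}^H\geq\dim V_{k\lambda_0}^H$. Your approach instead bypasses Lemma~\ref{simplelemma} entirely and reproves the sharp part by identifying $\operatorname{Proj}A^{(\lambda)}$ with the torus GIT quotient $\operatorname{Spec}A/\!\!/_\lambda T$ and showing it has dimension exactly $c(G/H)$ when $\lambda$ is interior; Hilbert-polynomial asymptotics then give the growth rate directly. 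This is more self-contained and conceptually uniform (the same GIT picture handles both the upper bound and the sharpness), at the cost of importing somewhat heavier machinery and the bookkeeping you flag in your last paragraph. One small correction: your closing remark that ``the remainder is exactly Timashev's Theorem~5.16'' undersells what you have done---according to the paper, that theorem supplies only \emph{one} sharp $\lambda_0$, whereas your GIT argument already yields sharpness for every interior $\lambda$, which is precisely the addition the paper needs Lemma~\ref{simplelemma} for.
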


\begin{proof}
The only addition to the proof of Theorem~5.16 of \cite{timashev_book} is the fact that for any $\lambda$ in the interior, the bound is sharp; Theorem~5.16 of \cite{timashev_book} only shows \textit{one} such dominant weight, that we will call $\lambda_0$. Given any dominant weight $\lambda$ in the interior of the cone, suppose we can find a nonzero space $V_{h\lambda-\lambda_0}^H$ for a given $h>0$. Then, we would get $dim(V_{h\lambda}^H)\geq dim(V_{\lambda_0}^H)$ by Lemma~\ref{simplelemma}, and equally $dim(V_{kh\lambda}^H)\geq dim(V_{k\lambda_0}^H)$ because $V_{k(h\lambda-\lambda_0)}^H\neq 0$ as well, that gives $dim(V_{k\lambda}^H)=O((k/h)^c)=O(k^c)$. The fact that there is a nonzero $V_{h\lambda-\lambda_0}^H$ just follows from the fact that $\lambda$ lies in the interior of the cone, and the fact that $\Lambda^+(G/H)$ is finitely generated as a semigroup. In fact, we can write $\lambda$ as a combination of the all generators of $\Lambda^+(G/H)$ with positive rational coefficients (if we couldn't, $\lambda$ would have to lie on the boundary); hence, a positive multiple $h\lambda$ of this will have integer coefficients that would dominate those of $\lambda_0$, so that $h\lambda-\lambda_0$ also belong to $\Lambda^+(G/H)$.
\end{proof}

The case of complexity zero is called \textit{spherical}, and much is known in this case (see for instance \cite{brion_enum}, \cite{timashev_book} for a general theory for spherical varieties). Quadric hypersurfaces are examples of homogeneous varieties with complexity zero, as we are about to see. In fact, complexity and rank are easily given by property (iv) in Definition~\ref{homogdef}.

\begin{lemma}\label{complexitylemma}
The rank of $G/H$ is $N$, while its complexity is equal to
$$dim(G)-dim(H)-dim(B)=dim(G/H)-N(N+3)/2$$
\end{lemma}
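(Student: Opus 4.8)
The plan is to read off both numbers directly from the geometry of the bilateral $B \times H$ action on $G$, using the identification of $G/H$ with (an open subset of) what we already know about $\Omega_N$ and $G/B$.

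For the complexity: by definition $c(G/H)$ is the minimal codimension of a $B$-orbit in $G/H$, equivalently (passing to $G$ and quotienting by $H$) the codimension of a maximal-dimensional $B \times H$-orbit in $G$. Now property (iv) of Definition~\ref{homogdef} says precisely that there is a point of $G$ whose stabilizer in $B \times H$ is finite; hence the orbit of that point has dimension exactly $\dim(B \times H) = \dim B + \dim H$, and this is the maximal possible orbit dimension (no orbit can exceed the dimension of the acting group). Therefore the generic $B \times H$-orbit in $G$ has codimension $\dim G - \dim B - \dim H$, which is $c(G/H)$. The second equality in the statement is then just bookkeeping: $\dim(G/H) = \dim G - \dim H$, and $\dim B = \dim G/B + \dim T = N(N+1)/2 + N = N(N+3)/2$ for $G = SL_{N+1}$, so $\dim G - \dim B - \dim H = \dim(G/H) - N(N+3)/2$.

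For the rank: we must show $\Lambda(G/H)$ has rank $N$, i.e.\ it is finite-index in the full weight lattice $\Lambda$ of $G$. One clean way is to exhibit, for each fundamental weight $\omega_i$ (or at least enough dominant weights to span a finite-index sublattice), a nonzero $H$-invariant in $V_{\omega_i}^*$ — but more robustly, I would argue via the Cox-ring / section description. Since $\Omega_N$ contains $G_a$ as a dense orbit and $\C[G]^H = \C[G/H]$, the semi-invariant functions decompose as $\bigoplus_{\lambda} V_\lambda \otimes (V_\lambda^*)^H$, and the weights $\lambda$ appearing are exactly $\Lambda^+(G/H)$. So the rank equals the rank of the weight monoid of $G/H$. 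Now again property (iv): a finite generic stabilizer for $B \times H$ acting on $G$ means the $B \times H$-invariant rational functions on $G$ are constant only in the ``diagonal'' directions, and more precisely the $(B \times H)$-eigenfunctions (eigen for the $B$-factor under a character $\lambda$, $H$-invariant) have weights $\lambda$ spanning a lattice of rank equal to $\dim G - \dim(\text{generic orbit}) $ plus the torus rank... — the cleanest statement is: the rank of $G/H$ equals $\mathrm{rk}\,\Lambda$ minus the dimension of a generic $T$-orbit on $(B^-\backslash G)/H \cong U^-\backslash G/H$; since generic $B \times H$-orbits are dense-modulo-finite and $\dim B = \dim U + N$, one checks the torus acts with trivial generic stabilizer on the relevant quotient, giving full rank $N$. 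I would phrase this using Proposition~\ref{affinepatch}: on the affine chart $A \cong \mathbb{A}^{N^2+2N}$ the boundary divisors $\Delta_i$ are coordinate hyperplanes, the classes $\delta_i = \alpha_i$ together with the restriction of functions show all of $\Lambda$ is realized, and restricting sections to the $H$-invariant part preserves this because (iv) forces the $H$-action not to collapse any of the $N$ independent ``rank'' directions.

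**The main obstacle** will be the rank computation, specifically making rigorous the claim that $H$-invariance does not drop the rank below $N$. The complexity half is essentially immediate from (iv) once one grants that a finite stabilizer means a top-dimensional orbit — I expect the author's proof to dispatch it in two lines. For the rank, the subtle point is ruling out that $(V_\lambda^*)^H$ vanishes along some sublattice direction; I would either cite the general fact (Timashev, around Prop.~5.15–5.16, already invoked in the excerpt) that $\mathrm{rk}(G/H) + c(G/H) + \dim H = \dim G$ when the generic $B\times H$-stabilizer is finite — wait, that's not quite an identity in general — or, more safely, combine: (a) $c(G/H) = \dim G - \dim B - \dim H$ from above, and (b) the general inequality $c(G/H) + r(G/H) \le \dim(G/H)$ which here would only give $r(G/H) \le N$, and then (c) a separate argument that $r(G/H) \ge N$, e.g.\ by noting $\Lambda^+(G/H)$ must span over $\mathbb{Q}$ a cone of full dimension $N$ because the $N$ boundary divisors $\Delta_1,\dots,\Delta_N$ of $\Omega_N$ survive the GIT quotient (their classes being the simple roots $\alpha_i$, which span $\Lambda$ over $\mathbb{Q}$), forcing the weight monoid to be full-dimensional. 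Tying (iv) precisely to statement (c) is where the real content sits, and I would expect to lean on Proposition~\ref{affinepatch} and property (iii)-(iv) of $\Omega_N$ to make it airtight.
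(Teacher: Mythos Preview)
Your complexity argument is correct and is exactly what the paper does: property (iv) gives a $B\times H$-orbit of dimension $\dim B + \dim H$ in $G$, hence a $B$-orbit of dimension $\dim B$ in $G/H$, and the complexity drops out.

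Your rank argument, however, never lands. You cycle through several ideas --- exhibiting invariants in each $V_{\omega_i}^*$, Cox-ring heuristics, a torus-stabilizer count, and finally the survival of the boundary divisors $\Delta_i$ under the GIT quotient --- without committing to any of them, and each has problems. The boundary-divisor route is circular in the paper's logic: the lemma is proved \emph{before} the GIT quotients $M_L$ are even defined, and Proposition~\ref{4points}(ii) (divisors surviving) already assumes $L$ is $H$-nef, which is a statement about $\Lambda^+(G/H)$. Your inequality $c(G/H)+r(G/H)\le \dim(G/H)$ is true but far from sharp here (the correct value of $c+r$ is $\dim(G/H)-\dim U$), so it cannot by itself give $r\ge N$.

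The paper's argument is a one-liner you almost reached when you mentioned Timashev. The relevant fact is Proposition~5.6 of \cite{timashev_book}: the rank equals the minimal codimension of a $U$-orbit minus the complexity. Since property (iv) gives a point with finite stabilizer in $B\times H$, the same point has finite stabilizer in $U\times H\subset B\times H$, so there is a $U$-orbit in $G/H$ of dimension $\dim U$. Then
\[
r(G/H)=\bigl(\dim(G/H)-\dim U\bigr)-\bigl(\dim(G/H)-\dim B\bigr)=\dim B-\dim U=N.
\]
This is the missing idea: pass from $B$ to $U$ and use the standard rank formula, rather than trying to produce invariants directly.
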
 

\begin{proof}
From property (iv) of Definition~\ref{homogdef}, we have an $B\times H$-orbit on $G$ of dimension $dim(B)+dim(H)$. This implies that there is a $B$-orbit of dimension $dim(B)$ in $G/H$, that is equivalent to the claim about the complexity. About the rank, Proposition~5.6 of \cite{timashev_book} tells us that the rank is the difference between the minimal codimension of an $U$-orbit and the complexity. But there is an $U$-orbit in $G/H$ of dimension $dim(U)$ (because there is for $B$), hence we have
$$r(G/H)=dim(G)-dim(H)-dim(U)-c(G/H)=dim(B)-dim(U)=N$$
as needed.
\end{proof}

So, the semigroup $\Lambda^+(G/H)$ spans (as a group) a finite index subgroup of the weight lattice $\Lambda$ of $G$. The question whether $\Lambda^+(G/H)$ could span (as a semigroup) the entire $\Lambda^+$ up to a finite index is an interesting one (and has consequences in what will follow). Let us give a definition about it.

\begin{definition}\label{specialdef}
We will call $X$ \textbf{special} if it quasihomogeneous as of Definition~\ref{homogdef}, and it satisfies one of the following equivalent conditions:
\begin{itemize}
\item each dominant weight $\lambda$ of $G$ has a positive multiple $k\lambda$ such that $(V_{k\lambda}^*)^H\neq 0$, for $H$ the stabilizer of $X$ in $G$;
\item the action of $H$ on each of $\PP^N,\GG(1,N),\ldots,\GG(N-1,N)$ has an invariant hypersurface.
\end{itemize}
\end{definition}

The equivalence of the two definition follows easily from Lemma~\ref{simplelemma}, since the spaces of sections of line bundles on $\GG(i-1,N)$ are just given by the representations $V_{k\omega_{i}}$.

Unfortunately, not all homogeneous varieties are special, as shown by next example.

\begin{example}
Let $X\subset \PP^5$ be the threefold obtained after the Segre embedding of $\PP^1\times\PP^2$, that is a quasihomogeneous variety as we have see in Example~\ref{multiSegreVeronese}. Then $H\cong SL_2\times SL_3$, and the only two $H$-orbits in $\PP^5$ are $X$ and its complement $\PP^5\setminus X$, so there is not an invariant hypersurface, that means that there is no invariant vector in any $V_{k\omega_1}$ for each $k>0$. To see this, notice that points in $\PP^5$ are points of
$$\PP(H^0(\OO_{\PP^1\times\PP^2}(1,1))^*)\cong\PP(H^0(\OO_{\PP^1}(1))^*\otimes H^0(\OO_{\PP^2}(1))^*)\cong$$
$$\cong\PP(Hom(H^0(\OO_{\PP^1}(1)),H^0(\OO_{\PP^2}(1))^*))$$

whose stratification in $SL_2\times SL_3$-orbits is just given by the rank, that can be either 1 or 2, giving us respectively $X$ and $\PP^5\setminus X$. This argument can be in fact generalized to any Segre embedding of any product $\PP^a\times\PP^b$ for $a\neq b$.
\end{example}

The next lemma gives a big help in proving that homogeneous varieties are special.

\begin{lemma}\label{dimension}
Let $H$ be a reductive group acting on an irreducible projective variety $Y$, such that $dim(H)\leq dim(Y)$. Then $Y$ has an $H$-invariant hypersurface.
\end{lemma}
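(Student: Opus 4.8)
The plan is to produce an $H$-invariant effective divisor on $Y$ by an averaging / dimension-count argument, using that $H$ is reductive so that $H$-invariants in spaces of global sections behave well. First I would reduce to the case where $Y$ is normal and projectively embedded: replace $Y$ by its normalization (which carries a lifted $H$-action and an $H$-linearized ample line bundle, and whose invariant hypersurfaces push forward to invariant hypersurfaces on $Y$), so we may assume $Y \hookrightarrow \PP^m$ is $H$-equivariant for some linearization of $\OO_Y(1)$. Since $H$ is reductive, for each $k$ the space $H^0(Y,\OO_Y(k))^H$ is a direct summand (the isotypic component of the trivial representation), and a nonzero element of it cuts out an $H$-invariant hypersurface. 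So it suffices to show $H^0(Y,\OO_Y(k))^H \neq 0$ for some $k>0$.

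The heart of the matter is the dimension estimate. Consider the incidence between $Y$ and its dual-style object: more precisely, fix a general point $y_0 \in Y$ and its $H$-orbit $H\cdot y_0$, which has dimension at most $\dim H \leq \dim Y$, hence is a proper closed (after taking closure) subset, or use that a generic point has positive-dimensional "room". The cleanest route: look at the action of $H$ on the very ample linear system $|\OO_Y(k)|$ for $k \gg 0$. The dimension of $H^0(Y,\OO_Y(k))$ grows like $k^{\dim Y}$, while the space of sections vanishing on a fixed $H$-orbit closure of dimension $\leq \dim H \leq \dim Y$ — no, that is not immediately a contradiction. Instead I would argue as follows: let $d = \dim Y$; the group $H$ acts on $\PP(H^0(Y,\OO_Y(k))^\vee) = \PP^{M_k}$ with $M_k \sim c\, k^d$, and the divisors in $|\OO_Y(k)|$ are parametrized by $\PP^{M_k}$ on which $H$ also acts. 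If there were no $H$-fixed point in $\PP^{M_k}$ for any $k$, then in particular $H$ has no fixed point in $\PP^{M_k}$; but for $k$ large, compare with the orbit map: a general divisor $D \in |\OO_Y(k)|$ has stabilizer in $H$ of dimension $\geq \dim H - M_k$ is false for large $k$, so this naive count goes the wrong way. The correct mechanism is Lemma-type: since $\dim H \le \dim Y$, a general hyperplane section (in a sufficiently positive embedding) has the property that the generic $H$-orbit on $Y$ has dimension $< \dim Y$ only when $\dim H < \dim Y$; when $\dim H = \dim Y$ one must use that even a dense orbit has complement of pure codimension one or else $Y$ is itself a single orbit, in which case $Y$ is projective homogeneous and carries an $H$-invariant member of $|\OO_Y(k)|$ by the Borel fixed point theorem applied to a Borel of $H$ acting on $\PP^{M_k}$ — wait, Borel fixed point gives a fixed point for the Borel, hence an eigenvector, which is $H$-semiinvariant, hence (as $H$ is semisimple with no characters, or reductive with the section spanning a one-dimensional invariant subspace forcing triviality on the semisimple part) $H$-invariant up to a character; one then passes to a power of $\OO_Y(1)$ to kill the character.

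So the key steps, in order, are: (1) normalize and equivariantly embed, reducing to showing $H^0(Y,\OO_Y(k))^H \neq 0$; (2) if $Y$ is a single $H$-orbit, conclude directly via the Borel fixed point theorem on $\PP(H^0(Y,\OO_Y(k))^\vee)$ and a character-clearing argument using reductivity of $H$; (3) if $Y$ is not a single orbit, take $Z \subsetneq Y$ the complement of the (union of the) open $H$-orbits, which is a nonempty proper $H$-invariant closed subset; pick an irreducible component $Z_0$ of $Z$ of maximal dimension, and its $H$-saturation is $Z_0$ itself. If $\dim Z_0 = \dim Y - 1$ we are done, as the sum of the components of $Z$ in its codimension-one part gives an $H$-invariant hypersurface (a power of its ideal sheaf produces the section). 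If $\dim Z_0 \le \dim Y - 2$, recurse: $Z_0$ is an $H$-invariant projective variety, still with $\dim H \le \dim Y$, but now we need $\dim H \le \dim Z_0$, which may fail — so instead one should run a more careful induction on $\dim Y$, slicing $Y$ by a general $H$-translate-averaged hyperplane is not available; rather, use that $Y \setminus (\text{open orbits})$ has codimension one unless $Y$ minus its finitely many maximal orbits is empty, i.e. $Y$ is a finite union of orbits, in which case the lower-dimensional orbit closures are $H$-invariant and by induction on dimension one of them, or $Y$ itself via step (2), yields the invariant divisor; alternatively, the closure of any non-open orbit is an $H$-invariant proper closed subset and one takes a minimal such, which is then $H$-homogeneous projective, apply step (2) to get an invariant divisor on it, but that is not a divisor on $Y$.

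The main obstacle is exactly this gluing issue when the non-open locus has codimension $\ge 2$: the honest fix is to invoke that $Y$ has only finitely many $H$-orbits is \emph{not} assumed, so the right argument is the following clean one — consider the quotient morphism for the $H$-action on the affine cone, or better: by a theorem on reductive group actions (Rosenlicht), there is a dense open $H$-invariant $U \subseteq Y$ with a geometric quotient $U \to U/H$, and $\dim U/H = \dim Y - \dim(\text{generic orbit}) \ge \dim Y - \dim H \ge 0$. If $\dim U/H \ge 1$, pull back a hypersurface from a projective compactification of $U/H$ (or just a non-constant invariant rational function's polar divisor) to get an $H$-invariant divisor on $Y$. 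If $\dim U/H = 0$, then the generic orbit is dense and equals $\dim Y = \dim H$, so $Y$ is almost-homogeneous with a dense orbit; its boundary $Y \setminus (\text{dense orbit})$ is a nonempty $H$-invariant closed proper subset, and \emph{if} it has a codimension-one component we are done, while if it has pure codimension $\ge 2$ then the dense orbit $O$ satisfies $\mathrm{Pic}$-type conditions forcing, again by Rosenlicht / the structure of almost-homogeneous varieties, a contradiction with projectivity of $Y$ unless $Y = O$ is homogeneous projective, landing us back in step (2). I would write the final version around Rosenlicht's theorem on rational invariants plus the Borel fixed point theorem for the homogeneous case, and I expect the bookkeeping separating "dense orbit with codimension-$\ge 2$ boundary" from the genuinely homogeneous case to be the delicate point to get right.
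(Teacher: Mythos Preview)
Your proposal eventually converges on the right two-case split, and that split is exactly what the paper does --- but you arrive there only after several false starts, and one of your subarguments is genuinely wrong.

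The incorrect piece is your ``step (2)'' via the Borel fixed point theorem. A Borel-fixed line in $\PP(H^0(Y,\OO_Y(k)))$ is a $B$-eigenvector, i.e.\ a highest weight vector; it is \emph{not} $H$-semiinvariant unless that highest weight happens to be a character of $H$, and there is no reason it should be. Passing to a power of $\OO_Y(1)$ does not help: it scales the weight but does not make it trivial on the semisimple part. Fortunately this step is never needed, because the fully homogeneous case $Y=H/F$ with $F$ finite cannot occur: $H/F$ is affine, hence cannot be positive-dimensional projective.

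The paper's proof is the clean version of what you sketch at the end. Case 1: there is a dense orbit $U$. Since $\dim H\le\dim Y$ and the orbit has dimension $\dim Y$, the stabilizer is finite, so $U\cong H/F$. Then $H$ reductive implies $\mathrm{Pic}(H)$ (hence $\mathrm{Pic}(U)$) is torsion; if $Y\setminus U$ had codimension $\ge 2$ this would force $\mathrm{Pic}(Y)$ torsion, contradicting projectivity. So $Y\setminus U$ has codimension $1$. Case 2: no dense orbit. If the generic orbit has codimension $d\ge 1$, take a general subvariety $W$ of dimension $d-1$ and form $\overline{H\cdot W}$; this is a proper $H$-invariant hypersurface. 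This is equivalent to your Rosenlicht argument (pull back a hypersurface from the rational quotient), just phrased more concretely.

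So: drop the section-counting and Borel fixed point detours, keep the two cases, and make the ``Pic-type conditions'' precise as above.
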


\begin{proof}
Let us split the situation in two cases: the case of where there is a dense orbit in $Y$, and the case where there isn't. If there is a dense orbit $U$, then we need to have $dim(H)=dim(Y)$, and $U\cong H/F$ where $F$ is a finite subgroup, stabilizing a point of $U$; notice now that $H$ has torsion Picard group, and so will $U$, and hence a simple intersection theory argument shows that its complement of $U$ in $Y$ has to have codimension 1. In case the smaller codimension for $H$-orbits is $d$, we can then just consider a general subvariety of $W\subset Y$ of dimension $d-1$, and consider the closure of its orbit $\overline{H\cdot W}$ to get an invariant hypersurface.
\end{proof}

In this way, we can prove that some homogeneous varieties are special.

\begin{corollary}\label{specialhomog}
The Veronese varieties $\nu_d(\PP^n)\subset \PP^{\binom{n+d}{d}-1}$ are special in the sense of Definition~\ref{specialdef} for $n=1,2$ and for any $d\geq 3$.
\end{corollary}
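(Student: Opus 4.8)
The plan is to establish the second of the two equivalent conditions in Definition~\ref{specialdef}: that $H$ acts with an invariant hypersurface on each of $\PP^N,\GG(1,N),\ldots,\GG(N-1,N)$. The only real input needed is Lemma~\ref{dimension}, applied simultaneously to all of these Grassmannians.

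First I would record the relevant dimension count. The Grassmannian $\GG(i-1,N)$ of $(i-1)$-planes in $\PP^N$ has dimension $i(N+1-i)$, and for $i\in\{1,\ldots,N\}$ one has $i(N+1-i)-N=(i-1)(N-i)\geq 0$, with equality exactly at $i=1$ and $i=N$. Hence every Grassmannian appearing in Definition~\ref{specialdef} has dimension at least $N$, the minimum $N$ being attained only by $\PP^N$ and by its dual. Since $H$ is semisimple, hence reductive, and each $\GG(i-1,N)$ is irreducible and projective, Lemma~\ref{dimension} produces an $H$-invariant hypersurface on every one of them as soon as $\dim H\leq N$.

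It then remains to verify $\dim H\leq N$ in the two asserted cases. For $n=1$, the variety $X=\nu_d(\PP^1)$ is the rational normal curve of degree $d$, so $N=d$ and $H$ is $SL_2$ or $PGL_2$ by Example~\ref{rnc}; thus $\dim H=3$, and $\dim H\leq N$ holds precisely for $d\geq 3$. For $n=2$, $X=\nu_d(\PP^2)$ has $H$ equal to $SL_3$ (or a central quotient of it) by Example~\ref{Veronese}, so $\dim H=8$, while $N=\binom{d+2}{2}-1\geq\binom{5}{2}-1=9$ for every $d\geq 3$; hence again $\dim H\leq N$. Combining this with the previous paragraph yields invariant hypersurfaces on all the Grassmannians in the list, which is exactly what being special requires.

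I do not expect a genuine obstacle here: the argument is a direct application of Lemma~\ref{dimension} together with an elementary dimension estimate. The only point that demands a little care is isolating the minimal dimension among the Grassmannians $\GG(i-1,N)$ — it is $N$, attained at the two extremes — and checking that the numerical bound $\dim H\leq N$ holds in exactly the stated range; it fails for $n=2,\,d=2$, where $N=5<8=\dim H$, consistent with that case being excluded from the corollary.
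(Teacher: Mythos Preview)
Your argument is correct and uses essentially the same idea as the paper's for the cases you treat: reduce to Lemma~\ref{dimension} by checking the numerical bound $\dim H\leq N$, using that every $\GG(i-1,N)$ has dimension at least $N$. However, you have read the hypothesis as the conjunction ``$n\in\{1,2\}$ with $d\geq 3$'', whereas the paper's proof treats it as a disjunction: the corollary is meant to assert specialness for $n=1,2$ (any $d\geq 2$), \emph{and also} for arbitrary $n$ whenever $d\geq 3$. Your closing remark that $(n,d)=(2,2)$ is ``excluded from the corollary'' confirms this mismatch of readings.

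For the $d\geq 3$ part the paper does exactly what you do, but for all $n$ at once, via the chain $N=\binom{n+d}{d}-1\geq\binom{n+3}{3}-1\geq(n+1)^2-1=\dim SL_{n+1}$. What your argument misses entirely are the cases $(n,d)=(1,2)$ and $(2,2)$, where the inequality $\dim H\leq N$ genuinely fails and Lemma~\ref{dimension} cannot be applied to $\PP^N$ or its dual. There the paper supplies the invariant hypersurfaces by hand: for the conic, $X$ itself in $\PP^2$ and the dual conic in $\PP^{2*}$; for the Veronese surface in $\PP^5$, Lemma~\ref{dimension} still covers $\GG(1,5),\GG(2,5),\GG(3,5)$, while for $\PP^5$ and $\GG(4,5)$ one uses the secant variety of $X$ and the locus of tangent hyperplanes. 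So your proof is fine for the narrower reading, but incomplete for what the paper actually establishes.
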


\begin{proof}
We have shown that Veronese varietes are homogeneous in Example~\ref{Veronese}, with $H$ being (possibily a quotient by a subgroup of the center of) $SL_{n+1}$.
For the case $d\geq 3$ and any $n\geq 1$, we have
$$dim(\GG(i,\binom{n+d}{d}-1))\geq \binom{n+d}{d}-1\geq \binom{n+3}{3}-1=$$
$$=(n+1)\frac{n^2+5n+6}{6}-1\geq (n+1)\frac{6n+6}{6}-1=(n+1)^2-1=dim SL_{n+1}$$
so that we can apply again Lemma~\ref{dimension} and get the invariant hypersurfaces.

The only two remaining cases are $(n,d)=(1,2)$ and $(n,d)=(2,2)$. In the former, $X$ is a smooth conic that is already an hypersurface, and the dual conic is the hypersurface we look for in $\GG(1,2)=\PP^{2*}$. If $(n,d)=(2,2)$, $X$ is the Veronese surface in $\PP^5$, and we need to find $SL_3$-invariant hypersurfaces in all Grassmannians $\GG(i,5)$ for $i=0,\ldots,4$; notice that we can apply Lemma~\ref{dimension} in all cases besides $\PP^5$ and $\GG(4,5)$, but in these cases we can consider the secant variety to $X$, that is well known to be an hypersurface in $\PP^5$, and the set of all hyperplanes that are tangent to $X$, that is an hypersurface in $\GG(4,5)$; both of these are clearly $SL_3$-invariant.
\end{proof}

For the remaining cases of $d=2$ and $n\geq 3$, it would again be enough to find invariant hypersurfaces in $\PP^N$ and $\GG(N-1,N)$, because all other cases are covered by Lemma~\ref{dimension}.

Other examples of homogeneous varieties that are special are quadric hypersurfaces (cf. \cite{CSV}),.

\subsection{Compactification(s)}

Let us consider now a homogeneous variety $X$, with identity component of its stabilizer $H$. We will now construct compactifications of the quotient $G/Stab(X)\cong G_a/H_a$ taking the G.I.T. quotient of $\Omega_N$ by $H$. The reference we will use for G.I.T. will be \cite{newstead}. We will consider the action of $H$ on $\Omega_N$ obtained restricting the action of $G\times G$ to the subgroup $\{0\}\times H$. By \cite{KKLV}, Theorem~2.4, for any choice of a line bundle $L$ on $\Omega_N$ there is a power of it that is $H$-linearizable (because a power of $L$ will be linearizable for the universal cover of $H$), and in a unique way because of $H$ does not have nontrivial characters. We are ready now to give our main definition. 

\begin{definition}\label{maindef}
Given a ($H$-linearized) line bundle $L$ on $\Omega_N$, we will call space of \textbf{complete homogeneous varieties} the G.I.T. quotient
$$M_L=\Omega_N/\!\!/_{_{\!\! L}}H=Proj\left(\bigoplus_{k\in\N} H^0(\Omega_N,L^{\otimes k})^H\right)$$
\end{definition}

The notation including the linearization $L$ is unfortunately necessary, because different linearizations could (and will) lead to different spaces. Let us start with a pivotal example. 

\begin{example}\label{pivotal}
Let us take $N=2$ and $X$ to be a smooth conic in $\PP^2$, so that $H$ is isomorphic to $SO_3\cong PGL_2\subset SL_3$. In this case, as we already mentioned, the homogeneous space $G_a/H_a=PGL_3/PGL_2$ is \textit{spherical}, and much is known about its compactifications and their geometric properties.

In this case, for any choice of $L$ ample, we obtain the well known space of complete conics. For $L$ multiple of $\eta_1$, on the other hand, we will get the more usual compactification $\PP^5$ of the space of conics, and for a multiple of $\eta_2$ the space of dual conics $\PP^{5*}$. The idea of looking at complete conics as G.I.T. quotient of the space of complete collineations appears in \cite{kannan}, and in some sense that paper was the very first inspiration for this work. This is also the main reason why we use the word ``complete'' in our definition.
\end{example}

We will now describe some properties of the spaces $M_L$, trying to follow in the next proposition properties i),ii),iii),iv) of $\Omega_N$. We will call $H$-nef a line bundle $L$ on $\Omega_N$ having a multiple $L^{\otimes k}$ with an $H$-invariant section not vanishing on any boundary component; equivalently, through the correspondence of $Pic(\Omega_N)$ with the weight lattice $\Lambda$, and Remark~\ref{vanishingboundary}, $L$ is $H$\textbf{-nef} if the corresponding weight has a multiple in $\Lambda^+(G/H)$; if $X$ is special as in Definition~\ref{specialdef}, all line bundles that are nef on $\Omega_3$ are $H$-nef. We will denote the unstable, semistable and stable loci of $\Omega_N$ by the $L$-linearized $H$-action respectively by $\Omega_N^{us}(L),\Omega_N^{ss}(L),\Omega_N^{s}(L)$, and by $\pi_L$ the quotient map $\pi_L:\Omega_N^{ss}(L)\to M_L$.

\begin{proposition}\label{4points}
Suppose $L$ is $H$-nef. Then
\begin{itemize}
\item[i)] $M_L$ is a projective variety of dimension $d=dim(G)-dim(H)$, with an action of $G$, and with an open dense orbit isomorphic to $G_a/H_a\cong G/Stab(X)$.
\item[ii)] The unstable locus $\Omega_N^{us}(L)$ does not contain any of the boundary divisors. Then, if $L$ is ample on $\Omega_N$, the irreducible components of codimension 1 in $M_L\setminus (U/H)$ are exactly the images by $\pi_L$ of the boundary divisors $\Delta_i\cap\Omega_N^{ss}(L)$ whose general point is stable.
\item[iii)] We have
$$Pic(M_L)\xrightarrow{\pi_L^*} Pic(\Omega_N^{ss}(L))\xleftarrow{restr}Pic(\Omega_N)$$
where the restriction is an isomorphism and $\pi_L^*$ is injective. If furthermore $L$ is ample on $\Omega_N$ and $\Omega_N^{ss}(L)=\Omega_N^{s}(L)$, then $M_L$ has at most finite quotient singularities, and $\pi_L^*$ has a finite index sublattice as image.
\item[iv)] Let $L_\lambda$ be a line bundle on $M_L$ corresponding to a weight $\lambda$ by the previous embedding. We have then
\begin{equation}\label{sectionsML}
H^0(M_L,L_\lambda)=\bigoplus_{\mu\in P_{\lambda}}V_{\mu}\times (V_{\mu}^*)^H
\end{equation}
where the sum is intended as a decomposition in $G$-irreducible representations, where $G$ acts on the left factor of every summand.
\end{itemize}
\end{proposition}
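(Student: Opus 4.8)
The plan is to deduce each of the four statements from the corresponding property of $\Omega_N$ listed in Section~\ref{sectCC}, together with standard G.I.T.\ facts from \cite{newstead}. For (i), I would first observe that since $L$ is $H$-nef, the ring $\bigoplus_k H^0(\Omega_N,L^{\otimes k})^H$ is nonzero in high degree, so $M_L$ is a genuine projective variety; its dimension is computed from the fact that the generic $H$-orbit in $\Omega_N$ meets the open dense orbit $U\cong G_a$ in a single $H_a$-coset (using property (i) of $\Omega_N$ and that $H_a$ acts freely on $G_a$ by right translation, so orbits have dimension $\dim H$). The residual left $G$-action on $\Omega_N$ commutes with the right $H$-action, hence descends to $M_L$, and the image of $U$ is the dense orbit $G_a/H_a$. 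One must check $U\subset\Omega_N^{ss}(L)$: because $L$ is $H$-nef, a power $L^{\otimes k}$ has an $H$-invariant section $s$ not vanishing on any $\Delta_i$, and such an $s$ is a semiinvariant function on $G_a$ under right translation which is nowhere zero on $G_a$ up to the boundary, so $U\subseteq \Omega_N^{ss}(L)$; combined with Remark~\ref{vanishingboundary} this pins down exactly which weights give nonvanishing invariant sections.

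For (ii), the key point is that $\Omega_N^{us}(L)$ is $G\times H$-stable (indeed $G\times G$-stable, since instability is detected by vanishing of all invariant sections and $G$ acts on the left commuting with $H$), and it cannot contain a boundary divisor $\Delta_i$ because, $L$ being $H$-nef, some $L^{\otimes k}$ has an $H$-invariant section non-vanishing on $\Delta_i$, which restricts to a nonzero invariant section on $\Delta_i$ and witnesses semistability of its general point. Then, when $L$ is ample, $\pi_L$ is a good quotient, and the codimension-one part of the complement $M_L\setminus(U/H)$ is the image under $\pi_L$ of the codimension-one part of $\Omega_N^{ss}(L)\setminus U$, which by property (ii) of $\Omega_N$ consists exactly of the $\Delta_i\cap\Omega_N^{ss}(L)$; the image has codimension one precisely for those divisors whose general point is stable (so that the quotient map is finite there rather than contracting), which is the stated description.

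For (iii), I would invoke the descent/Kempf-type results in G.I.T.: $\Omega_N$ is smooth with $Pic$ free of rank $N$, and removing the unstable locus (which, by (ii), has codimension $\geq 2$, since it contains no divisor) leaves $Pic(\Omega_N^{ss}(L))\cong Pic(\Omega_N)$ by purity. The pullback $\pi_L^*$ is injective because $\pi_L$ is surjective with connected fibers (a good quotient of an irreducible variety). When moreover $\Omega_N^{ss}(L)=\Omega_N^s(L)$ and $L$ is ample, the action is proper with finite stabilizers, so $M_L$ is a geometric quotient with at worst finite quotient singularities (Luna slice theorem), and Kempf descent says a line bundle on $\Omega_N^{ss}$ descends iff the stabilizers act trivially on its fibers — since stabilizers are finite, a power always descends, giving the finite-index image statement. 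Finally (iv) is the heart of the matter and where I expect the real work: take the description $H^0(\Omega_N,L_\lambda)\cong\bigoplus_{\mu\in P_\lambda}V_\mu\otimes V_\mu^*$ from property (iv) of $\Omega_N$ as a $G\times G$-representation, and pass to $\{0\}\times H$-invariants; since $H$ acts only on the right factor, $\left(V_\mu\otimes V_\mu^*\right)^{\{0\}\times H}=V_\mu\otimes (V_\mu^*)^H$ as a left $G$-representation. The subtlety is that $H^0(M_L,L_\lambda)$ is not literally $H^0(\Omega_N,L_\lambda)^H$ but $\bigoplus_k$-graded pieces of the invariant section ring in the appropriate degree, so I would argue that for $L$ $H$-nef the natural map $H^0(\Omega_N,L_\lambda)^H\to H^0(M_L,L_\lambda)$ is an isomorphism — this uses that $\Omega_N^{us}(L)$ has codimension $\geq 2$ (so restricting sections to $\Omega_N^{ss}$ loses nothing) together with the fact that a good quotient satisfies $\pi_{L,*}\OO_{\Omega_N^{ss}}^{} = \OO_{M_L}$ and the projection formula, so that $H^0(M_L,L_\lambda)=H^0(\Omega_N^{ss},\pi_L^*L_\lambda)^H=H^0(\Omega_N,L_\lambda)^H$. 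The main obstacle is making this last identification clean when $\Omega_N^{ss}\neq\Omega_N^s$, i.e.\ when $\pi_L^*L_\lambda$ may not be exactly the restriction of $L_\lambda$ on the nose but only up to a positive power; I would handle this by working with the full section ring $\bigoplus_k H^0(\Omega_N,L_\lambda^{\otimes k})^H$ at once, applying the $G\times G$-decomposition degree by degree, and noting that the $G$-isotypic decomposition is compatible with the ring grading, so the claimed formula holds for the graded piece defining $Pic(M_L)$.
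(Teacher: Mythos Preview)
Your approach is essentially the paper's, and parts (ii)--(iv) are close to correct, but there is a real gap in your argument for (i).

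The section you invoke for (i) is the wrong one. From $H$-nefness you get an $H$-invariant section $s$ of some $L^{\otimes k}$ that does not vanish \emph{identically} on any $\Delta_i$; this does \emph{not} mean $s$ is nowhere zero on $U\cong G_a$. Its zero locus is an effective divisor in the class $kL$ with no $\Delta_i$ as a component, but such a divisor typically meets $U$. So your sentence ``such an $s$ \ldots\ is nowhere zero on $G_a$'' is unjustified, and you have not shown $U\subset\Omega_N^{ss}(L)$, let alone $U\subset\Omega_N^{s}(L)$---and it is \emph{stability}, not semistability, that you need in order to conclude that the image of $U$ in $M_L$ is the orbit space $G_a/H_a$ rather than some further identification. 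The paper fixes this by using a different section $s_0$: since any dominant weight is a positive rational combination of the simple roots $\alpha_i$, some power $L^{\otimes k}$ has an $H$-invariant (indeed $G\times G$-invariant) section whose divisor is exactly $\sum m_i\Delta_i$ with all $m_i>0$. Then $(\Omega_N)_{s_0}=U$ on the nose, so $U$ is an affine $H$-invariant open set in which $H_a$ acts freely by right translation on $G_a$; the orbits are therefore closed and the stabilizers finite, which gives $U\subset\Omega_N^s(L)$ directly. Note this argument does not even need $H$-nefness---just nefness of $L$.

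For (iv) you have the right chain
\[
H^0(M_L,L_\lambda)\cong H^0(\Omega_N^{ss}(L),\pi_L^*L_\lambda)^H=H^0(\Omega_N,L_\lambda)^H,
\]
and your worry at the end is unnecessary: once (iii) identifies $\pi_L^*L_\lambda$ with the restriction of $L_\lambda$, there is nothing further to reconcile---the paper simply views sections as subvarieties of the total spaces $E$ on $\Omega_N^{ss}$ and $F=E/\!\!/_{L}\tilde H$ on $M_L$ and reads off the correspondence. Your suggestion to pass to the full graded ring is not needed. Also, in (iii) your ``connected fibers'' justification for injectivity of $\pi_L^*$ is shakier than you want, since $\pi_L$ is affine rather than proper; the paper appeals to Kempf descent (the preimage of a line bundle can only be $E/\!\!/_L\tilde H$), which is cleaner here.
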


\begin{proof}\textbf{i).}
First of all, notice that we have a section $s_0$ of a sufficient high power $L^{\otimes k}$ that vanishes only along the boundary components $\Delta_i$, and on all of them. This is true because we can always write any dominant weight as a rational positive linear combination of simple roots. The section $s_0$ will also of course be ${H}$-invariant. We can then complete $s_0$ to a basis $\{s_0,s_1,\ldots,s_r\}$ of $H^0(\Omega_N,L^{\otimes k})$, and embed $(\Omega_N)_{s_0}=U\cong G_a$ into $\A^r$ through coordinates $s_i/s_0$. Inside $(\Omega_N)_{s_0}$, all $H$-orbits are closed and disjoint, because they are in $G_a$; this proves directly that all points of $U$ are stable, and hence that the quotient will contain $U/H$ as an open dense subset. Then, the action of $G\times\{0\}$ commutes with the action of $\{0\}\times H$, and hence $M_L$ will have a $G$-action and the quotient map will be $G$-equivariant (and $U/H$ will be an open dense orbit). Notice that i) is true more in general if we drop the hypothesis for $L$ to be $H$-nef, and just be nef on $\Omega_N$; we are not though very interested in this case in general, because of the what comes after in this Proposition.

\noindent\textbf{ii).} To prove the statement, it is sufficient to find, for any nef line bundle $L$, an $H$-invariant section (eventually of a multiple $L^{\otimes k}$) that does not vanish identically on any boundary divisor., and this is given by the definition of $L$ to be $H$-nef, together with property iv) of $\Omega_N$.
For the second part, if the linearization is ample, a subvariety in a G.I.T. quotient is mapped into a subvariety of the same codimension if and only if its general point is stable.

\noindent\textbf{iii).} First of all, the statement on the restriction is obvious because $\Omega_N$ is smooth and the complement $\Omega_N^{us}(L)$ has codimension 2 or more. Then, Kempf descent lemma gives us that $\pi_L^*$ is injective; in particular, the remark after Theorem~2.3 of \cite{kempf_lemma} that says that the preimage of a line bundle on $\Omega_N$ with total space $E$ can only consist of the line bundle on $M_L$ with total space $F=E/\!\!/_{_{\!L}}\wt{H}$, where $\wt{H}$ is the universal cover of $H$ (notice that because $\wt{H}$ is semisimple there is a unique possible action of $\wt{H}$ on $E$, and hence a unique possible such $F$). If $\Omega_N^{ss}(L)=\Omega_N^{s}(L)$, furthermore, Luna's slice \'etale theorem gives us that $M_L$ has finite quotient singularities, and Kempf descent lemma again gives us the finite index.

\noindent\textbf{iv).} To prove this result, we just need to prove that $$H^0(M_L,L_\lambda)\cong H^0(\Omega^{ss}_{N}(L),\pi_L^*L_\lambda)^H=H^0(\Omega_{N},L_\lambda)^H.$$
Consider the total space $E$ of $\pi_L^*L_\lambda$ on $\Omega^{ss}_{N}(L)$ and $F=E/\!\!/_{_{\!L}}\wt{H}$ of $L_\lambda$ on $M_L$. Looking at sections as subvarieties of $E$ and $F$, it is clear that sections of $F$ correspond to $H$-invariant sections of $E$, just through $\pi_L$.
\end{proof}

\begin{remark}\label{remarkvanishing}
Exactly as in Remark~\ref{vanishingboundary}, if $\lambda-\mu=\sum c_i\alpha_i$, then sections in $V_{\mu}\otimes( V_{\mu}^*)^H$ are going to vanish with order exactly $c_i$ along the boundary divisor $E_i$ that is image of the boundary divisor $\Delta_i$ form $\Omega_N$.
\end{remark}

\begin{remark}\label{nefcase}
When talking about linearizations of the action of a group, it is usually assumed for the line bundle to be ample, so in our case any linear combination $\sum a_i\eta_i$ with positive coefficients. The definition can be extended to any line bundle $L$; in this case, though, the main theory of the GIT theory (the existence of the stable and semistable loci, the properties of the quotient map with closed orbits, the Hilbert-Mumford criterion) still apply, but to the birational model $$Proj\bigoplus_{k\geq 0} H^0(\Omega_N,L^{\otimes k})$$
that can be different from $\Omega_N$. In case $L$ is nef (and still $H$-nef), that means $\sum a_i\eta_i$ with nonnegative coefficients, $L$ is also globally generated, so $\Omega_N$ maps to $Proj\bigoplus H^0(\Omega_N,L^k)$, and we can recover in $\Omega_N$ notions of $L$-stable and $L$-semistable points, just pulling back them from this model; they won't anymore satisfy the Hilbert-Mumford criterion, and the map $\pi_L:\Omega_N\to M_L$ will not satisfy the properties about closed orbits anymore. The above Proposition still applies entirely to $M_L$ in this nef case, besides for the second part of ii) and iii). In the former case, the irreducible components of codimension 1 in $M_L\setminus (U/H)$ are in 1-1 correspondence with (and image by $\pi_L$ of) the boundary divisors $\Delta_i\cap\Omega_N^{ss}(L)$ that map birationally to $Proj\bigoplus H^0(\Omega_N,L^k)$ and whose general point is stable. In the latter, the condition $\Omega_N^{ss}(L)=\Omega_N^{s}(L)$ still implies that $M_L$ will have finite quotient singularities, but $\pi^*_L$ will have finite index image only in 
$$Pic(Proj\bigoplus H^0(\Omega_N,L^k))\subset Pic(\Omega_N)$$
\end{remark}

\begin{remark}
It might happen that the condition $\Omega_N^{ss}(L)=\Omega_N^{s}(L)$ never holds, and hence we never have the Picard groups of $M_L$ that is a finite index subgroup of $\Omega_N$. We have the chance though to get any divisor class (up to a positive multiple) of $\Omega_N$ appearing in a G.I.T. model $M_L$. In fact, any G.I.T. quotient has a tautological line bundle that pulls back to (a sufficiently high power of) the linearization itself; to get a class $L_0$ of $\Omega_N$ in the model, it would be sufficient to consider the model $M_{L_0}$.
\end{remark}

The question whether $\Omega_N^{ss}(L)=\Omega_N^{s}(L)$ ever happens is an interesting one; in the case of twisted cubics, that we will see in Section~\ref{TC}, we will answer this question entirely, and try to set up a method that could solve the question more generally.

\subsection{Intersection theory}\label{inters_theory}

The main aim of compactifying $G_a/H_a$ is to be able to use intersection theory to solve the following problem. Let $s$ be the dimension of $G_a/H_a$.

\begin{problem}\label{opendivisors}
Given a divisor $D^\circ\subset G_a/H_a$, in how many points $s$ general $G$-translates of it are going to intersect?
\end{problem}

Many enumerative questions from algebraic geometry can be reduced to this. Given a compactification $G_a/H_a\subset M$, one could try to find on $M$ the top intersection product of the closure $D$ of $D^\circ$
$$D^{s}=\int_M c_1(\OO(D))^{s}$$
and then hope that the linear system $|D|$ is base point free and is generated by $G$-translates of $D$.

A way to partially solve the first of these two issues that might occur, is to consider the \textit{volume} of a divisor, defined as
$$\text{vol}(D)=s!\cdot\limsup_{k\to\infty}\frac{h^0(M,\OO(kD))}{k^{s}}.$$
For $D$ nef on $M$, the two notions agree, $vol(D)=D^{s}$, from the Asymptotic Riemann-Roch formula (see Corollary~1.4.38 of \cite{positivity_1}). For $D$ effective, this notion is (asymptotically) equal to the \textit{moving self-intersection number} $D^{[s]}$, that is defined (when $H^0(\OO(D))$ is large enough) as the intersection of $s$ general divisors linearly equivalent to $D$, outside of the base locus $B(D)$ (see Definition 11.4.10 of \cite{positivity_2}). We have in fact
$$vol(D)=\lim_{k\to\infty}\frac{(kD)^{[s]}}{k^s}$$

\begin{comment}
, that is the intersection of general elements of the linear series away from the base locus. In particular, we have the following Lemma.

\begin{lemma}\label{lemmavol}
Let $Y$ be a projective variety of dimension $n$, and $D$ be a Cartier divisor. Let us call $D^{[n]}$ the cardinality of the intersection of $n$ general elements of the linear system $|\OO(D)|$, away from the base locus $B(\OO(D))$. Then $vol(D)=D^{[n]}$ if the graded ring
$$R=\bigoplus_{k\geq 0}H^0(Y,\OO(kD))$$
is generated in degree $k=1$.
\end{lemma}

\begin{proof}
If $R$ is generated in degree 1, then $\OO(D)$ is a very ample line bundle on $Y'=Proj R$, and the volume of $D$ would be exactly the cardinality of the intersection of the vanishing loci of $n$ general sections $s_1,\ldots,s_n\in H^0(Y',\OO(D))$. But we have a birational map $Y-\to Y'$ defined outside the base locus of $\OO(D)$, so this intersection will pull back to exactly $D^{[n]}$ points. If on the other hand $R$ has some generators in higher degree, let us call $S$ the subring of $R$ generated by the elements in degree 1. \end{proof}
\end{comment}

So, there is some reason to think that the volume function might be even more powerful in answering enumerative questions, and we will see that in Section~\ref{TC}. Another reason this seems the right choice, is that in Proposition~\ref{4points} we saw that the space of sections of $L_\lambda$ on $M_L$ is in fact independent on $L$. So, the volume function will be independent on the choice of linearization $L$ on $\Omega_N$ and the model $M_L$ obtained. The volume is in some sense then a more natural concept to consider, more intrinsic. Notice that on a model $M_L$, the line bundle $L$ (or the right multiple that is defined) will be ample; another way to rephrase why the volume is an intrinsic object is that the volume gives us the self intersection of $L$ on $M_L$; for a line bundle, the volume function will \textit{choose} for us the G.I.T. model where $L$ has the best behaviour (meaning, where it has no stable base locus), and calculate its self intersection there. In Section~\ref{TC}, in the explicit case of twisted cubics, we will investigate this further, posing some questions about what are some more geometric implications of the volume function.

The last, and more important reason why we will consider the volume, is that we have actually a recipe to calculate it explicitly, using formula (\ref{sectionsML}). We have in fact, as a corollary of Proposition~\ref{4points} iv), for $D$ in the linear series of a line bundle $L_\lambda$,
$$\text{vol}(D)=d!\cdot\limsup_{k\to\infty}\frac{\sum_{\mu\in P_{k\lambda}}dim(V_\mu)dim(V_\mu^H)}{k^{d}},$$
so that we can reduce the volume calculation to a problem about representations of $G$ and $H$-invariants. Going to a limit, we will now define asymptotic dimensions $\dimas$ and $\dimasi$ that will be more useful.

\begin{definition}\label{simas}
The asymptotic dimension $\dimas$ is equal to
$$\dimas=\lim_{k\to\infty}\frac{dim(V_{k\mu})}{k^{N(N+1)/2}}$$
\end{definition} 

\begin{remark}
The dimension of $V_\mu$ is given by the Weyl dimension formula
$$dim(V_\mu)=\prod_{\alpha}\frac{<\rho+\mu,\alpha>}{<\rho,\alpha>}$$
where the sum is over all positive roots, and $\rho$ is half the sum of all positive roots. It is clear then that this is a polynomial function on $\Lambda^+$ of degree equal to the number of positive roots, that is $N(N+1)/2$; hence, the limit in the above definition exists, and the asymptotic dimension $\dimas$ will then just be the homogeneous top degree part of it.
\end{remark}

For $\dimasi$, the situation is much more complicated (in fact, the next two sections will be just devoted to calculations about it). Let us start with a structure proposition about the behavior of $dim(V_{\mu}^H)$. We will need a few definitions in convex geometry before.

\begin{definition}\label{convex}\  
Let $A=[a_1,\ldots,a_m]$ be a list of vectors in a lattice $\Lambda$ of rank $d$, sitting inside and spanning a vector space $V=\Lambda_{\R}$. Suppose their convex hull does not contain 0. We will denote by $C(A)$ the pointed cone generated by them (their $\R^+$-span), and by $\Lambda(A)$ the sublattice of $\Lambda$ that they span over $\Z$.
A vector in $C(A)$ is called $\textbf{regular}$ if it doesn't lie in any cone $C(B)\subsetneq C(A)$ for a sublist $B\subsetneq A$, and it is called \textbf{singular} otherwise. A \textbf{big cell} is the closure $\mathfrak{c}$ of a connected components of the set of regular vectors, and the \textbf{chamber complex} associated with $A$ is the set $\mathcal{C}(A)$ of all big cells.
\end{definition}

The next Proposition will be proved in Section~\ref{asymptotics}, using the theory of vector partition functions.

\vspace{0.2cm}

\noindent\textbf{Proposition~\ref{capitalM}.} 
\textit{Let $M:\Lambda^+\to\N$ be the function associating every dominant weight $\lambda$ of $G$ the number $dim(V_\lambda^H)$. Then there exists a list $A$ of vectors in $\Lambda^+$ such that
\begin{itemize}
\item[(i)] The function $M$ is nonzero only on $\Lambda(A)\cap\Lambda^+$.
\item[(ii)] In each big cell $\mathfrak{c}$ of the chamber complex $\mathcal{C}(A)$, we have
$$M(\lambda) = q^{\mathfrak{c}}(\lambda) + b^{\mathfrak{c}}(\lambda) \quad \forall \lambda\in\Lambda(A)\cap\mathfrak{c}$$
where
\begin{itemize}
\item $q^{\mathfrak{c}}$ is a quasi-polynomial; that means, there is a finite index sublattice $\Lambda(A,\mathfrak{c})$ of $\Lambda(A)$ such that $q^{\mathfrak{c}}$ agrees on a different polynomial on each coset $\lambda+\Lambda(A,\mathfrak{c})$ for $\lambda\in\Lambda(A)$.
\item $b^{\mathfrak{c}}$ is a function that is zero on $\mathfrak{c}\cap\Lambda(A)\setminus(\mathfrak{c}+\beta)\cap\Lambda(A)$, where $\beta$ is an element of $\Gamma$ such that $M(\beta)$ is nonzero.
\end{itemize}
\end{itemize}
}

With the results obtained so far, we can improve the above proposition a little bit.

\begin{proposition}\label{capitalM2}
In Proposition~\ref{capitalM}, the cone $C(A)$ and the lattice $\Lambda(A)$ have the same dimension $s$ as $\Lambda$. Furthermore, in each big cell $\mathfrak{c}$, the top degree part $q_{top}^{\mathfrak{c}}$ of $q^{\mathfrak{c}}$ is in fact a polynomial (that means, it is the same polynomial in any coset $\lambda+\Lambda(A,\mathfrak{c})$) and it has degree $s-N(N+3)/2$.
\end{proposition}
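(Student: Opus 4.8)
The plan is to extract the two claims from what is already known: that $\Lambda^+(G/H)$ spans a finite-index subgroup of $\Lambda$ (Lemma~\ref{complexitylemma}), and that the growth rate of $\dim(V_{k\lambda}^H)$ in the interior of the cone is exactly the complexity $c(G/H)$ (Proposition~\ref{growth}), together with the structure statement of Proposition~\ref{capitalM}. For the first claim, recall that by construction of Proposition~\ref{capitalM} the function $M$ is supported on $\Lambda(A)\cap\Lambda^+$; but $M(\lambda)=\dim(V_\lambda^H)$ is nonzero precisely on $\Lambda^+(G/H)$, whose $\Z$-span $\Lambda(G/H)$ has rank $N$ by Lemma~\ref{complexitylemma}. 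Wait — $N$ is the rank of $G/H$, and we want the \emph{full} dimension $s=\dim\Lambda$. So I need to be more careful: Lemma~\ref{complexitylemma} says the rank of $G/H$ is $N$, which for $G=SL_{N+1}$ \emph{is} the full rank of $\Lambda$. So $\Lambda(G/H)$ already has finite index in $\Lambda$, hence $\Lambda(A)\supseteq\Lambda(G/H)$ (since $A$ can be taken inside $\Lambda^+(G/H)$, which generates $\Lambda(G/H)$) also has rank $s$, and therefore $C(A)$, being a cone on a generating set of a full-rank lattice with $0$ not in its convex hull, is $s$-dimensional. This disposes of the first assertion.

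For the degree of $q^{\mathfrak c}_{\mathrm{top}}$: by Proposition~\ref{growth}, for $\lambda$ in the interior of $C(A)$ the quantity $\dim(V_{k\lambda}^H)$ grows like $k^{c(G/H)}$ and no slower; and by Lemma~\ref{complexitylemma} the complexity equals $\dim(G/H)-N(N+3)/2 = s - N(N+3)/2$. On the other hand, inside a fixed big cell $\mathfrak c$ whose interior meets the interior of $C(A)$, Proposition~\ref{capitalM} writes $M(\lambda)=q^{\mathfrak c}(\lambda)+b^{\mathfrak c}(\lambda)$, where the correction term $b^{\mathfrak c}$ vanishes outside a bounded shift $\mathfrak c\setminus(\mathfrak c+\beta)$ of the cell, hence is lower order along any ray into the interior. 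Therefore the leading term of $M(k\lambda)$ as a function of $k$ is governed by $q^{\mathfrak c}_{\mathrm{top}}$, so $q^{\mathfrak c}$ has degree exactly $c(G/H)=s-N(N+3)/2$; evaluating along enough interior rays and using that a quasi-polynomial's top-degree part, when it is forced to have the same degree and the same values on a Zariski-dense set of rays across all cosets, must be a single polynomial, gives that $q^{\mathfrak c}_{\mathrm{top}}$ is an honest polynomial. To propagate the degree statement from cells meeting the interior to \emph{every} big cell, I would use that adjacent big cells of $\mathcal C(A)$ share a facet and the quasi-polynomials agree there to top order (a standard continuity property of vector partition functions), so the degree cannot drop as one moves to a boundary cell — or, more simply, invoke that the boundary cells are lower-dimensional in the chamber complex sense only if they fail to be big cells, and big cells by definition are top-dimensional, hence each contains interior points of $C(A)$.

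The main obstacle I anticipate is the last point — showing the top-degree part is a genuine polynomial (coset-independent) on \emph{every} big cell, not merely that it has the right degree. The cleanest route is probably to argue directly from the spline/partition-function description promised in Section~\ref{asymptotics}: the top-degree part of a vector partition function on a big cell is the volume of a fiber polytope, which is manifestly a polynomial in the chamber and independent of the lattice coset, with degree equal to $(\#\text{vectors}) - (\text{rank})$; one then only has to check that this combinatorial degree matches $s-N(N+3)/2$, which it does by the two identifications above. I would also double-check the edge case where a big cell meets the boundary of $C(A)$ only in its own boundary: since big cells are closures of connected components of regular vectors, their interiors consist of regular — hence interior — points of $C(A)$, so Proposition~\ref{growth}'s sharpness applies there too, and no separate argument is needed.
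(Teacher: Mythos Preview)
Your treatment of the full-rank claim and of the degree of $q^{\mathfrak c}$ matches the paper: both follow from Lemma~\ref{complexitylemma} (rank $N$, which is the full rank of $\Lambda$ for $G=SL_{N+1}$) and Proposition~\ref{growth} (the complexity is $s-N(N+3)/2$). No issue there.

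The gap is in the polynomiality of $q^{\mathfrak c}_{\mathrm{top}}$. Your ``Zariski-dense rays'' sketch does not go through: Proposition~\ref{growth} only says that $\dim(V_{k\lambda}^H)$ is $O(k^c)$ and not $O(k^{c'})$ for $c'<c$; it does \emph{not} give a well-defined limit $M(k\lambda)/k^c$, so along a ray the leading coefficient could genuinely oscillate between the values coming from different cosets. That is exactly what you must rule out, and the ray argument assumes it. Your fallback to the spline picture is closer in spirit, but also not immediate: $M$ is an \emph{alternating} sum of shifted partition functions (see the proof of Proposition~\ref{capitalM}), and if the leading (spline) terms cancel, the top degree of $M$ is pushed down into the range where quasi-polynomial contributions appear. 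You would need a separate argument to exclude that, and you have not supplied one.

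The paper's argument is different and more direct: it uses Lemma~\ref{simplelemma}. Given two cosets with top parts $p_1$ and $p_2$, pick fixed shifts $\lambda_{12},\lambda_{21}$ (in the appropriate cosets of $\Lambda(A,\mathfrak c)$, with $M$ nonzero there) and apply $M(\lambda+\lambda_{12})\geq M(\lambda)+M(\lambda_{12})-1$. Sending $\lambda\to\infty$ inside the second coset gives $p_2\leq p_1$; the symmetric inequality gives $p_1\leq p_2$; hence $p_1=p_2$. This is the missing idea: the superadditivity from Lemma~\ref{simplelemma} is exactly what lets you compare the cosets without any appeal to Section~\ref{asymptotics}.
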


\begin{proof}
The fact that $C(A)$ has full rank comes directly from the fact that the rank of $G/H$ is exactly $N$, as shown in Lemma~\ref{complexitylemma}; in particular, $C(A)$ is exactly the $\R^{>0}$-span of the semigroup $\Lambda^+(G/H)$. 
Suppose now, for a given cell $\mathfrak{c}$, that $q^\mathfrak{c}$ is a quasi polynomial having different top degree parts $p_1$ and $p_2$ on two different cosets $\lambda_1+\Lambda(A,\mathfrak{c})$ and $\lambda_2+\Lambda(A,\mathfrak{c})$. Let us now take elements $\lambda_{21}\in\lambda_2-\lambda_1+\Lambda(A)$ and $\lambda_{12}\in\lambda_1-\lambda_2+\Lambda(A)$ such that $M(\lambda_{12})$ and $M(\lambda_{21})$ are both nonzero. We have then, for each $\lambda$ for which $M(\lambda)\neq 0$, 
$$M(\lambda)\leq M(\lambda+\lambda_{12})-M(\lambda_{12})-1$$
$$M(\lambda)\leq M(\lambda+\lambda_{21})-M(\lambda_{21})-1$$
from Lemma~\ref{simplelemma}. Applying the first to elements of $\lambda\in\lambda_2+\Lambda(A,\mathfrak{c})$ going to infinity, we prove that $p_2\leq p_1$ on the entire $\mathfrak{c}$, applying the second to elements of $\lambda\in\lambda_1+\Lambda(A,\mathfrak{c})$ going to infinity, we prove that $p_1\leq p_2$; we get then $p_1=p_2$. The fact that the degree is equal to the complexity $s-N(N+3)/2$ just follows from Proposition~\ref{growth}.
\end{proof}

\begin{definition}\label{dimasi}
Let $c$ be the index of $\Lambda(A)\cap\Lambda_\alpha$ in $\Lambda_\alpha$, where $A$ comes from Proposition~\ref{capitalM}. For each big cell $\mathfrak{c}$ of the decomposition given in \ref{capitalM}, let again $q_{top}^{\mathfrak{c}}$ be the top degree homogeneous part of $q^{\mathfrak{c}}$, that is a polynomial by Proposition~\ref{capitalM2}  Then, for a dominant weight $\mu$ in a big cell $\mathfrak{c}$ of the decomposition given in \ref{capitalM}, we define the asymptotic dimension as 
$$\dimasi=\frac{1}{c}\limsup_{k\to\infty}\frac{q_{top}^{\mathfrak{c}}(k\mu)}{k^{s-N(N+3)/2}}=q_{top}^{\mathfrak{c}}(\mu)/c$$
\end{definition}

We are now ready to state the main theorem of this section. Notice that we can extend the functions $\dimas$ and $\dimasi$ to any $\mu\in\Lambda_{\R}$, because they are just (piecewise) polynomial functions. We will also equip the vector space $\Lambda_{\R}$ of the Euclidean metric for which the fundamental weights $\omega_i$ form an orthonormal basis (and the simple roots form a parallellotope of volume $N+1$). Finally, remember that 
$$\mathscr{P_\lambda}=\{\lambda-\sum c_i\alpha_i\mid c_i\in\R^{\geq0}\}\cap\mathcal{W}.$$

\begin{theorem}\label{main}
Let $D$ be a Cartier divisor on $M_L$, such that $\OO(D)$ corresponds to the weight $\lambda$. We have then
$$\text{vol}(D)=\frac{s!}{N+1}\cdot\int_{\mathscr{P}_\lambda}\dimas\dimasi d\mu.$$
\end{theorem}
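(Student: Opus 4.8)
The plan is to compute the volume directly from the formula for sections of $L_\lambda$ on $M_L$ established in Proposition~\ref{4points}~iv), by reducing the limsup to a Riemann sum over a lattice inside the polytope $\mathscr{P}_\lambda$. Starting from
$$H^0(M_L,L_{k\lambda})=\bigoplus_{\mu\in P_{k\lambda}}V_\mu\otimes(V_\mu^*)^H,$$
we get $h^0(M_L,L_{k\lambda})=\sum_{\mu\in P_{k\lambda}}\dim(V_\mu)\dim(V_\mu^H)$ (using $\dim(V_\mu^*)^H=\dim V_\mu^H$). Now $P_{k\lambda}=\{k\lambda-\sum c_i\alpha_i\mid c_i\in\N\}\cap\mathcal W$ is the set of lattice points (in the appropriate coset) of the dilated polytope $k\mathscr{P}_\lambda$. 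First I would substitute $\mu=k\nu$ and rewrite the sum as
$$h^0(M_L,L_{k\lambda})=\sum_{\nu\in\frac1k P_{k\lambda}}\dim(V_{k\nu})\dim(V_{k\nu}^H),$$
where $\nu$ ranges over a $\frac1k$-scaled lattice meeting $\mathscr{P}_\lambda$.

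The second step is to control the growth of each factor. By the remark following Definition~\ref{simas}, $\dim(V_{k\nu})=\dimm(V_\nu)\,k^{N(N+1)/2}+O(k^{N(N+1)/2-1})$ uniformly for $\nu$ in the compact set $\mathscr{P}_\lambda$, since $\dim V_\mu$ is a polynomial in $\mu$ of degree $N(N+1)/2$. For the invariant factor, Proposition~\ref{capitalM} together with Proposition~\ref{capitalM2} tells us that on each big cell $\mathfrak c$ of the chamber complex, $\dim(V_\mu^H)=q^{\mathfrak c}(\mu)+b^{\mathfrak c}(\mu)$ with $q^{\mathfrak c}$ a quasi-polynomial of degree $s-N(N+3)/2$ whose top homogeneous part $q^{\mathfrak c}_{top}$ is an honest polynomial; moreover $M$ is supported on $\Lambda(A)\cap\Lambda^+$, a finite-index sublattice, which contributes the factor $1/c$ in Definition~\ref{dimasi}, and the error term $b^{\mathfrak c}$ is supported near the boundary of the cone so it is negligible in the limit. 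Hence $\dim(V_{k\nu}^H)=c\cdot\dimm(V_\nu^H)\,k^{s-N(N+3)/2}+(\text{lower order})$ for $\nu$ in the interior of a big cell, and the number of cells is finite so the piecewise description causes no trouble. Multiplying, each summand grows like $k^{N(N+1)/2+s-N(N+3)/2}=k^{s-N}$; since the lattice $\frac1k(\Lambda(A))$ has covolume $\sim k^{-N}$ relative to the $N$-dimensional span (the rank of $G/H$ is $N$ by Lemma~\ref{complexitylemma}, so $\mathscr{P}_\lambda$ sits in an $N$-dimensional affine slice of $\Lambda_{\R}$), the Riemann sum picks up an extra $k^{-N}$, giving total order $k^s$, matching the denominator in the definition of volume.

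The third step is to assemble the Riemann sum. Writing $s!\,\limsup_k h^0(M_L,L_{k\lambda})/k^s$ and substituting the two asymptotic expansions, the sum $\frac1{k^N}\sum_{\nu}\dimm(V_\nu)\dimm(V_\nu^H)$ converges, as $k\to\infty$, to $\frac{1}{\mathrm{covol}}\int_{\mathscr{P}_\lambda}\dimm(V_\mu)\dimm(V_\mu^H)\,d\mu$ where the covolume is that of the lattice $\Lambda(A)$ inside the $N$-dimensional real span of the root lattice, computed with the chosen Euclidean metric in which the simple roots span a parallelotope of volume $N+1$. The factor $1/c$ from Definition~\ref{dimasi} is exactly what converts the covolume of $\Lambda(A)\cap\Lambda_\alpha$ back to the covolume $N+1$ of the full root lattice $\Lambda_\alpha$, so the two combine to produce the clean constant $\frac{s!}{N+1}$ in front of the integral. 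This gives
$$\mathrm{vol}(D)=\frac{s!}{N+1}\int_{\mathscr{P}_\lambda}\dimas\dimasi\,d\mu,$$
as claimed.

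I expect the main obstacle to be the bookkeeping of lattice normalizations: carefully identifying which lattice the points $\mu\in P_{k\lambda}$ live in, tracking the finite-index sublattice $\Lambda(A)$ on which the invariant dimension is supported, reconciling the index $c$ of Definition~\ref{dimasi} with the covolume of $\Lambda_\alpha$, and checking that with the stated Euclidean structure (simple roots forming a parallelotope of volume $N+1$) all the constants collapse to $\frac{s!}{N+1}$. A secondary technical point is justifying that the error terms — the lower-order parts of the Weyl polynomial, the sub-top parts of the quasi-polynomial $q^{\mathfrak c}$, and the boundary-supported correction $b^{\mathfrak c}$ — all vanish in the limit; this is routine since they are each a lower power of $k$ summed over $O(k^N)$ lattice points, but it needs to be said. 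One should also remark that the limsup is a genuine limit here, which follows because the Riemann sums converge.
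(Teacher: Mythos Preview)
Your overall strategy matches the paper's: use Proposition~\ref{4points}~iv) to write $h^0(M_L,L_{k\lambda})$ as a sum over $P_{k\lambda}$, split by big cells using Propositions~\ref{capitalM}--\ref{capitalM2}, replace each factor by its top-degree homogeneous part, and interpret the result as a Riemann sum over $\mathscr P_\lambda$. The lattice bookkeeping you flag as the main obstacle is indeed fussy but is handled exactly as you describe.

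The one genuine gap is your treatment of the correction term $b^{\mathfrak c}$. You say it is ``supported near the boundary of the cone so it is negligible in the limit'' and later lump it with the other error terms as ``a lower power of $k$ summed over $O(k^N)$ lattice points.'' Neither description is quite right. The function $b^{\mathfrak c}$ is supported on the strip $\mathfrak c\setminus(\mathfrak c+\gamma)$, which in $P_{k\lambda}$ contains only $O(k^{N-1})$ points; but you have \emph{no a priori pointwise bound} on $b^{\mathfrak c}$ there. Writing $b^{\mathfrak c}=M-q^{\mathfrak c}$, the quasi-polynomial $q^{\mathfrak c}$ has degree $c=s-N(N+3)/2$, but the only trivial bound on $M(\mu)=\dim V_\mu^H$ is $\dim V_\mu$, which has degree $N(N+1)/2$. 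With that bound the strip contributes $O(k^{N-1}\cdot k^{N(N+1)})$, which is \emph{not} $o(k^s)$ as soon as $\dim H\geq 1$. So the ``routine'' dismissal does not go through.

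The paper resolves this with a sandwich argument that avoids bounding $b^{\mathfrak c}$ altogether. Using Lemma~\ref{simplelemma} (applied on $G/B$) one gets $\dim(V_\mu)\dim(V_\mu^H)\leq \dim(V_{\mu+\gamma})\dim(V_{\mu+\gamma}^H)$, and for $\mu+\gamma$ deep in $\mathfrak c$ the right-hand side is exactly $p^{\mathfrak c}+r^{\mathfrak c}$ evaluated at $\mu+\gamma$, which differs from $p^{\mathfrak c}(\mu)+r^{\mathfrak c}(\mu)$ by a quasi-polynomial of strictly lower degree. This gives the upper bound; the lower bound comes from restricting to $\gamma^{\mathfrak c}+P_{(k-a)\lambda}\subset P_{k\lambda}$, where $s^{\mathfrak c}$ vanishes identically. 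This monotonicity trick is the one nontrivial technical device in the proof, and it is precisely what your outline is missing.
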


Before proving this theorem (whose proof will still not be complete until we prove Proposition~\ref{capitalM}) let us give one example.

\begin{example}
Let us consider Example~\ref{pivotal}. Let us pick coordinates $x,y$ for the basis $\omega_1,\omega_2$ of $\Lambda_{\R}$. From the Weyl dimension formula, we have
$$dim(V_{x,y})=\frac{(x+1)(y+1)(x+y+1)}{2}\quad \dimm(V_{x,y})=\frac{xy(x+y)}{2},$$
and from \cite{CSV} we have that $dim(V_{x,y}^H)=1$ iff $x$ and $y$ are even integers, so that we have $\dimm(V_{x,y}^H)=1/4$, since in this base the simple roots are the vectors $(2,-1)$ and $(-1,2)$ and we have
$$\Lambda_\alpha=\{(i,j) : 3|i+j\}$$
$$\Lambda(A)=\{(i,j) : 2|i,2|j\}$$
and the index of $\Lambda(A)\cap\Lambda_{\alpha}$ in $\Lambda_{\alpha}$ is 4. 
The volume of a divisor $D$ corresponding to a dominant weight $(a,b)$ then becomes
$$\text{vol}(D)=\frac{5!}{3}\int_{\mathscr{P}_{a,b}}\frac{xy(x+y)}{2}\cdot\frac{1}{4}dxdy=\frac{1}{32}(a^5+10a^4b+40a^3b^2+40a^2b^3+10ab^4+b^5).$$
Considering for instance the divisor $$D=\overline{\{\text{conics containing a point }p\}},$$
using some simple test curves we get $\lambda=(2,0)$, and we get the number 1 of intersection of 5 general translates of $D$ (that gives us the fact that there is one conic through 5 general points); considering the divisor $$D=\overline{\{\text{conics tangent to a conic }C\}},$$ we get $\lambda=(4,4)$, and the number 3264 of conics tangent to 5 general conics. More precise statements about the effects of Theorem~\ref{main} on enumerative geometry will be shown in Section~\ref{TC}.
\end{example}

\begin{remark}
This idea of making the volume function into an integral for enumerative geometry of (compactifications of) homogeneous spaces is not new. It has been developed for spherical varieties (where $B$ acts on $G/H$ with a dense orbit) in \cite{brion_enum}, and in the case of complexity 1 (where $B$ acts on $G/H$ with an orbit of codimension 1) in \cite{timashev_book}. In \cite{timashev_book}, a way is paved to start the calculations in the general case of any complexity; as far as we know, this is the first serious attempt to carry out the computation until the end in situations of higher complexity.
\end{remark}

We are now ready to prove Theorem~\ref{main}.

\begin{proof}
We have
$$\frac{\text{vol}(D)}{s!}=\limsup_{k\to\infty}\frac{\sum_{\mu\in P_{k\lambda}}dim(V_\mu)dim(V_\mu^H)}{k^d}.$$
In each big cell $\mathfrak{c}$ of the decomposition coming from Proposition~\ref{capitalM}, we have now
$$dim(V_\mu)dim(V_\mu^H)=p^{\mathfrak{c}}(\mu)+r^{\mathfrak{c}}(\mu)+s^{\mathfrak{c}}(\mu),$$

\begin{itemize}
\item $p^{\mathfrak{c}}(\mu)=\dimas q_{top}^{\mathfrak{c}}$ is a piecewise polynomial, of degree $N(N+1)/2+d-N(N+3)/2=d-N$.
\item $r^{\mathfrak{c}}(\mu)=(dim(V_\mu)-\dimas)q^{\mathfrak{c}}+dim(V_\mu)(q^{\mathfrak{c}}_{top}-q^{\mathfrak{c}})$ is a quasi polynomial of degree strictly less than $d-N$.
\item $s^{\mathfrak{c}}(\mu)=dim(V_{\mu})b^{\mathfrak{c}}(\mu)$ is still a function that is zero on $\mathfrak{c}\cap\Lambda(A)\setminus(\mathfrak{c}+\gamma^{\mathfrak{c}})\cap\Lambda(A)$, where $\gamma^{\mathfrak{c}}$ is an element of $\Lambda(A)$ such that $M(\gamma^{\mathfrak{c}})$ is nonzero.
\end{itemize}
Let us first show now that the components $s^{\mathfrak{c}}$ and $r^{\mathfrak{c}}$ are negligible. 
Notice that we have, for any $\mu$ such that $dim(V_\mu^H)\neq 0$,
$$dim(V_\mu)dim(V_\mu^H)\leq dim(V_{\mu+\gamma})dim(V_{\mu+\gamma}^H)-dim(V_\gamma)dim(V_\gamma^H)-1$$
following the same argument as in Lemma~\ref{simplelemma}. We also have
$$dim(V_{\mu+\gamma})dim(V_{\mu+\gamma}^H)=p^{\mathfrak{c}}(\mu)+r^{\mathfrak{c}}(\mu)+\underbrace{(p^{\mathfrak{c}}(\mu+\gamma)+r^{\mathfrak{c}}(\mu+\gamma)-p^{\mathfrak{c}}(\mu)-r^{\mathfrak{c}}(\mu))}_{=u^{\mathfrak{c}}(\mu)}$$
where $u^{\mathfrak{c}}(\mu)$ will be a quasipolynomial of degree $<d-N$ as well.
We have then 
$$\limsup_{k\to\infty}\frac{\sum_{\mu\in P_{k\lambda}\cap\mathfrak{c}}dim(V_\mu)dim(V_\mu^H)}{k^d}\leq$$
$$\leq\limsup_{k\to\infty}\frac{\sum_{\mu\in P_{k\lambda}\cap\mathfrak{c}}p^{\mathfrak{c}}(\mu)+r^{\mathfrak{c}}(\mu)+u^{\mathfrak{c}}(\mu)}{k^d}=\limsup_{k\to\infty}\frac{\sum_{\mu\in P_{k\lambda}\cap\mathfrak{c}}p^{\mathfrak{c}}(\mu)}{k^d}$$
where the last step is because $r^{\mathfrak{c}}(\mu)$ and $u^{\mathfrak{c}}(\mu)$ have degree smaller than $p^{\mathfrak{c}}(\mu)$.
Let now $a$ be an integer such that $\gamma^{\mathfrak{c}}+P_{(k-a)\lambda}\subset P_{k\lambda}$ for each $k\geq a$; this exists because $\gamma^{\mathfrak{c}}$ is in the span of the simple roots $\alpha_i$. We have then
$$\limsup_{k\to\infty}\frac{\sum_{\mu\in P_{k\lambda}\cap\mathfrak{c}}dim(V_\mu)dim(V_\mu^H)}{k^d}\geq$$
$$\geq \limsup_{k\to\infty}\frac{\sum_{\mu\in (\gamma^{\mathfrak{c}}+P_{(k-a)\lambda})\cap\mathfrak{c}}dim(V_\mu)dim(V_\mu^H)}{k^d}=$$
$$=\limsup_{k\to\infty}\frac{\sum_{\mu\in P_{(k-a)\lambda}\cap\mathfrak{c}}p^{\mathfrak{c}}(\mu)+r^{\mathfrak{c}}(\mu)+u^{\mathfrak{c}}(\mu)}{k^d}=$$
$$=\limsup_{k\to\infty}\left(\frac{k}{k+a}\right)^d\frac{\sum_{\mu\in P_{k\lambda}\cap\mathfrak{c}}p^{\mathfrak{c}}(\mu)}{k^d}=\limsup_{k\to\infty}\frac{\sum_{\mu\in P_{k\lambda}\cap\mathfrak{c}}p^{\mathfrak{c}}(\mu)}{k^d}.$$
Joining everything together, we get
$$\limsup_{k\to\infty}\frac{\sum_{\mu\in P_{k\lambda}\cap\mathfrak{c}}dim(V_\mu)dim(V_\mu^H)}{k^d}=\limsup_{k\to\infty}\frac{\sum_{\mu\in P_{k\lambda}\cap\mathfrak{c}}p^{\mathfrak{c}}(\mu)}{k^d}.$$
The polynomial $p^{\mathfrak{c}}(\mu)$ is now homogeneous of degree $d-N$.
We have then
$$\limsup_{k\to\infty}\frac{\sum_{\mu\in P_{k\lambda}\cap\mathfrak{c}}p^{\mathfrak{c}}(\mu)}{k^d}=\limsup_{k\to\infty}\frac{\sum_{\mu\in \left(\frac{1}{k}P_{k\lambda}\right)\cap\mathfrak{c}}p^{\mathfrak{c}}(\mu)}{k^{N}}.$$
We can then interpret this as a Riemann sum over the volume $\mathcal{P}_{\lambda}$. When summing over $\frac{1}{k}P_{k\lambda}$, we are splitting $\mathcal{P}_{\lambda}$ in polytopes that are fundamental chambers for the lattice $\frac{1}{k}\Lambda_a\cap\Lambda(A)$ (where $\Lambda(A)$ is again the lattice coming from Proposition~\ref{capitalM}), so they have volume $c\cdot(N+1)/k^N$, where $c$ is as is Definition~\ref{dimasi}. We get then
$$\limsup_{k\to\infty}\frac{\sum_{\mu\in \left(\frac{1}{k}P_{k\lambda}\right)\cap\mathfrak{c}}p^{\mathfrak{c}}(\mu)}{k^{N}}=$$
$$=\frac{1}{N+1}\limsup_{k\to\infty}\left(\sum_{\mu\in \left(\frac{1}{k}P_{k\lambda}\right)\cap\mathfrak{c}}\frac{c(N+1)}{k^N}\dimas \frac{q_{top}^{\mathfrak{c}}}{c}\right)=$$
$$=\frac{1}{N+1}\limsup_{k\to\infty}\left(\sum_{\mu\in \left(\frac{1}{k}P_{k\lambda}\right)\cap\mathfrak{c}}\frac{c(N+1)}{k^N}\dimas\dimasi\right)=$$
$$=\frac{1}{N+1}\int_{\mathcal{P}_{\lambda}\cap\mathfrak{c}}\dimas\dimasi d\mu$$Summing over all big cells $\mathfrak{c}$, we get the result.
\end{proof}

\section{Invariant theory}\label{invariant_theory}

\subsection{Formal series and characters}

This section heavily relies on the use of generating functions and formal power series. Let us give some definitions.

\begin{definition}
Let $\Gamma$ be a monoid, and $R$ be an integral domain. We will denote by $R[z^\Gamma]$ the polynomial ring generated by the monomials $z^\gamma$, that satisfy the formal relations $z^\gamma z^{\gamma'}=z^{\gamma+\gamma'}$. We will also define $R[[z^\Gamma]]$ as the set of power series
$$\sum_\Gamma a_\gamma z^\gamma.$$
\end{definition}

\begin{remark}\label{fractions}
If $\Gamma$ is isomorphic as a monoid to a product of copies of $\N$, then the product on $R[[z^\Gamma]]$ is well-defined, and we can talk about the ring of formal series.
Moreover, if $r\in R$ and $\gamma\in\Gamma$ nonzero, the element $1-rz^\gamma$ has an inverse in $R[[z^\Gamma]]$, given by
$$\frac{1}{1-rz^\gamma}=\sum_{k=0}^{\infty}r^kz^{k\gamma}$$
and we will use the fractional notation to indicate such series.
\end{remark}

\begin{example}
Let $\Lambda_G$ be the weight lattice of a semisimple group $G$, and let $V$ be a representation of $G$; we can then associate to $V$ an element $\chi_V^G(x)\in\Z[x^\Gamma]$, called the character of $V$ (the construction is through the diagonalization of the action of a maximal torus in $G$). The character of the direct sum of two representations is the sum of the two characters, and the character of the tensor product is the product of the characters.
\end{example}

\begin{definition}
Let $\Lambda_G$ be again the weight lattice of a semisimple group $G$, and $\Lambda^+_G$ the monoid of dominant weights. Let us denote by $V_\lambda$ the irreducible representation of $G$ with highest weight $\lambda\in\Lambda^+$. We will call the \textbf{generating function for $G$-characters} the formal series
$$\Xi_G(x,z)=\sum_{\lambda\in\Lambda^+}\chi^G_{V_\lambda}(x)\cdot z^\lambda\in\Z[x^\Lambda][[z^{\Lambda^+}]]$$
in the ring of formal series over $\Lambda^+$ having as coefficients polynomials in $\Z[x^\Lambda]$.
\end{definition}

The next step in this section will be to find a formula for $\Xi_G(x,z)$ in a closed form, that means as a finite formula including fractional terms as in Remark~\ref{fractions}.

\subsection{Generating function for $G$-characters}

\begin{example}\label{sl3char} Let us start finding $\Xi_G(x,z)$ in an example, when $G=SL_3$; this will shed some light on how to do it in general.

The key point is the following formula (where $S^aV$ is the $a$-th symmetric power of a vector space $V$).
\begin{equation} \label{tricksl3}
V_{a\omega_1+b\omega_2}=(S^aV_{\omega_1}\otimes S^bV_{\omega_2})/(S^{a-1}V_{\omega_1}\otimes S^{b-1}V_{\omega_2})
\end{equation}

Using this formula, we easily get
$$\Xi_{G}(x,z)=(1-z^{\omega_1+\omega_2})\sum_{a,b}\chi^G_{S^a V_{\omega_1}}(x)\cdot\chi^G_{S^b V_{\omega_2}}(x)\cdot z^{a\omega_1+b\omega_2}=$$
$$=(1-z^{\omega_1+\omega_2})\left(\sum_{a}\chi^G_{S^a V_{\omega_1}}(x)z^{a\omega_1}\right)\left(\sum_{b}\chi^G_{S^b V_{\omega_2}}(x)\cdot z^{b\omega_2}\right)$$

Expressions of the form $\sum_{a}\chi_G(Sym^a V_{\omega_1})(x)z^{a\omega_1}$ are now easy to express in closed form, using the fact that
\begin{equation} \label{tricksym}
\sum_{a}\chi^G_{S^a V}(x)z^{a\omega_i}=\prod_{\gamma\text{ weight of }V}\frac{1}{1-x^\gamma z^{\omega_i}}.
\end{equation}

We get then

$$\Xi_{SL_3}(x,z)=(1-z^{\omega_1+\omega_2})\prod_{\gamma\text{ weight of }V_{\omega_1}}\frac{1}{1-x^\gamma z^{\omega_1}}\prod_{\gamma\text{ weight of }V_{\omega_2}}\frac{1}{1-x^\gamma z^{\omega_2}}.$$

\end{example}

For bigger $G$, unfortunately, we don't have anymore a formula as simple as (\ref{tricksl3}). In order to be able to generalize this calculation, we need to look at our problem in a different, more geometric, way. Let us now give a few further definitions.

\begin{definition}
Let $G$ be a semisimple group, and $\Gamma$ a monoid; we will denote by $\Gamma$\textbf{-graded} $G$\textbf{-module} a (possibly infinite dimensional) vector space $V$ with the following properties:
\begin{itemize}
\item there is a $\Gamma$-grading $$V=\bigoplus_{\gamma\in\Gamma} V_\gamma$$ where all $V_\gamma$ are finite dimensional;
\item there is a linear $G$-action respecting the grading.
\end{itemize}
If furthermore there is a product on $V$ such that $V_\gamma\cdot V_{\gamma'}\subseteq V_{\gamma+\gamma'}$, then we will call $V$ a $\Gamma$\textbf{-graded} $G$\textbf{-algebra}. 
\end{definition}

\begin{definition}
Let $V$ be a $\Gamma$\textbf{-graded} $G$\textbf{-module}, and let $\Lambda_G$ be the weight lattice of $G$. The \textbf{character} of $V$ will be the formal series
$$\chi^G_V(x,z)=\sum_{\gamma\in\Gamma}\chi_G(V_\gamma)(x)\cdot z^\gamma\in \Z[\Lambda_G][[\Gamma]].$$
\end{definition}

What follows now comes from \cite{private}. References for it can be found on \cite{weymanbook}, Chapter 5, on \cite{eisenbudbook}, Chapter 21 and on \cite{pany}.

Let $V_1,\ldots,V_k$ be $k$ representations of $G$; we can then consider the $\N^k$-graded $G$-module given by $$R=\C[\bigoplus_{i=1}^kV_i]$$
with the obvious multigrading such that the degree $(a_1,a_2,\ldots,a_k)$ piece is $\bigotimes Sym^{a_i}V_i$; using the notation $z^{(a_1,a_2,\ldots,a_k)}=z_1^{a_1}z_2^{a_2}\cdots z_k^{a_k}$, we get that in this case
$$\chi^G_R(x,z)=\prod_{i=1}^k\left(\prod_{\gamma\text{ weight of }V_{i}}\frac{1}{1-x^\gamma z_i}\right)$$
because again of Formula~(\ref{tricksym}) in Example~\ref{sl3char}.
The ring $R$ can be thought as the ring of multihomogeneous polynomials on the (affine) vector space $\bigoplus_{i=1}^kV_i$. Consider now a $G$-invariant multihomogeneous subvariety $X\subset\bigoplus_{i=1}^kV_i$, that can be thought as the multicone over an invariant projective subvariety $\PP(X)\subset\bigtimes_1^k\PP V_i$. Let us now consider the ring $\C[X]$ of regular functions on it, that will be of course graded as well; notice that its $(a_1,a_2,\ldots,a_k)$-degree piece can be identified with the space of sections
$$H^0(\PP(X),\OO_{\bigtimes_1^k\PP V_i}(a_1,a_2,\ldots,a_k)|_{\PP(X)}).$$
$\C[X]$ will also be a (cyclic) $R$-module; we can consider the minimal resolution of $\C[X]$ in free $R$-modules
\begin{equation}\label{resolution}
0\to F_r\to\ldots\to F_2\to F_1\to R\to\C[X]\cong R/\mathcal{I}(X)\to0.
\end{equation}
Every module $F_i$ can be thought as isomorphic to $M_i\otimes_{\C}R$, where $M_i$ is a finite dimensional (as a vector space) $\N^k$-graded $G$-module, whose basis is a set of generators for $F_i$ as $R$-module. The lenght $r$ of the resolution will be equal to the codimension of $X$ in $\bigoplus_{i=1}^kV_i$. We get then the following formula for the character of $\C[X]$
$$\chi^G_{\C[X]}(x,z)=\chi^G_R\cdot\left(1-\chi_G(M_1)(x,z)+\ldots+(-1)^k\chi_G(M_k)(x,z)\right).$$
If furthermore $\C[X]$ is a Gorenstein ring, the dualizing module of $\C[X]$ is isomorphic to $\C[X]$ (up to a shift given by the multidegree of the relative canonical module $\Omega$), and hence the resolution is going to have the symmetry relations
\begin{equation}\label{duality1}
M_{r-j}=M_j^*\otimes M_r \quad \forall j=1,\ldots,r-1
\end{equation}
\begin{equation}\label{duality2}
M_r=(M_0)_{-\Omega}
\end{equation}
where the dual of a module is intended to have the opposite multigrading (so, possibly becoming negative), and where $(M_0)_{-\Omega}$ is the module $M_0$ shifted in multidegree by $-\Omega$.

We are interested in applying this machinery to the specific case of the full flag variety $G/B$, that naturally lives inside $\bigtimes_1^n\PP V_{\omega_i}$. We will consider as $X$ the multicone $\widehat{G/B}\subset\bigoplus V_{\omega_i}$. Notice that the $\N^k$ of the grading can be identified with the set of dominant weights $\Lambda^+$. The rational functions on the multicone $\widehat{G/B}$ are the sections on $G/B$ of the restrictions of line bundles on $\bigtimes_1^n\PP V_{\omega_i}$. We then have
$$\chi^G_{\C[\widehat{G/B}]}(x,z)=\sum_{\lambda}\chi^G_{V_\lambda}(x)\cdot z^{\lambda}=\Xi_G(x,z)$$
Given that $\C[\widehat{G/H}]$ is Gorenstein (from \cite{pany}), this gives us an explicit way of calculating $\Xi_G(x,z)$ as a polynomial in $x$ and $z$ times $\chi^G_{R}(x,z)$, that is equal to
$$\chi^G_R(x,z)=\prod_{i=1}^n\left(\prod_{\gamma\text{ weight of }V_{\omega_i}}\frac{1}{1-x^\gamma z_i}\right).$$
We will conclude this section with two examples.

\begin{example}
Let us work again on Example~\ref{sl3char}. In that case, the flag variety $G/B$ is a $(1,1)$ hypersurface in $\bigtimes_1^n\PP V_{\omega_i}$ that is just the product of two copies of $\PP^2$. We have then, by adjunction,
$\Omega=(-1,-1)$, and a resolution that is forced to be
$$0\to M_1\otimes_{\C}R=(V_0)_{1,1}\otimes_{\C}R \to R\to\C[\widehat{G/B}]\cong R/\mathcal{I}(\widehat{G/B})\to0$$
that gives us again
$$\Xi_G(x,z)=(1-z_1z_2)\prod_{\gamma\text{ weight of }V_{\omega_1}}\frac{1}{1-x^\gamma z_1}\prod_{\gamma\text{ weight of }V_{\omega_2}}\frac{1}{1-x^\gamma z_2}.$$
\end{example}

The previous case is pretty lucky because $G/B$ is an hypersurface, that immediately gives a very simple resolution. Let us now analyze a slightly more complicated case.

\begin{example}\label{resolutionSL4}
Let now $G$ be $SL_4$; we will denote the irreducible representation $V_{a\omega_1+b\omega_2+c\omega_3}$ by $V_{a,b,c}$ (or $V_{abc}$), and any vector $V$ space lifted by a degree $(d,e,f)$ by $V_{(d,e,f)}$ (or $V_{(def)}$). The flag variety $G/B$ in this case has codimension 5 in $\bigtimes_1^n\PP V_{\omega_i}$, so we should expect a resolution of lenght 5. About $\Omega$, it is well known that the canonical bundle on $G/B$ is the restriction of the line bundle $(-2,-2,-2)$ from $\bigtimes_1^n\PP V_{\omega_i}$, while the canonical of $\bigtimes_1^n\PP V_{\omega_i}$ is $(-4,-6,-4)$; $\Omega$ will be then $(-2,-4,-2)$.
We can the construct the resolution in 2 different ways; either using some software capable of finding syzygies (plus some understanding of what $G$-representation are the graded pieces of the modules $M_i$), or a more direct way, by inclusion/exclusion, that is the one we will follow.
It is clear that the first piece of the resolution is always $R$, that means the module $V_{0,0,0}$ will be just the trivial representation $M_0$ at degree 0. Let us now consider the following piece
$$M_1\otimes_{\C}R\xrightarrow{f} R\to R/\mathcal{I}(\widehat{G/B})\to 0$$
We want to find the minimal $M_1$ such that $f$ is surjective. Let us consider now the restriction of this sequence in small degrees.
$$\begin{matrix}
degree &\quad M_1\otimes_\C R &\to & R &\to& R/\mathcal{I}&\to& 0 \\
(0,0,0)&\quad ? &\to& V_{000} &\xrightarrow{\sim}& V_{000} &\to& 0 \\
(1,0,0)&\quad ? &\to& V_{100} &\xrightarrow{\sim}& V_{100} &\to& 0 \\
(0,1,0)&\quad ? &\to& V_{010} &\xrightarrow{\sim}& V_{010} &\to& 0 \\
(0,0,1)&\quad ? &\to& V_{001} &\xrightarrow{\sim}& V_{001} &\to& 0 \\
(2,0,0)&\quad ? &\to& S^2V_{100}=V_{200} &\xrightarrow{\sim}& V_{200} &\to& 0 \\
(0,2,0)&\quad ? &\to& S^2V_{010}=V_{020}\oplus V_{000} &\to& V_{020} &\to& 0 \\
(2,0,0)&\quad ? &\to& S^2V_{001}=V_{002} &\xrightarrow{\sim}& V_{002} &\to& 0 \\
(1,1,0)&\quad ? &\to& V_{100}\otimes V_{010} = V_{110}\oplus V_{001} &\to& V_{110} &\to& 0 \\
(1,0,1)&\quad ? &\to& V_{100}\otimes V_{001} = V_{101}\oplus V_{000} &\to& V_{101} &\to& 0 \\
(1,1,0)&\quad ? &\to& V_{010}\otimes V_{001} = V_{011}\oplus V_{100} &\to& V_{011} &\to& 0 
\end{matrix}$$
It is clear that in order to be surjective, $M_1$ has to contain at least the module $$(V_{0,0,0})_{(0,2,0)}\oplus(V_{0,0,1})_{(1,1,0)}\oplus(V_{0,0,0})_{(1,0,1)}\oplus(V_{1,0,0})_{(0,1,1)}.$$
It is in fact true that if $M_1$ equals the module above, then the map $M_1\otimes_\C R \to R$ is indeed surjective. To prove this, notice that it is enough to check it only up to degree $(2,4,2)$, because from Formula~(\ref{duality1}) no $M_i$ can contain anything of degree higher than $(2,4,2)$ (and by higher we mean having any coordinate larger).
In the same way, we can construct the module $M_2$; modules $M_3$ and $M_4$ will then follow from Formula~(\ref{duality1}). We get then
$$M_0=(V_{000})_{(000)}$$
$$M_1=(V_{001})_{(110)}\oplus(V_{100})_{(011)}\oplus(V_{000})_{(101)}\oplus(V_{000})_{(020)}$$
$$M_2=(V_{000})_{(210)}\oplus(V_{000})_{(012)}\oplus(V_{100})_{(120)}\oplus(V_{001})_{(021)}\oplus(V_{010})_{(111)}$$
$$M_3=(V_{000})_{(032)}\oplus(V_{000})_{(230)}\oplus(V_{001})_{(122)}\oplus(V_{100})_{(221)}\oplus(V_{010})_{(131)}$$
$$M_4=(V_{100})_{(132)}\oplus(V_{001})_{(231)}\oplus(V_{000})_{(141)}\oplus(V_{000})_{(222)}$$
$$M_5=(V_{000})_{(242)}.$$
This gives us in the end a closed formula for $\Xi_G(x,z)$ in the case of $SL_4$.
\end{example}

It should be noted that a different approach to this problem, despite probably leading to the same (or probably more complicated) calculations, using Gel'fand-Tsetlin theory, is carried out in \cite{erass}.

\subsection{Generating function for $H$-invariants}

We will now consider a closed subgroup $H\subset G$; we will assume it is also semisimple. From now on, to indicate the weight lattice $\Lambda$, the set of dominant weights $\Lambda^+$, the set of all (positive, simple) roots $\Phi$ (respectively $\Phi^+,\Phi^s$) we will use a subscript to make explicit the group we are talking about. 

The choice of a Borel and a Cartan subgroup for $G$ will be made in such a way it extends a choice of Borel and Cartan for $H$ (that is always possible, by Lie's theorem); in this way, we have a well defined restriction $\pi:\Lambda_G\to\Lambda_H$ that maps $\Lambda^+_G$ onto $\Lambda^+_H$. Let $\pi_x$ be the map
\begin{align*}
\pi_x:\Z[x^{\Lambda_G}][[z^{\Lambda_G^+}]]&\to \Z[x^{\Lambda_H}][[z^{\Lambda_G^+}]] \\
x^\lambda z^\mu &\mapsto x^{\pi(\lambda)}z^\mu
\end{align*}

The aim of this subsection is to find and express in a closed form the following power series.

$$\Xi_H^G(z)=\sum_{\lambda\in\Lambda^+_G}dim(V_\lambda^H) z^\lambda.$$

\begin{proposition}\label{constantterm}
Let $G,H$ as before. Then the series $\Xi_H^G(z)$ is the constant term in the $x$ variables of the series
$$\left(\prod_{\alpha\in\Phi^+_H}(1-x^{-\alpha})\right)\cdot\pi_x (\Xi_G(x,z))\in \Z[x^{\Lambda_H}][[z^{\Lambda_G^+}]].$$
\end{proposition}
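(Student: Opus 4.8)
The plan is to use the Weyl integration / character-theoretic device that extracts the multiplicity of the trivial $H$-representation from a character, applied termwise to the generating function $\Xi_G(x,z)$. First I would recall the basic fact from the representation theory of the semisimple group $H$: for a finite-dimensional $H$-module $W$ with (formal) character $\chi_W^H(x)\in\Z[x^{\Lambda_H}]$, the dimension of the invariant subspace $W^H$ equals the multiplicity of the trivial character in $\chi_W^H$, and this multiplicity can be computed as a "constant term" after multiplying by the Weyl denominator. Concretely, writing the Weyl denominator of $H$ as $\prod_{\alpha\in\Phi_H^+}(1-x^{-\alpha})$ (times the appropriate shift), pairing $\chi_W^H$ with it and taking the coefficient of $x^0$ picks out exactly $\dim W^H$; this is the standard manipulation underlying the Weyl character formula and the Kostant multiplicity formula, and in the formal-power-series language of the paper it amounts to: $\dim W^H$ is the constant term in the $x$-variables of $\left(\prod_{\alpha\in\Phi_H^+}(1-x^{-\alpha})\right)\chi_W^H(x)$.

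Next I would assemble this over all $\lambda\in\Lambda_G^+$. By definition $\Xi_G(x,z)=\sum_{\lambda\in\Lambda_G^+}\chi^G_{V_\lambda}(x)z^\lambda$, and the restriction map $\pi_x$ replaces $\chi^G_{V_\lambda}(x)$ by $\chi^G_{V_\lambda}|_{\Lambda_H}$, which is precisely the character of $V_\lambda$ regarded as an $H$-module (this uses that the chosen Borel and Cartan of $G$ extend those of $H$, so that restriction of weights is compatible with the torus diagonalizations). Therefore
$$\pi_x(\Xi_G(x,z))=\sum_{\lambda\in\Lambda_G^+}\chi^H_{V_\lambda}(x)\,z^\lambda,$$
and multiplying by $\prod_{\alpha\in\Phi_H^+}(1-x^{-\alpha})$ and extracting the $x$-constant term commutes with the (formal) sum over $\lambda$ because for each fixed $\lambda$ the inner expression is a genuine Laurent polynomial in $x$ with coefficients that are monomials in $z$ supported in a single degree $\lambda$; hence no convergence issue arises and the operation is well-defined on $\Z[x^{\Lambda_H}][[z^{\Lambda_G^+}]]$. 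Term by term this yields $\sum_\lambda\dim(V_\lambda^H)z^\lambda=\Xi_H^G(z)$, which is the claim.

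The step requiring the most care — and the one I would expect to be the main obstacle to write cleanly — is justifying the "constant term equals $\dim W^H$" identity in exactly the normalization used here, namely with the factor $\prod_{\alpha\in\Phi_H^+}(1-x^{-\alpha})$ and \emph{no} $\rho_H$-shift and \emph{no} division by $|\mathcal{W}_H|$. The honest way to see it: decompose $\chi_W^H=\sum_\mu m_\mu\,\chi^H_{V_\mu^{(H)}}$ into irreducible $H$-characters; then $\dim W^H=m_0$. Using the Weyl character formula, $\left(\prod_{\alpha\in\Phi_H^+}(1-x^{-\alpha})\right)\chi^H_{V_\mu^{(H)}}(x)=x^{-\rho_H}\sum_{w\in\mathcal{W}_H}(-1)^{w}x^{w(\mu+\rho_H)}$ (after the standard rewriting of the denominator as $x^{-\rho_H}\prod_{\alpha>0}(x^{\alpha/2}-x^{-\alpha/2})\cdot(\text{unit})$; one must fix conventions so the unit is $1$). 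The constant term of the right-hand side is the number of $w$ with $w(\mu+\rho_H)=\rho_H$, weighted by sign; since $\rho_H$ is strictly dominant, this is $1$ when $\mu=0$ (only $w=e$) and $0$ otherwise. Hence taking the $x$-constant term of $\left(\prod_{\alpha\in\Phi_H^+}(1-x^{-\alpha})\right)\chi_W^H$ returns exactly $m_0=\dim W^H$. I would state this as a short lemma (or cite it), being explicit about the sign/$\rho$-shift bookkeeping, and then the proof of the proposition is just the linearity-and-termwise-application argument of the previous paragraph.

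\begin{proof}
For a finite-dimensional $H$-module $W$ with formal character $\chi_W^H(x)\in\Z[x^{\Lambda_H}]$, write its decomposition into irreducibles as $\chi_W^H=\sum_{\mu\in\Lambda_H^+}m_\mu\,\chi^H_{V^{(H)}_\mu}$, so that $\dim W^H=m_0$. Multiplying the Weyl character formula for $H$ by the Weyl denominator gives, with the standard conventions,
$$\Bigl(\prod_{\alpha\in\Phi_H^+}(1-x^{-\alpha})\Bigr)\chi^H_{V^{(H)}_\mu}(x)=\sum_{w\in\mathcal{W}_H}(-1)^{\ell(w)}\,x^{w(\mu+\rho_H)-\rho_H}.$$
Since $\rho_H$ is strictly dominant, $w(\mu+\rho_H)=\rho_H$ forces $\mu=0$ and $w=e$; hence the $x$-constant term of the left-hand side equals $1$ if $\mu=0$ and $0$ otherwise. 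Taking the constant term of $\bigl(\prod_{\alpha\in\Phi_H^+}(1-x^{-\alpha})\bigr)\chi_W^H(x)$ therefore returns $m_0=\dim W^H$.

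Now apply this with $W=V_\lambda$ viewed as an $H$-module, for each $\lambda\in\Lambda_G^+$. Because the Borel and Cartan of $G$ were chosen compatibly with those of $H$, restriction of weights along $\pi:\Lambda_G\to\Lambda_H$ sends the $G$-character $\chi^G_{V_\lambda}(x)$ to the $H$-character of $V_\lambda$, i.e.\ $\pi_x\bigl(\chi^G_{V_\lambda}(x)\bigr)=\chi^H_{V_\lambda}(x)$. Hence
$$\pi_x(\Xi_G(x,z))=\sum_{\lambda\in\Lambda_G^+}\chi^H_{V_\lambda}(x)\,z^\lambda\in\Z[x^{\Lambda_H}][[z^{\Lambda_G^+}]].$$
Multiplying by $\prod_{\alpha\in\Phi_H^+}(1-x^{-\alpha})$ and extracting the constant term in the $x$-variables is a well-defined operation on $\Z[x^{\Lambda_H}][[z^{\Lambda_G^+}]]$: for each fixed $\lambda$ the coefficient of $z^\lambda$ is a single Laurent polynomial in $x$, so no infinite sums in $x$ occur, and the operation is $\Z[[z^{\Lambda_G^+}]]$-linear in the obvious sense. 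Applying it termwise and using the previous paragraph, the coefficient of $z^\lambda$ becomes $\dim(V_\lambda^H)$. Therefore the constant term in the $x$-variables of
$$\Bigl(\prod_{\alpha\in\Phi_H^+}(1-x^{-\alpha})\Bigr)\cdot\pi_x(\Xi_G(x,z))$$
equals $\sum_{\lambda\in\Lambda_G^+}\dim(V_\lambda^H)\,z^\lambda=\Xi_H^G(z)$, as claimed.
\end{proof}
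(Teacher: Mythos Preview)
Your proof is correct and follows essentially the same route as the paper's: identify the coefficient of $z^\lambda$ in $\pi_x(\Xi_G(x,z))$ as the $H$-character of $V_\lambda$, then use the Weyl character formula to show that multiplying by $\prod_{\alpha\in\Phi_H^+}(1-x^{-\alpha})$ and taking the $x$-constant term extracts the multiplicity of the trivial representation. Your version is in fact slightly more careful than the paper's, since you retain the sign $(-1)^{\ell(w)}$ in the numerator (which the paper omits, harmlessly for the conclusion) and you make explicit why the constant-term extraction is well-defined termwise on the formal series.
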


\begin{proof}
Given a $\lambda\in\Lambda^+_G$, the coefficient of $z^\lambda$ in $\pi_{x} (\Xi_G(x,z))$ is just the character in $\Z[x^{\Lambda_H}]$ of the representation $V_\lambda$ of $G$ restricted to $H$ (notice that on $H$ it does not need to be irreducible anymore!). Consider now the additive operator $CT_\Phi$ on $\Z[x^{\Lambda_H}]$ obtained multiplying by $\prod_{\Phi^+_H}(1-x^{-\alpha})$ and taking the constant term in $x$. Consider now the character $\chi^H_{V_\mu}(x)$ of an irreducible representation $V_\mu$ of $H$. By the Weyl character formula, we have
$$\prod_{\Phi^+_H}(1-x^{-\alpha})\chi^H_{V_\lambda}(x)=\sum_{w\in W_H}x^{w(\mu+\rho)-\rho}$$
where $W$ is the Weyl group of $H$, and $\rho=(\sum_{\Phi^+}\alpha)/2$ is the semisum of all positive roots. The only possibility for which this has a constant term is when $w(\mu+\rho)-\rho=0$, that happens if and only if $\mu=0$ and $w$ is the identity. This shows that the operator $CT_\Phi$ is zero on every $\chi^H_{V_\mu}(x)$, besides when $\mu=0$, where its value is 1. This operator, then, on any $H$-representation $V$ calculates the dimension of the $H$-invariants, and this ends the proof.
\end{proof}

The problem of finding the constant term in a multivariate Laurent series is not immediate (and the fact that all happens in a ring of formal series over other variables makes it even harder); a very general way is using a theory of formal multivariate residues; the only references we could find (and they are rather incomplete) are \cite{guocedef}, \cite{guocepap1}, \cite{guocepap2}, where a theory of Malcev-Neumann series is developed. We also believe that following those papers is the only solid hope to extend the calculations of this paper to more complicated cases. In the case of $G=SL_2$, there is a more direct way, that does not use Proposition~\ref{constantterm}, through the results on invariants in binary forms in \cite{brion_residue}; this is the one we will follow for the main case we will analize in detail. We will consider the case of $G=SL_4$ and $H=SL_2$, with the embedding given looking at the standard representation $V_{\omega_1}$ of $SL_4$ as the third symmetric power of the standard representation of $SL_2$. We have the following result.

\begin{theorem}\label{mess}
Let $G=SL_4$ and $H=SL_2$ be as above. Let us denote by $z_1,z_2,z_3$ the monomials $z^{\omega_1},z^{\omega_2},z^{\omega_3}$ of . The generating function $\Xi_H^G$ for $H$-invariants is then
\begin{equation}\label{sl4formula}
\Xi_H^G=\frac{P}{U}
\end{equation}
where
\begin{align*}
P=&1 - z_1^2 z_2 + z_1^4 z_2^2 + z_1^5 z_2 z_3 + z_1 z_2^2 z_3 + z_1^3 z_2^2 z_3 - z_1^7 z_2^2 z_3 - z_1^3 z_2^3 z_3 + z_1^2 z_3^2 - \\
 &-z_2 z_3^2 + z_1^2 z_2 z_3^2 + 2 z_1^2 z_2^2 z_3^2 - z_1^4 z_2^2 z_3^2 - z_1^6 z_2^2 z_3^2 - z_1^4 z_2^3 z_3^2 + z_1^3 z_3^3 + \\
 &+ z_1^3 z_2 z_3^3 - z_1^5 z_2 z_3^3 + z_1 z_2^2 z_3^3 + z_1^3 z_2^2 z_3^3 - 3 z_1^5 z_2^2 z_3^3 - z_1 z_2^3 z_3^3 - z_1^3 z_2^3 z_3^3 -\\ 
&- z_1^5 z_2^3 z_3^3 + z_1^7 z_2^3 z_3^3 + z_1^3 z_2^4 z_3^3 + z_1^4 z_2 z_3^4 - z_1^6 z_2 z_3^4 + z_2^2 z_3^4 - z_1^2 z_2^2 z_3^4 - \\
&-2 z_1^4 z_2^2 z_3^4 - z_1^6 z_2^2 z_3^4 + z_1^8 z_2^2 z_3^4 - z_1^2 z_2^3 z_3^4 + z_1^4 z_2^3 z_3^4 + z_1^5 z_3^5 + z_1 z_2 z_3^5 - \\
&-z_1^3 z_2 z_3^5 - z_1^5 z_2 z_3^5 - z_1^7 z_2 z_3^5 - 3 z_1^3 z_2^2 z_3^5 + z_1^5 z_2^2 z_3^5 + z_1^7 z_2^2 z_3^5 - z_1^3 z_2^3 z_3^5 + \\
&+z_1^5 z_2^3 z_3^5 + z_1^5 z_2^4 z_3^5 - z_1^4 z_2 z_3^6 - z_1^2 z_2^2 z_3^6 - z_1^4 z_2^2 z_3^6 + 2 z_1^6 z_2^2 z_3^6 + z_1^6 z_2^3 z_3^6 -\\
&- z_1^8 z_2^3 z_3^6 + z_1^6 z_2^4 z_3^6 - z_1^5 z_2 z_3^7 - z_1 z_2^2 z_3^7 + z_1^5 z_2^2 z_3^7 + z_1^7 z_2^2 z_3^7 + z_1^3 z_2^3 z_3^7 +\\
&+ z_1^4 z_2^2 z_3^8 - z_1^6 z_2^3 z_3^8 + z_1^8 z_2^4 z_3^8\\
 U=&(1-z_1^4)(1-z_1^3z_3)(1-z_1z_3^3)(1-z_3^4)(1-z_2)(1-z_2^3)(1-z_1^2z_2)(1-z_2z_3^2)
\end{align*}
\end{theorem}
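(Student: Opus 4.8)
The plan is to compute $\Xi_H^G$ as an explicit rational function by specializing the machinery already built in this section to $G=SL_4$, $H=SL_2$, the embedding coming from $V_{\omega_1}=S^3(\text{std}_{SL_2})$. Rather than going through Proposition~\ref{constantterm}, I would follow the route indicated in the text and use \cite{brion_residue}: since $H=SL_2$, the restriction $\pi$ sends a weight lattice vector $\lambda=a\omega_1+b\omega_2+c\omega_3$ of $SL_4$ to a weight of $SL_2$, and the coefficient of $z^\lambda$ in $\pi_x(\Xi_G(x,z))$ is the $SL_2$-character of $V_\lambda|_{SL_2}$, written as a Laurent polynomial in a single variable $x$. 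Extracting $\dim V_\lambda^H$ then amounts to applying the one-variable operator $CT_\Phi$ (multiply by $(1-x^{-2})$ and take the constant term), which for binary forms is exactly the classical "number of invariants" functional handled residue-theoretically in \cite{brion_residue}.

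The first concrete step is to record $\Xi_G(x,z)=\Xi_{SL_4}(x,z)$ in closed form using Example~\ref{resolutionSL4}: it equals $\chi^G_R(x,z)\cdot\big(1-\chi_G(M_1)+\chi_G(M_2)-\chi_G(M_3)+\chi_G(M_4)-\chi_G(M_5)\big)$ with $\chi^G_R(x,z)=\prod_{i=1}^3\prod_{\gamma\text{ wt of }V_{\omega_i}}(1-x^\gamma z_i)^{-1}$ and the $M_i$ as listed. Next I substitute the branching rule for the embedding $SL_2\hookrightarrow SL_4$: the weights of $V_{\omega_1}$ restrict to $x^3,x^1,x^{-1},x^{-3}$; those of $V_{\omega_2}=\wedge^2 V_{\omega_1}$ restrict to $x^4,x^2,x^0,x^0,x^{-2},x^{-4}$ (so $V_{\omega_2}|_{SL_2}=S^4\oplus S^0$), and $V_{\omega_3}=\wedge^3 V_{\omega_1}\cong V_{\omega_1}^*$ restricts to $x^3,x^1,x^{-1},x^{-3}$. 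Feeding these into $\chi^G_R$ gives $\pi_x(\chi^G_R)=\prod(1-x^{\pm3}z_1)^{-1}(1-x^{\pm1}z_1)^{-1}$ times the analogous factors for $z_3$ and for $z_2$, and $\pi_x$ of the numerator $\big(1-\chi_G(M_1)+\cdots\big)$ is a Laurent polynomial in $x$ with coefficients polynomials in $z_1,z_2,z_3$ obtained by restricting each $V_{abc}$ appearing to its $SL_2$-character. Then one multiplies by $(1-x^{-2})$ and extracts the constant term in $x$; the residue calculus of \cite{brion_residue} (an iterated one-variable residue picking up the poles inside the unit circle in $x$, organized so the $z_i$ are treated as formal parameters) produces $\Xi_H^G$ as a single rational function in $z_1,z_2,z_3$.

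The denominator $U$ is predicted structurally: the residues are taken at the roots $x$ of the factors $1-x^{\pm3}z_1$, $1-x^{\pm1}z_3$, etc., and collecting contributions yields exactly the eight factors $(1-z_1^4)(1-z_1^3z_3)(1-z_1z_3^3)(1-z_3^4)(1-z_2)(1-z_2^3)(1-z_1^2z_2)(1-z_2z_3^2)$ — these are the $SL_2$-invariant monomials in the relevant symmetric powers (e.g. $z_1^4$ from the quartic invariant of the binary cubic built from $z_1$, $z_1^2z_2$ and $z_2z_3^2$ from mixed covariants, $1-z_2,1-z_2^3$ from the split-off trivial summand in $S^4$, and so on), so the common denominator is forced once one knows the set of poles. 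With $U$ pinned down, the numerator $P$ is then the unique Laurent-polynomial numerator making the identity hold, and in practice I would verify it by expanding both sides of $\Xi_H^G\cdot U=P$ up to sufficiently high degree in $z_1,z_2,z_3$ (high enough to exceed the total degree $8+4+8$ of $P$), comparing coefficients against $\dim V_{abc}^{SL_2}$ computed directly from the branching rule.

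The main obstacle I expect is bookkeeping control over the multivariate residue/constant-term extraction: although it is "just" a one-variable residue in $x$, the partial fractions involve the $z_i$ nontrivially (the poles $x$ are algebraic functions of the $z_i$, sitting on the $4$th-, $3$rd-, and $1$st-power loci), and one must be careful that the iterated-residue prescription of \cite{brion_residue} is applied so that only the poles "inside" contribute and that no spurious poles are introduced when clearing denominators. A secondary difficulty is certifying the explicit $P$: since $P$ has several dozen terms and is not visibly symmetric (though it does respect the $z_1\leftrightarrow z_3$ symmetry coming from $V_{\omega_1}\leftrightarrow V_{\omega_1}^*$, which is a useful internal check), the safest route is the finite computer-algebra verification of $P=\Xi_H^G\cdot U$ truncated in degree, rather than a by-hand derivation of every monomial.
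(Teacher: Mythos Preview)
Your overall route---restrict $\Xi_G$ along the branching, then extract the $SL_2$-invariant part via the one-variable residue of \cite{brion_residue}, and finally confirm the numerator by a finite expansion---is in spirit exactly what the paper does, but there is a real gap in your verification step. You assert that the residue computation ``yields exactly the eight factors'' of $U$ as denominator and then propose to check $\Xi_H^G\cdot U=P$ only up to multidegree $(8,4,8)$. Neither claim is justified. Brion's residue formulas applied here do not produce $U$ on the nose: the paper shows that what comes out (after passing through the auxiliary $4$-variable algebra $\C[V_3\oplus V_4\oplus V_3\oplus V_1]^H$ and then the resolution of $\C[\widehat{G/B}]$ from Example~\ref{resolutionSL4}) is a rational function with the much larger denominator $U\cdot V\cdot(1-z_2)$, where $V$ is an explicit product of ten further factors. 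That the extra factor $V\cdot(1-z_2)$ actually cancels is not a structural prediction---it is precisely what must be proved. Consequently, without an a~priori bound on the numerator over a \emph{known} denominator, checking coefficients up to $(8,4,8)$ proves nothing: if the true denominator were larger than $U$, then $\Xi_H^G\cdot U$ would not even be a polynomial, and agreement on any finite window is mere numerical evidence.

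The paper closes this gap as follows. First it uses \cite{brion_residue} (Theorems~1 and~2 there) not to compute $\Xi_H^G$ outright---it remarks explicitly that this direct computation was beyond the available tools---but to bound the multidegree of the numerator over $U\cdot V\cdot(1-z_2)$ by $(30,30,30)$, after tracking how the resolution modules $M_i$ and the derivatives in the auxiliary variable $z_4$ shift degrees. With that bound in hand, the finite check becomes rigorous: one verifies that $U\cdot\Xi_H^G-P$ has no monomial with all three exponents $\le 30$ (computing each $\dim V_{abc}^{SL_2}$ via Lemma~\ref{singlecases}), and then a short degree argument forces the remainder to vanish identically. So the missing ingredient in your plan is exactly this a~priori degree bound; once you supply it, your expansion check must go up to $(30,30,30)$ rather than $(8,4,8)$.
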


\begin{proof}
Given a polynomial $p$ in $k$ variables $z_1,\ldots,z_k$, we will say it has multidegree $(a_1,\ldots,a_k)$ if the degree as a polynomial in $z_i$ is $a_i$ for every $i=1\ldots,k$. We will say it has multidegree at most $(a_1,\ldots,a_k)$ if the degree in $z_i$ is at most $a_i$ for every $i=1\ldots,k$. We will say a formal series \textit{has denominator} a given polynomial $r$ if it can be written in a form with $r$ as denominator and a polynomial as numerator.

We will first prove that $\Xi_H^G(z)$ has the form

$$\Xi_H^G(z)=\frac{Q}{U\cdot V\cdot (1-z_2)}$$

where $Q$ is a polynomial in $z_1,z_2,z_3$ having multidegree at most $(30,30,30)$, and 
\begin{align*}
V=&(1-z_1^2)(1-z_2^2)(1-z_3^2)(1-z_1^2z_2^3)(1-z_1^4z_2)(1-z_1^4z_2^3)\cdot\\
\cdot&(1-z_1z_3)(1-z_2z_3^4)(1-z_2^3z_3^2)(1-z_2^3z_3^4)
\end{align*}
After that, proving the theorem will just become a matter of checking the equality in a finite number of cases.

To prove the claim, we will use the results in \cite{brion_residue} to find the generating function of the multi-Hilbert function (or Poincar\'e series as it is called in \cite{brion_residue}) of the ($\N^4$-graded) algebra
$$\C[V_3\oplus V_4\oplus V_3\oplus V_1]^H$$
where $V_i$ is the irreducible representation of $SL_2$ having highest weight $i$ times the fundamental weight of $SL_2$. Following Theorem 1 in \cite{brion_residue}, clearing all denominators, we get that the denominator will be $U\cdot V\cdot W$,\footnote{we used the fact that if we apply $\Phi_{h_1,h_2,h_3,h_4}$ (as defined in \cite{brion_residue}), for every factor in the denominator $(1-z_1^{a_1}z_2^{a_2}z_3^{a_3}z_4^{a_4})$, we get a factor $(1-(z_1^{a_1/h_1}z_2^{a_2/h_2}z_3^{a_3/h_3}z_4^{a_4/h_4})^k)$, where $k$ is the smallest integer such that $ka_i/h_i$ is an integer for every $i$.} where
\begin{align*}
W=&(1-z_1z_4)(1-z_1z_4^3)(1-z_2z_4^2)(1-z_2z_4^4)(1-z_3z_4)(1-z_3z_4^3).
\end{align*}
Notice that  $U\cdot V\cdot W$ has multidegree $(25,24,25,14)$; as a Corollary of Theorem 2 in \cite{brion_residue}, we get that the numerator is a polynomial $Q$ in $z_1,z_2,z_3,z_4$ that has multidegree $(21,19,21,12)$. Notice also that we have
\begin{equation}\label{multihilb1}MultiHilb(\C[V_3\oplus (V_4\oplus V_0)\oplus V_3]^H)=\left.\frac{Q}{UVW(1-z_2)}\right\vert_{z_4=0}\end{equation}
\begin{equation}\label{multihilb2}MultiHilb((V_k\otimes\C[V_3\oplus (V_4\oplus V_0)\oplus V_3])^H)=\left.\frac{\frac{1}{k!}\frac{d^k}{dz_4^k}\frac{Q}{W}}{UV(1-z_2)}\right\vert_{z_4=0},\end{equation}
because of the fact that the representation $V_k\otimes V_h$ of $SL_2$ contains one invariant for $k=h$ and zero otherwise, and so to find invariants we need to look at the coefficient of $z_4^k$. After setting $z_4=0$, the denominator of (\ref{multihilb1}) and (\ref{multihilb2}) is exactly $UV(1-z_2)$, as claimed above; in fact, 
$$\frac{d^k}{dz_4^k}\frac{Q}{W}=\frac{Q^{(k)}}{W^k}$$
where $Q^{(k)}$ is a polynomial of multidegree at most $(21+2k,19+2k,21+2k,12+13k)$ (because the multidegree of $W$ is (2,2,2,14) and we take the derivative in $z_4$); then, setting $z_4=0$ will kill exactly all the factors in $W$.

The function $\Xi_H^G(z)$ is the generating function of the multi-Hilbert function of the multi-graded ring $\C[\widehat{G/B}]^H$.
Consider now the resolution (\ref{resolution}) of $\C[\widehat{G/B}]$ in free $R=\C[\bigoplus V_{\omega_i}]$-modules, and let's take the $H$-invariants of it; we get then
$$0\to (M_r\otimes_{\C}R)^H\to\ldots\to (M_1\otimes_{\C}R)^H\to R^H\to\C[\widehat{G/B}]^H\to0$$
so, again, the multiHilbert function for $\C[\widehat{G/B}]^H$ will be the alternating sum of the functions of the modules $(M_i\otimes R)^H$. Notice that $$R^H=\C[V_{100}|_{SL_2}\oplus V_{010}|_{SL_2} \oplus V_{001}|_{SL_2}]^H=\C[V_3\oplus (V_4\oplus V_0)\oplus V_3]^H$$
For the other terms $M_i$, from the calculations in Example~\ref{resolutionSL4}, we only get (lifts of) representations $V_{000}$, $V_{100}$ or $V_{001}$ (so, $V_3$ on $H$) and $V_{010}$ (so, $V_4\oplus V_0$ on $H$). These are just combinations of modules  of type $(V_k\otimes\C[V_3\oplus (V_4\oplus V_0)\oplus V_3])^H$, with $k$ at most 4. We then get immediately that the denominator of $MultiHilb(\C[\widehat{G/B}]^H)$ is $UV(1-z_2)$ claimed above (because all summands do). For a piece of type $(V_k)_{(a_1,a_2,a_3)}$, the numerator have multidegree $(a_1+21+2k,a_2+19+2k,a_3+21+2k)$ (because lifting the multdegree by $(a_1,a_2,a_3)$ means multiplying by $\zzz{a_1}{a_2}{a_3}$ and increasing the multidegree). Looking at all the pieces in the modules $M_i$ in Example~\ref{resolutionSL4}, the highest multidegree we get is at $(V_4)_{(1,3,1)}$, where we get a polynomial $R$ in $z_1,z_2z_3$ of multidegree $(30,30,30)$, as claimed.

Suppose now to know that the series
\begin{equation}\label{manychecks}
U\cdot\Xi_H^G(z)-p(z_1,z_2,z_3)
\end{equation}
has no term for all multidegrees where all $z_1,z_2,z_3$ have degree less then or equal to 30. This series has then the form $z_1^{31}S_1+z_2^{31}S_2+z_3^{31}S_3$, where $S_1,S_2,S_3$ are three series in $z_1,z_2,z_3$. Using $\Xi_H^G(z)=R/UV(1-z_2)$, we get
$$\frac{R}{V(1-z_2)}-P=z_1^{31}S_1+z_2^{31}S_2+z_3^{31}S_3$$\
and hence
$$R-P\cdot V(1-z_2)=(z_1^{31}S_1+z_2^{31}S_2+z_3^{31}S_3)\cdot V(1-z_2)$$
but now $P$ has multidegree $(8,4,8)$ and $V(1-z_2)$ has multidegree $(13,17,13)$, so in the LHS there is just a polynomial of multidegree at most $(30,30,30)$. The RHS needs then to be zero, and the theorem is proved.

It only remains to check that the expression (\ref{manychecks}) has no monomial where $z_1,z_2,z_3$ have exponents $\leq 30$. This can be done quit explicitely in a finite amount of time; the only nontrivial step is finding an explicit algorithm to find the coefficient of a monomial in $\Xi_H^G(z)$, that is, the dimension of a given $(V_{abc}|_{SL_2})^{SL_2}$), and this is the content of the following lemma. The rest is just a long calculation, that we did using a computer (in fact, in a few different ways), but it is also possible to do by hand.
\end{proof}

\begin{lemma}\label{singlecases}
Let $SL_2$ embed into $SL_{N+1}$ as the (identity component of the) stabilizer of a rational normal curve $X$, and let $V_{a_1\ldots a_N}$ be the irreducible representation of $SL_{N+1}$ of highest weight $\sum a_i\omega_i$. Then the dimension of $SL_2$-invariants in $V_{a_1\ldots a_N}$ is the constant term in the Laurent polynomial
\begin{equation}\label{weyl2}
(1-t^{-2})\prod_{1\leq i\leq j\leq n}\frac{t^{\sum_{k=i}^j(a_k+1)}-t^{-\sum_{k=i}^j(a_k+1)}}{t^{j-i+1}-t^{-(j-i+1)}}
\end{equation}
\end{lemma}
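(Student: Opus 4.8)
The plan is to compute the dimension of $(V_{a_1\ldots a_N})^{SL_2}$ by a straightforward character computation, using the Weyl character formula for both $SL_{N+1}$ and $SL_2$ restricted to the torus of $SL_2$. First I would set up the restriction of weights: when $SL_2$ sits inside $SL_{N+1}=SL(V_{\omega_1})$ via the identification of $V_{\omega_1}$ with the $N$-th symmetric power of the standard representation of $SL_2$, the standard torus $\operatorname{diag}(s,s^{-1})$ of $SL_2$ acts on $V_{\omega_1}$ with weights $s^N, s^{N-2}, \ldots, s^{-N}$. Writing $t=s$, this identifies the restriction map on weight lattices: the fundamental weight $\omega_i$ of $SL_{N+1}$ (which is $\Lambda^i$ of the standard representation, hence has $t$-weight the sum of the top $i$ among $N, N-2, \ldots, -N$) restricts to $t^{i(N+1-i)}$ up to the natural normalization. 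The key consequence is that the restriction to $SL_2$ of the character of $V_{\lambda}$, $\lambda=\sum a_i\omega_i$, is a symmetric Laurent polynomial in $t$, and the number of $SL_2$-invariants equals the multiplicity of the trivial $SL_2$-representation in this restriction.

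The main computational step is the same trick used in the proof of Proposition~\ref{constantterm}: the operator "multiply by $(1-t^{-2})$ and take the constant term in $t$" annihilates $\chi^{SL_2}_{V_m}$ for all $m>0$ and sends $\chi^{SL_2}_{V_0}=1$ to $1$, so applied to any $SL_2$-character it extracts exactly the dimension of the invariants. (This is precisely the Weyl character formula / Weyl integration formula for $SL_2$: $\prod_{\alpha\in\Phi^+_{SL_2}}(1-t^{-\alpha})=1-t^{-2}$.) So I would write
$$\dim\big(V_{a_1\ldots a_N}\big)^{SL_2}=\text{constant term in }t\text{ of }\ (1-t^{-2})\cdot\chi^{SL_{N+1}}_{V_\lambda}\big|_{SL_2}.$$
It remains to identify $\chi^{SL_{N+1}}_{V_\lambda}|_{SL_2}$ with the displayed product $\prod_{1\le i\le j\le n}\frac{t^{\sum_{k=i}^j(a_k+1)}-t^{-\sum_{k=i}^j(a_k+1)}}{t^{j-i+1}-t^{-(j-i+1)}}$. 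This is just the Weyl character formula for $SL_{N+1}$, specialized to the one-parameter subgroup $s^\vee=\mathrm{diag}(t^N,t^{N-2},\ldots,t^{-N})$ which is the principal $SL_2$. Indeed, the Weyl character formula gives $\chi_{V_\lambda}=\frac{\sum_{w}\operatorname{sgn}(w)\,e^{w(\lambda+\rho)}}{\sum_w\operatorname{sgn}(w)\,e^{w\rho}}$, and evaluating $e^{\mu}$ on this principal $SL_2$ torus at the standard basis weights $\varepsilon_i$ sends $\varepsilon_i\mapsto t^{N+2-2i}$, so the numerator and denominator become Vandermonde-type determinants $\det(x_i^{\mu_j})$ with $x_i=t^{N+2-2i}$. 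The ratio of two such determinants with the same nodes $x_i$ telescopes into the product form over $1\le i<j\le N+1$ of $\frac{x_i^{m_j}-x_j^{m_j}}{\cdot}$-type factors; carrying out the bookkeeping (with $m_j=\lambda_j+\rho_j$ the partition coordinates of $\lambda+\rho$, so that $m_i-m_j=\sum_{k=i}^{j-1}(a_k+1)$ re-indexed appropriately) collapses each factor to $\frac{t^{\sum_{k=i}^j(a_k+1)}-t^{-\sum_{k=i}^j(a_k+1)}}{t^{j-i+1}-t^{-(j-i+1)}}$, which after re-indexing $i,j$ to run over $1\le i\le j\le n=N$ is exactly~(\ref{weyl2}) without the $(1-t^{-2})$ factor.

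The step I expect to be the only real bookkeeping obstacle is this last one: matching the index conventions so that the product over pairs of basis vectors $\varepsilon_i$ of $SL_{N+1}$ turns into the stated product over $1\le i\le j\le n$ of the simple-root spans $\sum_{k=i}^j(a_k+1)$, and checking that the Vandermonde denominator $\prod_{i<j}(x_i-x_j)$ contributes exactly $\prod(t^{j-i+1}-t^{-(j-i+1)})$ up to an overall monomial in $t$ that cancels between numerator and denominator. This is the classical derivation of the specialization of the Weyl dimension/character formula along the principal $SL_2$ (equivalently, the "$q$-analogue" appearing in Kostant's work on the principal three-dimensional subgroup), and once the re-indexing is pinned down the proof is complete by combining it with the constant-term extraction above. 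No convergence issues arise since everything is a Laurent polynomial in $t$.
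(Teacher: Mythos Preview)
Your proposal is correct and follows essentially the same route as the paper: extract $SL_2$-invariants via the operator ``multiply by $(1-t^{-2})$ and take the constant term'' (exactly the argument of Proposition~\ref{constantterm}), and identify the restricted character with the product in~(\ref{weyl2}) by specializing the Weyl character formula for $SL_{N+1}$ to the principal torus $\mathrm{diag}(t^N,t^{N-2},\ldots,t^{-N})$ and recognizing the resulting alternating sum as a Vandermonde determinant. The paper organizes the last step by separately evaluating the Weyl numerator (as the Vandermonde determinant yielding the product of numerators) and the prefactor-times-Weyl-denominator (yielding the product of denominators), whereas you phrase it as the ratio of two Vandermonde-type determinants at geometric nodes; this is the same computation with the same index bookkeeping left to pin down.
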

\begin{proof}
Let $t=t^{\omega}\in\Z[t^{\Lambda_{SL_2}}]$, for $\omega$ the only dominant weight. We will prove that the character of the restriction $V_{a_1\ldots a_N}|_{SL_2}$ is the entire product in the formula. Then, multiplying by $(1-t^{-2})$ and taking the constant term will give us the dimension of the $SL_2$ invariants, as in the proof of Proposition~\ref{constantterm}. Let now $\eta_0,\ldots,\eta_N$ be the weights of $SL_{N+1}$ of the standard representation $V_{\omega_1}$ (so they sum up to zero), and notice that their projections by  $\pi:\Lambda_{SL_{N+1}}\to\Lambda_{SL_2}$ satisfy $\pi(\eta_i)=t^{N-2i}$. Let $z_i=z^{\eta_i}\in\Z[z^{\Lambda_{SL_{N+1}}}]$. Then, the Weyl character formula tells us that
$$\chi(V_{a_1\ldots a_N})=\frac{\prod_{i=0}^N z_i^{-(N-i)}}{\prod_{\alpha\in\Phi^+}(1-z^{-\alpha})}\sum_{\sigma\in\Sigma_{N+1}}\left(sgn(\sigma)\prod_{i=0}^N z_i^{\sum_{j=\sigma(i)+1}^{N}(a_j+1)}\right)$$
where $\Sigma_{N+1}$ is the set of all permutations of the set $\{0,1,\ldots,N\}$, and $sgn(\sigma)$ is the sign of a permutation $\sigma$. Projecting to $\Z[t^{\Lambda_{SL_2}}]$, is it clear that the first factor becomes just the product of all denominators of (\ref{weyl2}), because $$\prod_{i=0}^N z_i^{-(N-i)}=\prod_{\alpha\in\Phi^+}z^{\alpha/2},$$
and because the positive root $\sum_{k=i}^j\alpha_i$ is sent by $\pi$ to $t^{2(j-i+1)}$.
About the second factor, we have
$$\pi\left(\sum_{\sigma\in\Sigma_{N+1}}\left(sgn(\sigma)\prod_{i=0}^N z_i^{\sum_{j=\sigma(i)+1}^{N}(a_j+1)}\right)\right)=$$
$$=\sum_{\sigma\in\Sigma_{N+1}}\left(sgn(\sigma)\prod_{i=0}^N t^{(N-2i)\sum_{j=\sigma(i)+1}^{N}(a_j+1)}\right)$$
With a little bit of imagination, the second factor can be imagined as the determinant of the following matrix.
\begin{center}
$$\left[\begin{array}{c c c c}
t^{N\sum_{j=1}^{N}(a_j+1)} & t^{(N-2)\sum_{j=1}^{N}(a_j+1)} & \cdots  & t^{-N\sum_{j=1}^{N}(a_j+1)}\\
t^{N\sum_{j=2}^{N}(a_j+1)} & t^{(N-2)\sum_{j=2}^{N}(a_j+1)} & \cdots  & t^{-N\sum_{j=2}^{N}(a_j+1)}\\
\vdots & \vdots & \ddots  & \vdots \\
t^{N(a_N+1)} & t^{(N-2)(a_N+1)} & \cdots  & t^{-N(a_N+1)}\\
1 & 1 & \cdots  & 1 \\
\end{array}\right]$$
\end{center}
This matrix can be interpreted as a Vandermonde matrix, and its determinant is exactly the product of all the numerators of (\ref{weyl2}).
\end{proof}

\begin{remark}
The formulas in \cite{brion_residue} could be used directly to find the explicit formula, instead of just bounding the degree; unfortunately we were not able to produce computational tools able to sustain such a complicated calculation.
\end{remark}

\section{Vector partition functions and splines}\label{asymptotics}

\subsection{Vector partition functions}

The final aim for this chapter is to find the function $\dimasi$ from Section~\ref{inters_theory} explicitly, that is the asymptotic value for the coefficients of $\Xi_H^G$. Our first step will be to prove the next proposition, that we already used in Section~\ref{inters_theory}.

\begin{proposition}\label{capitalM}
Let $M:\Lambda^+\to\N$ be the function associating every dominant weight $\lambda$ of $G$ the number $dim(V_\lambda^H)$. Then there exists a list $A$ of vectors in $\Lambda^+$ such that
\begin{itemize}
\item[(i)] The function $M$ is zero outside $\Lambda(A)\cap\Lambda^+$.
\item[(ii)] In each big cell $\mathfrak{c}$ of the chamber complex $\mathcal{C}(A)$, we have
$$M(\lambda) = q^{\mathfrak{c}}(\lambda) + b^{\mathfrak{c}}(\lambda) \quad \forall \lambda\in\Lambda(A)\cap\mathfrak{c}$$
where
\begin{itemize}
\item $q^{\mathfrak{c}}$ is a quasi-polynomial; that means, there is a finite index sublattice $\Lambda(A,\mathfrak{c})$ of $\Lambda(A)$ such that $q^{\mathfrak{c}}$ agrees on a different polynomial on each coset $\lambda+\Lambda(A,\mathfrak{c})$ for $\lambda\in\Lambda(A)$.
\item $b^{\mathfrak{c}}$ is a function that is zero on $\mathfrak{c}\cap\Lambda(A)\setminus(\mathfrak{c}+\beta)\cap\Lambda(A)$, where $\beta$ is an element of $\Gamma$ such that $M(\beta)$ is nonzero.
\end{itemize}
\end{itemize}
\end{proposition}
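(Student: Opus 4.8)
The plan is to turn $M$ into the coefficient sequence of a rational generating series and then feed that series into the theory of vector partition functions, continuing the circle of ideas of Section~\ref{invariant_theory}. The starting observation is that $\Xi_H^G(z)$ is exactly the $\Lambda^+$-graded Hilbert series of the algebra $R:=\C[\widehat{G/B}]^H$: the degree-$\lambda$ component of $\C[\widehat{G/B}]=\bigoplus_{\lambda}V_\lambda$ is $V_\lambda$, so the degree-$\lambda$ component of $R$ is $V_\lambda^H$, of dimension $M(\lambda)$. Since $\widehat{G/B}$ is affine and $H$ is reductive, $R$ is a finitely generated $\C$-algebra; I would pick homogeneous algebra generators of multidegrees $a_1,\dots,a_p$ and adjoin to them a finite set of semigroup generators of $\Lambda^+(G/H)=\{\lambda:M(\lambda)\neq0\}$ (finite by Proposition~5.15 of \cite{timashev_book}, a semigroup by Lemma~\ref{simplelemma}), calling the resulting list $A=[a_1,\dots,a_m]$. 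Then $A$ consists of vectors of $\Lambda^+$, its convex hull avoids $0$ (as $A$ lies in the strongly convex cone $\mathcal W$), one has $\mathbb N A=\{\lambda:M(\lambda)\neq0\}$, and, by the multigraded Hilbert--Serre theorem,
\[
\Xi_H^G(z)=\frac{P(z)}{\prod_{i=1}^m(1-z^{a_i})},\qquad P\in\Z[z^{\Lambda^+}].
\]
In particular $M$ is supported on $\mathbb N A\subseteq\Lambda(A)\cap\Lambda^+$, which is assertion (i). (When $G=SL_4$, $H=SL_2$ one may take $A$ and $P$ explicitly from Theorem~\ref{mess}.)

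\textbf{Reduction to vector partition functions.} Writing $P=\sum_j\epsilon_j z^{\beta_j}$ as a finite $\Z$-combination of monomials — and noting that each $\beta_j\in\mathbb N A$, since $P=\Xi_H^G\cdot\prod_i(1-z^{a_i})$ is supported in $\mathbb N A+\mathbb N A=\mathbb N A$ — comparison of the coefficients of $z^\lambda$ gives
\[
M(\lambda)=\sum_j\epsilon_j\,p_A(\lambda-\beta_j),
\]
where $p_A$ is the vector partition function of $A$, $\sum_\mu p_A(\mu)z^\mu=\prod_i(1-z^{a_i})^{-1}$, with $p_A(\mu)=0$ for $\mu\notin\mathbb N A$ (in particular off $C(A)$). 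I would then invoke the structure theorem for vector partition functions — the one developed via box splines in the body of Section~\ref{asymptotics}: for each big cell $\mathfrak c$ of the chamber complex $\mathcal C(A)$ of Definition~\ref{convex} there is a quasi-polynomial $q^A_{\mathfrak c}$ on $\Lambda(A)$, with respect to a finite-index sublattice of $\Lambda(A)$, such that $p_A$ coincides with $q^A_{\mathfrak c}$ on the relative interior of $\mathfrak c$.

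\textbf{Per-cell conclusion.} Fix a big cell $\mathfrak c$. Since $\mathfrak c$ is a full-dimensional subcone of the pointed cone $C(A)$, the semigroup $\mathbb N A$ meets $\operatorname{relint}(\mathfrak c)$, and because the $\beta_j$ range over a finite set I may choose $\beta=\beta^{\mathfrak c}\in\mathbb N A$ so deep in $\mathfrak c$ that $\beta-\beta_j\in\operatorname{relint}(\mathfrak c)$ for every $j$; note $M(\beta)\neq0$ since $\beta\in\mathbb N A$. Define the quasi-polynomial
\[
q^{\mathfrak c}(\lambda):=\sum_j\epsilon_j\,q^A_{\mathfrak c}(\lambda-\beta_j)
\]
on $\Lambda(A)$ (a finite $\Z$-combination of translates of one quasi-polynomial, hence a quasi-polynomial for the common refinement $\Lambda(A,\mathfrak c)$ of the relevant sublattices), and set $b^{\mathfrak c}:=\big(M-q^{\mathfrak c}\big)\big|_{\mathfrak c\cap\Lambda(A)}$. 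For $\lambda\in(\mathfrak c+\beta)\cap\Lambda(A)$ write $\lambda=\mu+\beta$ with $\mu\in\mathfrak c$; since $\mathfrak c$ is a convex cone and $\beta-\beta_j\in\operatorname{relint}(\mathfrak c)$, each $\lambda-\beta_j=\mu+(\beta-\beta_j)$ lies in $\operatorname{relint}(\mathfrak c)$, so $p_A(\lambda-\beta_j)=q^A_{\mathfrak c}(\lambda-\beta_j)$ and hence $M(\lambda)=q^{\mathfrak c}(\lambda)$. Thus $b^{\mathfrak c}$ vanishes on $(\mathfrak c+\beta)\cap\Lambda(A)$, which is the required support statement, and (ii) follows.

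\textbf{Main obstacles.} Two points carry the weight. First, the rational form of $\Xi_H^G$ with a denominator that is a product of terms $1-z^{a_i}$: above I lean only on finite generation of the invariant ring together with multigraded Hilbert--Serre, which is enough for the existence statement in Proposition~\ref{capitalM}, but producing an \emph{explicit} $A$ and $P$ is a different matter and needs the residue machinery of \cite{brion_residue} — which is why the computation is carried through in full only for $G=SL_4$, $H=SL_2$ in Theorem~\ref{mess}. Second, and this is the real content, the quasi-polynomiality of $p_A$ on the chamber complex: it must be stated with the correct sublattice and with enough care on the walls of the cells that the translation argument above — which needs $\lambda-\beta_j$ to land in the \emph{open} cell — is legitimate. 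The remaining convex geometry (existence of a deep $\beta^{\mathfrak c}\in\mathbb N A$, the fact that a convex cone absorbs its relative interior) is routine.
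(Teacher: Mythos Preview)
Your argument is essentially the paper's own: both identify $\Xi_H^G$ as the multigraded Hilbert series of the finitely generated invariant ring $\C[\widehat{G/B}]^H$, extract a rational form $P(z)/\prod_i(1-z^{a_i})$ (the paper via a finite free resolution over the polynomial ring on the generators, you via multigraded Hilbert--Serre, which are the same statement), write $M$ as a finite signed sum of shifts of the vector partition function $\mathcal T_A$, invoke Theorem~\ref{sturmfelsvpf} for the per-cell quasi-polynomiality, and then choose a translate $\beta$ deep enough in each cell that every shifted argument $\lambda-\beta_j$ stays in that cell. Your treatment is slightly more careful in two places --- you verify $\beta_j\in\mathbb N A$ and you force $\beta\in\mathbb N A=\Lambda^+(G/H)$ so that $M(\beta)\neq0$, points the paper leaves implicit --- and your adjunction of extra semigroup generators of $\Lambda^+(G/H)$ to $A$ is harmless (the degrees of the algebra generators already generate $\Lambda^+(G/H)$ as a semigroup, since $R$ is a domain), so the proofs match.
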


We will need first a few definitions.

\begin{definition}
Let $A=[a_1,\ldots,a_m]$ be a list of vectors in a lattice $\Lambda$ sitting inside a vector space $V=\Lambda_\R$ of rank $d$, such that 0 is not in their convex hull. We will call \textbf{vector partition function} the function $\mathcal{T}_{A}:\Lambda\to\N$ assigning to every vector $\lambda\in\Lambda$ the number of solutions in nonnegative integers $c_1\ldots,c_k$ of the equation $$c_1a_1+\ldots+c_ka_k=\lambda.$$
\end{definition}

If we denote by $\Lambda^+\subset\Lambda$ the semigroup generated by the $\lambda_i$, we have
$$\prod_{i=1}^k\frac{1}{1-z^{\lambda_i}}=\sum_{\lambda\in\Lambda^+}\mathcal{T}_{A}(\lambda)z^\lambda\in\Z[[z^{\Lambda^+}]].$$

Let $A=[a_1,\ldots,a_m]$ be as before. Let $\Lambda(A)$ be the $\Z$-span of the vectors in $\mathcal{A}$. Recall from Definition~\ref{convex} the cone $C(A)$, the definition of a big cell $\mathfrak{c}$, and the chamber complex $\mathcal{C}(A)$. We have then the following, that is Theorem~1 in \cite{sturmfels}.

\begin{theorem}\label{sturmfelsvpf}
Let $\mathcal{T}_{A}$ be the partition function for a list of vectors $A=[a_1,\ldots,a_m]$ spanning the entire vector space $V$ (or rank $d$) they lie in. Then on each big cell $\mathfrak{c}$ of $\mathcal{C}(A)$ there is a polynomial $p$ of degree $m-d$, and a quasipolynomial $q$ of degree strictly less, such that
$$\mathcal{T}_{A}(\lambda)=p(\lambda)+q(\lambda)\quad \forall \lambda\in\Lambda(A)\cap \mathfrak{c}.$$
\end{theorem}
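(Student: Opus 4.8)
The plan is to prove the statement by induction on the \emph{excess} $m-d$, using the recursion satisfied by $\mathcal{T}_A$ when one vector is removed, and to invoke the polytope interpretation only at the end, in order to pin down the exact top degree. Recall that the generating identity displayed just before the theorem says precisely that $\mathcal{T}_A$ is the convolution $\mathcal{T}_{[a_1]}*\cdots*\mathcal{T}_{[a_m]}$; concretely, with $B=[a_1,\dots,a_{m-1}]$,
$$\mathcal{T}_A(\lambda)=\sum_{k\ge 0}\mathcal{T}_B(\lambda-ka_m).$$
Because $0$ is not in the convex hull of $A$, some linear functional is strictly positive on every $a_i$, so $C(A)$ and each of its subcones $C(B)$ are strongly convex; in particular $-a_m\notin C(B)$, so the sum above is finite for each $\lambda$, and the contributing $k$ form an arithmetic progression inside the interval cut out on a line of direction $a_m$ by the convex set $C(B)$. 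For the base case $m=d$ the list $A$ is linearly independent, hence a $\Z$-basis of $\Lambda(A)$; the chamber complex has the single big cell $C(A)$, and for $\lambda\in\Lambda(A)\cap C(A)$ the unique rational solution of $\sum c_i a_i=\lambda$ has all $c_i$ integral and $\ge 0$, so $\mathcal{T}_A(\lambda)=1$, the constant polynomial of degree $m-d=0$ (with $q=0$).

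Now assume the statement for all lists of excess $<m-d$, with $m-d\ge 1$. Reorder $A$ so that $a_m$ lies in the span of $a_1,\dots,a_{m-1}$; this is possible because the matroid of $A$ has rank $d<m$, hence contains a circuit, and removing a circuit element does not shrink the span. Then $B$ still spans $V$ and has excess $m-1-d$, so by induction, on each big cell $\mathfrak d$ of $\mathcal{C}(B)$ we have $\mathcal{T}_B(\mu)=p_{\mathfrak d}(\mu)+q_{\mathfrak d}(\mu)$ for $\mu\in\Lambda(B)\cap\mathfrak d$, with $p_{\mathfrak d}$ a polynomial of degree $m-1-d$ and $q_{\mathfrak d}$ a quasipolynomial of strictly smaller degree. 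The key point — and the step I expect to be the main obstacle — is a geometric lemma about the \emph{deconing}: as $\lambda$ ranges over the relative interior of a fixed big cell $\mathfrak c$ of $\mathcal{C}(A)$, the segment $\{\lambda-ta_m:\ t_-(\lambda)\le t\le t_+(\lambda)\}$ passes through a \emph{fixed} ordered list $\mathfrak d_1,\dots,\mathfrak d_r$ of big cells of $\mathcal{C}(B)$, and every transition parameter — the endpoints $t_\pm(\lambda)$ and the parameters $t$ at which the segment crosses a wall of $\mathcal{C}(B)$ — is an affine-linear function of $\lambda$. This holds because any wall of $\mathcal{C}(B)$ is the linear span of a $(d-1)$-subset $S'$ of $B$ and $\{t:\lambda-ta_m\in\operatorname{span}(S')\}$ is a single affine condition on $t$, linear in $\lambda$, whenever $a_m\notin\operatorname{span}(S')$; and the combinatorial data (which facet of $C(B)$ is exited first, the ordering of the crossing parameters, the signs of $t_\pm$) can only jump across hyperplanes spanned by $(d-1)$-subsets of $A$, i.e.\ across walls of $\mathcal{C}(A)$ — hence it is constant on each big cell of $\mathcal{C}(A)$. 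Verifying that the chamber complex $\mathcal{C}(A)$, built from $A$ alone, is already fine enough for all of this is exactly the delicate part.

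Granting the lemma, fix a big cell $\mathfrak c$ of $\mathcal{C}(A)$; for $\lambda\in\Lambda(A)\cap\mathfrak c$ the recursion becomes
$$\mathcal{T}_A(\lambda)=\sum_{i=1}^{r}\ \sum_{k\in I_i(\lambda)}\Bigl(p_{\mathfrak d_i}(\lambda-ka_m)+q_{\mathfrak d_i}(\lambda-ka_m)\Bigr),$$
where $I_i(\lambda)=\{k:\lambda-ka_m\in\mathfrak d_i\cap\Lambda(B)\}$ is an arithmetic progression of fixed common difference $e$ (the least positive integer with $e\,a_m\in\Lambda(B)$), whose residue class depends only on the coset of $\lambda$ modulo a fixed finite-index sublattice and whose endpoints are the integer parts of the affine functions from the lemma. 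Summing a polynomial of degree $j$ in $k$ over such a progression (Bernoulli summation) produces a quasipolynomial in $\lambda$ of degree $j+1$ whose top homogeneous part is a genuine polynomial, the fractional-part corrections being quasi-periodic of strictly smaller degree. Applying this with $j=m-1-d$ to the $p_{\mathfrak d_i}$-terms gives a polynomial of degree $m-d$ plus a quasipolynomial of smaller degree, and with $j<m-1-d$ to the $q_{\mathfrak d_i}$-terms gives only quasipolynomial contributions of degree $<m-d$; summing over $i$ yields $\mathcal{T}_A|_{\Lambda(A)\cap\mathfrak c}=p+q$ with $\deg q<\deg p\le m-d$. That $\deg p$ is \emph{exactly} $m-d$ follows from the identity $\mathcal{T}_A(\lambda)=\#\bigl(\{x\in\R_{\ge 0}^{m}:\textstyle\sum_i x_i a_i=\lambda\}\cap\Z^{m}\bigr)$: for $\lambda$ interior to $\mathfrak c$ this polytope is bounded — its recession cone $\{x\ge 0:\sum_i x_i a_i=0\}$ is trivial, again by strong convexity of $C(A)$ — and $(m-d)$-dimensional, so its normalized volume is a nonzero homogeneous polynomial of degree $m-d$ and equals the leading term of the lattice-point count. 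In fact, combined with the observation that the combinatorial type of the polytope $\{x\ge 0:\sum_i x_i a_i=\lambda\}$ is constant on the big cells of $\mathcal{C}(A)$ (immediate from the description of $\mathcal{C}(A)$ via bases), the Ehrhart--McMullen quasipolynomiality theorem for rational polytopes with linearly varying vertices gives an alternative, non-inductive proof of the whole statement.
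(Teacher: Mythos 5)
The paper does not actually prove this statement; it quotes it as Theorem~1 of the cited reference of Sturmfels, whose proof runs through the fiber polytope $P_\lambda=\{x\in\R^m_{\ge0}:\sum_i x_ia_i=\lambda\}$ and Brion-type lattice-point formulas --- essentially the route you sketch in your final sentence. Your main, inductive argument has a genuine gap, and it sits exactly where you predicted: the ``deconing'' lemma is false for $d\ge 3$. The transitions you must control are not only entry/exit through facets of the outer cone $C(B)$ (whose codimension-two faces are spanned by vectors of $B$, so that adding $\R a_m$ does produce a wall of $\mathcal{C}(A)$); they also include the passages between \emph{interior} big cells of $\mathcal{C}(B)$. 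The sequence of cells visited by $\lambda-ta_m$ changes when the segment sweeps past a codimension-two face $F$ of $\mathcal{C}(B)$, i.e.\ when $\lambda$ crosses the hyperplane $\operatorname{span}(F)+\R a_m$. Such a face is an intersection $C(S')\cap C(S'')$ of two walls, and when $|S'\cap S''|<d-2$ its span is in general not spanned by vectors of $A$, so this hyperplane is not a wall of $\mathcal{C}(A)$ and cuts through the interior of big cells. Concretely, take $d=3$ and $B=[e_1,e_2,e_3,(1,1,0),(0,1,1)]$: the walls $C(e_3,(1,1,0))\subset\{x=y\}$ and $C(e_1,(0,1,1))\subset\{y=z\}$ cross transversally along the ray through $(1,1,1)$, which is parallel to no element of $B$; four big cells of $\mathcal{C}(B)$ meet along this ray, and for a generic sixth vector $a_m$ a segment in direction $-a_m$ passing near it visits a different middle cell according to which side of $\operatorname{span}((1,1,1),a_m)$ the point $\lambda$ lies on --- and that hyperplane is not a wall of $\mathcal{C}(A)$.

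Consequently the induction only establishes quasipolynomiality on each cell of a subdivision that is in general strictly finer than $\mathcal{C}(A)$, and the remaining content of the theorem --- that the quasipolynomials on the two sides of each spurious wall coincide --- is not addressed; this gluing is precisely the nontrivial part of the result, and is why the standard proofs (Dahmen--Micchelli, Sturmfels, De~Concini--Procesi) do not proceed by this naive deletion recursion. The rest of your argument (base case, choice of a non-coloop $a_m$, Bernoulli summation over arithmetic progressions with affine endpoints yielding a quasipolynomial with polynomial top part, and the identification of the leading term with the normalized volume of $P_\lambda$ to pin the degree at exactly $m-d$) is sound. The one-sentence fallback at the end is the right strategy --- the vertices of $P_\lambda$ are the basic feasible solutions, the set of feasible bases and hence the normal fan of $P_\lambda$ is constant on a big cell, and Brion's formula reduces the count to lattice points in simplicial cones with linearly varying apex, each visibly quasipolynomial in $\lambda$ --- but as written it delegates the whole theorem to a black box (``Ehrhart--McMullen for polytopes with linearly varying vertices'') that is essentially equivalent to the statement being proved, so it would need to be expanded to count as a proof.
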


Let us now prove Proposition~\ref{capitalM}.

\begin{proof}[Proof of Proposition~\ref{capitalM}]
Consider the ring $\C[\widehat{G/B}]$ as before, with its multi-grading indexed by $\Lambda^+_G$. It is a finitely generated algebra, with an action of $H$ reductive, and hence the ring of invariants  $\C[\widehat{G/B}]^H$ will be finitely generated as well, with the same grading (because the $H$-action respects it). Notice that $\Xi_H^G(z)$ is just the multi-Hilbert polynomial of this ring. Let $f_1,\ldots,f_m$ be generators of this ring, of multidegrees $\lambda_1,\ldots,\lambda_m$ (that we can choose to be homogeneous) and let us consider a finite free $S=\C[f_1,\ldots,f_m]$-resolution of $\C[\widehat{G/B}]^H$
$$0\to F_r\to\ldots\to F_1\to F_0=S\to \C[\widehat{G/B}]^H\to 0.$$
Denoting by $\{g_{i,j}\}_{i=1}^{i_j}$ a set of generators for $F_j$ (that we can choose to be homogeneous with respect to the $\Lambda^+$-grading), of degrees $\lambda_{i,j}$, we get
$$\Xi_H^G(z)=\frac{\sum_{j=0}^r(-1)^j\sum_{i=1}^{i_j}z^{\lambda_{i,j}}}{\prod_{k=1}^s(1-z^{\lambda_k})}.$$
Let now $\Lambda$ be the $\Z$-span of the vectors $\lambda_i$ (notice that the $\lambda_{i,j}$ belong to it as well). Let $\mathcal{T}_{A}$ be the partition function for the list $A=[\lambda_1,\ldots,\lambda_s]$.
The coefficient of $z^\lambda$ in $\Xi_H^G(z)$ (that is also $dim(V_\lambda^H)$ ) is then
$$\sum_{j=0}^r(-1)^j\sum_{i=1}^{i_j}\mathcal{T}_{A}(\lambda-\lambda_{i,j}).$$
In each big cell $\mathfrak{c}$, let $\gamma^{\mathfrak{c}}$ be a vector in $\Lambda(A)\cap\mathfrak{c}$ such that $\gamma^{\mathfrak{c}}-\lambda_{i,j}$ is still in $\mathfrak{c}$ for every $i,j$. On $\Gamma\cap(\mathfrak{c}+\gamma^{\mathfrak{c}})$, then, the function $dim(V_\lambda^H)$ is a the result of a difference operator applied to a quasi polynomial, that is a quasi polynomial again. We get then our result.
\end{proof}

\subsection{Splines}

The next topic in this chapter will be devoted to finding explicitly these functions. For our practical use, as seen in Proposition~\ref{capitalM2}, we will only need the top degree part of the polynomials $q_\mathcal{C}$ from Proposition~\ref{capitalM}. To be able to do this, we will introduce the concept of \textit{spline}, that is a continuous counterpart of the (discrete) vector partition function, together with its ``box'' equivalent. Let us fix a basis for $\Lambda$, and the associated Euclidea metric on $V$.

\begin{definition}
Let $A=[a_1,\ldots,a_m]$ be a list of vectors in a lattice $\Lambda$ sitting inside a vector space $\Lambda_\R=V$ of rank $d$, whose $\R$-span is the entire $\Lambda_\R$, and such that 0 is not in their convex hull. The \textbf{multivariate spline} is the function $T:\Lambda_\R\to\R$ such that, for every function $f\in\C^{\infty}_c(\Lambda_\R)$ with compact support, we have
$$\int_{\Lambda_{\R}}f(x)T(x)dx=\int_0^{\infty}\cdots\int_0^{\infty}f(\sum_{i=1}^m t_i\lambda_i)dt_1\cdots dt_m.$$
Similarly, the \textbf{box spline} is the function  $B:\Lambda_\R\to\R$ such that, for every function $f\in\C^{\infty}_c(\Lambda_\R)$ with compact support, we have
$$\int_{\Lambda_{\R}}f(x)B(x)dx=\int_0^{1}\cdots\int_0^{1}f(\sum_{i=1}^m t_i\lambda_i)dt_1\cdots dt_m.$$
\end{definition}

In the case where the $\R$-span of $A$ is not the entire $V$, the splines can be defined as distributions, i.e. operators on a suitable space of functions.

Splines have been quite widely studied; for a resource containing most of the results, we refer to \cite{topics}. Given an element $a\in\Lambda$, we will denote by $\partial_a$ the derivative in that direction on differentiable functions on $V$. Given a list $B$ of vectors in $\Lambda$ we will denote by $\partial_B=\prod_{a\in B}\partial_a$. We have then the following. 

\begin{proposition}\label{factsplines}
Let $A=[a_1,\ldots,a_m]$ be as before. Then
\begin{itemize}
\item[i)] (cf. \cite{topics}, Th.~9.7, Proposition~7.17) the spline $T_A$ agrees, in every big cell $\mathfrak{c}$, with a polynomial $T_A^{\mathfrak{c}}$ of degree $n-d$;
\item[ii)]  (cf. \cite{topics}, Proposition~7.15) if A is a basis for $V$, and $c$ is the determinant of the matrix whose columns are the vectors of $A$, then $T_A=\frac{1}{c}\chi_{C(A)}$, where $\chi_{C(A)}$ is the characteristic function of the cone $C(A)$; 
\item[iii)]  (cf. \cite{topics}, Lemma~7.23 and Proposition~7.14) for each element $a\in A$ such that $A\setminus\{a\}$ still spans the entire $V$, we have that $T_A$ is continuous in the direction of $a$, and $\partial_a T_A^{\mathfrak{c}}=T_{A\setminus\{a\}}^{\mathfrak{c}}$; %for each element $a\in A$ such that $A\setminus\{a\}$ does not span the entire $V$ anymore, we have $\partial_a T_A^{\mathfrak{c}}=0$;
\item[iv)]  (cf. \cite{topics}, Theorem~11.16) if $B\subset A$ is such that $A\setminus B$ does not span $V$, we have $\partial_B T_A^{\mathfrak{c}}=0$ for each big cell $\mathfrak{c}$.
%\item (cf. \cite{topics}, Prop.~17.17) we have
%$$T_A(x)=B_A*\mathcal{T}_A=\sum_{a\in\Lambda}B(x-a)\mathcal{T}_A(a)$$
\end{itemize}
\end{proposition}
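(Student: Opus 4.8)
The statement collects four standard facts about multivariate splines, each already located in \cite{topics}; my plan is to establish them in the order ii), i), iii), iv), all of them consequences of a single recursive identity, with the genuinely routine bookkeeping delegated to the references. The key reinterpretation is that $T_A$ is the pushforward under the surjective linear map $\pi_A:\R^m\to V$, $e_i\mapsto a_i$, of Lebesgue measure on the positive orthant $\R_{\geq0}^m$; equivalently, for $x\in C(A)$ the number $T_A(x)$ is a fixed normalization of the $(m-d)$-dimensional volume of the polytope $\pi_A^{-1}(x)\cap\R_{\geq0}^m$. Integrating out the last coordinate yields the recursion
$$T_A(x)=\int_0^\infty T_{A\setminus\{a_j\}}(x-t\,a_j)\,dt,$$
that is, $T_A$ is the convolution of $T_{A\setminus\{a_j\}}$ with the uniform measure on the ray $\R_{\geq0}a_j$; this holds for every $j$ as an identity of distributions, and as an identity of functions whenever $A\setminus\{a_j\}$ still spans $V$.

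Part ii) is then the change-of-variables formula: if $A$ is a basis, $\pi_A$ is a linear isomorphism, so the pushforward of Lebesgue measure on $\R_{\geq0}^d$ is $1/|\det A|$ times the indicator function of the cone $C(A)$; choosing $c=\det A$ with the orientation that makes it positive removes the absolute value. Part i) follows by induction on $m-d$ with ii) as base case: a running integral in a fixed direction of a piecewise-polynomial function of degree $m-d-1$ is piecewise polynomial of degree $m-d$, and the only walls it can create are translates of hyperplanes spanned by subsets of $A\setminus\{a_j\}$ together with $a_j$, all of which are already among the walls defining the chamber complex $\mathcal{C}(A)$ of Definition~\ref{convex}. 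Part iii) simply reads off that the recursion exhibits $T_A$ as an antiderivative of $T_{A\setminus\{a\}}$ in the direction $a$: with $h(x)=\int_0^\infty g(x-ta)\,dt$ one has $h(x+sa)=\int_{-s}^\infty g(x-ua)\,du$, the integral being finite because $g=T_{A\setminus\{a\}}$ vanishes outside the pointed cone $C(A\setminus\{a\})$, which the ray $t\mapsto x-ta$ eventually leaves since $-a\notin C(A)$; hence $h$ is continuous along $a$ and $\partial_a h=g$, which on a big cell reads $\partial_a T_A^{\mathfrak{c}}=T_{A\setminus\{a\}}^{\mathfrak{c}}$.

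For part iv), pick $B'\subsetneq B$ maximal subject to $A\setminus B'$ still spanning $V$; it is proper because $A\setminus B$ does not span. Peeling off the elements of $B'$ one at a time using iii) gives $\partial_{B'}T_A^{\mathfrak{c}}=T_{A\setminus B'}^{\mathfrak{c}}$ on every big cell. Now take $b\in B\setminus B'$: by maximality $(A\setminus B')\setminus\{b\}$ no longer spans $V$, so $T_{(A\setminus B')\setminus\{b\}}$ is a distribution supported on the proper subspace it spans, hence vanishes on every full-dimensional big cell; since $\partial_b T_{A\setminus B'}=T_{(A\setminus B')\setminus\{b\}}$ as distributions, the polynomial $T_{A\setminus B'}^{\mathfrak{c}}$ has zero $\partial_b$-derivative, and the remaining derivatives indexed by $B\setminus(B'\cup\{b\})$ then change nothing, so $\partial_B T_A^{\mathfrak{c}}=0$. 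The one place I expect real friction is the chamber-complex bookkeeping underlying i) — that running integration never refines past $\mathcal{C}(A)$ — and the attendant compatibility of the polynomial pieces of $T_A$, $T_{A\setminus\{a\}}$, and $T_{A\setminus B'}$ used silently in iii) and iv); since these are carried out carefully as Theorem~9.7, Propositions~7.14, 7.15, 7.17, Lemma~7.23, and Theorem~11.16 of \cite{topics}, my recommendation is to cite those rather than reprove them.
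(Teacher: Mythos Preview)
The paper does not prove this proposition at all: each part is simply stated with a pointer to the relevant result in \cite{topics}, and the text moves directly on to Proposition~\ref{hyperplane}. So there is no ``paper's own proof'' to compare against; you have supplied more than the author did.

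Your sketch is sound. The pushforward interpretation of $T_A$, the convolution recursion $T_A(x)=\int_0^\infty T_{A\setminus\{a\}}(x-ta)\,dt$, and the order ii)$\Rightarrow$i)$\Rightarrow$iii)$\Rightarrow$iv) are exactly the standard route taken in \cite{topics}, so in spirit your argument coincides with the cited references. Two small remarks: in iii) and iv) you silently use that a big cell $\mathfrak{c}$ of $\mathcal{C}(A)$ is contained in a single big cell of $\mathcal{C}(A\setminus\{a\})$ (so that $T_{A\setminus\{a\}}^{\mathfrak{c}}$ makes sense as a polynomial), which you do flag as the bookkeeping concern at the end; and in iv) the passage from the distributional identity $\partial_b T_{A\setminus B'}=T_{(A\setminus B')\setminus\{b\}}$ to the polynomial identity on cells requires exactly that same compatibility. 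Since you already recommend citing \cite{topics} for these points, the proposal is appropriate as written.
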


A further property that we will need are the following.

\begin{proposition}\label{hyperplane}
Suppose two big cells $\mathfrak{c}_1$ and $\mathfrak{c}_2$ intersect in an $s-1$-dimensional locus, and let $H$ be the span of it. Suppose $A$ contains $k$ vectors outside of the hyperplane $H$, and let $L$ be a linear form vanishing on $H$. Then 
$$L^{k-1}|T_A^{\mathfrak{c}_1}-T_A^{\mathfrak{c}_1}$$
as polynomials. This is also true if we take one of the big cells to be the complement $\mathfrak{c}_0=\overline{V\setminus C(A)}$, for which of course $T_A^{\mathfrak{c}_0}=0$.
\end{proposition}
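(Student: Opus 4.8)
The plan is to translate the divisibility claim into a local regularity statement for $T_A$ at the wall and then prove that by induction, using only Proposition~\ref{factsplines}(iii). Write $A_H$ for the sublist of vectors of $A$ lying on $H$; since the common facet $\mathfrak{c}_1\cap\mathfrak{c}_2$ is $(s-1)$-dimensional, the hyperplane $H$ is the span of a sublist of $A$, so $A_H$ spans $H$, and $A\setminus A_H$ consists of the $k$ vectors of $A$ off $H$. Set $J:=T_A^{\mathfrak{c}_1}-T_A^{\mathfrak{c}_2}$ (in the complement case simply $T_A^{\mathfrak{c}_0}=0$). Fix linear coordinates $(y_1,\dots,y_s)$ on $\Lambda_\R$ with $y_1=L$, so $H=\{y_1=0\}$. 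Because $T_A^{\mathfrak{c}_1},T_A^{\mathfrak{c}_2}$ are polynomials, $L^{k-1}\mid J$ is equivalent to $\partial_{y_1}^{\,i}J|_{\{y_1=0\}}=0$ for $0\le i\le k-2$, i.e.\ to the function glued from $T_A^{\mathfrak{c}_1}$ and $T_A^{\mathfrak{c}_2}$ being of class $C^{k-2}$ across $H$; near a generic point $p$ of the relative interior of $\mathfrak{c}_1\cap\mathfrak{c}_2$ this is exactly the statement that $T_A$ is $C^{k-2}$ near $p$.

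I would then argue by induction on $k$, the number of vectors of $A$ off $H$. For $k=1$ there is nothing to prove, as $L^0=1$. Assume $k\ge2$ and the statement for all lists with $k-1$ vectors off $H$ (including the degenerate case where one ``cell'' is the complement $\mathfrak{c}_0$ of the cone). Choose any $a\in A\setminus A_H$; then $a\notin H$, and $A\setminus\{a\}$ still spans $\Lambda_\R$: any hyperplane containing $A_H$ must equal $H$ since $A_H$ spans $H$, yet $A\setminus\{a\}$ still contains one of the other $\ge 1$ vectors off $H$, which cannot lie in $H$. Applying Proposition~\ref{factsplines}(iii) to this $a$ gives two things: (a) $T_A$ is continuous along every line in the direction of $a$; and (b) $\partial_a T_A^{\mathfrak{c}}=T_{A\setminus\{a\}}^{\mathfrak{c}}$ on each big cell.

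From (a): a line in direction $a$ through a point $q$ of $H$ near $p$ crosses $H$ transversally at $q$ (since $a\notin H$), so continuity forces $T_A^{\mathfrak{c}_1}(q)=T_A^{\mathfrak{c}_2}(q)$; thus $J$ vanishes on a neighbourhood of $p$ inside $H$, hence $J|_{\{y_1=0\}}\equiv 0$. From (b): near $p$, $\partial_a J$ equals the difference of the two polynomial branches of $T_{A\setminus\{a\}}$ across $H$ (this difference is $0$ if $H$ is not a wall of $\mathcal{C}(A\setminus\{a\})$ near $p$, which only helps). Now $A\setminus\{a\}$ has $(A\setminus\{a\})_H=A_H$ spanning $H$ and exactly $k-1$ vectors off $H$, so by the inductive hypothesis $L^{k-2}$ divides that difference, i.e.\ $L^{k-2}\mid\partial_a J$. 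Refining the coordinates so that in addition $\partial_{y_1}$ is parallel to $a$ (keep $y_1=L$, take $y_2,\dots,y_s$ pulled back along $\Lambda_\R\to\Lambda_\R/\R a$), we get $\partial_{y_1}J\in(y_1^{k-2})$; integrating in $y_1$ yields $J=p_0(y_2,\dots,y_s)+y_1^{k-1}\cdot(\text{polynomial})$, and $p_0=J|_{\{y_1=0\}}=0$ by the first half of this paragraph. Hence $J\in(y_1^{k-1})=(L^{k-1})$, which is the assertion. The complement case is the same argument, using that Proposition~\ref{factsplines}(iii) also yields continuity of $T_A$ across $\partial C(A)$ and that the inductive hypothesis was assumed to cover it.

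I expect the only delicate points to be bookkeeping rather than substance: checking that $A_H$ really spans $H$ (so that $A\setminus\{a\}$ stays spanning and $T_{A\setminus\{a\}}$ is still an honest function), that passing to $A\setminus\{a\}$ coarsens the chamber complex but does not destroy the facet in $H$ along which the two branches are compared, and that ``continuous in the direction of $a$'' in Proposition~\ref{factsplines}(iii) genuinely rules out a jump of $T_A$ transversally across $H$ — which it does precisely because $a\notin H$. All the analytic content (local boundedness, piecewise-polynomiality, the derivative formula $\partial_a T_A=T_{A\setminus\{a\}}$) is already packaged in Proposition~\ref{factsplines}, so the proof amounts to a one-variable integration together with this compatibility check on cells; no further estimates are needed.
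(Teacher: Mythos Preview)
Your argument is correct and follows essentially the same route as the paper: translate $L^{k-1}\mid J$ into $C^{k-2}$ regularity of $T_A$ across the wall, induct on $k$, and at each step remove one $a\notin H$ and invoke Proposition~\ref{factsplines}(iii) for both continuity and the derivative identity $\partial_a T_A=T_{A\setminus\{a\}}$. The only differences are cosmetic: you start the induction at $k=1$ rather than $k=2$, and you spell out explicitly why $A\setminus\{a\}$ still spans and how the chamber complex behaves under deletion --- points the paper's proof leaves implicit.
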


\begin{proof}
Let us prove it by induction on $k$. It is obvious that the statement is equivalent to asking the function to be of class $C^{k-2}$ along $\mathfrak{c}_1\cap\mathfrak{c}_2$ in any direction transversal to $H$. Let $a$ be a vector in $A$ that does not belong to $H$.
If $k=2$, then $A\setminus\{a\}$ still spans the entire $V$, so $T_A$ is continuous in the direction of $a$ (that is transverse to $a$) by Proposition~\ref{factsplines} iii). If $k>2$, the function $\partial_a T_A$ will be of class $C^{k-3}$ in the direction of $a$ along $\mathfrak{c}_1\cap\mathfrak{c}_2$, so $T_A$ will be of class $C^{k-2}$. The same argument applies also at the boundary of $C(A)$.   
\end{proof}

Based on Definition~\ref{dimasi}, we need only the top degree part of the polynomials the partition function agrees with. In this sense, the spline is exactly what we are looking for.

\begin{proposition}\label{vpfspline}
Let $\mathcal{A}=[a_1,\ldots,a_m]$ be as before, and let $e$ be the index of $\Lambda(A)$ in $\Lambda$. Let us denote by $\mathcal{T}_A^{\mathfrak{c},top}$ the top degree part of the function $\mathcal{T}_A$ agrees with on $\Lambda(A)\cup\mathfrak{c}$, as in Theorem~\ref{sturmfelsvpf}. We have then
$$T_A=\frac{\mathcal{T}_A^{\mathfrak{c},top}}{e}$$
\end{proposition}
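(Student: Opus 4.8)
The plan is to read both sides off the linear surjection $\pi:\R^m\to V$ defined by $\pi(e_i)=a_i$ (so that $\pi(\Z^m)=\Lambda(A)$), and to compare them through the coarea formula. For $\lambda$ in the interior of a big cell $\mathfrak c$ (necessarily $\mathfrak c\subseteq C(A)$) put $P_\lambda:=\pi^{-1}(\lambda)\cap\R^m_{\geq 0}$, a polytope of dimension $m-d$ inside the affine fibre $\pi^{-1}(\lambda)$, with the scaling property $P_{k\lambda}=kP_\lambda$. Since $N:=\ker\pi\cap\Z^m$ is a saturated sublattice of $\Z^m$ of rank $m-d$ (its quotient is $\Lambda(A)$, hence torsion free), the set $\Z^m\cap\pi^{-1}(\lambda)$ is a coset of $N$ whenever $\lambda\in\Lambda(A)$. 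Let $\operatorname{vol}_N$ denote Lebesgue measure on $\ker\pi$, transported to each fibre, normalised so that a fundamental domain of $N$ has measure $1$.

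First I would treat the discrete side. By definition $\mathcal T_A(k\lambda)=\#\bigl(kP_\lambda\cap(\text{coset of }N)\bigr)$, so the leading-term statement of Ehrhart theory for rational polytopes gives $\mathcal T_A(k\lambda)=\operatorname{vol}_N(P_\lambda)\,k^{m-d}+O(k^{m-d-1})$. On the other hand Theorem~\ref{sturmfelsvpf} writes $\mathcal T_A(k\lambda)=k^{m-d}\mathcal T_A^{\mathfrak c,top}(\lambda)+O(k^{m-d-1})$ for $\lambda\in\Lambda(A)\cap\operatorname{int}\mathfrak c$. Comparing, and using that both $\mathcal T_A^{\mathfrak c,top}$ and $\lambda\mapsto\operatorname{vol}_N(P_\lambda)$ are polynomials on $\mathfrak c$ (the latter homogeneous of degree $m-d$ because $P_{k\lambda}=kP_\lambda$), I conclude $\mathcal T_A^{\mathfrak c,top}=\operatorname{vol}_N(P_\bullet)$ as polynomials on each big cell (on the outer cell $\overline{V\setminus C(A)}$ both sides vanish).

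Next I would treat the spline. Since $N$ is a direct summand of $\Z^m$ with $\Z^m/N\cong\Lambda(A)$, the standard Lebesgue measure $dt$ on $\R^m$ (for which $\Z^m$ has covolume $1$) factors along $\pi$ as $\operatorname{vol}_N\otimes\operatorname{vol}_{\Lambda(A)}$, where $\operatorname{vol}_{\Lambda(A)}$ is Lebesgue measure on $V$ making $\Lambda(A)$ of covolume $1$ — this is just the multiplicativity of covolumes in a split short exact sequence of lattices. Hence, for $f\in\mathcal C^\infty_c(V)$, Fubini's theorem along the fibration $\pi$ gives
\[
\int_{\R^m_{\geq 0}}f(\pi(t))\,dt=\int_V f(\lambda)\operatorname{vol}_N(P_\lambda)\,d\operatorname{vol}_{\Lambda(A)}(\lambda).
\]
Because $[\Lambda:\Lambda(A)]=e$ and the metric defining $T_A$ comes from the fixed basis of $\Lambda$, we have $\operatorname{vol}_{\Lambda(A)}=\tfrac1e\operatorname{vol}_\Lambda$; comparing the displayed identity with the defining identity $\int_{\R^m_{\geq 0}}f(\pi(t))\,dt=\int_V f\cdot T_A\,d\operatorname{vol}_\Lambda$ of the multivariate spline yields $T_A(\lambda)=\tfrac1e\operatorname{vol}_N(P_\lambda)=\tfrac1e\mathcal T_A^{\mathfrak c,top}(\lambda)$ on each big cell, which is the assertion (the polynomial identity on the closed cell follows since interiors are Zariski dense).

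As a consistency check, the base case $m=d$ (where $A$ is a basis, $T_A=\tfrac1{|\det A|}\chi_{C(A)}=\tfrac1e\chi_{C(A)}$ by Proposition~\ref{factsplines}(ii), while $\mathcal T_A^{\mathfrak c,top}\equiv 1$) falls out immediately. One could instead try an induction on $m$, removing a vector $a$ with $A\setminus\{a\}$ still spanning $V$ and using $\partial_a T_A^{\mathfrak c}=T_{A\setminus\{a\}}^{\mathfrak c}$ together with $\mathcal T_A-\mathcal T_A(\cdot-a)=\mathcal T_{A\setminus\{a\}}$; but since removing $a$ can shrink the spanned sublattice, that route still needs the same volume bookkeeping to recover the constant, so the direct argument is cleaner. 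The only genuinely delicate point is keeping all normalisations straight: that $e$ is exactly $[\Lambda:\Lambda(A)]$ and enters solely through $\operatorname{vol}_{\Lambda(A)}=\tfrac1e\operatorname{vol}_\Lambda$, with the fibre lattice $N$ absorbing the rest via covolume multiplicativity.
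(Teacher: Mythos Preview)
Your proof is correct and takes a genuinely different route from the paper's. The paper argues by citing the structure theory of \cite{DM}: when $\Lambda(A)=\Lambda$, the ``exponential'' part $E\mathcal T_A^{\mathfrak c}$ of the partition function has strictly lower degree, so the top degree sits in the ``polynomial'' part $P\mathcal T_A^{\mathfrak c}$, and Proposition~5.3 of \cite{DM} identifies that with $T_A$; the case $\Lambda(A)\subsetneq\Lambda$ is then handled by passing to $\Lambda(A)$ and tracking how the spline rescales under the change of Euclidean metric, which produces the factor $e$.

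You instead prove the statement from first principles, identifying both sides with the same fibre volume $\operatorname{vol}_N(P_\lambda)$: the discrete side via the leading Ehrhart coefficient, and the continuous side via Fubini along $\pi:\R^m\to V$ together with the covolume multiplicativity $dt=\operatorname{vol}_N\otimes\operatorname{vol}_{\Lambda(A)}$ and $\operatorname{vol}_{\Lambda(A)}=\tfrac1e\operatorname{vol}_\Lambda$. This is more self-contained (no black-box reference) and makes the geometric origin of the constant $e$ transparent; it also handles the sublattice case and the unimodular case in one stroke, whereas the paper treats them separately. The paper's approach is shorter once one has \cite{DM} at hand, and situates the result within that broader framework, but yours is the cleaner stand-alone argument.
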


\begin{proof}
In this proof, we will refer directly to notation and results from \cite{DM}. Suppose $\Lambda(A)=\Lambda$; then all the lists $A_z$ (as in Section~5 of \cite{DM}) are all strictly smaller than $A$; this implies that the component $E\mathcal{T}_A^{\mathfrak{c}}$ has (quasi)polynomial components of degree strictly less than $m-d$ (cf. (5.5) of \cite{DM}), and hence $P\mathcal{T}_A^{\mathfrak{c}}$ always contains the top degree part of $\mathcal{T}_A^{\mathfrak{c}}$. So, Proposition~5.3 of \cite{DM} gives us the result.
If $\Lambda(A)\subsetneq\Lambda$, let us try to apply the above argument to $\Lambda(A)$ instead of $\Lambda$; while the values of $\mathcal{T}_A$ are not affected by the choice of the lattice, the function $T_A$ does, because we are using a different Euclidean metric for the integral in its definition; in particular, we are applying a linear transformation of determinant $1/e$, and the new spline $T_A^{(\Lambda(A))}$ will be equal to $eT_A$. We have then, from the first part, that $\mathcal{T}_A^{\mathfrak{c},top}=T_A^{(\Lambda(A))}=eT_A$ as needed.
\end{proof}

\subsection{Asymptotics for $SL_2$-invariants in representations of $SL_4$}

In this section we will keep following Theorem~\ref{mess}, and produce a formula for the function $\dimasi$ in the case of $X$ a twisted cubic; this will make us able to evaluate explicitely the volume of divisors on $M_L$ in this case, by Theorem~\ref{main}.

From now on in this section, $G$ will be $SL_4$, and $H$ will be the identity connected component in the stabilizer of a twisted cubic. Remember $P$ and $U$ from Theorem~\ref{mess}. We have

$$\Xi^G_H=\frac{P}{U}=\frac{P_1}{U_1}+\frac{P_2}{U_2}+\frac{P_3}{U_3}+\frac{P_4}{U_4},$$
where
\begin{align*}
P_1=&-z_1^2 z_2 - z_1^3 z_3 - z_1 z_2 z_3 + z_1^5 z_2 z_3 + z_1 z_2^2 z_3 - z_1^2 z_3^2 + z_1^6 z_3^2 - z_2 z_3^2 + \\
  &+z_1^2 z_2 z_3^2 + z_1^4 z_2 z_3^2 - z_1 z_3^3 + z_1^3 z_3^3 + z_1^5 z_3^3 + z_1^3 z_2 z_3^3 + z_1^3 z_2^2 z_3^3 + z_1^4 z_3^4 + \\ 
 &+z_1^2 z_2 z_3^4 + z_1^4 z_2 z_3^4 + z_1^4 z_2^2 z_3^4 + z_1^3 z_3^5 +  z_1^5 z_3^5 + z_1 z_2 z_3^5 + z_1^2 z_3^6 + z_1^6 z_2^2 z_3^6\\
P_2=&-1 + z_1^2 z_2 + z_1 z_2 z_3 + z_2 z_3^2 \\
P_3=&1 + z_1^3 z_3 + z_1^2 z_3^2\\
P_4=&1 + z1^2 z3^2 + z1 z3^3\\
U_1=&(1-z_1^4)(1-z_1^3z_3)(1-z_1z_3^3)(1-z_3^4)(1-z_2)(1-z_2^3) \\
U_2=&(1-z_1^4)(1-z_3^4)(1-z_2)(1-z_2^3)(1-z_1^2z_2)(1-z_2z_3^2) \\
U_3=&(1-z_1^4)(1-z_1^3z_3)(1-z_3^4)(1-z_2)(1-z_2^3)(1-z_1^2z_2) \\
U_4=&(1-z_1^4)(1-z_1z_3^3)(1-z_3^4)(1-z_2)(1-z_2^3)(1-z_2z_3^2)
\end{align*}

Let now $A_i$ be the list of exponents of the $z$ variables in the factors in $U_i$, thought as elements of $\N^3$. Using as basis $\omega_1,\omega_2,\omega_3$ of $\Lambda_G$, and coordinates $x_1,x_2,x_3$, we can talk about $\dimasx$ as a function in the variables $x_1,x_2,x_3$, that by Propositions~\ref{capitalM} and \ref{capitalM2} is a piecewise polynomial function of $3$. We have the following lemma.

\begin{lemma}\label{a0}
We have
$$\dimasx=24T_{A_1}(x)+4T_{A_2}(x)+6T_{A_3}(x)+6T_{A_4}(x).$$
\end{lemma}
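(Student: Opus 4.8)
The plan is to combine the partial fraction decomposition $\Xi^G_H = \sum_{i=1}^4 P_i/U_i$ with the asymptotic relationship between vector partition functions and splines established in Proposition~\ref{vpfspline}. Each fraction $P_i/U_i$ is a generating function whose denominator $U_i$ corresponds to the list $A_i$ of six exponent vectors in $\N^3$; thus $1/U_i = \sum_{\lambda} \mathcal{T}_{A_i}(\lambda) z^\lambda$, and $P_i/U_i = \sum_\lambda \left(\sum_{j} \pm\, \mathcal{T}_{A_i}(\lambda - \mu_{i,j})\right) z^\lambda$, where the $\mu_{i,j}$ run over the exponent vectors appearing in $P_i$ with the appropriate signs. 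So $\dim(V_x^H)$ is, for $x$ in a fixed big cell, a sum of shifted copies of the quasi-polynomials $\mathcal{T}_{A_i}$.

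First I would argue that taking the top-degree part kills all the shifts: since each $A_i$ consists of $6$ vectors in a $3$-dimensional space, Theorem~\ref{sturmfelsvpf} says $\mathcal{T}_{A_i}$ agrees on each big cell with a polynomial of degree $6-3 = 3$ plus lower-degree quasi-polynomial terms, and shifting the argument by a fixed vector $\mu_{i,j}$ changes the degree-$3$ homogeneous part by terms of degree $\leq 2$. Hence $\mathcal{T}_{A_i}^{\mathfrak{c},top}(x - \mu_{i,j}) = \mathcal{T}_{A_i}^{\mathfrak{c},top}(x)$, and the top-degree part of $\sum_j \pm\,\mathcal{T}_{A_i}(x-\mu_{i,j})$ equals $\left(\sum_j \pm 1\right)\mathcal{T}_{A_i}^{\mathfrak{c},top}(x) = P_i(1,1,1)\,\mathcal{T}_{A_i}^{\mathfrak{c},top}(x)$. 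One then reads off $P_1(1,1,1), P_2(1,1,1), P_3(1,1,1), P_4(1,1,1)$; from the displayed formulas these evaluate to the multiplicities that will ultimately produce the coefficients $24,4,6,6$ after the next step.

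Next I would apply Proposition~\ref{vpfspline}, which gives $\mathcal{T}_{A_i}^{\mathfrak{c},top} = e_i\, T_{A_i}$ where $e_i$ is the index $[\Lambda : \Lambda(A_i)]$ of the sublattice spanned by the vectors in $A_i$ inside $\Lambda = \Lambda_G \cong \Z^3$. So the final coefficient in front of $T_{A_i}$ in the formula for $\dimasx$ is $P_i(1,1,1)\cdot e_i$. The remaining task is the bookkeeping computation: for each $i$, list the six exponent vectors of $U_i$, compute the index of the lattice they generate (e.g.\ for $U_2$ the vectors are $(4,0,0),(0,0,4),(0,1,0),(0,3,0),(2,1,0),(0,1,2)$, and one checks the index is the relevant value), and multiply by $P_i(1,1,1)$. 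I expect the arithmetic to come out to $24, 4, 6, 6$ respectively, matching the claimed formula; consistency can be cross-checked against the known value $\dim(V_{abc}^H)$ for small $(a,b,c)$ via Lemma~\ref{singlecases}, or against the fact that $\dimasx$ must have degree $3 = s - N(N+3)/2 = 15 - 12$ everywhere, which all four splines $T_{A_i}$ individually satisfy.

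The main obstacle is making the top-degree cancellation argument fully rigorous across big cells: a priori the chamber complexes $\mathcal{C}(A_1),\ldots,\mathcal{C}(A_4)$ are different, so on a given cell $\mathfrak{c}$ of the common refinement one must be careful that each $\mathcal{T}_{A_i}$ is genuinely a polynomial-plus-lower-order there and that the shifts $\mu_{i,j}$ do not move $x$ out of the cell in a way that matters asymptotically — but since we only care about the homogeneous top-degree piece and shifts are bounded, the scaling $x \mapsto kx$, $k\to\infty$, confines everything to the interior of a single big cell, so the argument goes through. A secondary (purely computational) subtlety is getting the lattice indices $e_i$ right, since an off-by-a-factor error there would be invisible to the degree check; the safest route is to verify the assembled piecewise-polynomial $\dimasx$ against a handful of exact values from Lemma~\ref{singlecases} after taking the appropriate limits.
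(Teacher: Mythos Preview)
Your approach is the same as the paper's, and it is essentially correct for $i=2,3,4$, but there is a genuine gap in the $i=1$ case that would cause your arithmetic to fail.

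The formula ``top-degree $= P_i(1,1,1)\,\mathcal{T}_{A_i}^{\mathfrak{c},top}$'' implicitly assumes that every exponent vector $\mu_{i,j}$ appearing in $P_i$ lies in $\Lambda(A_i)$. This holds for $i=2,3,4$: one checks that all monomials of $P_2,P_3,P_4$ have even $x_1+x_3$, hence lie in $\Lambda(A_2)=\Lambda(A_3)=\Lambda(A_4)=\{2\mid a+c\}$, and your formula gives $2\cdot 2,\,3\cdot 2,\,3\cdot 2 = 4,6,6$. But for $i=1$ it fails: $\Lambda(A_1)=\{4\mid a+c\}$, and many monomials of $P_1$ (for instance $z_1^2 z_2$, with $a+c=2$) lie in the nontrivial coset of $\Lambda(A_1)$ inside $\Lambda(A_2)$. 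Since $\mathcal{T}_{A_1}$ is supported on $\Lambda(A_1)$, the shifted term $\mathcal{T}_{A_1}(x-\mu_{1,j})$ vanishes identically at points $x\in\Lambda(A_1)$ whenever $\mu_{1,j}\notin\Lambda(A_1)$; it contributes instead on the \emph{other} coset. Your formula would yield $P_1(1,1,1)\cdot e_1 = 12\cdot 4 = 48$, not $24$.

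The fix, which is exactly what the paper does, is to split the monomials of $P_1$ by their coset in $\Lambda(A_2)/\Lambda(A_1)$: each of the two cosets carries coefficient-sum $6$. Hence on \emph{each} coset of $\Lambda(A_1)$ in $\Lambda(A_2)$ the top-degree part of the coefficient function of $P_1/U_1$ is $6\cdot\mathcal{T}_{A_1}^{\mathfrak{c},top}=6\cdot 4\,T_{A_1}=24\,T_{A_1}$, so the top-degree is the constant polynomial $24\,T_{A_1}$ across all of $\Lambda(A_2)$. Once this correction is made, your argument and the paper's are identical; note also that the division by $c$ from Definition~\ref{dimasi} is trivial here since $\Lambda_\alpha\subset\Lambda(A_2)$ forces $c=1$.
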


\begin{proof}
We will use Proposition~\ref{vpfspline}. Given a series
$$\frac{\sum_B c_bz^b}{\prod_A(1-z^a)},$$
supposing that all elements of $B$ lie in $\Lambda(A)$ and that $\sum_B c_b\neq 0$, then the coefficient of $z^\lambda$ is given by
$$\sum_B c_b\mathcal{T}_A(\lambda-b).$$
On $\Lambda(A)$, this will be a piecewise quasipolynomial, whose top degree part will be 
$$\sum_B c_b\mathcal{T}_A^{top}(\lambda-b)=\sum_B c_b\mathcal{T}_A^{top}(\lambda)=\left(\sum_B c_b\right)T_A(\lambda)|\Lambda/\Lambda(A)|.$$
For $\frac{P_i}{U_i}$ with $i=2,3,4$, then, we have $\Lambda(A_i)=\{(i,j,k): 2|i+k\}$, and the sum of the three top degree parts is
$$4T_{A_2}+6T_{A_3}+6T_{A_4}.$$
For $\frac{P_1}{U_1}$, we have $\Lambda(A_1)=\{(i,j,k): 4|i+k\}$. The coefficients of the series $\frac{1}{U_1}$, then, will agree on $\Lambda(A_1)$ with a piecewise quasipolynomial having as top degree term $4 T_{A_1}$. The numerator, then, has some monomials (whose sum of coefficient is 6) that lie still in $\Lambda(A_1)$, and the other monomials (whose sum of coefficients is still 6) that have the exponent in the nonzero class in $\Lambda(A_2)/\Lambda(A_1)$; so, the coefficients of the series $\frac{P_1}{U_1}$ will agree on the entire $\Lambda(A_2)$ with a function having top degree 
$$6\cdot 4 T_{A_1}.$$
Notice that $\Lambda(A_2)$ is contained in the root lattice $\Lambda_\alpha=\{(i,j,k): 4|i+2k+3k\}$, so the number $c$ in Definition~\ref{dimasi} is 1, and this concludes the proof.
\end{proof}

It remains now only to find the four multivariate splines for the lists $A_1,A_2,A_3,A_4$, and we will use Proposition~\ref{hyperplane}. Everything will happen inside the Weyl chamber $\mathcal{W}$ of $G$, that is the positive octant of a three dimensional vector space in this case; we will only show its intersection with the hyperplane $x+y+z=1$, that is a triangle. We will denote by $\partial_i$ the derivative $\partial/\partial x_i$.

\begin{example}\label{a1}
We have $$A_1=\left[
\left(\begin{array}{c}4\\0\\0\end{array}\right),
\left(\begin{array}{c}3\\0\\1\end{array}\right),
\left(\begin{array}{c}1\\0\\3\end{array}\right),
\left(\begin{array}{c}0\\0\\4\end{array}\right),
\left(\begin{array}{c}0\\1\\0\end{array}\right),
\left(\begin{array}{c}0\\3\\0\end{array}\right)
\right]$$
\begin{center}
\begin{tikzpicture}
[scale=0.8,transform shape]
\tikzstyle{circ}=[draw,circle,fill=black,minimum size=4pt,
                            inner sep=0pt]
\draw (0,0) -- (4,6) -- (8,0) -- (0,0);
\draw (2,0) -- (4,6) -- (6,0);
\draw (0,0) node (1) [circ,label=180:$\left(\begin{array}{c}4\\0\\0\end{array}\right)$]{};
\draw (2,0) node (1) [circ,label=270:$\left(\begin{array}{c}3\\0\\1\end{array}\right)$]{};
\draw (6,0) node (1) [circ,label=270:$\left(\begin{array}{c}1\\0\\3\end{array}\right)$]{};
\draw (8,0) node (1) [circ,label=0:$\left(\begin{array}{c}0\\0\\4\end{array}\right)$]{};
\draw (4,6) node (1) [circ,label=30:$\left(\begin{array}{c}0\\1\\0\end{array}\right)$]{};
\draw (4,6) node (1) [circ,label=150:$\left(\begin{array}{c}0\\3\\0\end{array}\right)$]{};
\draw (2,1) node (1) [label=135:$\mathfrak{c}_1$]{};
\draw (4,1) node (1) [label=90:$\mathfrak{c}_2$]{};
\draw (6,1) node (1) [label=45:$\mathfrak{c}_3$]{};
\end{tikzpicture}
\end{center}

\noindent We have that $T_{A_1}^{\mathfrak{c}_1}$ is divided by $x_3^2$ and $x_2$, by Proposition~\ref{hyperplane}, so it will be of the form $ax_2x_3^2$. From Proposition~\ref{factsplines} ii) and iii), we have $$\partial_2(\partial_1+3\partial_3)(3\partial_1+\partial_3)T_{A_1}^{\mathfrak{c}_1}=T_{\Tiny{\left[
\left(\!\!\!\begin{array}{c}4\\0\\0\end{array}\!\!\!\right),
\left(\!\!\!\begin{array}{c}0\\0\\4\end{array}\!\!\!\right),
\left(\!\!\!\begin{array}{c}0\\3\\0\end{array}\!\!\!\right)
\right]}}^{\mathfrak{c}_1}=\frac{1}{4\cdot 4\cdot 3}$$
that gives us $a=1/288$. The difference $T_{A_1}^{\mathfrak{c}_2}-T_{A_1}^{\mathfrak{c}_1}$ is a multiple of  $(x_1-3x_3)^2$ by Proposition~\ref{hyperplane}, so it will be of the form $l(x)(x_1-3x_3)^2$, for $l(x)$ a linear form. We also know that $T_{A_1}^{\mathfrak{c}_2}$ needs to be symmetrical in $x_1$ and $x_3$, and this leaves as only choice $l(x)=-x_2/2304$. By symmetry, we have $T_{A_1}^{\mathfrak{c}_3}-T_{A_1}^{\mathfrak{c}_2}=x_2(3x_1-x_3)^2/2304$ too.
\end{example}

\begin{example}\label{a2}
We have $$A_2=\left[
\left(\begin{array}{c}4\\0\\0\end{array}\right),
%\left(\begin{array}{c}3\\0\\1\end{array}\right),
%\left(\begin{array}{c}1\\0\\3\end{array}\right),
\left(\begin{array}{c}0\\0\\4\end{array}\right),
\left(\begin{array}{c}0\\1\\0\end{array}\right),
\left(\begin{array}{c}0\\3\\0\end{array}\right)
\left(\begin{array}{c}2\\1\\0\end{array}\right),
\left(\begin{array}{c}0\\1\\2\end{array}\right),
\right]$$
\begin{center}
\begin{tikzpicture}
[scale=0.8,transform shape]
\tikzstyle{circ}=[draw,circle,fill=black,minimum size=4pt,
                            inner sep=0pt]
\draw (0,0) -- (4,6) -- (8,0) -- (0,0);
%\draw (2,0) -- (4,6);
%\draw (4,6) -- (6,0);
\draw (0,0) -- (6.666,2) -- (1.333,2) -- (8,0);
\draw (0,0) node (1) [circ,label=180:$\left(\begin{array}{c}4\\0\\0\end{array}\right)$]{};
%\draw (2,0) node (1) [circ,label=270:$\left(\begin{array}{c}3\\0\\1\end{array}\right)$]{};
%\draw (6,0) node (1) [circ,label=270:$\left(\begin{array}{c}1\\0\\3\end{array}\right)$]{};
\draw (8,0) node (1) [circ,label=0:$\left(\begin{array}{c}0\\0\\4\end{array}\right)$]{};
\draw (4,6) node (1) [circ,label=30:$\left(\begin{array}{c}0\\1\\0\end{array}\right)$]{};
\draw (4,6) node (1) [circ,label=150:$\left(\begin{array}{c}0\\3\\0\end{array}\right)$]{};
\draw (1.333,2) node (1) [circ,label=150:$\left(\begin{array}{c}2\\1\\0\end{array}\right)$]{};
\draw (6.666,2) node (1) [circ,label=30:$\left(\begin{array}{c}0\\1\\2\end{array}\right)$]{};
\draw (4,0.2) node (1) [label=90:$\mathfrak{c}_1$]{};
\draw (2.1,0.8) node (1) [label=135:$\mathfrak{c}_2$]{};
\draw (5.9,0.8) node (1) [label=45:$\mathfrak{c}_3$]{};
\draw (4,1.2) node (1) [label=90:$\mathfrak{c}_4$]{};
\draw (4,3) node (1) [label=90:$\mathfrak{c}_5$]{};
\end{tikzpicture}
\end{center}

\noindent We have that $T_{A_2}^{\mathfrak{c}_1}$ is divided by $x_2^3$, by Proposition~\ref{hyperplane}, so it will be of the form $ax_2^3$. From Proposition~\ref{factsplines} ii) and iii), we have $$\partial_2(\partial_2+2\partial_3)(2\partial_1+\partial_2)T_{A_2}^{\mathfrak{c}_1}=T_{\Tiny{\left[
\left(\!\!\!\begin{array}{c}4\\0\\0\end{array}\!\!\!\right),
\left(\!\!\!\begin{array}{c}0\\0\\4\end{array}\!\!\!\right),
\left(\!\!\!\begin{array}{c}0\\3\\0\end{array}\!\!\!\right)
\right]}}^{\mathfrak{c}_1}=\frac{1}{4\cdot 4\cdot 3}$$
that gives us $a=1/288$ again. Similarly, we have
\begin{align*}
T_{A_2}^{\mathfrak{c}_2}-T_{A_2}^{\mathfrak{c}_1}&=a_{12}(2x_2-x_3)^3\\
T_{A_2}^{\mathfrak{c}_4}-T_{A_2}^{\mathfrak{c}_2}&=a_{24}(x_1-2x_2)^3\\
T_{A_2}^{\mathfrak{c}_3}-T_{A_2}^{\mathfrak{c}_4}&=a_{43}(2x_2-x_3)^3\\
T_{A_2}^{\mathfrak{c}_1}-T_{A_2}^{\mathfrak{c}_3}&=a_{31}(x_1-2x_2)^3
\end{align*}
for suitable constants $a_{12},a_{24},a_{43},a_{31}$. Summing the four equations, we get also $a_{12}=-a_{43}$ and $a_{24}=-a_{31}$. The difference $T_{A_2}^{\mathfrak{c}_1}-T_{A_2}^{\mathfrak{c}_1}$ is a multiple of  $(x_1-3x_3)^2$ by Proposition~\ref{hyperplane}, so it will be of the form $l(x)(x_1-3x_3)^2$, for $l(x)$ a linear form. Imposing $x_3|T_{A_2}^{\mathfrak{c}_2}$ and $x_1|T_{A_2}^{\mathfrak{c}_4}$, we get $a_{12}=-1/2304$ and $a_{31}=1/2304$. Imposing also $x_1x_3|T_{A_2}^{\mathfrak{c}_5}$, we also get 
$$T_{A_2}^{\mathfrak{c}_5}-T_{A_2}^{\mathfrak{c}_4}=-(x_1-2x_2+x_3)^3/2304.$$
\end{example}

\begin{example}\label{a3}
We have $$A_3=\left[
\left(\begin{array}{c}4\\0\\0\end{array}\right),
%\left(\begin{array}{c}3\\0\\1\end{array}\right),
%\left(\begin{array}{c}1\\0\\3\end{array}\right),
\left(\begin{array}{c}0\\0\\4\end{array}\right),
\left(\begin{array}{c}0\\1\\0\end{array}\right),
\left(\begin{array}{c}0\\3\\0\end{array}\right)
\left(\begin{array}{c}2\\1\\0\end{array}\right),
\left(\begin{array}{c}0\\1\\2\end{array}\right),
\right]$$
\begin{center}
\begin{tikzpicture}
[scale=0.8,transform shape]
\tikzstyle{circ}=[draw,circle,fill=black,minimum size=4pt,
                            inner sep=0pt]
\draw (0,0) -- (4,6) -- (8,0) -- (0,0);
%\draw (2,0) -- (4,6);
%\draw (4,6) -- (6,0);
\draw (4,6) -- (2,0) -- (1.333,2) -- (8,0);
\draw (0,0) node (1) [circ,label=180:$\left(\begin{array}{c}4\\0\\0\end{array}\right)$]{};
\draw (2,0) node (1) [circ,label=270:$\left(\begin{array}{c}3\\0\\1\end{array}\right)$]{};
%\draw (6,0) node (1) [circ,label=270:$\left(\begin{array}{c}1\\0\\3\end{array}\right)$]{};
\draw (8,0) node (1) [circ,label=0:$\left(\begin{array}{c}0\\0\\4\end{array}\right)$]{};
\draw (4,6) node (1) [circ,label=30:$\left(\begin{array}{c}0\\1\\0\end{array}\right)$]{};
\draw (4,6) node (1) [circ,label=150:$\left(\begin{array}{c}0\\3\\0\end{array}\right)$]{};
\draw (1.333,2) node (1) [circ,label=150:$\left(\begin{array}{c}2\\1\\0\end{array}\right)$]{};
%\draw (6.666,2) node (1) [circ,label=30:$\left(\begin{array}{c}0\\1\\2\end{array}\right)$]{};
\draw (1,0.2) node (1) [label=90:$\mathfrak{c}_1$]{};
\draw (2.4,0.8) node (1) [label=135:$\mathfrak{c}_2$]{};
\draw (4,0.2) node (1) [label=45:$\mathfrak{c}_3$]{};
\draw (2.5,2.7) node (1) [label=90:$\mathfrak{c}_4$]{};
\draw (4.5,2.5) node (1) [label=90:$\mathfrak{c}_5$]{};
\end{tikzpicture}
\end{center}

\noindent We have that $T_{A_3}^{\mathfrak{c}_1}$ is divided by $x_2^2x_3$, by Proposition~\ref{hyperplane}, so it will be of the form $ax_2^2x_3$. From Proposition~\ref{factsplines} ii) and iii), we have $$\partial_2(3\partial_1+\partial_3)(2\partial_1+\partial_2)T_{A_3}^{\mathfrak{c}_1}=T_{\Tiny{\left[
\left(\!\!\!\begin{array}{c}4\\0\\0\end{array}\!\!\!\right),
\left(\!\!\!\begin{array}{c}0\\0\\4\end{array}\!\!\!\right),
\left(\!\!\!\begin{array}{c}0\\3\\0\end{array}\!\!\!\right)
\right]}}^{\mathfrak{c}_1}=\frac{1}{4\cdot 4\cdot 3}$$
that gives us $a=1/96$. We have then
\begin{align*}
T_{A_3}^{\mathfrak{c}_2}-T_{A_3}^{\mathfrak{c}_1}&=a_{12}(x_1-2x_2-3x_3)^3\\
T_{A_3}^{\mathfrak{c}_3}-T_{A_3}^{\mathfrak{c}_2}&=l(x)(x_1-3x_3)^2\\
T_{A_3}^{\mathfrak{c}_4}-T_{A_3}^{\mathfrak{c}_2}&=a_{24}(x_1-2x_2)^3\\
\end{align*}
for $a_{12}$ a constant and $l(x)$ a linear form. Imposing that $x_2^2|T_{A_3}^{\mathfrak{c}_3}$, we get that $l(x)=a_{12}(-x_1+6x_2+3x_3)$; imposing that $x_3|T_{A_3}^{\mathfrak{c}_4}$, we also get $a_{24}=-a_{12}$. For the same argument as in the previous example, we have

\begin{align*}
T_{A_3}^{\mathfrak{c}_5}&=T_{A_3}^{\mathfrak{c}_4}+T_{A_3}^{\mathfrak{c}_3}-T_{A_3}^{\mathfrak{c}_2}=\\
&=a_{12}[(x_1-2x_2-3x_3)^3-(x_1-2x_2)^3+(-x_1+6x_2+3x_3)(x_1-3x_3)^2]+x_2^2x_3/96
\end{align*}
Imposing it to be divided by $x_1^2$, we get $a_{12}=1/3456$, that completes this case as well.
\end{example}

The case of $T_{A_4}$ can be obtained just switching $x_1$ and $x_3$ in the above.
We reached then the following, that is a direct consequence of Lemma~\ref{a0} and Examples~\ref{a1},~\ref{a2},~\ref{a3}. 

\begin{corollary}\label{finalformula}
In the coordinates $x_1,x_2,x_3$ as above, the piecewise polynomial function $\dimasx$ agrees with a different polynomial of degree 3 in each of the following cells.
\begin{center}
\begin{tikzpicture}
[scale=0.8,transform shape]
\tikzstyle{circ}=[draw,circle,fill=black,minimum size=4pt,
                            inner sep=0pt]
\draw (0,0) -- (4,6) -- (8,0) -- (0,0);
%\draw (2,0) -- (4,6);
%\draw (4,6) -- (6,0);
\draw (4,6) -- (2,0) -- (1.333,2) -- (6.666,2) -- (6,0) -- (4,6);
\draw (0,0) node (1) [circ,label=180:$\left(\begin{array}{c}4\\0\\0\end{array}\right)$]{};
\draw (2,0) node (1) [circ,label=270:$\left(\begin{array}{c}3\\0\\1\end{array}\right)$]{};
\draw (6,0) node (1) [circ,label=270:$\left(\begin{array}{c}1\\0\\3\end{array}\right)$]{};
\draw (8,0) node (1) [circ,label=0:$\left(\begin{array}{c}0\\0\\4\end{array}\right)$]{};
\draw (4,6) node (1) [circ,label=30:$\left(\begin{array}{c}0\\1\\0\end{array}\right)$]{};
\draw (4,6) node (1) [circ,label=150:$\left(\begin{array}{c}0\\3\\0\end{array}\right)$]{};
\draw (1.333,2) node (1) [circ,label=150:$\left(\begin{array}{c}2\\1\\0\end{array}\right)$]{};
\draw (6.666,2) node (1) [circ,label=30:$\left(\begin{array}{c}0\\1\\2\end{array}\right)$]{};
\draw (1,0.2) node (1) [label=90:$\mathfrak{c}_1$]{};
\draw (2.4,0.8) node (1) [label=135:$\mathfrak{c}_2$]{};
\draw (2.5,2.7) node (1) [label=90:$\mathfrak{c}_3$]{};
\draw (4,0.5) node (1) [label=90:$\mathfrak{c}_4$]{};
\draw (4,2.5) node (1) [label=90:$\mathfrak{c}_5$]{};
\draw (7,0.2) node (1) [label=90:$\mathfrak{c}_6$]{};
\draw (5.6,0.8) node (1) [label=45:$\mathfrak{c}_7$]{};
\draw (5.5,2.7) node (1) [label=90:$\mathfrak{c}_8$]{};
\end{tikzpicture}
\end{center}
We have, furthermore,
$$\dimasx^{\mathfrak{c}_4}=\frac{24x_1x_2x_3-x_2^3-3x_2(x_1+x_3-x_2)^2}{288}$$
$$\dimasx^{\mathfrak{c}_3}-\dimasx^{\mathfrak{c}_2}=\dimasx^{\mathfrak{c}_5}-\dimasx^{\mathfrak{c}_4}=$$
$$=\dimasx^{\mathfrak{c}_8}-\dimasx^{\mathfrak{c}_7}=\frac{(2x_2-x_1-x_3)^3}{576}$$
$$\dimasx^{\mathfrak{c}_3}-\dimasx^{\mathfrak{c}_5}=\dimasx^{\mathfrak{c}_2}-\dimasx^{\mathfrak{c}_4}=\frac{(x_1-3x_3)^3}{576}$$
$$\dimasx^{\mathfrak{c}_8}-\dimasx^{\mathfrak{c}_5}=\dimasx^{\mathfrak{c}_7}-\dimasx^{\mathfrak{c}_4}=\frac{(x_3-3x_1)^3}{576}$$
$$\dimasx^{\mathfrak{c}_1}-\dimasx^{\mathfrak{c}_2}=-\frac{(x_1-2x_2-3x_3)^3}{576}$$
$$=\dimasx^{\mathfrak{c}_6}-\dimasx^{\mathfrak{c}_7}=-\frac{(x_3-2x_2-3x_1)^3}{576}$$
that completely determines it.
\end{corollary}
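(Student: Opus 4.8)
The plan is to read $\dimasx$ off from Lemma~\ref{a0}, which writes it as the fixed combination $24\,T_{A_1}+4\,T_{A_2}+6\,T_{A_3}+6\,T_{A_4}$, and to substitute the explicit piecewise‑polynomial formulas for the four splines produced in Examples~\ref{a1},~\ref{a2},~\ref{a3}; the fourth one is $T_{A_4}(x_1,x_2,x_3)=T_{A_3}(x_3,x_2,x_1)$, since interchanging $\omega_1$ and $\omega_3$ carries the list $A_3$ to $A_4$. In each Example the spline is recorded as one distinguished cubic together with its jumps across the walls of the corresponding chamber complex, and by Proposition~\ref{hyperplane} each such jump is a homogeneous cubic divisible by a power of the linear form defining the wall.

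First I would superimpose the four chamber complexes. Their walls are the five hyperplanes $\{x_1=3x_3\}$, $\{x_3=3x_1\}$, $\{x_1=2x_2+3x_3\}$, $\{x_3=2x_2+3x_1\}$, $\{x_1+x_3=2x_2\}$ shown in the statement, together with two further hyperplanes $\{x_1=2x_2\}$ and $\{x_3=2x_2\}$ which appear in $\mathcal{C}(A_2)$ and $\mathcal{C}(A_3)$ (respectively $\mathcal{C}(A_2)$ and $\mathcal{C}(A_4)$) but in neither of the remaining two complexes. The eight cells $\mathfrak{c}_1,\dots,\mathfrak{c}_8$ in the picture are cut out by the first five hyperplanes only, so the point that genuinely needs proof is that the two extra hyperplanes carry no net jump of the combination $24\,T_{A_1}+4\,T_{A_2}+6\,T_{A_3}+6\,T_{A_4}$: across $\{x_1=2x_2\}$ that jump equals $4$ times the jump of $T_{A_2}$ plus $6$ times the jump of $T_{A_3}$ (the other two splines being smooth there), and with the constants of Examples~\ref{a2} and \ref{a3} these two terms are $\pm\tfrac{1}{576}(x_1-2x_2)^3$ with opposite signs, hence cancel; $\{x_3=2x_2\}$ is the mirror case. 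This cancellation, which is exactly where the multiplicities $24,4,6,6$ enter in an essential way, is the delicate step, and I would regard it as the heart of the argument.

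Granting that, the remainder is bookkeeping. On the central cell $\mathfrak{c}_4=\{x_1+x_3\ge 2x_2,\ x_1\le 3x_3,\ x_3\le 3x_1\}$ adjacent to the bottom edge of $\mathcal{W}$, each $T_{A_i}$ equals the cubic obtained in its Example by adding the base polynomial to the wall‑crossings met along the way; summing with weights $24,4,6,6$ and simplifying gives $\dimasx^{\mathfrak{c}_4}=\tfrac{1}{288}\bigl(24x_1x_2x_3-x_2^3-3x_2(x_1+x_3-x_2)^2\bigr)$, which can be spot‑checked at a sample point (e.g.\ at $(x_1,x_2,x_3)=(3,2,3)$ both sides equal $\tfrac{41}{36}$). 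For each of the five visible walls the jump of $\dimasx$ across it is again the weighted sum of the four spline jumps, all recorded in the Examples; after combining, the terms involving $x_2$ cancel and one is left with the pure cubes in the displayed difference formulas. Since the eight cells are connected through the five walls, $\dimasx^{\mathfrak{c}_4}$ together with these differences pins down $\dimasx$ on all of $\mathcal{W}$. The expected main obstacle is the spurious‑wall cancellation of the middle paragraph; everything else is a finite, if lengthy, manipulation of cubics.
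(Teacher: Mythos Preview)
Your proposal is correct and follows exactly the route the paper takes: the text states that the corollary ``is a direct consequence of Lemma~\ref{a0} and Examples~\ref{a1},~\ref{a2},~\ref{a3}'' (plus the $x_1\leftrightarrow x_3$ symmetry giving $T_{A_4}$), and your write-up simply unpacks that sentence. Your explicit identification of the spurious-wall cancellation across $\{x_1=2x_2\}$ and $\{x_3=2x_2\}$ is the one point the paper leaves implicit; note that to make the signs come out as you describe one should read $x_1\mid T_{A_2}^{\mathfrak c_3}$ (and hence $a_{31}=-1/2304$, $a_{24}=+1/2304$) in Example~\ref{a2}, after which $4\cdot\tfrac{1}{2304}+6\cdot(-\tfrac{1}{3456})=0$ exactly as you claim.
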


We can now finally apply Theorem~\ref{main}, to calculate actual numbers. To find the intergrals in question, we used Mathematica, and the following lines of code.

\begin{doublespace}
\noindent\(\pmb{g[\text{x$\_$},\text{y$\_$},\text{z$\_$}]\text{:=} (1/576)\text{UnitStep}[x]\text{UnitStep}[y]\text{UnitStep}[z]}\\
\pmb{(48x y z -2y{}^{\wedge}3-6y(x+z-y){}^{\wedge}2+\text{UnitStep}[2y-x-z](2y-x-z){}^{\wedge}3+}\\
\pmb{\text{UnitStep}[z-3x](z-3x){}^{\wedge}3+\text{UnitStep}[x-3z](x-3z){}^{\wedge}3-}\\
\pmb{\text{UnitStep}[x-2y-3z](x-2y-3z){}^{\wedge}3-\text{UnitStep}[z-2y-3x](z-2y-3x){}^{\wedge}3);}\\
\pmb{\text{dimasVHx}=\text{PiecewiseExpand}[g[x,y,z]];}\\
\pmb{\text{dimasVx}=x y z(x+y)(y+z)(x+y+z)/12;}\\
\pmb{P[\text{a$\_$},\text{b$\_$},\text{c$\_$}]\text{:=}}\\
\pmb{\text{ImplicitRegion}[\{x>0,y>0,z>0, x+2y+3z<a+2b+3c,3x+2y+z<3a+2b+c,}\\
\pmb{x+2y+z<a+2b+c\},\{x,y,z\}];}\\
\pmb{\text{VOL}[\text{a$\_$},\text{b$\_$},\text{c$\_$}]\text{:=}}\\
\pmb{12!*(1/4)*(\text{Integrate}[\text{PiecewiseExpand}[\text{dimasVHx}*\text{dimasVx}],\{x,y,z\}\in P[a,b,c]])}\)
\end{doublespace}

Given explicit integer values for $a,b,c$, the program takes around 15 minutes to run and give the numerical answer.

We will see in the next section some enumerative consequences of these calculations, and we will speculate a little bit about why we could have expected a picture as in Corollary~\ref{finalformula}.

\section{The twisted cubics case}\label{TC}

We will now express some consequences of the entire work so far, to the case of $X$ being a twisted cubic, $G=SL_4$ and $H=SL_2$. As seen in Example~\ref{rnc}, $X$ is a homogeneous variety, and by Corollary~\ref{specialhomog} it is special. We will indicate by $L_{a_1,a_2,a_3}$ or $L_{a_1a_2a_3}$ the line bundle corresponding to the weight $a_1\omega_1+a_2\omega_2+a_3\omega_3$, and by $M_{a_1,a_2,a_3}$ or $M_{a_1a_2a_3}$ the G.I.T. quotient obtained with the $H$-linearized line bundle $L_{a_1,a_2,a_3}$.

\subsection{Two stratifications of $\Omega_3$}

We will now describe two stratification of $\Omega_3$, that will make us able to describe explicitly the stable and semistable loci for the various linearizations, and further geometric properties of the quotients $M_{a_1a_2a_2}$.

Let us pick a basis $e_1,e_2,e_3,e_4$ for the vector space $V$ on which $G$ acts on, in such a way $X$ can be parametrized as $[t^3,t^2s,ts^2,s^3]$ in $\PP(V)$. Let us set $e_{ij}=e_i\wedge e_j\in\wedge^2 V$ and $e_{ijk}=e_i\wedge e_j\wedge e_k\in\wedge^3 V$. Elements of $\Omega_3$ can then be represented as (projectivization of) triples of matrices $$(A_1,A_2,A_3) \quad \begin{cases}A_1\in SL(<e_1,e_2,e_3,e_4>)=SL(V)\\A_2\in SL(<e_{12},e_{13},e_{14},e_{23},e_{24},e_{34}>)=SL(\wedge^2V)\\A_3\in SL(<e_{123},e_{124},e_{134},e_{234}>)=SL(\wedge^3V)\end{cases}.$$

We already have the first stratification, that is given by $G$-orbits, that we denoted by $U$, $\Delta_1$, $\Delta_2$, $\Delta_3$ and all their intersections that we denoted by $\Delta_{12},\Delta_{13},\Delta_{23},\Delta_{123}$. For each of these strata, we can inquire who the three matrices $A_1,A_2,A_3$ are, their ranks, and how they relate to each other.

For the matrix $A_1$, let us consider the projective (rational) morphism $\PP(V)\to\PP(V)$; we will indicate by \textit{kernel} of $A_1$ the projectivization of the actual linear kernel (so, rather the locus where the projective morphism is not defined) and by \textit{image} the image of the projective morphism. We will indicate points by $p,q$, lines by $L,M$ and planes by $H,K$. For $A_2$, notice that the projective morphism $\PP(\wedge^2V)\to\PP(\wedge^2V)$ associated will carry the Grassmannian of lines $\GG(1,3)$ into itself, because $A_2$ conserves pure tensors in $\wedge^2V$; we will indicate by \textit{kernel} the intersection of the projectivization of the actual linear kernel and $\GG(1,3)$, and by image the intersection of the projective image with $\GG(1,3)$, expressed as Schubert cycles. For $A_3$, we will indicate by \textit{kernel} the locus where the projective morphism $\PP(\wedge^3V)\to\PP(\wedge^3V)$ is not defined and by \textit{image} the image of the projective morphism again; this will be expressed as dual varieties, because we have $\PP(\wedge^3V)\cong\PP(V^*)$; for a plane $H\subset \PP(V)$, the cycle $H^*$ will be a point in $\PP(\wedge^3V)$, and analogously $L^*$ will be a line and $p^*$ will be a plane. Let us now analyze all different cases.
\begin{center}
\begin{tabular}{ | c | c | c | c | c | c | c | c | c | c | c | c | c | }
  \hline
  & \multicolumn{3}{|c|}{$A_1$}& \multicolumn{3}{|c|}{$A_2$}& \multicolumn{3}{|c|}{$A_3$} & \\
  \hline
  & rk & ker & im & rk & ker & im & rk & ker & im & extra \\
  \hline
  $U$ & 4 & $\emptyset$ & $\PP V$ & 6 & $\emptyset$ & $\PP\wedge^2V$ & 4 & $\emptyset$ & $\PP\wedge^3V$ & \\
  \hline
  $\Delta_1$ & 1 & $H$ & $q$ & 3 & $\Sigma_{1,1}(H)$ & $\Sigma_{2}(q)$ & 3 & $H^*$ & $q^*$ & \\
  \hline
  $\Delta_2$ & 2 & $L$ & $M$ & 1 & $\Sigma_{1}(L)$ & $\Sigma_{2,2}(M)$ & 2 & $L^*$ & $M^*$ & \\
  \hline
  $\Delta_3$ & 3 & $p$ & $K$ & 3 & $\Sigma_{2}(p)$ & $\Sigma_{1,1}(K)$ & 1 & $p^*$ & $K^*$ & \\
  \hline
  $\Delta_{12}$ & 1 & $H$ & $q$ & 1 & $\Sigma_{1}(L)$ & $\Sigma_{2,2}(M)$ & 2 & $L^*$ & $M^*$ & \!\!\mbox{\begin{tabular}{c} $L\subset H,$ \\ $q\in M$\end{tabular}}\!\! \\
  \hline
  $\Delta_{13}$ & 1 & $H$ & $q$ & 2 & \!\!\mbox{\begin{tabular}{c} $\Sigma_{1,1}(H)\cup$ \\ $\Sigma_{2}(q)$\end{tabular}}\!\! & $\Sigma_{2,1}(q,K)$ & 1 & $p^*$ & $K^*$ & \!\!\mbox{\begin{tabular}{c} $p\in H,$ \\ $q\in K$\end{tabular}}\!\! \\
  \hline
  $\Delta_{23}$ & 2 & $L$ & $M$ & 1 & $\Sigma_{1}(L)$ & $\Sigma_{2,2}(M)$ & 1 & $p^*$ & $K^*$ & \!\!\mbox{\begin{tabular}{c} $p\in L,$ \\ $M\subset K$\end{tabular}}\!\! \\
  \hline
  $\Delta_{123}$ & 1 & $H$ & $q$ & 1 & $\Sigma_{1}(L)$ & $\Sigma_{2,2}(M)$ & 1 & $p^*$ & $K^*$ &  \!\!\mbox{\begin{tabular}{c} $p\in L\subset H,$ \\ $q\in M\subset K$\end{tabular}}\!\!  \\
  \hline
\end{tabular}
\end{center}

A further, and quite neat, description of these strata is the following; the strata $\Delta_I$ is the $G$-orbit of the limit as the variables $u_i$ for $i\in I$ of the following family
\begin{equation}\label{neat}
\left[\begin{matrix}1&0&0&0\\0&u_1&0&0\\0&0&u_1u_2&0\\0&0&0&u_1u_2u_3\end{matrix}\right]\in G_a \cong  U\subset\Omega_3.
\end{equation}

We will now describe the second stratification, that is in some sense finer than this, and is related to the action of $H$.

Notice that the maximal torus $\left[\begin{smallmatrix}t&0\\0&t^{-1}\end{smallmatrix}\right]\subset H$ acts on $(A_1,A_2,A_3)$ as
\begin{equation}\label{torusacting}
\left(A_1\cdot \left[\begin{smallmatrix}t^3&0&0&0\\0&t&0&0\\0&0&t^{-1}&0\\0&0&0&t^{-3}\end{smallmatrix}\right],
A_2\cdot \left[\begin{smallmatrix}t^4&0&0&0&0&0\\0&t^2&0&0&0&0\\0&0&1&0&0&0\\0&0&0&1&0&0\\0&0&0&0&t^{-2}&0\\0&0&0&0&0&t^{-4}\end{smallmatrix}\right],
A_3\cdot \left[\begin{smallmatrix}t^3&0&0&0\\0&t&0&0\\0&0&t^{-1}&0\\0&0&0&t^{-3}\end{smallmatrix}\right]\right)
\end{equation}

So, every row of the matrices $A_i$ is multiplied by a certain power of $t$.
We are now ready to give our definition.

\begin{definition}
We will denote by $\Omega_3^{c_1,c_2,c_3}$ the closure of the $H$-orbit of the locus in $\Omega_3$ of triples of matrices such that the lowest nonzero row in $A_i$ gets multiplied by $t^{c_i}$.
\end{definition}

The usefulness of these strata is immediate; we have in fact the next Proposition, that is just a rephrasing of the Hilbert-Mumford criterion for stability and semistability.

\begin{proposition}\label{stablesemi}
Let us consider the $H$-linearization $L_{a_1a_2a_3}$ on $\Omega_3$. Then
$$\Omega_3^{s}(L_{a_1a_2a_3})=\Omega_3\setminus\left(\bigcup_{a_1c_1+a_2c_2+a_3c_3\geq0}\Omega_3^{c_1,c_2,c_3}\right)$$
$$\Omega_3^{ss}(L_{a_1a_2a_3})=\Omega_3\setminus\left(\bigcup_{a_1c_1+a_2c_2+a_3c_3>0}\Omega_3^{c_1,c_2,c_3}\right)$$
\end{proposition}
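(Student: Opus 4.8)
The plan is to apply the Hilbert–Mumford numerical criterion (see \cite{newstead}) to the action of $H=SL_2$ on $\Omega_3$ linearized by $L_{a_1a_2a_3}$, and to show that the numerical invariant $\mu^{L}(x,\lambda)$ attached to a point $x$ and a one-parameter subgroup $\lambda$ is governed precisely by the strata $\Omega_3^{c_1,c_2,c_3}$. First I would recall that, since every one-parameter subgroup of $SL_2$ is conjugate to a positive power of the standard torus $\lambda_0(t)=\operatorname{diag}(t,t^{-1})$, and since the strata $\Omega_3^{c_1,c_2,c_3}$ are defined as $H$-orbit closures, it suffices to compute $\mu^{L}(x,\lambda_0)$ for all $x$ and take into account all $H$-translates; this is why the criterion will naturally produce $H$-invariant unions of strata. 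The action of $\lambda_0(t)$ on a triple $(A_1,A_2,A_3)$ is given explicitly in \eqref{torusacting}: it multiplies the rows of each $A_i$ by fixed powers of $t$ (namely $t^3,t,t^{-1},t^{-3}$ on the rows of $A_1$ and $A_3$, and $t^4,t^2,1,1,t^{-2},t^{-4}$ on the rows of $A_2$).

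Next I would carry out the computation of $\mu^{L}(x,\lambda_0)$ directly. Writing a point of $\Omega_3\subset\prod_i\PP(\operatorname{End}(V_{\omega_i}))$ in homogeneous coordinates as the triple of matrices $(A_1,A_2,A_3)$, the line bundle $L_{a_1a_2a_3}$ is the restriction of $\OO(a_1,a_2,a_3)$, so the weight of $\lambda_0$ on the fibre over $\lim_{t\to 0}\lambda_0(t)\cdot x$ is $\sum_i a_i$ times (the negative of) the smallest $t$-exponent appearing among the nonzero entries of $A_i$ after the action \eqref{torusacting} — equivalently, $-\sum_i a_i c_i'$ where $c_i'$ is the $t$-weight picked up by the lowest nonzero row of $A_i$. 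By the sign conventions of \cite{newstead}, $x$ fails to be semistable for $\lambda_0$ exactly when this weight is negative, i.e. when $\sum_i a_i c_i' > 0$, and fails to be stable when $\sum_i a_i c_i' \geq 0$. Comparing with the definition of $\Omega_3^{c_1,c_2,c_3}$ (the closure of the $H$-orbit of triples whose lowest nonzero row in $A_i$ is scaled by $t^{c_i}$), this says precisely that $x$ is $\lambda_0$-unstable iff $x$ lies in some $\Omega_3^{c_1,c_2,c_3}$ with $\sum a_i c_i \geq 0$ (resp.\ $>0$).

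To finish, I would invoke the Hilbert–Mumford criterion to pass from ``unstable for the single one-parameter subgroup $\lambda_0$'' to ``unstable for $H$'': a point is unstable iff it is destabilized by \emph{some} one-parameter subgroup, and every such subgroup is $H$-conjugate to a power of $\lambda_0$, so $x$ is $H$-unstable iff some $H$-translate $h\cdot x$ is $\lambda_0$-unstable, iff $x\in h^{-1}\Omega_3^{c_1,c_2,c_3}$ for suitable $(c_1,c_2,c_3)$ with $\sum a_i c_i \geq 0$; since each $\Omega_3^{c_1,c_2,c_3}$ is already $H$-stable by definition, the union $\bigcup_{\sum a_ic_i\geq 0}\Omega_3^{c_1,c_2,c_3}$ is exactly the unstable locus, and analogously for the strictly-semistable boundary. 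Taking complements gives the two displayed formulas. The main technical obstacle I anticipate is pinning down the sign and normalization in the Hilbert–Mumford weight (which convention of $\mu^L$ one uses, and that $L$ is a restriction of $\OO(a_1,a_2,a_3)$ rather than an abstract ample bundle), and checking that the closure operation in the definition of $\Omega_3^{c_1,c_2,c_3}$ is compatible with the limit $\lim_{t\to0}\lambda_0(t)\cdot x$ used in the criterion — i.e.\ that a point whose \emph{generic} behaviour puts it outside all the bad strata cannot have a special limit landing inside one with the wrong sign; this is where one uses that the strata are closed and that the exponents $c_i$ are determined by the lowest nonvanishing row, a condition that can only jump up (never down) under specialization.
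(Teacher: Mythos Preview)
Your proposal is correct and follows exactly the approach the paper indicates: the paper states only that Proposition~\ref{stablesemi} ``is just a rephrasing of the Hilbert--Mumford criterion for stability and semistability,'' and you have unpacked precisely that, using that every one-parameter subgroup of $SL_2$ is $H$-conjugate to a positive power of the standard torus $\lambda_0$ and that the numerical weight for $L_{a_1a_2a_3}=\OO(a_1,a_2,a_3)|_{\Omega_3}$ is $-\sum a_i c_i$. Your closing remark about the closure/specialization issue (weights $c_i$ can only jump up) is the right observation to make the two inclusions match, and the closedness of the unstable locus handles the passage from the $H$-orbit of the defining locus to its closure $\Omega_3^{c_1,c_2,c_3}$.
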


Let us now show a few examples of these strata.

\begin{example}
$\Omega_3^{-3,-4,-3}$ is just the entire $\Omega_3$, because the set of triples with the last rows of $A_1,A_2,A_3$ being nonzero is dense in $\Omega_3$ (and so will its $H$-orbit).
\end{example}

\begin{example}\label{boundarytc}
Let us now consider $\Omega_3^{-1,-4,-3}$, that is composed by all $H$-translates of elements for which the last row of $A_1$ is zero. The last row of $A_1$ being zero means that the kernel of $A_1$ needs to contain the point $[0,0,0,1]$, that lies on the twisted cubic $X$; any $H$-translate of it, hence, will have the kernel containing \textit{a} point of $X$, because $X$ is homogeneous for the action of $H$. This strata will contain hence the entire $\Delta_1$, because in that case the kernel is a plane $H$ that will of course contain a point of $X$; it will al course also contain the entire $\Delta_{12},\Delta_{23},\Delta_{123}$, but also the subsets of $\Delta_{2},\Delta_{3},\Delta_{23}$ for which the kernel of $A_1$ is either a line $L$ meeting $X$, or a point $p$ on the curve. Analogously, $\Omega_3^{-3,-2,-3}$ will contain the entire $\Delta_2$ but not the entire $\Delta_1$ and $\Delta_3$, and $\Omega_3^{-3,-4,-1}$ will contain the entire $\Delta_3$ but not the entire $\Delta_1$ and $\Delta_2$.
\end{example}

\begin{example}\label{boundarytc2}
Let's look at the other side of the spectrum, at $\Omega_3^{3,4,3}$; notice that this locus will \textit{always} be unstable for any $L$ nef on $\Omega_N$. $\Omega_3^{3,4,3}$ is the $H$-orbit of the set of matrices having only the first row that is nonzero. Any such object, for rank reasons, will have to lie necessarily in $\Delta_{123}$. Having only the first row nonzero means that the three kernels are $[<e_2,e_3,e_4>]$, $\Sigma_{1}([<e_3,e_4>])$ and $[<e_4>]^*$; these are a plane that \textit{osculates} (meeting once with multiplicity 3) $X$ at the point $[e_4]$, the tangent line at $[e_4]$, and $[e_4]$ itself; having $H$ acting on it, the three kernels can only become a triple osculating plane-tangent line-point at \textit{any} point of $X$.
\end{example}

It is actually possible to simplify this stratification a little bit. We will call a stratum $\Omega_3^{c_1,c_2,c_3}$ \textit{effective} if for each $\Omega_3^{c'_1,c'_2,c'_3}$ such that $c'_i\geq c_i$ and $(c_1,c_2,c_3)\neq(c'_1,c'_2,c'_3)$, we have 
$$\Omega_3^{c_1,c_2,c_3}\subsetneq\Omega_3^{c'_1,c'_2,c'_3}.$$

\begin{remark}\label{allstrata}
After a lot of calculations similar to what happened in Remarks~\ref{boundarytc}-\ref{boundarytc2}, it is possible to show that the only effective strata $\Omega_3^{c_1,c_2,c_3}$ are in the following 24 cases for $(c_1,c_2,c_3)$.
$$
\begin{matrix}
(3,4,3) & (3,4,1) & (3,2,3) & (1,4,3) \\
(1,4,1) & (3,2,-1) & (3,0,1) & (1,0,3) \\
(-1,2,3) & (3,0,-1) & (-1,0,3) & (1,-2,1) \\ 
(-1,2,-1) & (1,0,-3) & (-3,0,1) & (1,-2,-3) \\
(-1,0,-3) & (-3,0,-1) & (-3,-2,1) & (-1,-4,-1) \\
(-1,-4,-3) & (-3,-2,-3) & (-3,-4,-1) & (-3,-4,-3)
\end{matrix}
$$
It is possible to give geometric interpretation of all of these strata, as elements of a given boundary component, with the kernels in specific relation with the twisted cubic $X$. There are a few fascinating coincidences regarding this list; first, these elements are in 1-1 correspondence to the elements of the Weyl group $W$ of $G$, and to the Bruhat cells of $G/B$. Furthermore, if we describe them in a geometric way as in Remarks~\ref{boundarytc}-\ref{boundarytc2}, there is some unclear ``duality'' relationship between every nontrivial stratum $\Omega_3^{c_1,c_2,c_3}$ and its ``opposite'' $\Omega_3^{-c_1,-c_2,-c_3}$. More about it will come in the following subsection.
\end{remark}

\begin{remark}
To extend this stratification to the situation of any $G=SL_{N+1}$ and $H$ stabilizer of a homogeneous variety, the stratification would look like
$$\Omega_N^{c_1,\ldots,c_N}$$
where $c_1,\ldots,c_N$ are integral weights of $H$; in particular, $c_i$ would be one of the weights for the representation $V_{\omega_i}$ of $G$ restricted to $H$. The combinatorics of these spaces would get much more complicated, and very likely would contain a lot of information about the geometry of the spaces $M_L$ (possibly also about the volume function); we were not able to find some general statement about it, and that's why we have brought up this stratification only in the case of twisted cubics.
\end{remark}

\subsection{The spaces $M_L$}

Using these new strata, we can prove the following.

\begin{proposition}\label{completetc}
Let $L_{a_1a_2a_3}$ be any ample line bundle on $\Omega_3$. Then:
\begin{itemize}
\item[i)] The general point of each boundary divisor $\Delta_1,\Delta_2,\Delta_3$ lies in $\Omega_3^s(L_{a_1a_2a_3})$; as a consequence, $M_{a_1a_2a_3}$ will have three boundary irreducible components $E_1,E_2,E_3$ of codimension 1.
\item[ii)] If $(a_1,a_2,a_3)$ is general in $\N^3$ (that means, outside of the zero locus of a finite number of linear forms) then $$\Omega_3^s(L_{a_1a_2a_3})=\Omega_3^{ss}(L_{a_1a_2a_3});$$
as a consequence, if $(a_1,a_2,a_3)$ is general $M_{a_1a_2a_3}$ has finite quotient singularities and its Picard group can be identified with a finite index sublattice of $\Lambda(G)$.
\end{itemize} 
\end{proposition}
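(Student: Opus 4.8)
The plan is to translate both statements into explicit computations with the stratification $\Omega_3^{c_1,c_2,c_3}$ via the Hilbert--Mumford criterion (Proposition~\ref{stablesemi}), using the list of $24$ effective strata from Remark~\ref{allstrata}. For part~(i), I would first note that by Proposition~\ref{4points}~ii) (and Proposition~\ref{stablesemi}), a boundary divisor $\Delta_i$ contributes a codimension-$1$ component $E_i$ to $M_{a_1a_2a_3}\setminus(U/H)$ precisely when its generic point is stable, i.e.\ when $\Delta_i$ is not contained in any $\Omega_3^{c_1,c_2,c_3}$ with $a_1c_1+a_2c_2+a_3c_3\ge 0$. So I must show: for each $i\in\{1,2,3\}$, the generic point of $\Delta_i$ lies only in effective strata $\Omega_3^{c_1,c_2,c_3}$ with all of $c_1,c_2,c_3$ negative. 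Concretely, from the geometric descriptions in Examples~\ref{boundarytc}--\ref{boundarytc2} and the orbit table, the generic point of $\Delta_1$ has kernel a general plane $H$ (meeting $X$ in $3$ general points, so not containing a point, tangent line, or osculating plane "at" a point of $X$ in any degenerate way), and similarly for $\Delta_2$, $\Delta_3$; checking against the $24$-element list shows the smallest stratum containing the generic point of $\Delta_1$ is $\Omega_3^{-1,-4,-3}$, that of $\Delta_2$ is $\Omega_3^{-3,-2,-3}$, and that of $\Delta_3$ is $\Omega_3^{-3,-4,-1}$ — all with strictly negative pairing against any ample $(a_1,a_2,a_3)\in\N^3$. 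Hence the generic point of each $\Delta_i$ is stable, giving the three divisors $E_i$.

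For part~(ii), the equality $\Omega_3^s(L_{a_1a_2a_3})=\Omega_3^{ss}(L_{a_1a_2a_3})$ fails exactly when some effective stratum $\Omega_3^{c_1,c_2,c_3}$ satisfies $a_1c_1+a_2c_2+a_3c_3=0$ with $(c_1,c_2,c_3)\neq(-3,-4,-3)$ (the one trivially always unstable-free), equivalently when $(a_1,a_2,a_3)$ lies on one of the finitely many hyperplanes $\{a_1c_1+a_2c_2+a_3c_3=0\}$ as $(c_1,c_2,c_3)$ ranges over the $24$ effective triples other than $(-3,-4,-3)$. For $(a_1,a_2,a_3)$ outside this finite union of hyperplanes, no stratum has zero pairing, so the two descriptions in Proposition~\ref{stablesemi} coincide. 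The consequences then follow from Proposition~\ref{4points}~iii): with $L$ ample on $\Omega_3$ and $\Omega_3^{ss}(L)=\Omega_3^s(L)$, Luna's slice theorem gives that $M_{a_1a_2a_3}$ has finite quotient singularities, and the Kempf descent lemma gives that $\pi_L^*$ identifies $\mathrm{Pic}(M_{a_1a_2a_3})$ with a finite-index sublattice of $\mathrm{Pic}(\Omega_3)\cong\Lambda(G)$.

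The main obstacle is the verification underlying Remark~\ref{allstrata}: establishing that the $24$ listed triples are exactly the effective strata, and — what part~(i) really needs — pinning down which effective strata actually contain the \emph{generic} point of each $\Delta_i$ (as opposed to merely some point of $\Delta_i$). This is the "lot of calculations similar to Remarks~\ref{boundarytc}--\ref{boundarytc2}" alluded to in the text: for each boundary orbit one analyzes, row by row, which rows of the matrices $A_1,A_2,A_3$ can be made to vanish under the $H$-torus action \eqref{torusacting} after a generic $H$-translate, which amounts to asking when a general plane (resp.\ line, point) in the kernel data acquires a special incidence with the twisted cubic $X$ — generically it does not, which is the crux. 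Once that incidence analysis is in hand, parts~(i) and~(ii) are immediate bookkeeping with the explicit list and the two cited propositions; I would present the incidence analysis for $\Delta_1$ in detail and indicate that $\Delta_2$, $\Delta_3$ follow by the analogous argument (or by the duality $\Omega_3^{c_1,c_2,c_3}\leftrightarrow\Omega_3^{-c_1,-c_2,-c_3}$ noted in Remark~\ref{allstrata}).
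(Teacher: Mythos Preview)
Your proposal is correct and takes essentially the same approach as the paper's proof: use Example~\ref{boundarytc} together with the monotonicity $c\le c'\Rightarrow \Omega_3^{c}\supseteq\Omega_3^{c'}$ to see that the general point of each $\Delta_i$ lies only in strata with all-negative $(c_1,c_2,c_3)$, invoke Proposition~\ref{stablesemi} for stability, use the finite list of Remark~\ref{allstrata} for part~(ii), and read off the consequences from Proposition~\ref{4points}. The one point the paper makes explicit that you leave out is that invoking Proposition~\ref{4points} requires $L$ to be $H$-nef, which holds for every ample $L$ here because the twisted cubic is special (Corollary~\ref{specialhomog}).
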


\begin{proof}
In Example~\ref{boundarytc}, we have seen that the general point of every boundary divisor lies in exactly one of the three strata $\Omega_3^{-1,-4,-3}$,$\Omega_3^{-3,-2,-3}$ and $\Omega_3^{-3,-4,-1}$. Notice that if $c_i\leq c'_i$ for $i=1,2,3$ then we clearly have $$\Omega_3^{c_1,c_2,c_3}\supseteq\Omega_3^{c'_1,c'_2,c'_3}.$$
So, points in the boundary divisor can't lie in any other strata $\Omega_3^{c_1,c_2,c_3}$, and hence they cannot lie in a strata for which $a_1c_1+a_2c_2+a_3c_3\geq0$ for any positive $a_1,a_2,a_3$; this proves the first part of i).
For the first part of ii), there is only a finite number of possibilities for $c_1,c_2,c_3$ (coming from Remark~\ref{allstrata})); for any $a_1,a_2,a_3$ such that $a_1c_1+a_2c_2+a_3c_3\neq0$ for each of these possibilities, then  
$\Omega_3^s(L_{a_1a_2a_3})=\Omega_3^{ss}(L_{a_1a_2a_3})$ from Proposition~\ref{stablesemi}. The second part of i) and ii) comes from Proposition~\ref{4points}, because from Proposition~\ref{specialhomog} we have $X$ is special, and all ample line bundles will be $H$-nef.
\end{proof}

As stated in Remark~\ref{nefcase}, when $L$ is not ample but just nef, it is still possible to understand the models $M_L$, but we some parts of Proposition~\ref{4points} don't quite work anymore. Let us do a couple of example to show what happens.

\begin{example}\label{m100}
Let's consider the line bundle $L_{100}$; in this case, the model $$Proj\bigoplus_{k\geq 0}H^0(\Omega_3,L_{k00})$$
is just isomorphic to $\PP^{15}=\PP(End(V))$, and the only boundary divisor of $\Omega_3$ that appears is just $\Delta_3$. Having $\PP^{15}$ Picard rank 1, we are forced to have the general point of $\Delta_3$ being stable, and the Picard group of $M_{100}$ to be a finite index subgroup of the Picard group of $\PP^{15}$ (because $M_{100}$ is projective). We have also no strictly semistable locus (because of no exponent of $t$ in (\ref{torusacting}) on $A_1$ being zero), so in the end $M_{100}$ will have only finite quotient singularities, have Picard rank 1, and the only boundary divisor $E_3$ (that is the quotient of $\Delta_3$).
\end{example}

\begin{example}\label{m010}
Let us now consider $L_{010}$. The model 
$$Proj\bigoplus_{k\geq 0}H^0(\Omega_3,L_{0k0})$$
is just obtained projecting $\Omega_3$ to the middle component $\PP^{35}=\PP(End(\wedge^2V))$. It has now two boundary components, corresponding to $\Delta_1$ and $\Delta_3$ on $\Omega_3$; in order to see this, notice that forgetting about $A_1$ and $A_3$ the stratum $\Delta_2$ has the same image as $\Delta_{123}$. This space won't very likely be smooth (it is possible to check it by an explicit calculation on tangent spaces at points of $\Delta_{123}$) as it won't most likely be the G.I.T. quotient
$$M_{010}=Proj\bigoplus_{k\geq 0}H^0(\Omega_3,L_{0k0})^H.$$
Notice that there will be some strictly semistable locus, because one of the power $t$ acts by is zero. Because of the fact that $M_{010}$ is projective, the general point of at least one of $\Delta_1$ and $\Delta_3$ has to be stable; by a symmetry argument, the general point of both will be, and we get the two boundary divisors $E_1$ and $E_3$ in $M_{010}$ as well. 
\end{example}

\begin{remark}\label{nefcases}
We can more in general describe how many boundary divisors we have for any $L$ nef, so allowing the $a_i$ to be zero as well. We just need to look at how many of the $\Delta_1,\Delta_2,\Delta_3$ survive in the model 
$$Proj\bigoplus_{k\geq 0}H^0(\Omega_3,L^{\otimes k}).$$
Following this rule, we have
\begin{itemize}
\item $E_1,E_2,E_3$ for each $a_1>0,a_2\geq0,a_3>0$,
\item $E_1,E_3$ for each $a_1a_3=0,a_2>0$,
\item $E_1$ when $a_1=a_2=0$,
\item $E_3$ when $a_2=a_3=0$. 
\end{itemize}
\end{remark}

We can also describe explicitly what line bundles $L$ give rise to different models $M_L$. We have in fact the following.

\begin{proposition}\label{models}
Let the below picture be the intersection of the positive octant in $\Z^3\cong Pic(\Omega_3)$ and the hyperplane with sum of coordinates 1.Then, the model $M_L$ depends only on the position of $L$ in the following chamber decomposition. Furthermore, $(a_1,a_2,a_3)$ satisfies the generality condition of Proposition~\ref{completetc}, (ii) if and only if it belongs to the interior of one of the following chambers.
\begin{center}
\begin{tikzpicture}
[scale=0.8,transform shape]\label{vargitt}
\tikzstyle{circ}=[draw,circle,fill=black,minimum size=4pt,
                            inner sep=0pt]
\draw (0,0) -- (4,6) -- (8,0) -- (0,0);
%\draw (2,0) -- (4,6);
%\draw (4,6) -- (6,0);
\draw (4,6) -- (2,0) -- (1.333,2) -- (6.666,2) -- (6,0) -- (4,6);
\draw (0,0) node (1) [circ,label=180:$\left(\begin{array}{c}4\\0\\0\end{array}\right)$]{};
\draw (2,0) node (1) [circ,label=270:$\left(\begin{array}{c}3\\0\\1\end{array}\right)$]{};
\draw (6,0) node (1) [circ,label=270:$\left(\begin{array}{c}1\\0\\3\end{array}\right)$]{};
\draw (8,0) node (1) [circ,label=0:$\left(\begin{array}{c}0\\0\\4\end{array}\right)$]{};
\draw (4,6) node (1) [circ,label=30:$\left(\begin{array}{c}0\\1\\0\end{array}\right)$]{};
\draw (4,6) node (1) [circ,label=150:$\left(\begin{array}{c}0\\3\\0\end{array}\right)$]{};
\draw (1.333,2) node (1) [circ,label=150:$\left(\begin{array}{c}2\\1\\0\end{array}\right)$]{};
\draw (6.666,2) node (1) [circ,label=30:$\left(\begin{array}{c}0\\1\\2\end{array}\right)$]{};
\draw (1,0.2) node (1) [label=90:$\mathfrak{c}_1$]{};
\draw (2.4,0.8) node (1) [label=135:$\mathfrak{c}_2$]{};
\draw (2.5,2.7) node (1) [label=90:$\mathfrak{c}_3$]{};
\draw (4,0.5) node (1) [label=90:$\mathfrak{c}_4$]{};
\draw (4,2.5) node (1) [label=90:$\mathfrak{c}_5$]{};
\draw (7,0.2) node (1) [label=90:$\mathfrak{c}_6$]{};
\draw (5.6,0.8) node (1) [label=45:$\mathfrak{c}_7$]{};
\draw (5.5,2.7) node (1) [label=90:$\mathfrak{c}_8$]{};
\end{tikzpicture}
\end{center}
\end{proposition}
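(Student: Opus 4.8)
The plan is to combine the variation-of-GIT (VGIT) philosophy with the explicit list of effective strata from Remark~\ref{allstrata}. The key observation is that, by Proposition~\ref{stablesemi}, the semistable (and stable) loci $\Omega_3^{ss}(L_{a_1a_2a_3})$ depend on $(a_1,a_2,a_3)$ only through which of the finitely many inequalities $a_1c_1+a_2c_2+a_3c_3\gtrless 0$ hold, as $(c_1,c_2,c_3)$ ranges over the $24$ effective triples. Each triple $(c_1,c_2,c_3)$ defines a hyperplane $H_{(c_1,c_2,c_3)} = \{a_1c_1+a_2c_2+a_3c_3 = 0\}$ in $Pic(\Omega_3)_\R\cong\R^3$, and these hyperplanes (intersected with the positive octant, then sliced by $a_1+a_2+a_3=1$) cut the open cone into finitely many chambers; inside each chamber the sign vector is constant, hence $\Omega_3^{ss}(L)$ and $\Omega_3^{s}(L)$ are constant, hence the GIT quotient $M_L$ (which by Proposition~\ref{4points}, or rather by Remark~\ref{nefcase} for nef $L$, is built from these loci) is constant. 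First I would therefore verify that the arrangement of these $24$ hyperplanes, restricted to the positive octant and the slicing triangle, is exactly the chamber decomposition $\mathfrak{c}_1,\ldots,\mathfrak{c}_8$ drawn in the statement: this is a finite computation. One checks, for instance, that the walls in the picture are the lines $x_1 = 3x_3$, $x_3 = 3x_1$, $2x_2 = x_1+x_3$, $x_1 = 2x_2+3x_3$ and $x_3 = 2x_2+3x_1$ (the same arrangement that appeared in Corollary~\ref{finalformula}), and that each such line is of the form $H_{(c_1,c_2,c_3)}$ for one of the $24$ effective triples, while the remaining triples give hyperplanes meeting the octant only in a face or not refining this arrangement inside it.

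For the second assertion, the generality condition in Proposition~\ref{completetc}(ii) is precisely that $a_1c_1+a_2c_2+a_3c_3\neq 0$ for every triple in the list of Remark~\ref{allstrata}; by Proposition~\ref{stablesemi} this is the condition $\Omega_3^{s}(L_{a_1a_2a_3}) = \Omega_3^{ss}(L_{a_1a_2a_3})$. A point $(a_1,a_2,a_3)$ in the positive octant violates this exactly when it lies on one of the hyperplanes $H_{(c_1,c_2,c_3)}$, i.e.\ exactly on a wall of the arrangement, i.e.\ on the boundary of one of the chambers $\mathfrak{c}_i$. Hence $(a_1,a_2,a_3)$ satisfies the generality condition if and only if it lies in the interior of some $\mathfrak{c}_i$, which is the claim. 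One subtlety to address carefully: on the boundary between the ample cone and its walls one must use Remark~\ref{nefcase} to make sense of $M_L$ for non-ample nef $L$ (projecting to the appropriate birational model $Proj\bigoplus H^0(\Omega_3,L^{\otimes k})$ first), and one should remark that two line bundles differing by a positive scalar give the same quotient, which is why the statement slices by the hyperplane $a_1+a_2+a_3=1$.

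I expect the main obstacle to be the honest verification that the hyperplane arrangement coming from the $24$ effective triples, once intersected with the positive octant, collapses to exactly the $5$ walls drawn — no more, no fewer. This requires the full analysis behind Remark~\ref{allstrata} (the ``lot of calculations'' the author alludes to): for each of the $24$ triples one must identify the corresponding stratum $\Omega_3^{c_1,c_2,c_3}$ geometrically (as a subvariety of some boundary component $\Delta_I$ with prescribed incidence conditions of the three kernels with the twisted cubic $X$), check the containment relations among strata (so that only the \emph{effective} ones matter), and confirm that no two distinct triples in the list give rise to the same wall inside the octant while two triples related by the ``opposite'' symmetry $(c_1,c_2,c_3)\mapsto(-c_1,-c_2,-c_3)$ give the same hyperplane. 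Once this combinatorial-geometric bookkeeping is in place, the rest is a direct application of standard VGIT: on each chamber the (semi)stable locus is constant, so the quotient is constant, and the walls are exactly where stability can change, which is exactly where $\Omega_3^s\neq\Omega_3^{ss}$.
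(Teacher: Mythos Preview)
Your proposal is correct and follows essentially the same route as the paper: both reduce to Remark~\ref{allstrata} and Proposition~\ref{stablesemi}, observe that only the effective triples with entries of mixed sign can produce a wall inside the positive octant, and then verify that the resulting $10$ triples pair off into the $5$ walls drawn. The paper's own proof is in fact briefer than yours---it simply lists those $10$ triples columnwise and notes they give the $5$ segments---so your version, with the explicit wall equations and the remark about the $(c_1,c_2,c_3)\mapsto(-c_1,-c_2,-c_3)$ symmetry, is a faithful expansion of it.
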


\begin{proof}
This follows from Remark~\ref{allstrata}. The only way a quotient $M_L$ can change is if stable and semistable loci in $\Omega_3$ change; and that happens only whenever a nontrivial stratum moves in or out these loci. For each of these strata (only those with both positive and negative entries, because these are the ones for which the nef cone can achieve both positive and negative values) we get a hyperplane. In particular, the strata are
$$
\begin{matrix}
(3,2,-1) & (-1,2,3) & (3,0,-1) & (-1,0,3) & (1,-2,1) \\ 
(-3,-2,1) & (1,-2,-3) & (-3,0,1) &(1,0,-3) & (-1,2,-1)
\end{matrix}
$$
and (columnwise) they represent the 5 line segments dividing the nef cone in chambers in the picture. The second statement then follows as well.
\end{proof}

\begin{remark}
It is of course not a chance that the picture in Proposition~\ref{models} is the same as in Corollary~\ref{finalformula}. For a $L$ in a chamber $\mathfrak{c}$, the nef cone of $M_L$ will lift up to the chamber $\mathfrak{c}$ itself; so, on every chamber $\mathfrak{c}$ of this decomposition, the volume function has to agree with the self intersection product of $M_L$ for $L$ in $\mathfrak{c}$. So, the volume function will be polynomial in each of these chambers. 
\end{remark}

\subsection{Modular interpretation}

It is quite natural to ask what is the relation of these spaces $M_L$ with the different moduli spaces of twisted cubics that are already known, for instance the Hilbert scheme\footnote{We will call $Hilb_{3m+1}(\PP^3)^\circ$ the irreducible component in the Hilbert scheme $Hilb_{3m+1}(\PP^3)$ that is the closure of smooth curves.} $Hilb_{3m+1}(\PP^3)^\circ$ and the Kontsevich space of stable maps $\overline{\mathcal{M}}_{0,0}(\PP^3,3)$. Both these spaces contain an open set isomoprhic to $G_a/H_a$, and hence there will be birational morphisms from $M_L$ to either of them. We will now describe explicitly (up to codimension 1) the map $M_L\to Hilb_{3m+1}(\PP^3)^\circ$. Notice that the complement of $G_a/H_a$ in $Hilb_{3m+1}(\PP^3)^\circ$ is composed by two irreducible components $E_{\lambda}$ whose general point is a rational nodal plane cubic with an embedded point at the node, and $E_{\omega}$ whose general point is a union of a conic and a line secant to it. Everything we will say will hold in a similar matter for the Kontsevich space $\mathcal{M}_{0,0}(\PP^3,3)$ as well.

Let us consider the family

\begin{center}
\begin{tikzpicture}[description/.style={fill=white,inner sep=2pt}]
\matrix (m) [matrix of math nodes, row sep=1em,
column sep=2.5em, text height=1.5ex, text depth=0.25ex]
{ \Phi & \!\!\!\!\!\!\!\!\!\!\!\!\!\!\!\!\!\!=\{(g,p)\mid g^{-1}p\in X\}\subset G\times \PP^3 \\
G_a &  \\ };
\path[->,font=\scriptsize]
(m-1-1) edge node[auto] {$\pi$} (m-2-1);
\end{tikzpicture}
\end{center} 
that is clearly a projective algebraic subvariety of $G_a\times \PP^3$, and the fiber over every $g\in G_a$ is just the curve $g\cdot X$.
The action of $H$ on $G_a\times \PP^3$ just as the right multiplication on the first component leaves $\Phi$ invariant, because $h^{-1}g^{-1}p\in X\iff g^{-1}p\in X$. So, the quotient 
\begin{center}
\begin{tikzpicture}[description/.style={fill=white,inner sep=2pt}]
\matrix (m) [matrix of math nodes, row sep=1em,
column sep=2.5em, text height=1.5ex, text depth=0.25ex]
{ \Phi/H & \!\!\!\!\!\!\!\!\!\!\!\!\!\!\!\!\!\!\subset G_a/H_a\times \PP^3 \\
G_a/H_a & \!\!\!\!\!\!\!\!\!\!\!\!\!\!\!\!\!\!\!\!\!\!\!\!\!\!\!= G_a/H\\ };
\path[->,font=\scriptsize]
(m-1-1) edge node[auto] {$\pi$} (m-2-1);
\end{tikzpicture}
\end{center} 
will be a family of twisted cubics over $G_a/H_a$. By the modularity of $Hilb_{3m+1}(\PP^3)$, this gives us a map $f_{\pi}:G_a/H_a\to Hilb_{3m+1}(\PP^3)$, that we can extend to a rational map $f_L$ from any of the spaces $M_L$. Now, the undeterminacy locus of this map has to 

It is interesting to find out if the map is defined on the boundary divisors $E_1,E_2,E_3$, and what are their images, in terms of the two boundary divisors $E_{\lambda}$ and $\Delta_{\phi}$ of $Hilb_{3m+1}(\PP^3)^\circ$.

\begin{lemma}\label{divsurv1}
Let $L$ be ample on $\Omega_3$; then $f_L$ sends the general point of $E_3$ to the general point of $E_{\lambda}$, and it collapses the divisors $E_2$ and $E_1$. If $L$ is nef, these statements are still true for the divisors $E_i$ that appear in $M_L$, as in Remark~\ref{nefcases}.
\end{lemma}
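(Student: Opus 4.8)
Since $G_a/H_a$ is dense in both $M_L$ and $Hilb_{3m+1}(\PP^3)^\circ$ and $f_L$ is the $G$-equivariant rational map extending the identity there, induced by the family $\Phi/H\to G_a/H_a$ of twisted cubics, the behaviour of $f_L$ at a general point of a boundary divisor $E_i$ is governed by the flat limit of the family $\{g_u\cdot X\}_u$ along a general one-parameter family $g_u\in G_a$ whose closure in $\Omega_3$ meets $\Delta_i$ at a general point. So the plan is, for each of $\Delta_3,\Delta_2,\Delta_1$, to identify this flat limit and locate it in $Hilb_{3m+1}(\PP^3)^\circ$; this is consistent with the stratum picture of \S\ref{TC}, in which the curve associated to a point of $\Omega_3$ is read off from the collineation $A_1$ (and its enrichments) as in (\ref{neat}) and Examples~\ref{boundarytc}--\ref{boundarytc2}.

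For $E_3$ I would choose $g_u$ so that $A_1:=\lim_u g_u$ (in the first factor $\PP(\mathrm{End}\,V)$) is a general endomorphism of rank $3$; its kernel is then a general point $p\in\PP^3$, which in particular does not lie on the fixed twisted cubic $X$, and its image is a plane $K$. The naive image $A_1(X)\subset K$ is a nodal plane cubic, the node being the image of the two points of $X$ on the unique secant line of $X$ through $p$. I would then carry out a local computation at the node, presenting $g_u\cdot X$ by the $2\times 2$ minors of a $2\times 3$ matrix of linear forms and passing to the flat limit of these ideals, to show that the flat limit of $g_u\cdot X$ is exactly this nodal plane cubic together with an embedded point at the node --- the classical flat limit of a twisted cubic projecting to a nodal plane cubic. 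Thus the flat limit is a point of $E_\lambda$; and as the point of $\Delta_3$ (equivalently $A_1$ modulo the right $H$-action) varies over a dense set, the corresponding nodal-plane-cubic-with-embedded-point varies over a dense subset of the irreducible $11$-dimensional variety $E_\lambda$. Since $E_3$ is irreducible of the same dimension $11$, the resulting dominant rational map $E_3\dashrightarrow E_\lambda$ sends the general point to the general point.

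For $E_2$ and $E_1$ I would instead show the flat limit lies in a locus of dimension $<11$. If $A_1$ has rank $2$, with kernel a general line $L$ and image a line $M$, then $X$ stays away from $\ker A_1$ along the whole degeneration, so no transversal component appears and the flat limit is supported on $M$ (the enrichment data $A_2,A_3$ of the complete collineation in $\Delta_2$ record only $L$ and $M$, hence can at most produce a non-reduced thickening along $M$). If $A_1$ has rank $1$ with image a point $q$, the same analysis --- now using that $A_2$ has image $\Sigma_2(q)$, the Schubert variety of lines through $q$ --- gives a flat limit supported on a union of lines through $q$. In both cases the flat limit lies in the locus $Z\subset Hilb_{3m+1}(\PP^3)^\circ$ of subschemes whose support is contained in a line; such a subscheme is contained in a bounded infinitesimal neighbourhood of that line, so $Z$ fibres over $\GG(1,3)$ with fibres of bounded dimension and $\dim Z<11$. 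Hence $f_L$ cannot map $E_2$ or $E_1$ onto an $11$-dimensional subvariety, so it collapses them. When $L$ is merely nef these computations depend only on the stratum $\Delta_i$, not on $L$, and by Remark~\ref{nefcase} and Proposition~\ref{completetc} the surviving boundary divisors of $M_L$ listed in Remark~\ref{nefcases} are still the birational images of the corresponding $\Delta_i$, so the same conclusions hold for them.

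The main obstacle is the two flat-limit identifications: proving that an embedded point genuinely occurs at the node in the $\Delta_3$ case (i.e.\ determining the flat limit ideal, not just its support), and, in the $\Delta_2$ and $\Delta_1$ cases, ruling out an unexpected component of the flat limit transversal to $M$ (respectively off the lines through $q$). Both are controlled by the fact that for a general one-parameter family the curve stays away from the kernel of the limiting collineation --- which is exactly the content of choosing $A_1$ general --- but they are the steps that require genuine computation with flat limits.
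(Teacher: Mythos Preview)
Your overall strategy---choosing one-parameter arcs in $G_a$ limiting to a general point of each $\Delta_i$ and computing the flat limit of $g_t\cdot X$---is exactly the paper's, and your treatment of $E_3$ matches it. The paper, however, does not bound dimensions abstractly for $E_2$ and $E_1$: it identifies the flat limits explicitly using the arcs $\mathrm{diag}(1,1,t,t)$ and $\mathrm{diag}(1,t,t,t)$ (up to a general $G$-translate). For $\Delta_2$ the limit is the \emph{triple line} (the subscheme whose ideal is the square of the ideal of a line), a $4$-dimensional locus in $Hilb_{3m+1}(\PP^3)^\circ$. For $\Delta_1$ the limit is the union of \emph{three non-coplanar lines} meeting at a point, a locus of codimension $3$.

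This last point exposes a genuine error in your argument. For $E_1$ the kernel of $A_1$ is a general plane $H$, and $H$ \emph{does} meet $X$---in three points---so your heuristic ``$X$ stays away from $\ker A_1$'' fails here, and it is precisely these three intersection points that produce the three lines in the limit. Consequently your assertion that ``in both cases the flat limit lies in the locus $Z$ of subschemes whose support is contained in a line'' is false for $E_1$, and the dimension bound you deduce from $Z$ does not apply. Your alternative justification via the $A_2$-image $\Sigma_2(q)$ is also not sound as stated: the map $f_L$ is built solely from the family of curves in $\PP^3$, so the enrichment $A_2$ is not directly visible to it (that data becomes relevant only for the map $g_L$ to the multi-Hilbert scheme in Lemma~\ref{divsurv2}). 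Even for $E_2$ your phrase ``fibres of bounded dimension'' does not by itself yield $\dim Z<11$; one needs the explicit identification of the triple line (or at least a concrete bound on the fibre) to conclude.
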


\begin{proof}
Notice that if $L$ is ample, then $M_L$ will have three boundary divisors $E_1,E_2,E_3$ that are images of $\Delta_1,\Delta_2,\Delta_3$ from $\Omega_3$. Completing $f_\pi$ to $f_L$ is the same as completing the family $\Phi/H\to G_a/H_a$ as a flat family over $M_L$, that will be as well possible to do only outside of a locus of codimension at least 2. This is the same as completing the family $\Phi\to G_a$ as a family over $\Omega_3$; we can then use the formula \ref{neat} to construct arcs $\{g_t\}_{t\in\C^*}$ in $G_a$ having as limits the general points of $\Delta_1,\Delta_2,\Delta_3$, and find the flat limit $\lim_{t\to 0}(g_t\cdot X)$ of the twisted cubic $X$. For example, and arc in $G_a$ having limit a general point of $\Delta_3$ is a general $G$-translate of the arc $$\left\{\left[\begin{smallmatrix} 1 & 0 & 0 & 0 \\ 0 & 1 & 0 & 0 \\ 0 & 0 & 1 & 0 \\ 0 & 0 & 0 & t \end{smallmatrix}\right]\right\}.$$
This arc correspond to the projection away from a general point onto a general plane of $\PP^3$; the flat limit $\lim_{t\to 0}(g_t\cdot X)$ will then be a planar nodal cubic with an embedded point at the node; this proves that $f_L$ maps $E_3$ generically to $E_{\lambda}$. In the cases of $E_2$ and $E_1$, the arcs are general translates of (respectively)
$$\left\{\left[\begin{smallmatrix} 1 & 0 & 0 & 0 \\ 0 & 1 & 0 & 0 \\ 0 & 0 & t & 0 \\ 0 & 0 & 0 & t \end{smallmatrix}\right]\right\},\quad\left\{\left[\begin{smallmatrix} 1 & 0 & 0 & 0 \\ 0 & t & 0 & 0 \\ 0 & 0 & t & 0 \\ 0 & 0 & 0 & t \end{smallmatrix}\right]\right\};$$
in the first case, the arc represents the projection away from a general line onto a general line; the flat limit of the twisted cubic is then the curve whose ideal is the square of the ideal of a line (we'll call it \textit{triple line}); this shows that the general point of $E_2$ is sent by $f_L$ inside the locus in $Hilb_{3m+1}(\PP^3)^\circ$ of triple lines, that has dimension 4 (entirely contained in $E_{\omega}$, and disjoint from $E_{\lambda}$); this locus is also closed, so it will contain the image of the entire $E_2$ (or at least the part of if where $f_L$ is defined); $E_2$ is then collapsed by that. In the second case, the arc represents the projection away from a general plane onto a general point; the flat limit will then be the union of three not coplanar lines meeting at the same point (still contained in $E_{\omega}$, and whose closure intersect also $E_{\lambda}$). This locus in $Hilb_{3m+1}(\PP^3)^\circ$ has codimension 3, so $E_1$ will be shrunk as well. 
\end{proof}

\begin{remark}
In this way we can also show that $f_L$ cannot be a regular morphism; if it was, it would be surjective, but we don't reach the general point of the other boundary divisor $E_{\omega}$. To make the map regular, we would need to modify $M_L$ along the undeterminacy locus. To reach the general point of $E_{\omega}$, we would only need to modify $M_L$ along the points that have an arc $\{g_t\}$ with them as a limit, whose flat limit $\lim_{t\to 0}(g_t\cdot X)$ is the union of a conic and a line secant to it. Such arcs arise (in $G_a$) as the projection away from a point in $X$; their limits are the triples $(A_1,A_2,A_3)$ such that the kernel of $A_1$ is a point on $X$; in terms of the strata above, these points correspond to $\Omega_3^{-1,0,3}$. These points are in the (semi)stable locus for $L_{a_1a_2a_3}$ only whenever $3a_3-a_1<0$ ($3a_3-a_1\leq0$); in case this doesn't happen, the locus $\Omega_3^{1,0,-3}$ is actually composed by points that are limits of such arcs as well. The geometry of such points is more complicated; they belong to $\Delta_{12}$, and they are such that the kernel of $A_1$ is a plane containing a tangent line to $X$ at a point $p$, and such that the kernel of $A_3$ is the dual of a line containing $p$. This sort of duality  (as well as what happens more precisely when $3a_3-a_1=0$) will be explored later in this subsection.
\end{remark}

These are not the only known moduli spaces for twisted cubics. In \cite{netquadrics}, it is studied the moduli space of twisted cubics obtained considering nets of quadrics (another interpretation of this is as truncated Hilbert scheme $Tr_2Hilb_{3m+1}(\PP^3)^\circ$ at degree 2); in this space, there is only one boundary divisor, that corresponds to $E_{\omega}$ (because the net of quadrics is not able to recognize the cubic on the plane in $E_{\lambda}$). In \cite{vainsblowup} it is proved that $Hilb_{3m+1}(\PP^3)^\circ$ is just a blowup of $Tr_2Hilb_{3m+1}(\PP^3)^\circ$, whose exceptional divisor is of course $E_{\lambda}$ (but it was proved independently at the same time also by the authors of \cite{netquadrics}). A bit of history; using \cite{vainsblowup}, and starting from the Chow ring of $Tr_2Hilb_{3m+1}(\PP^3)^\circ$ obtained in \cite{netquadrics2}, people explicitly found the Chow ring of $Hilb_{3m+1}(\PP^3)^\circ$ using \cite{chow}.
The Chow groups were already calculated though (interpreting cycles as Schubert cycles) in \cite{chowhilb}.

Another branch of history goes all the way back to Schubert, \cite{Kalkul} in 1879, where plenty of enumerative answers for twisted cubics were given, seemingly without the use of a proper moduli spaces. The most impressive of this answers is the number $5\,819\,539\,783\,680$ of twisted cubics tangent to 12 general quadrics (later proved officially in \cite{sketchnumber}). Many tentatives to unwind Schubert's calculation have been done; he referred to 11 ``degenerations'' of a twisted cubic, or rather of triples of curves, consisting of a twisted cubic $C$, the curve $\Gamma\subset\G$ of all its tangent lines (a rational normal quartic curve), and the dual curve $C^*\subset\PP^3$ of all its osculating planes (another twisted cubic). This 11 degenerations of triples were then described explicitly by Alguneid in \cite{alguneid}, as triples of cycles, and by Piene in \cite{piene11} as triples of schemes. In \cite{piene11}, we find this beautiful picture that explains the situation much more than any word. Every triple should be read backwards too.

\vspace{-1cm}

\begin{center}
\includegraphics[,scale=0.5]{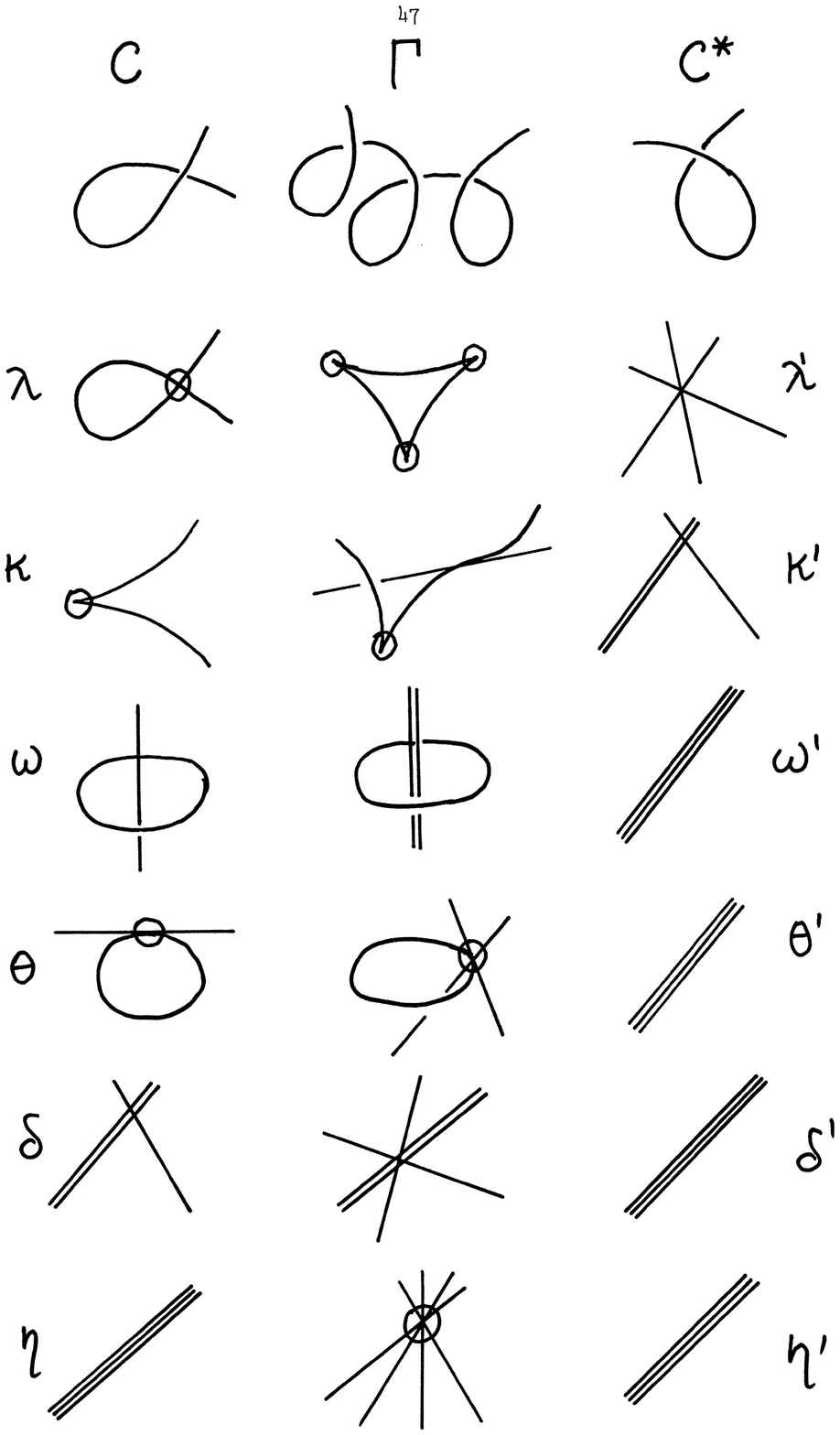}
\end{center}
\vspace{-1cm}
Let us be more precise: what Piene talks about in \cite{piene11} is to consider the multi-Hilbert scheme
$$MHilb\subset Hilb_{3m+1}(\PP^3)^\circ\times Hilb_{4m+1}(\G)^\circ\times Hilb_{3m+1}(\PP^{3*})^\circ$$
obtained considering the closure of triples $(C,\Gamma,C^*)$; this will again be a compactification of the space of twisted cubics, and the boundary outside $G_a/H_a$ will have (at least) divisorial components $E_{\lambda}$, $E_{\kappa}$, $E_{\omega}$, $E_{\theta}$, $E_{\delta}$, $E_{\eta}=E_{\eta'}$, $E_{\delta'}$, $E_{\theta'}$, $E_{\omega'}$, $E_{\kappa'}$, $E_{\lambda'}$. In \cite{pienerandom}, a further degeneration was discovered, still by Piene, and we will call it $E_{\zeta}$; the reason why this remained hidden to Schubert and Alguneid is because, to see $E_{\zeta}$ as a divisor, it is important to consider the nonreduced structure of the three curves; in this case, $C$ and $C*$ will be union of a simple line and a double line meeting (as in $E_{\kappa'}$) and $\Gamma$ will the the union of a single and a triple line. This degeneration is self dual, and so $E_{\zeta'}=E_{\zeta}$. We believe that these are the only irreducible components of the boundary of $MHilb$, even if we don't have a proof of that. Notice that under the projection $MHilb\to Hilb_{3m+1}(\PP^3)^\circ$, the divisors $E_{\lambda}$ and $E_{\omega}$ are sent birationally to their homonymous in $Hilb_{3m+1}(\PP^3)^\circ$, while all the others get contracted to smaller dimensional loci.

We can construct as before birational morphisms $g_L$ from $M_L$ to $MHilb$, starting from a family of triples
$$M\Phi\subset G_a\times\PP^3\times\G\times\PP^{3*}.$$

We can now wonder again where do the three divisors $E_1,E_2,E_3$ go under this morphism.

\begin{lemma}\label{divsurv2}
Let $L$ be ample on $\Omega_3$; then $g_L$ sends the general point of $E_3$ to the general point of $E_{\lambda}$, sends the general point of $E_2$ to the general point of $E_{\eta}$ and sends the general point of $E_1$ to the general point of $E_{\lambda'}$. If $L$ is nef, these statements are still true for the divisors $E_i$ that appear in $M_L$, as in Remark~\ref{nefcases}.
\end{lemma}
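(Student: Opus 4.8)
The plan is to follow the template of Lemma~\ref{divsurv1}, now keeping track of the entire triple $(C,\Gamma,C^{*})=(g_t\cdot X,\ g_t\cdot\Gamma,\ g_t\cdot X^{*})$ rather than just its first component $C$, and to offload most of the flat-limit bookkeeping onto the two Hilbert projections $\mathrm{pr}\colon MHilb\to Hilb_{3m+1}(\PP^3)^\circ$ and $\mathrm{pr}^{*}\colon MHilb\to Hilb_{3m+1}(\PP^{3*})^\circ$. First I would record that $g_L$ is regular at the general point of each $E_i$ occurring in $M_L$: exactly as in Lemma~\ref{divsurv1}, extending $g_\pi$ over $M_L$ amounts to completing the flat family $M\Phi\to G_a$ over $\Omega_3$, whose indeterminacy locus has codimension $\geq 2$; and the general point of the relevant $\Delta_i$ is stable (Proposition~\ref{completetc}(i) for $L$ ample, Remark~\ref{nefcases} together with Proposition~\ref{4points} for $L$ nef). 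The value of $g_L$ there is the $MHilb$-point of $\lim_{t\to 0}(g_t\cdot X,\ g_t\cdot\Gamma,\ g_t\cdot X^{*})$ computed along the explicit arcs $\{\mathrm{diag}(1,1,1,t)\}$, $\{\mathrm{diag}(1,1,t,t)\}$, $\{\mathrm{diag}(1,t,t,t)\}$ of (\ref{neat}) that already appear in the proof of Lemma~\ref{divsurv1}.

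For $E_3$ I would argue through $\mathrm{pr}$. Since $f_L=\mathrm{pr}\circ g_L$ and $f_L$ carries the general point of $E_3$ to the general point of $E_\lambda\subset Hilb_{3m+1}(\PP^3)^\circ$ by Lemma~\ref{divsurv1}, the birational map $g_L$ cannot contract the divisor $E_3$ (otherwise $\mathrm{pr}\circ g_L$ would too), so $g_L(E_3)$ is a boundary divisor of $MHilb$ with $\mathrm{pr}(g_L(E_3))=E_\lambda$. As recalled before the statement, $E_\lambda\subset MHilb$ is the unique boundary divisor mapping onto $E_\lambda\subset Hilb_{3m+1}(\PP^3)^\circ$ (all others contract), and $\mathrm{pr}$ is birational on it; hence $g_L$ sends the general point of $E_3$ to the general point of $E_\lambda$. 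For $E_1$ I would invoke the involution of $\Omega_3$ interchanging the two copies of $PGL_4$: it exchanges $\Delta_1\leftrightarrow\Delta_3$ (dualizing $\PP^3$ and $\PP^{3*}$, fixing the middle factor), normalizes $H$ (the dual of a twisted cubic is a twisted cubic with the same stabilizer), identifies $M_{a_1a_2a_3}\cong M_{a_3a_2a_1}$ with $E_1\leftrightarrow E_3$, and on $MHilb$ sends $(C,\Gamma,C^{*})\mapsto(C^{*},\Gamma^{\vee},C)$, exchanging $E_\lambda\leftrightarrow E_{\lambda'}$. Applying it to the $E_3$ statement (valid for every ample, resp.\ nef, triple) yields the $E_1$ statement; equivalently one reads the arc $\{\mathrm{diag}(1,t,t,t)\}$ on $\PP^{3*}$ as $\{\mathrm{diag}(t,1,1,1)\}$ up to rescaling, a projection of the dual space away from a point, and repeats the $\mathrm{pr}^{*}$ argument.

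For $E_2$ I would compute the limit triple along $\{\mathrm{diag}(1,1,t,t)\}$ directly. By Lemma~\ref{divsurv1}, $\lim_{t\to 0}(g_t\cdot X)$ is the triple line (the square of the ideal of a line $\ell$), and by the self-duality of this arc (it is fixed up to rescaling under the dualization) $\lim_{t\to 0}(g_t\cdot X^{*})$ is the triple line on the dual line $\ell^{*}$; the middle component $\lim_{t\to 0}(g_t\cdot\Gamma)\subset\G$ is the corresponding multiple structure on the pencil of lines through the appropriate point, read off from the rank-one degeneration of the middle matrix $A_2$ tabulated in Section~\ref{TC}. This identifies $g_L(E_2)$ with the boundary divisor of $MHilb$ whose first component is a \emph{pure} triple line; among the twelve boundary components this is $E_\eta=E_{\eta'}$, and it is distinct from Piene's self-dual divisor $E_\zeta$, whose first component is the \emph{reducible} line-plus-double-line configuration rather than a triple line. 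Taking the limit along a general arc with limit a general point of $\Delta_2$ (or a dimension count on $E_\eta$) shows the image is the general point of $E_\eta$. Finally, for $L$ merely nef the same arcs and flat-limit computations apply verbatim to whichever of $E_1,E_2,E_3$ survives in $M_L$ (Remark~\ref{nefcases}), since ampleness entered only through the stability of the general point of the relevant $\Delta_i$.

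The main obstacle I anticipate is the explicit flat-limit computation for the middle curve $\Gamma$ along the $\Delta_2$ arc, and — more delicately — verifying that the resulting triple is the \emph{general} point of $E_\eta$ and lands on no smaller stratum of $MHilb$; the analogous subtlety for $E_1$ and $E_3$ is bypassed by the projection-and-uniqueness argument, and the dual curve $C^{*}$ costs nothing thanks to the transpose/inverse symmetry of the arcs, so the genuine new content is concentrated in the $E_2$ case. A secondary technical point to dispatch carefully is, as in Lemma~\ref{divsurv1}, that the indeterminacy locus of $g_L$ has codimension at least $2$, so that ``general point of $E_i$'' is in the domain of $g_L$.
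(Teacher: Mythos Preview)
Your proposal is correct, but it takes a different route from the paper for $E_1$ and $E_3$. The paper's proof is entirely by direct computation: it writes down the explicit triple of matrices (formula~(\ref{neatplus})) showing how a point of $\Delta_I$ arises as the limit, as $u_i\to 0$ for $i\in I$, of the three matrices acting simultaneously on $\PP^3$, $\G$, $\PP^{3*}$, and then computes the flat limit of $(C,\Gamma,C^*)$ case by case for each of $\Delta_1,\Delta_2,\Delta_3$, identifying the resulting triple in Piene's list. There is no projection argument and no appeal to duality; all three cases are handled uniformly.

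Your approach trades computation for structure. For $E_3$ you factor $g_L$ through the projection $\mathrm{pr}\colon MHilb\to Hilb_{3m+1}(\PP^3)^\circ$ and invoke Lemma~\ref{divsurv1} together with the fact (stated in the paper just before the picture) that $E_\lambda\subset MHilb$ is the unique boundary divisor mapping birationally onto $E_\lambda\subset Hilb$; since $\mathrm{pr}$ is a birational morphism, the fiber over the general point of $E_\lambda$ is a single point, and your argument goes through. For $E_1$ you use the outer involution of $\Omega_3$ swapping the two $G$-factors, which is clean and avoids recomputing anything. Only for $E_2$ do you actually compute a flat limit, and there your description of the triple (pure triple line in $\PP^3$, pure triple line in $\PP^{3*}$, the corresponding degeneration of $\Gamma$) matches what the paper's direct computation yields. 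What you gain is that two of the three flat-limit computations disappear; what you lose is that you are leaning on the stated properties of the projection $MHilb\to Hilb$ rather than verifying the image directly. The paper's approach has the advantage of being self-contained and of producing formula~(\ref{neatplus}) explicitly, which is itself a useful description of how the boundary strata of $\Omega_3$ act on the triple $(C,\Gamma,C^*)$.
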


\begin{proof}
The proof goes as for the proof of Lemma~\ref{divsurv1}. We need to extend formula \ref{torusacting}, to get that the triples $(A_1,A_2,A_3)$ in a stratum $\Delta_I$ of $\Omega_3$ appear as $G$-translates of the limit, as $u_i\to0$ for all $i\in I$, of the triples
\begin{equation}\label{neatplus}
\left(\left[\begin{smallmatrix}1&0&0&0\\0&u_1&0&0\\0&0&u_1u_2&0\\0&0&0&u_1u_2u_3\end{smallmatrix}\right],\left[\begin{smallmatrix}1&0&0&0&0&0\\0&u_2&0&0&0&0\\0&0&u_2u_3&0&0&0\\0&0&0&u_1u_2&0&0\\0&0&0&0&u_1u_2u_3&0\\0&0&0&0&0&u_1u_2^2u_3\end{smallmatrix}\right],\left[\begin{smallmatrix}1&0&0&0\\0&u_3&0&0\\0&0&u_2u_3&0\\0&0&0&u_1u_2u_3\end{smallmatrix}\right]\right)
\end{equation}
applied to a general triple $(C,Gamma,C^*)$.
The three matrices will act respectively on the three factors $\PP^3,\G,\PP^{3*}$ moving the three curves $C,\Gamma,C^*$. Working case by case, the claim follows.\end{proof}

We have of course still morphisms $g_L$ that are pretty far from being regular (because we don't reach many of the boundary divisors of $MHilb$). It might be interesting to know whether the birational inverse of $g_L$ is regular, because it will be surjective on the general point of all boundary divisors.

There is more; the models $M_L$, in how they vary, contain in fact much more information. What follows comes from \cite{vardh} and \cite{varth}, and it is the main theorem of the theory of Variation of GIT. For every wall crossing in the picture (\ref{vargitt}) from a model $M_L$ with $L$ in a chamber $\mathfrak{c}$ to a model $M_{L'}$ with $L'$ in a chamber $\mathfrak{c}'$, we have a diagram

\begin{center}
\begin{tikzpicture}[description/.style={fill=white,inner sep=2pt}]
\matrix (m) [matrix of math nodes, row sep=1em,
column sep=1em, text height=1.5ex, text depth=0.25ex]
{ & \wt{M}_{\mathfrak{c},\mathfrak{c}'}  & \\
M_L &  & M_{L}' \\
& M_{L''} &  \\ };
\path[->,font=\scriptsize]
(m-1-2) edge node[auto] {} (m-2-1)
(m-1-2) edge node[auto] {} (m-2-3)
(m-2-1) edge node[auto] {} (m-3-2)
(m-2-3) edge node[auto] {} (m-3-2);
\end{tikzpicture}
\end{center} 

where $M_{L''}$ is the model obtained for a line bundle $L''$ lying on the wall $\mathfrak{c}\cap\mathfrak{c}'$ separating the two chambers, and $\wt{M}_{\mathfrak{c},\mathfrak{c}'}$ is the fibered product of the two morphisms. All of the maps are birational morphisms, and the locus where these map are not isomprhism is a square

\begin{center}
\begin{tikzpicture}[description/.style={fill=white,inner sep=2pt}]
\matrix (m) [matrix of math nodes, row sep=1em,
column sep=1em, text height=1.5ex, text depth=0.25ex]
{ & \wt{E}_{\mathfrak{c},\mathfrak{c}'}  & \\
E_L &  & E_{L}' \\
& E_{L''} &  \\ };
\path[->,font=\scriptsize]
(m-1-2) edge node[auto] {} (m-2-1)
(m-1-2) edge node[auto] {} (m-2-3)
(m-2-1) edge node[auto] {} (m-3-2)
(m-2-3) edge node[auto] {} (m-3-2);
\end{tikzpicture}
\end{center} 

whose maps are locally trivial fibrations with fiber weighted projective spaces, and $\wt{E}_{\mathfrak{c},\mathfrak{c}'}$ has codimension 1 in $\wt{M}_{\mathfrak{c},\mathfrak{c}'}$. We can now ask where these divisors $\wt{E}_{\mathfrak{c},\mathfrak{c}'}$ are sent into the multiHilbert scheme $MHilb$, as before. We have the following conjecture.

\begin{conjecture}
We have the following correspondences, between divisors $\wt{E}_{\mathfrak{c},\mathfrak{c}'}$ and boundary divisors in $MHilb$ (in the sense that that's where the general point is send to the map obtained extended the above one on $G_a/H_a$).
\begin{itemize}
\item $\wt{E}_{\mathfrak{c}_1,\mathfrak{c}_2}\to E_{\kappa}$
\item $\wt{E}_{\mathfrak{c}_6,\mathfrak{c}_7}\to E_{\kappa'}$
\item $\wt{E}_{\mathfrak{c}_2,\mathfrak{c}_4}=\wt{E}_{\mathfrak{c}_3,\mathfrak{c}_5}\to E_{\omega}$
\item $\wt{E}_{\mathfrak{c}_4,\mathfrak{c}_7}=\wt{E}_{\mathfrak{c}_5,\mathfrak{c}_8}\to E_{\omega'}$
\item $\wt{E}_{\mathfrak{c}_2,\mathfrak{c}_3}=\wt{E}_{\mathfrak{c}_4,\mathfrak{c}_5}=\wt{E}_{\mathfrak{c}_7,\mathfrak{c}_8}\to E_{\zeta}$
\end{itemize}
\end{conjecture}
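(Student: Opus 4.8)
The plan is to combine the Variation-of-GIT picture recalled above with the explicit arc degenerations of Lemmas~\ref{divsurv1} and~\ref{divsurv2}, run this time on the deeper boundary strata of $\Omega_3$ rather than on $\Delta_1,\Delta_2,\Delta_3$.

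First I would make the wall-crossing exceptional divisors explicit. By Proposition~\ref{models} and its proof, the five walls of the nef-cone chamber decomposition are cut out (up to sign) by the five effective strata of Remark~\ref{allstrata} with normals $(3,2,-1)$, $(-1,2,3)$, $(3,0,-1)$, $(-1,0,3)$ and $(1,-2,1)$; crossing the wall with normal $c$ means that $\Omega_3^{c}$ and $\Omega_3^{-c}$ swap their (semi)stability and both become strictly semistable for a linearization $L''$ on the wall. By \cite{vardh} and \cite{varth}, $\wt{E}_{\mathfrak{c},\mathfrak{c}'}$ is then a locally trivial (weighted projective) fibration over the GIT quotient of $\Omega_3^{c}$ by $H$ at $L''$, and in particular its generic point lies over the generic point of the boundary stratum $\Delta_I$ of $\Omega_3$ containing $\Omega_3^{c}$. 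Since any two adjacent chambers meeting along the same wall are governed by one and the same stratum, this already gives, at the level of generic points, the identifications $\wt{E}_{\mathfrak{c}_2,\mathfrak{c}_4}\cong\wt{E}_{\mathfrak{c}_3,\mathfrak{c}_5}$, $\wt{E}_{\mathfrak{c}_4,\mathfrak{c}_7}\cong\wt{E}_{\mathfrak{c}_5,\mathfrak{c}_8}$ and $\wt{E}_{\mathfrak{c}_2,\mathfrak{c}_3}\cong\wt{E}_{\mathfrak{c}_4,\mathfrak{c}_5}\cong\wt{E}_{\mathfrak{c}_7,\mathfrak{c}_8}$, these groups of chamber pairs sharing respectively the $(-1,0,3)$-, the $(3,0,-1)$- and the $(1,-2,1)$-wall. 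It then remains to compute, for each of the five walls, the image of $\wt{E}$ under the extension $\wt{g}$ of the morphism $g_L$ of Lemma~\ref{divsurv2}.

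Next I would run the degeneration computation. A general point of $\Omega_3^{c}$ is the limit as $s\to 0$ of $g_0\cdot\mathrm{diag}(s^{d_1},\dots,s^{d_4})\in G_a$ for general $g_0\in G$ and integer weights $d_i$ (with $\sum d_i=0$) chosen so that the induced torus weights of the limit, read off from (\ref{torusacting}), are $c$; substituting this arc into (\ref{neatplus}) and applying it to a general triple $(C,\Gamma,C^{*})$ produces a one-parameter family in $Hilb_{3m+1}(\PP^3)^{\circ}\times Hilb_{4m+1}(\G)^{\circ}\times Hilb_{3m+1}(\PP^{3*})^{\circ}$ whose flat limit $(C_0,\Gamma_0,C_0^{*})$ is exactly what Lemma~\ref{divsurv2} computes, now for the finer strata. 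The outer involution $c\mapsto(-c_3,-c_2,-c_1)$ of the stratification corresponds to the $\PP^3\leftrightarrow\PP^{3*}$ duality, exchanging primed and unprimed degenerations; under it the five walls fall into the pairs $\{(3,2,-1),(-1,2,3)\}$ and $\{(3,0,-1),(-1,0,3)\}$ together with the self-dual wall $(1,-2,1)$. Hence two of the walls produce $E_{\kappa}$ and $E_{\kappa'}$, two produce $E_{\omega}$ and $E_{\omega'}$, and the self-dual wall $(1,-2,1)$ produces the self-dual degeneration $E_{\zeta}$ — the one where, as stressed in \cite{pienerandom}, the nonreduced structure is essential ($C_0$ and $C_0^{*}$ a line plus a double line, $\Gamma_0$ a line plus a triple line). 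Matching the positions of $\mathfrak{c}_1,\dots,\mathfrak{c}_8$ in the nef-cone picture with the geometric descriptions of the relevant strata then produces the five bullet points.

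The last ingredient — and the main obstacle — is to upgrade this from a statement about limits of arcs to the assertion that $\wt{g}$ is actually regular at a general point of $\wt{E}_{\mathfrak{c},\mathfrak{c}'}$, since $g_L$ itself is not even a regular morphism. Concretely one must show that the family $M\Phi/H$ of triples extends to a flat family in a neighbourhood of the generic point of $\wt{E}$ inside $\wt{M}_{\mathfrak{c},\mathfrak{c}'}$: properness of the three Hilbert schemes supplies a flat limit along each arc, but one has to verify that these limits glue along a codimension-one locus of $\wt{M}_{\mathfrak{c},\mathfrak{c}'}$, i.e. that passing to the VGIT common resolution already resolves the indeterminacy of $g_L$ there (as it does not over $M_L$), and that a general arc of the above shape meets a general, not special, point of the weighted-projective fibre of $\wt{E}$. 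Once this local flatness is in place, the degenerate triple $(C_0,\Gamma_0,C_0^{*})$ computed by the arc is literally the value of $\wt{g}$ at the generic point of $\wt{E}$, and comparing with the boundary components of $MHilb$ described above following \cite{piene11} finishes the proof. I expect the flatness and indeterminacy bookkeeping, together with keeping track of the embedded and nonreduced structures (especially in the $E_{\zeta}$ case), to be where the real work lies.
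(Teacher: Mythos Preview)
The statement you are trying to prove is labeled a \emph{Conjecture} in the paper, and the paper explicitly does not prove it. Immediately after stating it, the author writes: ``We leave this as a conjecture, because we didn't carry out all the details to prove it; a proof would require an explicit description of the loci $\Omega_3^{c_1,c_2,c_3}$, and description of the limits of the triples $(C,\Gamma,C^*)$ for a family in $G_a$ approaching a general element of $\Omega_3^{c_1,c_2,c_3}$.'' So there is no ``paper's own proof'' to compare against.

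Your proposal is a reasonable outline of exactly the strategy the author gestures at: identify each wall with its controlling stratum $\Omega_3^{c}$, run the arc degeneration of (\ref{neatplus}) on a general point of that stratum, and read off the resulting triple $(C_0,\Gamma_0,C_0^*)$ in $MHilb$. Your wall/normal bookkeeping is correct, and the duality pairing of walls is right. But, as you yourself flag, the proposal stops short of a proof at two places. First, you do not actually compute any of the five flat limits; the heart of the matter is verifying, case by case, that the limit triple for the stratum with normal $(1,-2,1)$ really is the nonreduced configuration defining $E_\zeta$, and similarly for the other four. Second, the regularity issue you isolate is genuine and is precisely why the author left this open: one must show that the indeterminacy of $g_L$ is resolved on $\wt{M}_{\mathfrak{c},\mathfrak{c}'}$ along the generic point of $\wt{E}_{\mathfrak{c},\mathfrak{c}'}$, and your sketch does not supply an argument for this beyond naming the difficulty. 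In short, your plan matches the paper's intended route, but it remains a plan; the conjecture is still open after your proposal.
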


We leave this as a conjecture, because we didn't carry out all the details to prove it; a proof would require an explicit description of the loci $\Omega_3^{c_1,c_2,c_3}$, and description of the limits of the triples $(C,\Gamma,C^*)$ for a family in $G_a$ approaching a general element of $\Omega_3^{c_1,c_2,c_3}$.
 
So, in this way we can see a few other of Schubert's degenerations, and the extra one from \cite{pienerandom} as well. Hence, we might be entitled to say that (at least a few of) Schubert's divisors are ``natural'' in some sort of way. It is still unclear if the remaining Schubert's divisors $\theta$,$\theta'$,$\delta$,$\delta'$ are hidden somewhere in this picture in any other way. For instance, they might occur at the two points where two walls meet, in some more complex variation of GIT statement. Or, we can notice how the strata are related to such boundary divisors, and hope that the correspondence continues in some way.
$$\begin{array}{| c | c | c | c |}\hline
(3,4,3) & (3,4,1) & (3,2,3) & (1,4,3)  \\ 
(-3,-4,-3) & (-3,-4,-1) & (-3,-2,-3) & (-1,-4,-3)  \\ 
G_a/H_a & E_{\lambda} & E_{\eta} & E_{\lambda'} \\ \hline
(-1,2,3) & (3,2,-1) & (-1,0,3) &  (3,0,-1) \\ 
(1,-2,-3) & (-3,-2,1) & (1,0,-3)& (-3,0,1)  \\
E_{\kappa} & E_{\kappa'} & E_{\omega} & E_{\omega'} \\ \hline
(1,-2,1) & (1,4,1) & (1,0,3) & (3,0,1) \\
(-1,2,-1) & (-1,-4,-1) & (-1,0,-3) & (-3,0,-1) \\
E_{\zeta} & ? & ? & ? \\ \hline
\end{array}$$

\begin{remark}\label{chowquotient}
There is a notion of Chow quotient (in \cite{chowquot}, defined in a specific case), that is in some sense the inverse limit of all the different models, so it should include all these divisors. We did not find in the literature any precise definition or study about this object in the case $H$ is not a torus. Such a variety would have 8 (or more) boundary components, and possibly a regular morphism from $MHilb$, that ``sees'' all these components. From what we have seen, it is possible that the volume function could be directly related to the intersection theory of this Chow quotient.
\end{remark}

\begin{remark}
Another geometric feature of $M_L$ that we can analyze is the amount of $G$-orbits that we have. The Hilbert scheme $Hilb_{3m+1}(\PP^3)^\circ$ is known to have only a finite number of $G$-orbits (cf. \cite{alggeo}), and the Kontsevich space $\overline{\mathcal{M}}_{0,0}(\PP^3,3)$ is known to have a one parameter family of $G$-orbits. We will show that $M_L$ has in fact a three dimensional family of $G$-orbits (that is the maximum allowed, because the complexity of $G/H$ is three, and because of Theorem 5.7 of \cite{timashev_book}). The subvariety $\Delta_{123}$ of $\Omega_3$ is isomorphic to the product $G/B\times B\backslash G$, with $H$ acting only on the second component; if the general point is $L$-stable, then its image in $M_L$ will be isomorphic to $G/B\times (B\backslash G)/\!\!/_{\!L}H$, with $G$ acting on the first factor only; $(B\backslash G)/\!\!/_{\!L}H$ will have dimension three, and this will give a 3-dimensional family of $G$-orbits. To see that the general point of $\Delta_{123}$ is stable, after some calculation it is possible to show that the general point of $\Delta_{123}$ belongs to the intersection
$$\Omega_3^{-3,-4,-1}\cap\Omega_3^{-3,-2,-3}\cap\Omega_3^{-1,-4,-3}$$
and to no other such stratum, and hence it will always be stable for any $L$ nef. These will also be the only closed $G$-orbits that $M_L$ will have. In case $L$ is just nef, a similar argument proves the same; the only difference is that those closed $G$-orbits will be of type $G/P$ for a different parabolic group.
\end{remark}

\subsection{The volume function and enumerative results}

We were not able to prove directly any enumerative result about twisted cubics; there is evidence though that the volume function is the right tool to use, at least in a few cases.

\subsubsection{The case (4,0,0): tangency to 12 planes}

The first problem we would like to solve is to find the number of twisted cubics that are tangent to 12 planes, that we know to be 56960 (cf. \cite{sketchnumber} and \cite{pandha}). For a plane $H$, we will denote by $D_H$ the divisor of $G_a/H_a$ consisting of all twisted cubics tangent to $H$. We can obtain it in the following way: consider the ($G$-equivariant) map
$$G_a/H_a\xrightarrow{f} \PP^{34}=\PP(H^0(\OO_{\PP^{3*}}(4)))=\PP(V_{004})$$
obtained sending a twisted cubic to the polynomial defining the set of all its tangent planes in $\PP^{3*}$, that is a degree 4 hypersurface. Any divisor $D_H$ can be obtained as the pullback of a hyperplane from $f$. Let us now pick a $M_L$ for $L$ general (in the interior of a chamber of \ref{models}). Let us extend $f$ to a morphism $\tilde{f}$ the entire $M_L$ (there could be an undeterminacy locus of codimension 2 or more), and pullback $\OO_{\PP(V_{004})}(1)$ to a line bundle $L_{a_1a_2a_3}$;  the sections we pullback through this map will be a subspace of $H^0(M_L,L_{a_1a_2a_3})$ isomorphic to $V_{400}$ as a representation of $G$. Notice that these sections don't vanish on any of the boundary components (because their vanishing loci are closures of loci in $G_a/H_a$). The only possibility then is for this line bundle to be $L_{400}$, because of Property (iv) of $\Omega_3$ and of $M_L$. Notice that we have
$$H^0(M_L,L_{400})\cong V_{400}\oplus V_{020}\oplus V_{000}$$
using formula (\ref{sectionsML}) (and a few applications of Lemma~\ref{singlecases}). From Remark~\ref{remarkvanishing}, the sections in $V_{400}$ are those coming from $\PP(V_{004})$, the sections in $V_{020}$ will vanish twice on $E_1$, and the section in $V_{000}$ vanishes only on the boundary, once along $E_3$, twice along $E_2$, and three times along $E_1$. Now, when we evaluate the volume $vol(D_H)$, using Theorem~\ref{main} with $\lambda=4\omega_1$, it gives the answer we know is right, 56960.

There are though three major problems.

\begin{itemize}
\item[i)] We are hoping to use the fact that $vol(D_H)=D_H^{[12]}$, that as we have seen is true only asymptotically. Going a little bit deeper, this statement is true as soon as the base locus $B(D_H)$ is equal to the stable base locus $\bf{B}\!\!$ $(D_H)$. This is not immediate; in fact, we have
\begin{align*}
H^0(M_L,L_{400})&\cong V_{400}\oplus V_{020}\oplus V_{000}\\
H^0(M_L,L_{800})&\cong V_{800}\oplus V_{420}^{\oplus 2}\oplus V_{040}^{\oplus 2}\oplus V_{311}\oplus V_{121}\oplus V_{202}\oplus V_{400}\oplus V_{020}\oplus V_{000} \\
H^0(M_L,L_{1200})&\cong V_{1200}\oplus V_{820}^{\oplus 2}\oplus V_{630}\oplus V_{440}^{\oplus 3}\oplus V_{060}^{\oplus 3} \oplus V_{711} \oplus V_{521}^{\oplus 2} \oplus V_{331}^{\oplus 3}\oplus \\
&\oplus V_{141}\oplus V_{602}^{\oplus 2}\oplus V_{412}\oplus V_{222}^{\oplus 3}\oplus V_{303} \oplus V_{113}\oplus V_{004}\oplus V_{800}\oplus  \\
&\oplus V_{420}^{\oplus 2}\oplus V_{040}^{\oplus 2}\oplus V_{311}\oplus V_{121}\oplus V_{202}\oplus V_{400}\oplus V_{020}\oplus V_{000}
\end{align*}
obtained using formula (\ref{sectionsML}) (and a few applications of Lemma~\ref{singlecases}). After a bit of computations on these, it is possible to see that there are generators of the ring
\begin{equation}\label{ring400}
\bigoplus_{k\geq 0} H^0(M_L,L_{4k,0,0})
\end{equation}
in degree 2 and 3, that could narrow the base locus for $k>1$.
\item[ii)] As we have seen just above, we have $H^0(M_L,L_{400})\cong V_{400}\oplus V_{020}\oplus V_{000}$; this means that the divisors $D_H$ are not really general in their linear series, because they all lie in the $V_{400}$ component. The other two components $V_{020}$ and $V_{000}$ vanish along entire boundary components though, so we believe it could be possible to prove that the base locus of the divisors of type $D_H$ is the same as the entire linear system $H^0(M_L,L_{400})$.
\item[iii)] We are using  the pullback of $\OO_{\PP(V_{004})}(1)$ as a line bundle on $M_L$, while in fact this does not necessarily hold true (and we believe it does not). The problem is, when we extend the line bundle to the undeterminacy locus of $\tilde{f}$ we could (and will) reach some orbifold singularities. We might be able to extend $L_{400}$ only after taking a suitable tensor power of it (our guess is that we need to take the sixth power of it, and it will again be related to the generators in degree 2 and 3 of (\ref{ring400})). So, we cannot really talk about linear series, sections, or even intersection number. To overcome this issue, it would be necessary to appeal to some theory of $\Q$-Cartier divisor.
\end{itemize}
Despite all these problems, the volume calculation does give the answer we were expecting from the literature. So, we believe a statement about the volume function giving enumerative answer could be proved.  Before stating a conjecture, we will analyze now another example.

\subsubsection{The case (0,3,0): meeting 12 lines}

Another very natural question to ask is the number of twisted cubics that meet 12 lines, that we know to be 80160 (cf.  \cite{sketchnumber}, \cite{vakil}, \cite{pandha}). Considering the equation defining the Chow variety of a twisted cubics, we get a map
$$G_a/H_a\xrightarrow{f} \PP^{49}\cong \PP(H^0(\G,\OO(3)))\cong \PP(V_{030})$$
and we can get the divisors $D_L$ (of twisted cubics meeting the line $L$) as pullback of hyperplane sections from $f$. As before, we can extend $f$ to a general $M_L$; divisors $D_L$ will come then from sections of the (supposedly) line bundle $L_{030}$, and we can apply the volume function to $\lambda=3\omega_2$. The answer we get is not the expected one though, it is 1146960. We are then in a situation where the volume function does not give the right answer. Let us show how we can fix this.

The three issues that we had in the previous case still stand entirely, with two small differences (one good and one bad) in ii); the (supposedly) space of sections of the (supposedly) line bundle $L_{030}$ is the following:
$$V_{030}^{\oplus 2}\oplus V_{010}.$$
From Remark~\ref{remarkvanishing}, the sections in $V_{010}$ will vanish now on all boundary components (on $E_1$ and $E_3$ with multiplicity one, on $E_2$ with multiplicity two). The good difference is that we can now prove directly that the base locus of $V_{030}^{\oplus 2}$ is the same as the base locus of the entire space: in fact, the base locus cannot change inside the boundary, because sections of $V_{010}$ vanish there, and it cannot change on $G_a/H_a$, because we have a transitive $G$-action and the base locus has to be $G$-invariant. The bad difference is that the sections $D_L$ are not general inside $V_{030}^{\oplus}$. If we imagine $V_{030}^{\oplus}$ as the vector space $V_{030}\otimes \C^2$ where $G$ acts on the first component, all divisors $D_L$ are contained in a subspace $V_{030}\otimes v$ for a fixed vector $v$, and in this subspace the base locus does increase; this is why, in our opinion, the volume function does not give the right answer.

We can consider though the dual condition, that is the divisors $D_{L}'$ of twisted cubics such that the dual twisted cubics meets a line (or equivalently, such that one osculating plane contains a given line). This divisors will still be pullbacks from a map to $\PP(V_{030})$, but a different one now! They will end up corresponding to a space of sections $V_{030}\otimes v'$ for $v'$ linearly equivalent to $v$. To take 12 general sections in $V_{030}^{\oplus 2}$, we could then take 6 sections in $V_{030}\otimes v$ (hence, divisors of type $D_L$) and $6$ sections in $V_{030}\otimes v'$ (hence, divisors of type $D_L'$). As far as we know, the only known answer to this question is in Schubert's book \cite{Kalkul} (top of page 179), and it is exactly 1146960 as the volume function predicted. There, he also claims that taking 5 divisors of one type and 7 of the other is still a situation that is ``general enough'', and gives the same answer. We can state now a conjecture about it that could work.

\begin{conjecture}\label{conjecturre}
Given a dominant weight $\lambda$, the volume function gives the number of intersection of 12 general (Weil) divisors coming from the (pseudo) sections
$$V_{\lambda}\otimes (V_{\lambda}^*)^H$$
away from their base locus.
\end{conjecture}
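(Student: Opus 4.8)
The plan is to reduce the conjecture to two ingredients: (a) the moving self-intersection number $D^{[12]}$ of $L_\lambda$ on $M_L$ (here $12=s=\dim G-\dim H$) is computed entirely inside the open orbit $U/H\cong G_a/H_a$, where only the $G$-submodule $W:=V_\lambda\otimes(V_\lambda^*)^H\subseteq H^0(M_L,L_\lambda)$ is seen; and (b) twelve general members of $W$ meet in a finite, reduced scheme all of whose points lie in $U/H$ together with the base locus. Throughout one must pass to a multiple $L_{m\lambda}$ large enough that it is genuinely Cartier on the (finite quotient) singularities of $M_L$---this is issue (iii) of the text---do the intersection theory there, and divide by $m^{12}$ at the end; one should also note that the right-hand side of Theorem~\ref{main} scales by $m^{12}$ under $\lambda\mapsto m\lambda$, so this normalization is compatible with the formula.

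First I would record the structural fact, immediate from Proposition~\ref{4points}(iv) together with Remark~\ref{remarkvanishing}: one has $H^0(M_L,L_\lambda)=W\oplus\bigoplus_{\mu\in P_\lambda,\ \mu\neq\lambda}V_\mu\otimes(V_\mu^*)^H$, and every section lying in a summand with $\mu\neq\lambda$ vanishes identically along at least one boundary divisor $E_i$ (any $i$ with $c_i>0$, where $\lambda-\mu=\sum c_i\alpha_i$). Hence the restriction maps $W\to H^0(U/H,L_\lambda|_{U/H})$ and $H^0(M_L,L_\lambda)\to H^0(U/H,L_\lambda|_{U/H})$ have the same image, so $W$ and the full linear system $|L_\lambda|$ cut out exactly the same divisors on $U/H$. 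Since $U/H$ is a single $G$-orbit and $W$ is a nonzero $G$-submodule (the $G$-action being on the $V_\lambda$-factor), $W$ is base-point free on $U/H$, and its base locus $\mathbf B(W)$ in $M_L$ is contained in the boundary $M_L\setminus(U/H)$.

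Next I would invoke equivariant generic transversality. For general $w_1,\dots,w_{12}\in W$, Kleiman's theorem applied to the transitive $G$-action on $U/H$ shows that $D_{w_1}\cap\cdots\cap D_{w_{12}}$ meets $U/H$ in a reduced zero-dimensional scheme, of some length $n_{\mathrm{int}}$. The conjecture is then equivalent to the conjunction of $(\star)$ $n_{\mathrm{int}}=\mathrm{vol}(D)$ and $(\star\star)$ every point of $D_{w_1}\cap\cdots\cap D_{w_{12}}$ outside $U/H$ lies in $\mathbf B(W)$ (and there are no positive-dimensional components outside $\mathbf B(W)$, so the count is well defined). For $(\star)$: assuming the genuine base locus equals the stable base locus (issue (i) of the text), one has $\mathrm{vol}(D)=D^{[12]}$, and since twelve general members of $|L_\lambda|$ agree on $U/H$ with general members of $W$, the computation of $D^{[12]}$ can be moved into $U/H$, giving $D^{[12]}=n_{\mathrm{int}}$; feeding this back through Theorem~\ref{main} closes the loop.

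The hard part is $(\star\star)$---the boundary dimension count---and this is exactly what keeps the statement a conjecture. One would need, stratum by stratum on the boundary (for twisted cubics the decomposition into the strata $\Omega_3^{c_1,c_2,c_3}$ of Remark~\ref{allstrata}, refined into $G$-orbits, gives a handle), to bound $\dim\bigl(E_I\cap D_{w_1}\cap\cdots\cap D_{w_{12}}\bigr)$ for general $w_i$ and show it is negative off $\mathbf B(W)$; equivalently, to show that the restricted subsystem $W|_{E_I}$ moves enough on each boundary stratum. The case $\lambda=3\omega_2$ discussed in the text, where $\dim(V_\lambda^*)^H=2$ and the proper subspace $V_{030}\otimes v$ (spanned by the geometric divisors $D_L$) has strictly larger base locus than all of $W$, shows that the conjecture is \emph{sharp} in demanding genuinely general members of $W$ rather than $G$-translates of one geometric divisor---so any proof of $(\star\star)$ must exploit precisely the extra freedom in $W$ coming from $\dim(V_\lambda^*)^H>1$ (the mixing of $D_L$- and $D_L'$-type divisors). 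I would expect this step, rather than $(\star)$ or the decomposition bookkeeping, to be the real obstacle, and it likely requires either an explicit understanding of the $\Omega_3^{c_1,c_2,c_3}$ and the flat limits of the triple $(C,\Gamma,C^*)$ along arcs into each stratum, or a general position argument of Kleiman type adapted to the $B\times H$-action on $G$ underlying the wonderful compactification.
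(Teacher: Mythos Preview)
The statement you are addressing is a \emph{conjecture} in the paper, not a theorem: the author explicitly writes ``We don't know of counterexamples for this conjecture. The main obstacles for the proof of such a conjecture are basically the three issues i)--iii) seen above.'' There is no proof in the paper to compare against.

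Your proposal is not a proof either, and to your credit you say so: you explicitly flag $(\star\star)$ as ``the hard part\ldots exactly what keeps the statement a conjecture,'' and you assume away issue (i) (that the base locus equals the stable base locus) rather than establishing it. What you have written is a careful and accurate outline of the strategy one would follow, together with a correct identification of the obstructions --- which is essentially what the paper itself does in the discussion surrounding the $(4,0,0)$ and $(0,3,0)$ cases. Your structural observation (that the summands $V_\mu\otimes(V_\mu^*)^H$ with $\mu\neq\lambda$ vanish on the boundary, so $W$ and $|L_\lambda|$ cut the same divisors on $U/H$) is correct and matches the paper's reasoning in the $(0,3,0)$ case; your invocation of Kleiman on the open orbit is the natural move. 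But none of issues (i), (ii), (iii) is resolved, and $(\star\star)$ is genuinely open. So this is a good roadmap, aligned with the author's own thinking, but it should not be presented as a proof.
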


We don't know of counterexamples for this conjecture. The main obstacles for the proof of such a conjecture are basically the three issues i)-iii) seen above. We believe this conjecture could also give, after getting a formula as in Corollary~\ref{finalformula} for $SL_3$-invariants in representations of $SL_6$, the number of Veronese surfaces in $\PP^5$ that are tangent to 27 planes, finding the volume of a divisor in the class $3\omega_1$ as a dominant weight for $SL_6$. As far as we know, this number has not been found yet.

To find the number of twisted cubics tangent to 12 quadric hypersurfaces, we believe we should consider the (pseudo) line bundle $L_{860}=L_{400}^{\otimes 2}\otimes L_{030}^{\otimes 2}$; the volume function evaluated there is the number 28744287411306496/2187. We will very likely have some even more complication than i), ii), iii) above. It is also possible that this could give a counterexample for Conjecture~\ref{conjecturre}.

\subsection{Further speculations}

Schubert gives a lot of formulas relating divisors of the type as we have just seen, such as $D_H,D_L,D_L'$ (that he calls respectively $\rho,\nu,\nu'$), and the boundary divisors; he claims that
\begin{align*}
\rho&=\delta+\xi+\omega+3\theta+2\delta+2\nu+2\delta'+3\theta'+\omega'+\xi'+\delta'\\
\nu&=\tfrac{3}{2}\delta+\tfrac{3}{2}\xi+\tfrac{1}{2}\omega+\tfrac{5}{2}\theta+2\delta+3\nu+3\delta'+\tfrac{9}{2}\theta'+\tfrac{3}{2}\omega'+\tfrac{3}{2}\xi'+\tfrac{3}{2}\delta'\\
\nu'&=\tfrac{3}{2}\delta+\tfrac{3}{2}\xi+\tfrac{3}{2}\omega+\tfrac{9}{2}\theta+3\delta+3\nu+2\delta'+\tfrac{5}{2}\theta'+\tfrac{1}{2}\omega'+\tfrac{3}{2}\xi'+\tfrac{3}{2}\delta'
\end{align*}
First, as a sanity check, we have again the dependence relations we have in $M_L$, if we remove all boundary divisors besides $\delta,\eta$ and $\delta'$, and substitute them respectively with $E_3,E_2,E_1$. Then, using the formulas above, we can see that the difference between $\nu$ and $\nu'$ is a combination of the divisors $\omega,\theta, \delta,\delta',\theta',\omega'$; so, they would be different divisor classes in a compactification of $G_a/H_a$ including in its image in $MHilb$ the general point of any of those boundary components. This opens two different further directions.

\subsubsection{More on the volume function}

The volume function on a G.I.T. quotient contains much more information than expected.
Let us consider again a wall-crossing situation from a chamber $\mathfrak{c}$  (with $L$ in its interior) to a chamber $\mathfrak{c}'$ (with $L'$ in its interior) , and the diagram

\begin{center}
\begin{tikzpicture}[description/.style={fill=white,inner sep=2pt}]
\matrix (m) [matrix of math nodes, row sep=1em,
column sep=1em, text height=1.5ex, text depth=0.25ex]
{ & \wt{M}_{\mathfrak{c},\mathfrak{c}'}  & \\
M_L &  & M_{L}' \\
& M_{L''} &  \\ };
\path[->,font=\scriptsize]
(m-1-2) edge node[auto] {} (m-2-1)
(m-1-2) edge node[auto] {} (m-2-3)
(m-2-1) edge node[auto] {} (m-3-2)
(m-2-3) edge node[auto] {} (m-3-2);
\end{tikzpicture}
\end{center} 

Let us denote as before $\wt{E}_{\mathfrak{c},\mathfrak{c}'}$ the exceptional divisor in $\wt{M}_{\mathfrak{c},\mathfrak{c}'}$. We can identify the $Pic(\wt{M}_{\mathfrak{c},\mathfrak{c}'})_{\Q}$ with the $Pic(M_L)_\Q\oplus\Q\cdot\wt{E}_{\mathfrak{c},\mathfrak{c}'}$.
For a line bundle $L_0$ in the chamber $\mathfrak{c}$, the volume is the self intersection number of $L_0$ on $M_L$, and or a line bundle $L_0'$ in the chamber $\mathfrak{c}'$, the volume is the self intersection number of $L_0'$ on $M_L'$. What happens now on $\wt{M}_{\mathfrak{c},\mathfrak{c}'}$? Every line bundle $L_0$ in $\mathfrak{c}$ lifts through the map in our square to a line bundle $L_0+\alpha(L_0)\wt{E}_{\mathfrak{c},\mathfrak{c}'}$, where $\alpha$ is a linear form. The volume of $L_0$, then, will be the top intersection number of $L_0+\alpha(L_0)\wt{E}_{\mathfrak{c},\mathfrak{c}'}$ in $\wt{M}_{\mathfrak{c},\mathfrak{c}'}$. Analogously, we have a linear form $\beta$ such that the volume of a line bundle $L_0'$ in $\mathfrak{c}'$ is the self intersection number of $L_0'+\beta(L_0')\wt{E}_{\mathfrak{c},\mathfrak{c}'}$ in $\wt{M}_{\mathfrak{c},\mathfrak{c}'}$. Now, the two forms $\alpha$ and $\beta$ are different, and they account for the different expressions of the volume function in the two chambers. Looking at how the volume function changes chamber by chamber, then, we could be able to find intersection products involving the divisor $\wt{E}_{\mathfrak{c},\mathfrak{c}'}$, and find intersection products of $D_L$ and $D_L'$ as different divisor classes. More in general, whenever we have $dim(V_{\lambda}^*)^H\geq 2$, and we want to find the intersection of general $G$-translates of a specific divisor $D$, that hence will be contained in a subspace
$$V_\lambda\otimes v\subset V_\lambda\otimes (V_\lambda^*)^H$$
we could be able to work on a more refined compactification of $G_a/H_a$ that would make us able to ``isolate'' the copy of $V_\lambda$ that we care about, and find the enumerative answer we look for. Collecting everything together, it could help us giving an expressio for the self intersection of divisors in the Chow quotient that we talked about in Remark~\ref{chowquotient}.

Another use of the volume could be to find mixed intersection products of divisors (just interpolating the right amount of value of the self intersection - but taking care of the ``mixed'' base loci that can occur). It would be also interesting to find some way to calculate intersection products with higher codimensional cycles as well. We have in fact (for $M_L$ general; cf. \cite{chow_vistoli}) $$rk(A^2(M_L))>rk(Sym^2(A^1(M_L))),$$
so there are codimensional 2 cycles that don't appear as twofold product of divisors; some of them are the most central ones in enumerative geometry, such as all twisted cubics through a point, or al twisted cubics bisecant to a line.

\subsubsection{Embeddings and valuations}

In \cite{lunavust}, a theory to classify all equivariant embeddings of an homogeneous space $G_a/H_a$ is set. The main algebraic object that drives the theory is the $\Lambda_G$-graded algebra
$$R=\C[G/H]^U\cong\C[U\backslash G/H]\cong\C[G/U]^H\cong\bigoplus_{\lambda\in\Lambda_{G_a}}V_\lambda^H,$$
that is the central object in Section~\ref{invariant_theory}, and whose multi-Hilbert polynomial was named $\Xi_H^G$ and was found in Theorem~\ref{mess}. The other ingredient in the theory are the \textbf{valuations} on $R$, defined as maps $\nu:R\setminus 0\to \Q$ such that
$$\nu(f)\in \N \quad \forall f\in \C[G_a/H_a]\subset \C[G/H]$$
$$\nu(f+g)\leq max(\nu(f),\nu(g))$$
$$\nu(fg)=\nu(f)\nu(g)$$
$$\nu(f)=0 \iff f\in V_0$$
Such an onbject can be seen as the order of pole of functions on $G_a/H_a$ at a divisor at the boundary, when extending these functions to rational functions on compactifications of $G_a/H_a$; valuations like these correspond in fact to all possible boundary divisors that can appear compactifying $G_a/H_a$ equivariantly.

The first question we can ask is about which evaluations we have that are constant on the entire pieces $V_\lambda^H$; these are called \textbf{central} valuations, and after a bit of calculation they can be obtained as positive linear combinations of the following three
$$v_1:R\setminus 0\to \N \quad v_1(V_{a_1,a_2,a_3})=\frac{a_1+2a_2+3a_3}{4}$$
$$v_2:R\setminus 0\to \N \quad v_2(V_{a_1,a_2,a_3})=\frac{a_1+2a_2+a_3}{2}$$
$$v_3:R\setminus 0\to \N \quad v_3(V_{a_1,a_2,a_3})=\frac{3a_1+2a_2+a_3}{4}$$
Not by chance, these three valuations correspond exactly to the three boundary divisors $E_1,E_2,E_3$ we have in (almost)  all our spaces $M_L$. 

The problem of finding \textit{all} such evaluations (not just the central ones) is much trickier. The answer could possibly not even be discrete. A good starting point would be to express a set of generators of $R$, to then specify where they are mapped by $\nu$. We were not able to do that either though; looking at specific values of the multiHilbert function $\Xi_H^G$, we were just able to find an incomplete list of weights $\lambda$ where we a generator must exist.

$$\begin{array}{c c c c c c}
(4,0,0) & (3,0,1) & (2,0,2) & (3,0,3) & (1,0,3) & (0,0,4) \\ 
(4,2,0) & (6,3,0) & (0,1,0) & (0,3,0) & (0,2,4) & (0,3,6) \\
(3,2,1) & (1,2,1) & (2,1,2) & (2,2,2) & (3,3,3) & (1,2,3) \\ 
\end{array}$$

We do know that there are only a finite number of generators, because of Hilbert's theorem on invariants applied to $\C[G/U]^H$. Given that any of these generators gives a map $G_a/H_a\to\PP(V_{a_1,a_2,a_3})$, an interesting problem would be to characterize such maps geometrically.

\begin{comment}
\begin{appendices}

\section{Python code}\label{code}
put here everything else

\lstinputlisting[language=Python]{python/check.py}

\end{appendices}
\end{comment}

\bibliography{main}{}

\def\cprime{$'$}
\begin{thebibliography}{10}

\bibitem{alguneid}
A.~R. Alguneid.
\newblock Analytical degeneration of complete twisted cubics.
\newblock {\em Proc. Cambridge Philos. Soc.}, 52:202--208, 1956.

\bibitem{alguneid2}
A.~R. Alguneid.
\newblock A representation of six aspects of the twisted cubic on the
  {G}rassmannian of lines in {$S_{3}$}.
\newblock {\em Proc. Math. Phys. Soc. U. A. R. No.}, 23:25--32 (1960), 1959.

\bibitem{alguneid3}
A.~R. Alguneid and M.~Fawzy Amer.
\newblock Degenerations of twisted cubics.
\newblock {\em Proc. Math. Phys. Soc. U. A. R. No.}, 23:33--42 (1960), 1959.

\bibitem{erass}
S.~Billey, V.~Guillemin, and E.~Rassart.
\newblock A vector partition function for the multiplicities of
  {$\mathfrak{sl}_k\mathbb{C}$}.
\newblock {\em J. Algebra}, 278(1):251--293, 2004.

\bibitem{private}
M.~Brion.
\newblock personal communication.

\bibitem{brion_residue}
M.~Brion.
\newblock Invariants de plusieurs formes binaires.
\newblock {\em Bull. Soc. Math. France}, 110(4):429--445, 1982.

\bibitem{brion_enum}
M.~Brion.
\newblock Groupe de {P}icard et nombres caract\'eristiques des vari\'et\'es
  sph\'eriques.
\newblock {\em Duke Math. J.}, 58(2):397--424, 1989.

\bibitem{DM}
W.~Dahmen and C.~A. Micchelli.
\newblock The number of solutions to linear {D}iophantine equations and
  multivariate splines.
\newblock {\em Trans. Amer. Math. Soc.}, 308(2):509--532, 1988.

\bibitem{CSV}
C.~De~Concini and C.~Procesi.
\newblock Complete symmetric varieties.
\newblock In {\em Invariant theory ({M}ontecatini, 1982)}, volume 996 of {\em
  Lecture Notes in Math.}, pages 1--44. Springer, Berlin, 1983.

\bibitem{topics}
C.~De~Concini and C.~Procesi.
\newblock {\em Topics in hyperplane arrangements, polytopes and box-splines}.
\newblock Universitext. Springer, New York, 2011.

\bibitem{vardh}
I.~V. Dolgachev and Y.~Hu.
\newblock Variation of geometric invariant theory quotients.
\newblock {\em Inst. Hautes \'Etudes Sci. Publ. Math.}, (87):5--56, 1998.
\newblock With an appendix by Nicolas Ressayre.

\bibitem{kempf_lemma}
J.-M. Drezet and M.~S. Narasimhan.
\newblock Groupe de {P}icard des vari\'et\'es de modules de fibr\'es
  semi-stables sur les courbes alg\'ebriques.
\newblock {\em Invent. Math.}, 97(1):53--94, 1989.

\bibitem{eisenbudbook}
D.~Eisenbud.
\newblock {\em Commutative algebra}, volume 150 of {\em Graduate Texts in
  Mathematics}.
\newblock Springer-Verlag, New York, 1995.
\newblock With a view toward algebraic geometry.

\bibitem{netquadrics}
G.~Ellingsrud, R.~Piene, and S.~A. Str{\o}mme.
\newblock On the variety of nets of quadrics defining twisted cubics.
\newblock In {\em Space curves ({R}occa di {P}apa, 1985)}, volume 1266 of {\em
  Lecture Notes in Math.}, pages 84--96. Springer, Berlin, 1987.

\bibitem{netquadrics2}
G.~Ellingsrud and S.~A. Str{\o}mme.
\newblock On the {C}how ring of a geometric quotient.
\newblock {\em Ann. of Math. (2)}, 130(1):159--187, 1989.

\bibitem{reprthy}
William Fulton and Joe Harris.
\newblock {\em Representation theory}, volume 129 of {\em Graduate Texts in
  Mathematics}.
\newblock Springer-Verlag, New York, 1991.
\newblock A first course, Readings in Mathematics.

\bibitem{alggeo}
J.~Harris.
\newblock {\em Algebraic geometry}, volume 133 of {\em Graduate Texts in
  Mathematics}.
\newblock Springer-Verlag, New York, 1992.
\newblock A first course.

\bibitem{hopf}
H.~Hopf.
\newblock Ein topologischer {B}eitrag zur reellen {A}lgebra.
\newblock {\em Comment. Math. Helv.}, 13:219--239, 1941.

\bibitem{chowquot}
M.~M. Kapranov.
\newblock Chow quotients of {G}rassmannians. {I}.
\newblock In {\em I. {M}. {G}el\cprime fand {S}eminar}, volume~16 of {\em Adv.
  Soviet Math.}, pages 29--110. Amer. Math. Soc., Providence, RI, 1993.

\bibitem{sketchnumber}
S.~Kleiman, S.~A. Str{\o}mme, and S.~Xamb{\'o}.
\newblock Sketch of a verification of {S}chubert's number
  {$5\,819\,539\,783\,680$} of twisted cubics.
\newblock In {\em Space curves ({R}occa di {P}apa, 1985)}, volume 1266 of {\em
  Lecture Notes in Math.}, pages 156--180. Springer, Berlin, 1987.

\bibitem{KKLV}
F.~Knop, H.~Kraft, D.~Luna, and T.~Vust.
\newblock Local properties of algebraic group actions.
\newblock In {\em Algebraische {T}ransformationsgruppen und
  {I}nvariantentheorie}, volume~13 of {\em DMV Sem.}, pages 63--75.
  Birkh\"auser, Basel, 1989.

\bibitem{LaksovCC2}
D.~Laksov.
\newblock Completed quadrics and linear maps.
\newblock In {\em Algebraic geometry, {B}owdoin, 1985 ({B}runswick, {M}aine,
  1985)}, volume~46 of {\em Proc. Sympos. Pure Math.}, pages 371--387. Amer.
  Math. Soc., Providence, RI, 1987.

\bibitem{positivity_1}
R.~Lazarsfeld.
\newblock {\em Positivity in algebraic geometry. {I}}, volume~48 of {\em
  Ergebnisse der Mathematik und ihrer Grenzgebiete. 3. Folge. A Series of
  Modern Surveys in Mathematics [Results in Mathematics and Related Areas. 3rd
  Series. A Series of Modern Surveys in Mathematics]}.
\newblock Springer-Verlag, Berlin, 2004.
\newblock Classical setting: line bundles and linear series.

\bibitem{positivity_2}
R.~Lazarsfeld.
\newblock {\em Positivity in algebraic geometry. {II}}, volume~49 of {\em
  Ergebnisse der Mathematik und ihrer Grenzgebiete. 3. Folge. A Series of
  Modern Surveys in Mathematics [Results in Mathematics and Related Areas. 3rd
  Series. A Series of Modern Surveys in Mathematics]}.
\newblock Springer-Verlag, Berlin, 2004.
\newblock Positivity for vector bundles, and multiplier ideals.

\bibitem{chow}
M.~Lehn and C.~Sorger.
\newblock Chow - a sage package for computations in intersection theory.
\newblock http://www.math.sciences.univ-nantes.fr/\textasciitilde sorger/chow.

\bibitem{lunavust}
D.~Luna and Th. Vust.
\newblock Plongements d'espaces homog\`enes.
\newblock {\em Comment. Math. Helv.}, 58(2):186--245, 1983.

\bibitem{newstead}
P.~E. Newstead.
\newblock {\em Introduction to moduli problems and orbit spaces}, volume~51 of
  {\em Tata Institute of Fundamental Research Lectures on Mathematics and
  Physics}.
\newblock Tata Institute of Fundamental Research, Bombay; by the Narosa
  Publishing House, New Delhi, 1978.

\bibitem{pandha}
R.~Pandharipande.
\newblock Intersections of {$\bold Q$}-divisors on {K}ontsevich's moduli space
  {$\overline M_{0,n}(\bold P^r,d)$} and enumerative geometry.
\newblock {\em Trans. Amer. Math. Soc.}, 351(4):1481--1505, 1999.

\bibitem{pany}
D.~I. Panyushev.
\newblock The structure of the canonical module and the {G}orenstein property
  for some quasihomogeneous varieties.
\newblock {\em Mat. Sb. (N.S.)}, 137(179)(1):76--89, 143, 1988.

\bibitem{patel}
A.~Patel and A.~Landesman.
\newblock personal communication.

\bibitem{piene11}
R.~Piene.
\newblock Degenerations of complete twisted cubics.
\newblock In {\em Enumerative geometry and classical algebraic geometry
  ({N}ice, 1981)}, volume~24 of {\em Progr. Math.}, pages 37--50. Birkh\"auser,
  Boston, Mass., 1982.

\bibitem{pienerandom}
R.~Piene.
\newblock On the problem of enumerating twisted cubics.
\newblock In {\em Algebraic geometry, {S}itges ({B}arcelona), 1983}, volume
  1124 of {\em Lecture Notes in Math.}, pages 329--337. Springer, Berlin, 1985.

\bibitem{pieneschl}
R.~Piene and M.~Schlessinger.
\newblock On the {H}ilbert scheme compactification of the space of twisted
  cubics.
\newblock {\em Amer. J. Math.}, 107(4):761--774, 1985.

\bibitem{chowhilb}
D.~Schaub.
\newblock G\'en\'erateurs explicites du groupe de {C}how du sch\'ema de
  {H}ilbert des cubiques de {${\bf P}(3,{\bf C})$}.
\newblock {\em Math. Ann.}, 282(3):485--502, 1988.

\bibitem{Kalkul}
H.~Schubert.
\newblock {\em Kalk\"ul der abz\"ahlenden {G}eometrie}.
\newblock Springer-Verlag, Berlin-New York, 1979.
\newblock Reprint of the 1879 original, With an introduction by Steven L.
  Kleiman.

\bibitem{kannan}
S.~Senthamarai~Kannan.
\newblock Remarks on the wonderful compactification of semisimple algebraic
  groups.
\newblock {\em Proc. Indian Acad. Sci. Math. Sci.}, 109(3):241--256, 1999.

\bibitem{sturmfels}
B.~Sturmfels.
\newblock On vector partition functions.
\newblock {\em J. Combin. Theory Ser. A}, 72(2):302--309, 1995.

\bibitem{varth}
M.~Thaddeus.
\newblock Geometric invariant theory and flips.
\newblock {\em J. Amer. Math. Soc.}, 9(3):691--723, 1996.

\bibitem{timashev_book}
D.~A. Timashev.
\newblock {\em Homogeneous spaces and equivariant embeddings}, volume 138 of
  {\em Encyclopaedia of Mathematical Sciences}.
\newblock Springer, Heidelberg, 2011.
\newblock Invariant Theory and Algebraic Transformation Groups, 8.

\bibitem{VainsCC}
I.~Vainsencher.
\newblock Complete collineations and blowing up determinantal ideals.
\newblock {\em Math. Ann.}, 267(3):417--432, 1984.

\bibitem{vainsblowup}
I.~Vainsencher.
\newblock A note on the {H}ilbert scheme of twisted cubics.
\newblock {\em Bol. Soc. Brasil. Mat.}, 18(1):81--89, 1987.

\bibitem{vainsenchersegre}
I.~Vainsencher and F.~Xavier.
\newblock A compactification of the space of twisted cubics.
\newblock {\em Math. Scand.}, 91(2):221--243, 2002.

\bibitem{vakil}
R.~Vakil.
\newblock The enumerative geometry of rational and elliptic curves in
  projective space.
\newblock {\em J. Reine Angew. Math.}, 529:101--153, 2000.

\bibitem{chow_vistoli}
A.~Vistoli.
\newblock Chow groups of quotient varieties.
\newblock {\em J. Algebra}, 107(2):410--424, 1987.

\bibitem{weymanbook}
J.~Weyman.
\newblock {\em Cohomology of vector bundles and syzygies}, volume 149 of {\em
  Cambridge Tracts in Mathematics}.
\newblock Cambridge University Press, Cambridge, 2003.

\bibitem{guocepap2}
G.~Xin.
\newblock A fast algorithm for {M}ac{M}ahon's partition analysis.
\newblock {\em Electron. J. Combin.}, 11(1):Research Paper 58, 20, 2004.

\bibitem{guocedef}
G.~Xin.
\newblock {\em {The Ring of Malcev-Neumann Series and the Residue Theorem}}.
\newblock PhD thesis, Brandeis University, May 2004.

\bibitem{guocepap1}
G.~Xin.
\newblock A residue theorem for {M}alcev-{N}eumann series.
\newblock {\em Adv. in Appl. Math.}, 35(3):271--293, 2005.

\end{thebibliography}
%\addcontentsline{toc}{section}{Bibliography}
\bibliographystyle{plain}

\singlespacing

\end{document}